\documentclass{amsart}
\usepackage{parskip}
\usepackage{lipsum}
\usepackage{tikz-cd}
\usepackage{hyperref}
\usepackage{graphicx,color}
\usepackage{stix}
\usepackage{amssymb,amsmath,bm,mathtools}
\usepackage{bm}
\usepackage{enumerate}
\usepackage{tkz-graph}
\usepackage{ragged2e}
\usepackage{enumitem}
\usepackage{adjustbox}
\usepackage{tipa}
\usepackage{comment}
\usepackage{tikz}
\usepackage[all]{xy}

\usetikzlibrary{arrows,decorations.pathmorphing}
\usetikzlibrary{backgrounds,positioning}
\usetikzlibrary{fit,petri,shapes.misc}
\usetikzlibrary{shapes.geometric}
\usetikzlibrary{decorations.markings}
\usetikzlibrary{calc}

\newlength{\defaultpgflinewidth}
\tikzset{auto}
\tikzset{empty/.style={circle,inner sep=0pt,minimum size=6mm}}
\tikzset{emptyvt/.style={circle,inner sep=0pt,minimum size=0mm}}

\tikzset{plain/.style={circle,draw,very thick,
inner sep=0pt,minimum size=6mm}}

\tikzset{xplain/.style={circle,draw,very thick,
inner sep=0pt,minimum size=8mm}}

\tikzset{smallplain/.style={circle,draw,very thick,
inner sep=0pt,minimum size=4mm}}

\tikzset{xsplain/.style={circle,draw, thick,
inner sep=0pt,minimum size=1.5mm}}

\tikzset{tinyplain/.style={circle,draw, thick,
inner sep=0pt,minimum size=1mm}}

\tikzset{smalldotted/.style={circle,draw,very thick, densely dotted,
inner sep=0pt,minimum size=3mm}}

\tikzset{dottedplain/.style={circle,draw,very thick, densely dotted,
inner sep=0pt,minimum size=6mm}}

\tikzset{normaldot/.style={circle,black,fill=black,inner sep=0pt,minimum size=2mm}}

\tikzset{rectplain/.style={rectangle,draw,very thick,minimum size=6mm}}
\tikzset{bigplain/.style={rectangle,draw,very thick,minimum size=1cm}}

\tikzset{triangular/.style={regular polygon, regular polygon sides=3, draw,very thick,
inner sep=0pt,minimum size=1.2cm}}

\tikzset{arrow/.style={->,thick}}
\tikzset{dashedarrow/.style={->,dashed,thick}}
\tikzset{dottedarrow/.style={->,dotted,thick}}
\tikzset{mapto/.style={|->,thick}}
\tikzset{->-/.style={decoration={markings, mark=at position #1 with {\arrow{>}}},postaction={decorate}}}

\tikzset{implies/.style={thick,double,double equal sign distance,-implies}} 

\tikzset{line/.style={thick}}
\tikzset{dottedline/.style={dotted,thick}}
\tikzset{dashedline/.style={dashed,thick}}

\tikzset{inputleg/.style={<-,thick}}
\tikzset{outputleg/.style={->,thick}}
\tikzset{dottedinput/.style={<-,dotted,thick}}

\usepackage{transparent}
\usepackage{graphicx,import}
\usepackage{caption}
\usepackage{subcaption}
\usepackage{enumitem}

\newcommand{\Z}{\mathbb{Z}}

\newcommand{\F}{\mathsf{F}}

\newcommand{\bE}{\mathscr{E}}

\newcommand{\bM}{\mathscr{M}}

\newcommand{\calQ}{\mathcal{Q}}

\newcommand{\bS}{\mathbf{S}}

\newcommand{\calM}{\mathcal{M}}
\newcommand{\pantscpx}{\mathcal{C}}
\newcommand{\xra}[1]{\overset{#1}{\rightsquigarrow}}
\newcommand{\surf}{S}%% Notation for a surface. S is bad since we already have S-moves and he modular operad S. F is bad because of F-moves. Sigma is bad because of the symmetric group.

\newcommand{\sSet}{\mathscr{S}}

\newcommand{\isSet}{\mathtt{S}}

\newcommand{\Set}{\mathrm{Set}}
\newcommand{\Grpd}{\mathscr{G}}

\newcommand{\Gr}{\mathsf{Grp}}

\newcommand{\PaRB}{\mathsf{PaRB}}

\newcommand{\markS}{\mathcal{M}(S)}
\newcommand{\calP}{\mathcal{P}}

\newcommand{\twist}{\tau}

\newcommand{\galQ}{\mathbf{\mathrm{Gal}}(\overline{\mathbb{Q}}/\mathbb{Q})}

\newcommand{\Mod}{\mathrm{Mod}}
\newcommand{\dMod}{\mathrm{dMod}}

\newcommand{\Cyc}{\mathrm{Cyc}}
\newcommand{\Op}{\mathrm{Op}}

\newcommand{\op}{\mathrm{op}}

\newcommand{\Sc}{\mathrm{Sc}}

\newcommand{\widesthat}[1]{(#1)^{\wedge}}
\newcommand{\id}{\operatorname{id}}

\DeclareMathOperator*{\colim}{colim}
\newcommand{\Pro}{\operatorname{Pro}}
\newcommand{\cs}{\mathrm{B}}
\newcommand{\nerve}{\mathrm{N}}

\newcommand{\End}{\operatorname{End}}
\newcommand{\Hom}{\operatorname{Hom}}
\newcommand{\Ho}{\operatorname{Ho}}

\newcommand{\ob}{\operatorname{ob}}
\newcommand{\Aut}{\operatorname{Aut}}

\newcommand{\Map}{\operatorname{Map}}
\newcommand{\Alg}{\operatorname{Alg}}

\newcommand{\HoAut}{\operatorname{HoAut}}

\newcommand{\Diff}{\operatorname{Diff}}
\newcommand{\Homeo}{\operatorname{Homeo}}

\newcommand{\trun}[1]{\mathrm{tr}_{\leq{#1}}}

\usepackage{scalerel,stackengine}
\stackMath
\newcommand\reallywidehat[1]{%
\savestack{\tmpbox}{\stretchto{%
  \scaleto{%
    \scalerel*[\widthof{\ensuremath{#1}}]{\kern-.6pt\bigwedge\kern-.6pt}%
    {\rule[-\textheight/2]{1ex}{\textheight}}%WIDTH-LIMITED BIG WEDGE
  }{\textheight}% 
}{0.5ex}}%
\stackon[1pt]{#1}{\tmpbox}%
}
\newcommand{\GT}{\widehat{\mathsf{GT}}}
\newcommand{\NS}{\widehat{\mathsf{\Gamma}}}

\newcommand{\Br}{\mathsf{B}} %braid groups
\newcommand{\PB}{\mathsf{PB}} %braid groups

\newcommand{\RB}{\mathsf{RB}} %ribbon braid groups
\newcommand{\PRB}{\mathsf{PRB}} %pure ribbon braid groups

\newcommand{\Mmax}{\mathcal{M}^{max}}

\newcommand{\nb}{nb}

\newcommand{\corolla}{C}

\newcommand{\triv}{\raisebox{-2pt}{\includegraphics[height=14pt]{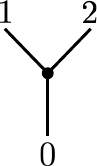}}}
\newcommand{\forkone}{\raisebox{-4pt}{\includegraphics[height=8pt]{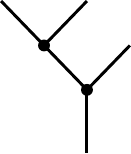}}}
\newcommand{\forktwo}{\raisebox{-4pt}{\includegraphics[height=8pt]{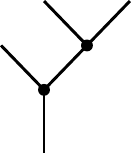}}}
\newcommand{\contraction}{\raisebox{-4pt}{\includegraphics[height=8pt]{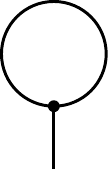}}}
\newcommand{\graphgtwo}{\raisebox{-4pt}{\includegraphics[height=8pt]{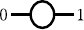}}}

\newtheorem{theorem}{Theorem}[section]
\newtheorem{thm}[theorem]{Theorem}%[section]
\newtheorem{prop}[theorem]{Proposition}%[section]
\newtheorem*{prop*}{Proposition}%[section]
\newtheorem{lemma}[theorem]{Lemma}%[section]
\newtheorem{cor}[theorem]{Corollary}%[section]
\newtheorem*{cor*}{Corollary}%[section]
\newtheorem{Idea}[theorem]{Idea}%[section]
\newtheorem*{thm*}{Theorem}

\theoremstyle{definition}
\newtheorem{definition}[theorem]{Definition}%[section]
\newtheorem{example}[theorem]{Example}%[section]
\newtheorem{remark}[theorem]{Remark}%[section]
\newtheorem{notation}[theorem]{Notation}

\numberwithin{equation}{section}

\let\oldtocsection=\tocsection
\let\oldtocsubsection=\tocsubsection
\renewcommand{\tocsection}[2]{\hspace{0em}\oldtocsection{#1}{#2}}
\renewcommand{\tocsubsection}[2]{\hspace{1em}\oldtocsubsection{#1}{#2}}
\DeclareRobustCommand{\SkipTocEntry}[5]{}
\renewcommand{\paragraph}[1]{\textbf{{#1}.}\hspace{5pt}}

\title[Galois actions on surfaces]{Galois actions on surfaces and a higher genus Grothendieck-Teichm\"{u}ller group}

\author[L. Basualdo Bonatto]{Luciana Basualdo Bonatto}
\address{Mathematical Institute, University of Oxford \\ Oxford, UK}
\email{luciana.basualdobonatto@maths.ox.ac.uk}

\author[M. Robertson]{Marcy Robertson}
\address{School of Mathematics and Statistics \\ The University of Melbourne \\ Melbourne, Victoria, Australia}
\email{marcy.robertson@unimelb.edu.au}

\date{\today}

\begin{document}

\begin{abstract}
We construct an operadic model for the higher-genus Teichm\"uller tower. More precisely, we define a modular operad $\bS$ in groupoids built from mapping class groups, with compositions and contractions encoding gluing operations on surfaces. We prove a presentation theorem for maps out of $\bS$, showing that they are determined by a small number of genus-zero and genus-one generators and relations. Using this presentation and the work of Nakamura--Schneps, we construct a faithful action of the Nakamura--Schneps subgroup $\NS\subseteq\GT$ on the profinite completion $\widehat{\bS}$, and hence an action of $\operatorname{Gal}(\overline{\mathbb Q}/\mathbb Q)$. The genus-zero truncation of $\bS$ recovers the cyclic operad of parenthesized ribbon braids, and its group of object-fixing profinite automorphisms recovers $\GT$. Finally, the profinite completion of the classifying spaces of $\bS$ assemble into a modular $\infty$-operad in profinite spaces whose values identify with the \'etale homotopy types of moduli stacks of curves with marked tangent vectors, and the $\NS$-action extends to this homotopy-coherent Teichm\"uller tower.
\end{abstract}

\maketitle
\section{Introduction}

In the essay \emph{Esquisse d'un Programme}, Grothendieck proposed that the absolute Galois group of $\mathbb Q$, $\galQ$, could be studied through its action on a ``tower'' of geometric objects, namely \'etale fundamental groups of the moduli spaces $\mathcal M_{g}^{n}$ , together with the geometric morphisms relating these spaces \cite{grothendieck_esquisse}. These morphisms record that the standard action of $\galQ$ is compatible with maps such as forgetting marked points, gluing curves along marked points or tangent directions, and passing to boundary strata.  Already in genus zero this action is highly nontrivial:  Belyi's theorem implies that the action of $\galQ$ on $\pi_1^{\acute et}(\mathcal M_0^4)$ is faithful \cite{belyi}. Grothendieck's guiding idea was that there should be an ``ideal Teichm\"uller tower'' containing enough of these compatibility maps such that the automorphisms of the tower would recover precisely $\galQ$. The main purpose of this paper is to construct a topological model for a Teichm\"uller tower as a modular operad.

For $2g-2+n>0$, the \'etale fundamental group of $\mathcal M_g^n$ identifies with the profinite completion of the mapping class group $\Gamma_g^n$ of a genus $g$ surface with $n$ marked points fixed pointwise \cite{matsumoto2000arithmetic,boggi2009fundamental}. The action of $\galQ$ then gives an outer action on the profinite completion of $\Gamma_g^n$. Passing from marked points to boundary components corresponds topologically to remembering tangent directions at the marked points, and this gives corresponding actions on the profinite completions of the mapping class groups $\Gamma_{g,n}$ of genus $g$ surfaces with $n$ boundary components fixed pointwise (Definition~\ref{def: mapping class group}). Thus the tower of \'etale fundamental groups has a topological counterpart built from mapping class groups and compatibility maps coming from topological operations such as cutting along simple closed curves, gluing boundary components, and forgetting marked points. 

This approach via mapping class groups provides the topological framework for the modular operad (Definition~\ref{def: modular operad}) constructed in this paper. Namely, we define a family of groupoids $\bS(g,n+1)$, for $2g+n\geq 1$, satisfying
    \[\cs\mathbf S(g,n+1)\simeq \cs\Gamma_{g,n+1}.\]
The groupoids $\mathbf S(g,n+1)$ assemble into a modular operad in groupoids, denoted $\mathbf S$ (Definition~\ref{def: surface modular operad}).  Its compositions and contractions encode the maps induced by gluing boundary components. After profinite completion, we obtain a modular operad $\widehat{\bS}$ in profinite groupoids (see Section~\ref{sec: profinite completion of modular operads in groupoids}). This construction supports the Galois action envisioned by Grothendieck:

\begin{thm*}[Theorem~\ref{main theorem NS action}]
The absolute Galois group $\galQ$ acts faithfully on the modular operad
$\widehat{\mathbf S}$, via an explicit homomorphism
\[
\galQ\longrightarrow \operatorname{Aut}(\widehat{\mathbf S}).
\]
\end{thm*}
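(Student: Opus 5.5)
The plan is to factor the Galois action through the Nakamura--Schneps group, using the composite
\[
\galQ \hookrightarrow \NS \longrightarrow \operatorname{Aut}(\widehat{\mathbf S}).
\]
The first map is the known injection: Belyi's theorem gives $\galQ\hookrightarrow\GT$, and Nakamura--Schneps show that the image lies in $\NS$. So it suffices to build a homomorphism $\NS\to\operatorname{Aut}(\widehat{\mathbf S})$ and show it is injective. Composition then gives the explicit faithful homomorphism.

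To construct the action I would use the presentation theorem for maps out of $\bS$, passed to profinite completions. It says that a morphism $\widehat{\bS}\to\widehat{\bS}$ is determined by where a small set of generators go, namely the genus-zero ones (braiding, twists, associators in $\bS(0,n)$ for small $n$) and the genus-one ones (the generators of $\bS(1,1)$ and $\bS(1,2)$). The assignments must satisfy a finite list of relations: pentagon, hexagons, the relations of $\Gamma_{1,1}$ and $\Gamma_{1,2}$, and the compatibility of contraction with composition.

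For $(\lambda,f)\in\NS$ I would send the generators as follows:
\begin{itemize}
\item the twist $\tau \mapsto \tau^{\lambda}$;
\item the braiding $\sigma \mapsto \sigma^{\lambda}$;
\item the associator $a \mapsto f(\sigma^2,\, x_{23}^2)\,a$, in the usual $\GT$ pattern.
\end{itemize}
In genus one I would use the Nakamura--Schneps formulas for the action on the Dehn twists $a,b$ generating $\widehat\Gamma_{1,1}$. The genus-zero relations are exactly the defining $\GT$ relations, so they hold because $\NS\subseteq\GT$. The genus-one relations are precisely the extra defining relations of $\NS$ (the ones Nakamura--Schneps impose so that the $\widehat\Gamma_{1,1}$ and $\widehat\Gamma_{1,2}$ formulas are well defined). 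Functoriality follows from the uniqueness part of the presentation: the composite of two such endomorphisms agrees on generators with the endomorphism attached to the product in $\NS$. Since $\NS$ is a group, this makes each endomorphism an automorphism and gives a homomorphism.

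For faithfulness, I would restrict to genus zero. The genus-zero truncation of $\widehat{\bS}$ is $\widehat{\PaRB}$, and the induced action is the standard $\GT$ action. That action is faithful, since already the action on $\widehat{\PB}_3$ together with the associator recovers $(\lambda,f)$. So any element acting trivially on $\widehat{\bS}$ acts trivially on the genus-zero part and is the identity in $\NS$.

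The main obstacle is the genus-one relation check. One has to match the presentation theorem's genus-one relations with Nakamura--Schneps' relations, keeping the basepoint and tangential-path conventions consistent, and also verify the contraction/composition compatibility. I would attack it by translating each relation in the presentation into an identity in $\widehat\Gamma_{1,2}$ and citing the corresponding lemma of Nakamura--Schneps.
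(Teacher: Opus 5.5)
Your plan is essentially the paper's proof: compose the Nakamura--Schneps injection $\galQ\hookrightarrow\NS$ with an action $\NS\to\Aut_0(\widehat{\bS})$ obtained from the presentation theorem via the completion adjunction. As in the paper, the genus-zero relations come from $\GT$, the genus-one relations are checked by explicit mapping-class computations (the paper does (G1a) and (G1b) directly in $\widehat\Gamma_{1,1}$ from $\GT$-type identities and reduces (G2) to Nakamura--Schneps' (6AS) computation), and faithfulness is read off from the genus-zero truncation.

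Two small corrections. First, the associator should go to $\alpha\cdot f(x_{12},x_{23})$ (the paper's $\alpha\cdot f(D_d,D_c)$), not to an expression involving $x_{23}^2$. Second, checking that $\iota_G\circ\iota_F$ and $\iota_{F\cdot G}$ agree on the genus-one generator (the $S$-move $\sigma=D_aD_bD_a$, sent to $\sigma^{\lambda}D_b^{8\rho_2}f(D_b^2,D_a^2)D_a^{-8\rho_2}$) is a genuine computation, not a formal consequence of uniqueness; it uses the cocycle identity $\rho_2(F\cdot G)=\rho_2(G)+\mu\,\rho_2(F)$ for $G=(\mu,g)$.
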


A substantial part of the paper is devoted to constructing this action and relating it to the classical genus-zero Grothendieck--Teichm\"uller theory.

The genus-zero part of the Teichm\"uller tower was first studied in depth by Drinfeld \cite[Section~4]{Drin}. He considered a tower of profinite braid groups whose structure maps include the standard operations of forgetting strands and replacing one strand by several parallel strands. Drinfeld introduced the profinite Grothendieck--Teichm\"uller group $\GT$ via the symmetries of this tower (Definition~\ref{defn:GT}). Ihara showed that there is an embedding
    \[\galQ\hookrightarrow \GT,\]
and that the induced action on Drinfeld's tower of profinite braid groups agrees with the Galois actions arising from the geometric \'etale fundamental groups
of the genus-zero moduli spaces \cite{ihara}. 

Operadic models for the genus-zero Teichm\"uller tower have been studied in, for instance, \cite{FresseBook1,BN,willwacher2015m,Horel_profinite_groupoids,Boavida-Horel-Robertson,robertson2025grothendieck}. The version most directly related to the surfaces used in this paper is the cyclic operad of parenthesized ribbon braids $\PaRB^{\mathrm{cyc}}$, introduced in \cite{campos2019configuration}. Its cyclic structure reflects the fact that, for genus-zero surfaces, all boundary components can be treated symmetrically rather than singling out one as an output.

Our modular operad $\bS$ extends this genus-zero surface model to higher genus. More precisely, its genus-zero truncation $\trun{0}\bS$, defined in Appendix~\ref{subsec: truncations}, is naturally a cyclic operad and satisfies the following comparison.

\begin{prop*}[Proposition~\ref{prop:PaRB-cyclic-is-S0}]
There is an isomorphism of cyclic operads
    \[\operatorname{tr}_{\leq 0}\mathbf S \cong \PaRB^{\mathrm{cyc}}. \]
\end{prop*}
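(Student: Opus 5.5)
We construct an explicit comparison functor arity by arity and then check that it respects the cyclic structure.

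\emph{Objects.} In genus zero, the objects of $\bS(0,n+1)$ are boundary-labelled surfaces of genus zero with a chosen decomposition into pairs of pants (cuff-labelled, with the boundary ordering data of Definition~\ref{def: surface modular operad}). Their dual trees are planar binary trees with labelled leaves. These correspond bijectively to the objects of $\PaRB^{\mathrm{cyc}}(n)$, namely parenthesized permutations, viewed cyclically with the root treated as an extra leaf. We define the map on objects by sending a pants decomposition to its dual tree, read as a parenthesization.

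\emph{Morphisms.} We use the fact, built into the definition of $\bS$, that morphism sets of $\bS(0,n+1)$ are torsors over the pure mapping class group $\Gamma_{0,n+1}$. By the classical identification, $\Gamma_{0,n+1}$ is isomorphic to the pure ribbon braid group $\PRB_n$ modulo the full twist relation appropriate to the cyclic setting. In $\PaRB^{\mathrm{cyc}}$ the morphisms are ribbon braids together with the boundary twists, and the root twist is identified with the product of leaf twists and the full twist. So on each arity we get an isomorphism of groupoids: connected groupoids with matching object sets and isomorphic vertex groups.

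\emph{Coherence.} For the map to be a functor on all morphisms, not only vertex groups, we must choose compatible morphism identifications. We do this using the presentation of $\PaRB^{\mathrm{cyc}}$ as generated by associators (the F-moves, i.e.\ reparenthesizations), braidings and twists, subject to pentagon and hexagon relations together with the cyclic relations. By the presentation theorem for genus-zero maps out of $\bS$ (or, in the other direction, the universal property of $\PaRB^{\mathrm{cyc}}$ as the cyclic operad freely generated in arity two and three with these relations), it suffices to map the generators:
\begin{itemize}
\item the associator goes to the F-move exchanging pants decompositions of a four-holed sphere;
\item the braiding goes to the half-twist exchanging two cuffs;
\item the twist goes to the Dehn twist about a boundary curve.
\end{itemize}
We then verify the pentagon, the hexagons and the cyclic/twist relations in $\bS$. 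These hold because they are the standard relations of the genus-zero Lego–Teichmüller groupoid (Bakalov–Kirillov, Hatcher–Lochak–Schneps).

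\emph{Bijectivity and cyclic structure.} This gives a map of cyclic operads $\PaRB^{\mathrm{cyc}} \to \trun{0}\bS$. It is bijective on objects by the tree correspondence, and on automorphism groups it is the isomorphism $\PRB_n/\!\sim\;\cong \Gamma_{0,n+1}$. Since both sides are connected groupoids in each arity, it is an isomorphism. Compatibility with composition and with the cyclic $\Sigma_{n+1}$-action holds on generators by construction: gluing pants along cuffs corresponds to substituting trees.

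The main obstacle is matching the cyclic action. Rotating the root must send the root twist to the correct product of leaf twists and full twist, and the orientation and sign conventions for half-twists versus braidings must agree. I would first check this explicitly in arities two and three, on the pair of pants and the four-holed sphere. The general case then follows because both structures are generated there.
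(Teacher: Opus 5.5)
Your route is the paper's: build the comparison from the Fresse/Boavida--Horel--Robertson presentation by sending $\mu,\tau,\beta,\alpha$ to $P$ and to the $T$-, $B$- and $A$-moves. You then check (T), (H1), (H2), (P) against the twist--braiding, hexagon and pentagon $2$-cells of the marking complex, and conclude arity-wise from the bijection on objects and $\PRB_n\cong\Gamma_{0,n+1}$.

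Two small corrections to that part:
\begin{itemize}
\item Of your two options for constructing the map, only the second is admissible. The genus-zero part of Theorem~\ref{thm: presentation for maps out of bS} is deduced from this very proposition, so invoking it would be circular.
\item No quotient is involved on vertex groups. The cyclic structure does not change the underlying groupoids, so $\Aut(T)=\PRB_n$ in $\PaRB^{\mathrm{cyc}}(n)$, and $\PRB_n\cong\Gamma_{0,n+1}$ holds on the nose. The identity ``root twist $=$ full twist times leaf twists'' already holds in $\PRB_n$; it is not a relation to be imposed.
\end{itemize}

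The genuine gap is the cyclic step. This is the only thing the proposition adds to the operad-level isomorphism, and you leave it open. Saying it holds on generators ``by construction'' because gluing corresponds to tree substitution does not work. Substitution only gives compatibility with operadic composition, not with relabelling the root.

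Two things are needed. First, the explicit action defining $\PaRB^{\mathrm{cyc}}$:
\begin{itemize}
\item $(01)^*\mu=(12)^*\mu$;
\item $(01)^*\tau=\tau$;
\item $(01)^*\beta=\beta^{-1}\cdot(\id_\mu\circ_2\tau^{-1})$;
\item $(01)^*\alpha=(231)^*\alpha^{-1}$.
\end{itemize}
These are exactly the ``cyclic relations'' your presentation would have to contain, and you never state them. Second, a proof that their images hold in $\bS$:
\begin{itemize}
\item On objects this is $(01)A_2^+=(12)A_2^+$.
\item For $\tau$ it is automatic, since $\Sigma_1^+$ acts trivially on $\bS(0,2)$.
\item For $\beta$ it is a real computation. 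The $(01)$-relabelled $B$-move is not a $B$-move but the opposite one corrected by boundary twists. The twist--braiding $2$-cell $B_{1,2}B_{2,1}=T_0T_1^{-1}T_2^{-1}$ converts it into $\mathbf b^{-1}\cdot(\id_P\circ_2\mathbf t^{-1})$; this is the precise form of your ``root twist'' concern.
\item For $\alpha$, one checks $(01)^*(P\circ_1P)=(231)^*(P\circ_2P)$ and $(01)^*(P\circ_2P)=(231)^*(P\circ_1P)$. Then $(01)^*\mathbf a\cdot(231)^*\mathbf a$ is a loop, and the self-duality $2$-cell $A_{c,c'}A_{c',c}=\id$ shows it is trivial.
\end{itemize}

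These generator-level identities are the input your final ``generated there'' reduction needs. Without them, the proposal only reproves Lemma~\ref{lemma:operad-map-PaRB-to-S0}.
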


\medskip

The key technical input for constructing actions on $\widehat{\bS}$ is a presentation theorem for maps out of $\bS$. This is the higher-genus analogue of the presentation of maps out of genus-zero operadic models such as parenthesized ribbon braids. In the genus-zero case, an operadic map out of $\PaRB$ is determined by an object together with morphisms representing a twist, a braiding, and an associator satisfying certain relations \cite{FresseBook1,Boavida-Horel-Robertson}. In the higher-genus setting we obtain the following analogue.

\begin{thm*}[Theorem\ref{thm: presentation for maps out of bS}]
    Let $\calQ$ be a modular operad in groupoids. A map of modular operads $f:\bS\longrightarrow \calQ$ is uniquely determined by a tuple
        \[ (\mathbf{m},\mathbf{t},\mathbf{b},\mathbf{a},\mathbf{s}) \]
consisting of an object $\mathbf{m}\in \ob(\calQ(0,3))$ 
and morphisms $\mathbf{t}$ (twist), $\mathbf{b}$ (braiding), $\mathbf{a}$ (associator), and $\mathbf{s}$ (genus-one switch), satisfying the standard genus-zero relations and three additional genus-one relations.
\end{thm*}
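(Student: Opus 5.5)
Since $\bS$ is built from mapping class groups of surfaces with parametrized boundary, the natural route is to exhibit $\bS$ as the modular operad in groupoids \emph{presented} by generators and relations, and then read off the universal property. Concretely, I would first show that the objects of $\bS$ are freely generated, as a modular operad in sets, by the single object $\mathbf{m}\in\ob\bS(0,3)$, the pair of pants. Every object of $\bS(g,n+1)$ is a pants decomposition with extra combinatorial data. Any such decomposition is obtained from copies of the pair of pants by operadic compositions (tree edges) and contractions (loop edges of the dual graph). Together with the uniqueness of the decomposition data, this shows that any map $f$ is determined on objects by $f(\mathbf{m})$, and that conversely any object of $\calQ(0,3)$ extends uniquely on objects.

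For morphisms, I would use the known presentation of the Hatcher--Thurston / Bakalov--Kirillov type complex of marked pants decompositions (the Lego--Teichm\"uller game). The morphisms of $\bS$, i.e.\ the fundamental groupoid of the moduli of such markings, are generated under composition, operadic composition, contraction and symmetric group actions by four elementary moves:
\begin{itemize}
\item the Dehn twist $\mathbf{t}$ along a boundary of the pants;
\item the half-twist braiding $\mathbf{b}$ in $\bS(0,3)$;
\item the associator/F-move $\mathbf{a}$ in $\bS(0,4)$;
\item the S-move $\mathbf{s}$ in $\bS(1,1)$.
\end{itemize}
The relations are generated by the genus-zero hexagon, pentagon and twist relations (which by Proposition~\ref{prop:PaRB-cyclic-is-S0} are exactly those of $\PaRB^{\mathrm{cyc}}$), plus genus-one relations of the form $\mathbf{s}^2=$ (braiding/twist), $(\mathbf{s}\mathbf{t})^3=\mathbf{s}^2$ in $\bS(1,1)$, and a relation in $\bS(1,2)$ mixing $\mathbf{a}$, $\mathbf{s}$ and $\mathbf{b}$. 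The key input is the simple connectivity of the (suitably decorated) pants complex, which gives that the groupoid is the fundamental groupoid of a 2-complex whose 2-cells are these relations, localized in genus $\le1$. I would cite Hatcher--Lochak--Schneps and Bakalov--Kirillov for this, together with the standard reduction that any 2-cell lives in a subsurface of type $(0,4),(0,5),(1,1),(1,2)$.

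Uniqueness then follows because $f$ must be compatible with operadic structure, so it is determined on generators. Existence requires defining $f$ on an arbitrary morphism by choosing a word in the generators and checking well-definedness. Here the relations in the presentation are imported via compositions and contractions into higher $(g,n)$, and must map to identities in $\calQ$ by the equivariance and associativity axioms of modular operads. The main obstacle is this last step: showing that the relations of the pants complex, which are stated ``locally'' on subsurfaces, are precisely the operadic extensions of the five listed relations. In particular, commutativity relations between moves on disjoint subsurfaces should follow automatically from the interchange law for compositions and contractions in $\calQ$. I would attack this by induction on $2g-2+n$, cutting along a curve disjoint from the support of a given 2-cell.
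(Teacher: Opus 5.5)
Your architecture matches the paper's: objects come from the free modular operad $\Upsilon$, morphisms are generated by $\tau,\beta,\alpha,\sigma$ through the complex of marked pants decompositions, genus zero is imported from $\PaRB^{\mathrm{cyc}}\cong\trun{0}\bS$, and well-definedness comes from simple connectivity.

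\textbf{The object-level step is wrong as stated.} $\ob\bS=\Upsilon$ is \emph{not} free on a single generator with free $\Sigma_2^+$-action. It is free on the $\Sigma_2^+$-set $\Upsilon(0,3)=\Sigma_2^+/A_2^+=\{P,(12)^*P\}$. A vertex decoration only records a cyclic order, so $P$ is fixed by $A_2^+=\langle(012)\rangle$. Equivariance of $f$ then forces $f(P)=f((012)^*P)=(012)^*f(P)$, that is, $(012)^*\mathbf{m}=\mathbf{m}$. This is relation \eqref{sym_rel_morph} in the paper's full statement, and it must be part of the data. Your claim that ``any object of $\calQ(0,3)$ extends uniquely on objects'' is therefore false: if $\mathbf{m}$ is not fixed by $(012)$, there is no equivariant extension on objects at all. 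Once you impose (S), the object-level extension is unique and equivariant, exactly as in the paper.

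\textbf{Three further points where your sketch is looser than the paper.}
\begin{enumerate}
\item The bridge from simple connectivity to a presentation is Lemma~\ref{lemma: the complexes of morphisms and markings}. It is the coslice $G\downarrow\bS(g,n+1)$, not $\bS(g,n+1)$ itself, that is isomorphic to the fundamental groupoid of $\calM(S_G)$. With that in hand, words in the generators composing to $\id_G$ are exactly the loops at the standard marking.
\item Citing Bakalov--Kirillov does not give you the right complex. Their $\Mmax$ has non-pants vertices, distinguished half-edges, $Z$- and $F$-moves, and a slightly different $(1,2)$-cell, none of which are morphisms of $\bS$. The paper instead proves in Appendix~\ref{app: complex of markings} that its pants-only complex $\calM(S)$ is connected and simply connected, by lifting over the Hatcher--Lochak--Schneps pants complex. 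The ingredients are: fibres built from $\prod_i\calM(S_i)$ modulo the $\Z^C$ twist action, which the gluing cells kill; lifts of $A$- and $S$-edges up to contractible squares; and lifts of the 3A, 5A, 3S, 6AS and C cells to the hexagon, pentagon, (G1.2), (G2) and commutativity cells.
\item Your induction on $2g-2+n$ is unnecessary. Each 2-cell is supported on a subsurface whose boundary curves lie in every pants decomposition around the cell. So at any of its vertices the corresponding object is a graph substitution containing the support's graph. The cell boundary is then directly a relabelled operadic composite or contraction, with identities, of one of the listed relations, and disjoint commutativity is just the interchange law. This is how the paper argues.
\end{enumerate}
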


The proof of this presentation theorem uses complexes of marked pants decompositions on surfaces. More precisely, we compare the groupoids $\bS(g,n+1)$ with the fundamental groupoids of CW complexes pants decompositions equipped with additional marking data (Definition~\ref{def: complex of markings}). These complexes refine the Hatcher--Thurston pants-decomposition complex \cite{Hatcher_Thurston} by adding marking data similar to that of \cite{bk_marked_surfaces}. Their connectedness and simple connectedness, proved in Appendix~\ref{app: complex of markings}, allow us to read off generators and relations for maps out of $\bS$.

Using the presentation theorem, we construct the action on $\widehat{\bS}$ from the compatible actions of Nakamura--Schneps on profinite mapping class groups. In \cite{Nakamura-Schneps}, Nakamura and Schneps introduced a subgroup $\NS$ of $\GT$ by imposing additional genus-one relations on elements of $\GT$. They proved that $\NS$ contains the image of $\galQ$, giving inclusions
    \[\galQ\hookrightarrow \NS\hookrightarrow \GT,\]
and it remains open whether either inclusion is strict \cite[Theorem~1.2]{Nakamura-Schneps}. They also constructed compatible actions of $\NS$ on profinite mapping class groups of surfaces equipped with additional structure, extending the genus-zero action of $\GT$ \cite[Theorem~1.4]{Nakamura-Schneps}. Our construction packages these actions operadically: the presentation theorem reduces the verification to the defining genus-zero and genus-one relations, and hence produces a faithful action of $\NS$ on $\widehat{\bS}$.

Restricting to genus zero recovers the usual Grothendieck--Teichm\"uller symmetry. More precisely, we obtain the following compatibility with the cyclic genus-zero model.

\begin{prop*}[Proposition~\ref{prop:genus-zero-GT}]
There is an isomorphism of profinite groups
    \[\GT\cong \operatorname{Aut}_0(\operatorname{tr}_{\leq 0}\widehat{\mathbf S})\]
where $\operatorname{Aut}_0(-)$ denotes automorphisms fixing the objects of the underlying groupoids.
\end{prop*}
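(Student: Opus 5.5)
We prove $\GT\cong \operatorname{Aut}_0(\operatorname{tr}_{\leq 0}\widehat{\mathbf S})$ by reducing to the known genus-zero statement for parenthesized ribbon braids. By Proposition~\ref{prop:PaRB-cyclic-is-S0} we have an isomorphism of cyclic operads $\operatorname{tr}_{\leq 0}\mathbf S\cong \PaRB^{\mathrm{cyc}}$. First we check that truncation commutes with profinite completion, i.e.\ $\operatorname{tr}_{\leq 0}\widehat{\mathbf S}\cong \widehat{\PaRB^{\mathrm{cyc}}}$. This should be formal: both truncation and completion act levelwise on the groupoids $\mathbf S(0,n+1)$, and the compositions of genus-zero pieces only involve genus-zero pieces. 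Hence it suffices to show $\operatorname{Aut}_0(\widehat{\PaRB^{\mathrm{cyc}}})\cong\GT$.

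For this we use the genus-zero presentation. An object-fixing endomorphism of $\widehat{\PaRB^{\mathrm{cyc}}}$ is determined by the images of the generators, with the object $\mathbf m$ fixed:
\begin{itemize}
\item the twist $\mathbf t$, sent to $\mathbf t^{\lambda}$ for some $\lambda\in\widehat{\mathbb Z}$;
\item the braiding $\mathbf b$, sent to $\mathbf b^{\lambda}$ (forced, since the only arrows from the relevant object are of this form, up to the pure part);
\item the associator $\mathbf a$, sent to $\mathbf a\cdot f(x,y)$ with $f\in\widehat{F}_2'$.
\end{itemize}
The hexagon and pentagon relations among the generators translate exactly into Drinfeld's defining relations for $(\lambda,f)$ in Definition~\ref{defn:GT}. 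Invertibility of the endomorphism corresponds to invertibility in $\GT$. Composition of automorphisms matches the $\GT$ multiplication, because substituting generators composes in the standard way. This is the argument of \cite{Horel_profinite_groupoids,Boavida-Horel-Robertson} for $\widehat{\PaB}$, adapted to the ribbon setting.

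The cyclic structure and the twist need extra care; this is the main obstacle. We must show two things.
\begin{enumerate}
\item The twist's image is forced to be $\mathbf t^{\lambda}$ with the same $\lambda$ as the braiding. This follows from the ribbon relation $\mathbf b^2$-twist compatibility $\mathbf t_{12}=\mathbf b^2\,\mathbf t_1\mathbf t_2$ together with the fact that $\mathbf t$ generates a central $\widehat{\mathbb Z}$.
\item Compatibility with the cyclic action on $\PaRB^{\mathrm{cyc}}(n)$ imposes no further constraint on $(\lambda,f)$.
\end{enumerate}
For the second point, we would verify that the cyclic relation is a consequence of the hexagon, pentagon and twist relations for any element of $\GT$, in the spirit of the known extension of $\GT$-actions to framed or ribbon braids. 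To do this, we express the cyclic permutation action on the generator in $\PaRB^{\mathrm{cyc}}(2)$ via $\mathbf b$, $\mathbf t$ and $\mathbf a$, and check it using the relation $f(x,y)f(y,x)=1$.

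Finally we check that the resulting map $\GT\to\operatorname{Aut}_0$ is continuous, so that it is an isomorphism of profinite groups. Continuity is automatic, since the action is given by a formula on topological generators.
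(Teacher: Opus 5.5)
Your overall route is the paper's: identify $\trun{0}\widehat{\bS}$ with the completion of $\PaRB^{\mathrm{cyc}}$ via Proposition~\ref{prop:PaRB-cyclic-is-S0}, then use the identification of object-fixing endomorphisms of completed cyclic $\PaRB$ with $\underline{\GT}$, and pass to units. (Truncation commutes with completion because completion is entrywise.) The paper does not reprove that last identification. It cites \cite[Theorem~5.4]{robertson2025grothendieck}, and the non-cyclic ribbon case is already treated in \cite{Boavida-Horel-Robertson}, so nothing has to be adapted from $\PaB$. The step you call the main obstacle is exactly what the paper cites, and your sketch of it does not close it as written.

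\textbf{Images of the generators are not forced.} In the ribbon setting, $\Hom(\mathbf m,(12)^*\mathbf m)$ is a torsor for $\widehat{\PRB}_2\cong\widehat{\mathbb Z}^3$. Likewise $\Hom(\mathbf m\circ_1\mathbf m,\mathbf m\circ_2\mathbf m)$ is a torsor for $\widehat{\PRB}_3\cong\widehat{\F}_2\times\widehat{\mathbb Z}\times\widehat{\mathbb Z}^3$. So a priori the image of $\mathbf b$ carries extra input twists $T_1^{p}T_2^{q}$, and the image of $\mathbf a$ carries a central power and input twists.
\begin{itemize}
\item Relation \eqref{twist_rel_morph} only gives $\mathbf t\mapsto\mathbf t^{\lambda}$, with the same $\lambda$ as for $\mathbf b$, together with $p+q=0$.
\item The leftover factor $T_1^{p}T_2^{-p}$ is killed by the hexagons.
\item The extra components of the image of $\mathbf a$, and the condition $f\in\widehat{\F}_2'$, are killed by the pentagon.
\end{itemize}
The last two points are easiest to see after abelianising $\widehat{\PRB}_3$ and $\widehat{\PRB}_4$.

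\textbf{The cyclic check is in the wrong arity.} This is the more serious problem. Compatibility of $(01)^*$ with the image of $\beta$ is automatic. Use $(01)^*\mathbf b=\mathbf b^{-1}\cdot(\id\circ_2\mathbf t^{-1})$ and the twist--braiding relation $B_{1,2}B_{2,1}=T_0T_1^{-1}T_2^{-1}$: both sides agree for every $\lambda$, and neither $\mathbf a$ nor $f$ appears. So relation \eqref{I} cannot be what is being tested there. The only $f$-dependent condition is $(01)^*\mathbf a=(231)^*\mathbf a^{-1}$ in arity three. Relabelling and conjugating by $\mathbf a$ move the two curves underlying $f$, so by Lemma~\ref{lemma: conjugation by mapping classes} this condition should reduce to an identity of the form $f(D_x,D_y)=f(D_y,D_x)^{-1}$. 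That is where \eqref{I} is used.

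Without this arity-three verification, or the citation, forgetting the cyclic structure only gives an injection $\Aut_0(\widehat{\PaRB}^{\mathrm{cyc}})\hookrightarrow\GT$, not equality.
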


Thus the Grothendieck--Teichm\"uller action respects the operation of treating all boundary components of a genus-zero surface symmetrically, rather than distinguishing one boundary component as an output. This is compatible with Kontsevich's discussion of the framed genus-zero Teichm\"uller tower and its higher-genus modular-operadic analogue \cite{KontBourbaki}, and is closely related to the work of the second author and Singh on cyclic parenthesized ribbon braids and framed tangles \cite{robertson2025grothendieck}.

The presentation theorem also gives an operadic form of Grothendieck's \emph{two-level principle}: the full tower should be generated from its genus-zero and genus-one parts. In the present setting, the generators and defining relations occur already in the levels corresponding to
    \[\Gamma_{0,4},\qquad \Gamma_{0,5},\qquad \Gamma_{1,1},\qquad \Gamma_{1,2}.\]
Equivalently, if $\trun{1}\bS$ denotes the genus-one truncation of $\bS$, defined in Appendix~\ref{subsec: truncations}, then $\bS$ is freely recovered from this truncation.

\begin{cor*}[Corollary~\ref{cor: maps out of bS}]
There is an isomorphism of modular operads
    \[\bS\cong (\trun{1})_!(\trun{1}\bS),\]
where $(\trun{1})_!$ denotes the left adjoint to the genus-one truncation functor.
\end{cor*}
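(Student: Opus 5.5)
We prove the isomorphism by showing that $\bS$ and $(\trun{1})_!(\trun{1}\bS)$ corepresent the same functor on modular operads in groupoids, and then invoke Yoneda. The canonical comparison map is the counit $\varepsilon\colon (\trun{1})_!(\trun{1}\bS)\to \bS$ of the adjunction $(\trun{1})_!\dashv \trun{1}$. It suffices to show that for every modular operad $\calQ$ in groupoids, precomposition with $\varepsilon$ gives a bijection
\[
\Hom(\bS,\calQ)\longrightarrow \Hom\bigl((\trun{1})_!(\trun{1}\bS),\calQ\bigr)\cong \Hom(\trun{1}\bS,\trun{1}\calQ),
\]
where the last identification is the adjunction. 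Concretely, this composite sends $f$ to its restriction $\trun{1}f$. Since $\varepsilon$ is a map in the category of modular operads in groupoids, a natural bijection of corepresented functors forces $\varepsilon$ to be an isomorphism, not merely an equivalence.

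The argument for bijectivity runs the presentation theorem (Theorem~\ref{thm: presentation for maps out of bS}) twice.

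\emph{Injectivity.} A map $f\colon\bS\to\calQ$ is uniquely determined by the tuple $(\mathbf m,\mathbf t,\mathbf b,\mathbf a,\mathbf s)$. All of these data live in arities of genus at most one:
\begin{itemize}
\item $\mathbf m\in\ob\calQ(0,3)$;
\item $\mathbf t,\mathbf b,\mathbf a$ are morphisms in $\calQ(0,2)$, $\calQ(0,3)$ and $\calQ(0,4)$ (or the appropriate genus-zero arities);
\item $\mathbf s$ is a morphism in $\calQ(1,1)$.
\end{itemize}
So the tuple is read off from $\trun{1}f$, and $f\mapsto \trun{1}f$ is injective.

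\emph{Surjectivity.} Start from a map $g\colon\trun{1}\bS\to\trun{1}\calQ$ of genus-one truncated operads, and let $(\mathbf m,\mathbf t,\mathbf b,\mathbf a,\mathbf s)$ be the images under $g$ of the universal tuple in $\bS$. We must check that this tuple satisfies the relations required by the presentation theorem.
\begin{itemize}
\item The genus-zero relations (pentagon, hexagons, twist compatibilities) are equalities of composites in the arities of $\Gamma_{0,4}$ and $\Gamma_{0,5}$.
\item The three genus-one relations hold in the arities of $\Gamma_{1,1}$ and $\Gamma_{1,2}$.
\item All of these relations hold for the universal tuple in $\bS$, and the composites involved use only partial compositions and contractions whose inputs and outputs stay in genus $\le 1$. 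Such composites are part of the structure of $\trun{1}\bS$, so $g$ preserves them.
\end{itemize}
The presentation theorem then produces $f\colon\bS\to\calQ$ realizing this tuple. Finally, $\trun{1}f$ and $g$ agree on the generating tuple, and $\trun{1}\bS$ is generated under genus $\le 1$ operations by this tuple, so $\trun{1}f=g$.

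The main obstacle is exactly this last generation statement, together with the claim that every relation in the presentation is expressible inside the truncation. A contraction can raise genus, so we must confirm that the operations used to build $\trun{1}\bS$ from $(\mathbf m,\dots,\mathbf s)$ never pass through genus $\ge2$. We would handle this by applying the presentation argument directly to truncated operads. The marked pants complexes for surfaces of genus $\le1$ are connected and simply connected (Appendix~\ref{app: complex of markings}), so the proof of the presentation theorem restricts verbatim and yields the same presentation for maps $\trun{1}\bS\to\mathcal R$ into any genus-one truncated operad $\mathcal R$. With that truncated presentation in hand, both the surjectivity and the agreement $\trun{1}f=g$ follow by uniqueness.
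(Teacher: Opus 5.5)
Your argument is correct and follows the paper's proof: Yoneda applied to the counit $(\trun{1})_!(\trun{1}\bS)\to\bS$, with the restriction bijection $\Hom(\bS,\calQ)\cong\Hom(\trun{1}\bS,\trun{1}\calQ)$ coming from Theorem~\ref{thm: presentation for maps out of bS}, and you usefully make explicit a step the paper leaves implicit, namely that a truncated map $g$ is recovered from its tuple. That step needs only the generation statement of Proposition~\ref{prop:S-generated-by-standard-moves} in genus $\le 1$, not a full truncated presentation with simple connectivity: genus never decreases under compositions and contractions, so no intermediate stage of such an expression leaves the truncation.
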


\medskip

In the final part of the paper, we pass from profinite groupoids to profinite spaces in order to compare our construction with the \'etale homotopy types of moduli spaces. This gives a homotopical refinement of the Teichm\"uller tower: it retains the \'etale fundamental groups while also remembering higher homotopy information. We use the modern $\infty$-categorical formulation of \'etale homotopy types as shapes of \'etale $\infty$-topoi, together with their profinite refinements. This perspective is compatible with the classical constructions of Artin--Mazur \cite{Artin_Mazur} and Friedlander \cite{Friedlander}; see also \cite[Definition~7.1.6.1 and Example~7.1.6.9]{Lurie}, \cite{Carchedi}, and \cite{haine2024profinite}.

Oda \cite[Theorem 1]{Oda} gives a concrete description of the profinite \'etale homotopy types of the moduli spaces $\mathcal M_g^n$. As recalled in Example~\ref{example: Oda}, for $2g-2+n>0$ there is an equivalence in profinite spaces
\[
\Pi^{\acute et}_\infty(\mathcal M_g^n\otimes_{\mathbb Z}\overline{\mathbb Q})
\simeq
\widehat{\cs\Gamma_g^n}.
\]
Thus, in this case, the \'etale homotopy type is identified with the profinite completion of the classifying space of a mapping class group.

For our topological model of the Teichm\"uller tower, we work with boundary mapping class groups $\Gamma_{g,n}$. On the algebro-geometric side, this corresponds to replacing marked points by marked points equipped with non-zero tangent vectors. We denote by $\mathcal M_{g,n}^{\tan}$ the moduli stack of smooth curves with $n$ ordered marked points equipped with non-zero tangent vectors. The corresponding form of Oda's comparison identifies its profinite \'etale homotopy type with the classifying space of the completed boundary mapping class group:
\[
\Pi^{\acute et}_\infty(\mathcal M_{g,n}^{\tan}\otimes_{\mathbb Z}\overline{\mathbb Q})
\simeq
\widehat{\cs\Gamma_{g,n}}.
\]
We recall this comparison in Lemma~\ref{lemma:tangent-moduli-comparison}. Using the equivalence $\cs\mathbf S(g,n+1)\simeq \cs\Gamma_{g,n+1}$, this identifies $\widehat{\cs\mathbf S}(g,n+1)$ with the corresponding \'etale homotopy type of $\mathcal M_{g,n+1}^{\tan}$.

It is not formal, however, that these profinite spaces assemble into a modular operad. The classifying space functor preserves products, so the collection of spaces $\cs\bS(g,n+1)$ inherit a strict modular operad structure from $\bS$. After profinite completion, this argument breaks down because profinite completion of spaces rarely preserves products \cite[Proposition~3.9]{Boavida-Horel-Robertson}. This is a problem because the composition laws in a modular operad are encoded by products over the vertices of graphs (Section~\ref{sec: modular operads}).

To address this, we use modular $\infty$-operads, a homotopy-coherent version of modular operads. More precisely, we use a genus-graded version of the Segal model developed by Hackney, the second author, and Yau in \cite{hry1}. In this model, a modular $\infty$-operad is a contravariant functor on the modular dendroidal category of genus-graded graphs satisfying Segal equivalences over the vertices of each graph. The required compatibility with products is thus imposed up to homotopy, rather than as a strict product-preservation statement.

Applying this formalism to the profinite classifying spaces of $\bS$, we obtain a modular $\infty$-operad in profinite spaces whose values on corollas identify with the \'etale homotopy types of the moduli stacks $\mathcal M_{g,n+1}^{\tan}$. Its values on more general graphs encode the corresponding gluing data through the modular Segal conditions. After applying classifying spaces and passing to derived automorphisms, the strict $\NS$-action on $\widehat{\bS}$ induces an action in the homotopy category of modular $\infty$-operads in profinite spaces.

\begin{thm*}[Theorem~\ref{NS action on topological operad}]
The profinite classifying spaces $\widehat{\cs(\bS(g,n+1))}$, for $2g+n\geq1$, assemble into a modular $\infty$-operad $\widehat{\cs\nerve\bS}$ in profinite spaces. The action of $\NS$ on $\widehat{\bS}$ induces a faithful action of $\NS$
on $\widehat{\cs\bS}$ in the homotopy category, via an explicit injective homomorphism
    \[\NS\longrightarrow \pi_0\mathbb R\operatorname{Aut}(\widehat{\cs\bS}).\]
\end{thm*}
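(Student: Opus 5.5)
Since $\widehat{\cs\nerve\bS}$ is built as a functor on the modular dendroidal category, the plan has two parts: first check the modular Segal conditions up to homotopy, then transfer the strict $\NS$-action and prove it stays faithful.

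\emph{Step 1: the modular $\infty$-operad.} The nerve $\nerve\bS$ of the strict modular operad $\bS$ in groupoids is a functor on the genus-graded modular dendroidal category $\Omega^{\mathrm{op}}$. On a graph $G$ it takes the value $\prod_{v\in V(G)}\bS(g_v,n_v)$, so it is strictly Segal. Applying $\cs$, which preserves finite products, gives a strict Segal object $\cs\nerve\bS$ in spaces. Now apply profinite completion levelwise, producing $\widehat{\cs\nerve\bS}$. Each Segal map becomes
\[
\widehat{\cs\textstyle\prod_v \bS(g_v,n_v)}\longrightarrow \textstyle\prod_v\widehat{\cs\bS(g_v,n_v)}.
\]
It suffices to show this is an equivalence of profinite spaces. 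Since $\cs\bS(g,n)\simeq \cs\Gamma_{g,n}$, the issue is whether profinite completion commutes with finite products of classifying spaces of mapping class groups. This is the main obstacle. My first attempt would invoke the standard criterion, as in \cite{Boavida-Horel-Robertson}: completion preserves products of $K(\pi,1)$'s when each $\pi$ is \emph{good} in Serre's sense and has finitely generated profinite completion. For then $\widehat{\cs\pi}\simeq \cs\widehat{\pi}$ and $\widehat{\pi\times\pi'}=\widehat\pi\times\widehat{\pi'}$. Goodness of the $\Gamma_{g,n}$ is known in low genus, and in general would follow from the boundary–punctured comparison together with the results cited in Lemma~\ref{lemma:tangent-moduli-comparison}. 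That lemma identifies the completed spaces with étale homotopy types, which are $K(\widehat{\Gamma}_{g,n},1)$. If goodness is unavailable in general, the fallback is to define the modular $\infty$-operad as the Segal (fibrant) replacement of $\widehat{\cs\nerve\bS}$ in the model structure of \cite{hry1}, adapted to profinite spaces. Its corolla values are unchanged, so the identification with $\mathcal M^{\tan}_{g,n+1}$ still holds.

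\emph{Step 2: the action.} Theorem~\ref{main theorem NS action} provides $\NS\to\Aut(\widehat{\bS})$. Each automorphism $\phi$ of the profinite modular operad $\widehat\bS$ induces an automorphism of $\nerve\widehat\bS$ and hence of $\cs\nerve\widehat\bS$. Precomposing with the natural comparison $\widehat{\cs\nerve\bS}\to \cs\nerve\widehat{\bS}$, which is a levelwise weak equivalence by Step~1, gives a class in $\pi_0\mathbb R\Aut(\widehat{\cs\bS})$. Functoriality of $\cs\nerve$ makes the resulting map $\NS\to\pi_0\mathbb R\Aut(\widehat{\cs\bS})$ a group homomorphism.

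\emph{Step 3: injectivity.} Suppose $\sigma\in\NS$ acts trivially in the homotopy category. Evaluating at the corolla with $(g,n+1)=(0,3)$ and at the boundary-gluing morphisms, take $\pi_1$ at the base object $\mathbf m$. This recovers the automorphism of $\widehat{\bS}(0,4)$ and $\widehat{\bS}(0,5)$ determined by $\sigma$, up to conjugation, which is the only ambiguity homotopy introduces. Through the genus-zero truncation and Proposition~\ref{prop:genus-zero-GT}, the induced outer action on $\widehat{\mathsf F}_2\subset\widehat{\PB}_3$ determines $\sigma$. A trivial outer action forces $\lambda=1$, and then $f$ is inner-trivial modulo the known centralizer computation, so $f=1$ as in Drinfeld's faithfulness argument. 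Hence the map is injective.
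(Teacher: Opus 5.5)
Step~1 has a genuine gap, and the whole first assertion rests on it. Goodness of $\Gamma_{g,n+1}$ is not available in arbitrary genus. The paper has it only for $g\le 1$, which is exactly why Proposition~\ref{prop: genus-one B N and completion commute} is restricted to the genus-one truncation. So the criterion of \cite[Proposition~3.9]{Boavida-Horel-Robertson} does not cover graphs with vertices of higher genus.

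Your suggestion that goodness would follow from Lemma~\ref{lemma:tangent-moduli-comparison} is circular. That lemma, like Oda's theorem, identifies the \'etale homotopy type with the completed space $\widehat{\cs\Gamma}_{g,n+1}$, not with $\cs\widehat{\Gamma}_{g,n+1}$. Weak equivalences of profinite spaces are detected by $\pi_0$, $\pi_1$ and twisted cohomology with finite coefficients. Hence saying that this space is a profinite $K(\widehat{\Gamma}_{g,n+1},1)$ is equivalent to goodness of $\Gamma_{g,n+1}$.

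The fallback fails too. There is no reason for corolla values to survive a Segal (fibrant) replacement: when the Segal maps are not already equivalences, the replacement generally modifies the corollas, which must absorb the composites coming from larger graphs. In any case, the statement asserts that the entrywise completion itself is Segal.

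The missing idea is to avoid goodness altogether. The paper proceeds in three steps:
\begin{itemize}
\item It proves that the \'etale homotopy types of the stacks $\mathcal M^{\tan}_{g,n+1}$ are admissible prospaces (Lemma~\ref{lemma:tangent-moduli-admissible}). The stack is the geometric realisation of the \v{C}ech nerve of a finite-type affine atlas whose terms are qcqs schemes, and Haine's class $\mathcal H$ is closed under geometric realisations.
\item It applies Haine's theorem that profinite completion commutes with finite products of admissible prospaces (Proposition~\ref{prop:haine-product}).
\item It transports this to $\prod_i\cs\Gamma_{g_i,n_i+1}$ via Lemma~\ref{lemma:tangent-moduli-comparison}.
\end{itemize}
This is Proposition~\ref{prop:haine-boundary-product}, which yields the Segal condition in every genus (Lemma~\ref{lemma:teichmuller-tower-is-modular-infty-operad}).

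The same problem reappears in Step~2. On the corolla $C_{(g,n+1)}$, your comparison $\widehat{\cs\nerve\bS}\to\cs\nerve\widehat{\bS}$ is the map $\widehat{\cs\Gamma}_{g,n+1}\to\cs\widehat{\Gamma}_{g,n+1}$. This is a weak equivalence precisely when $\Gamma_{g,n+1}$ is good, and the Segal property from Step~1 says nothing about it. So ``levelwise weak equivalence by Step~1'' is unjustified beyond the genera where goodness is known, and you do not yet have self-maps of $\widehat{\cs\nerve\bS}$ in higher genus. The paper uses this comparison only in genus at most one (Lemma~\ref{lemma: B of good groupoids commutes with hat} and Proposition~\ref{prop: genus-one NS action on spaces}). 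It then passes from $\trun{1}\widehat{\cs\nerve\bS}$ to the whole tower through the two-level statement, Proposition~\ref{prop: two-level profinite space model}.

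Your Step~3, by contrast, is a reasonable alternative to the paper's appeal to genus-zero faithfulness (Corollary~\ref{cor: homotopy NS action}, resting on \cite[Theorem~8.4]{Boavida-Horel-Robertson}). Since genus-zero mapping class groups are good, a homotopically trivial automorphism restricts at a corolla of type $(0,4)$ to an inner automorphism of $\widehat{\Gamma}_{0,4}$. This forces $\lambda=1$ and $f=x^by^a$ for some $a,b\in\widehat{\mathbb Z}$. Three points need fixing:
\begin{itemize}
\item The corolla of type $(0,3)$ only detects $\lambda$, so you must work at type $(0,4)$.
\item You still need the relations \eqref{I} and \eqref{II} to exclude $f=x^by^a\neq 1$.
\item The argument presupposes the homomorphism that Step~2 has not yet produced.
\end{itemize}
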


Restricting to genus zero recovers the isomorphism:
\[
\GT\cong \pi_0\mathbb R\operatorname{Aut}
(\trun{0}\widehat{\cs\nerve\bS}).
\]

\subsection{Acknowledgments}
The authors would like to thank Pedro Boavida de Brito, Philip Hackney, Peter Hain, Geoffroy Horel, and Jan Steinebrunner for many useful conversations and comments.  M.R. recognizes the support of Australian Research Council Future Fellowship FT210100256. L.B.B. is grateful to the Max Planck Institute for Mathematics in Bonn for its hospitality and financial support.

\tableofcontents
\section{Preliminaries}
Throughout, we will write $\bE=(\bE, \otimes, 1_{\bE})$ to denote a cocomplete, cartesian, symmetric monoidal category such that the monoidal product $\otimes$ commutes with small colimits in each variable. In this paper, $\bE$ will be (completed versions of) the category of sets, groupoids, or simplicial sets. All of our categories are \emph{simplicial} categories, meaning that they are tensored and co-tensored over the category of simplicial sets. In particular, this means that one can replace the set of maps $\Hom_{\bE}(X,Y)$ by a space\footnote{We follow a common abuse of terminology and often use the term \emph{space} when referring to simplicial sets.} of maps $\Map_{\bE}(X,Y)$, for any $X,Y$ in $\bE$. To reduce clutter, we will often suppress the category $\bE$ from our notation and simply write $\Hom(X,Y)$ or $\Map(X,Y)$ when no confusion can arise. 

\medskip 

In order to discuss homotopy classes of maps between (modular and cyclic) operads, we will make use of the language of Quillen model categories and $\infty$-categories. For Quillen model categories, we take \cite{hirsch} or \cite{balchin} as comprehensive references.  We will use the formalism of relative categories from \cite{barwick_kan} or \cite[Chapter 13]{balchin} as our model for $\infty$-categories.

\begin{definition}\label{def: relative category}
A \emph{relative category} is a pair $(\bE, \mathscr{W})$ where $\bE$ is a small category and $\mathscr{W}$ is a wide subcategory of $\bE$ whose morphisms are a class of designated \emph{weak equivalences} in $\bE$.   A functor of relative categories $F:(\bE,\mathscr{W})\rightarrow(\bE',\mathscr{W}')$ consists of a functor $F:\bE\rightarrow\bE'$ such that $F(\mathscr{W})\subset \mathscr{W}'$.
\end{definition}

\begin{example}
Any model category $\bE$ is a relative category in which $\mathscr{W}$ is the class of weak equivalences in the model structure on $\bE$.    
\end{example}

The \emph{homotopy category} of a relative category $\bE$, denoted $\Ho(\bE)$, is obtained by formally inverting the morphisms in $\mathscr{W}$.  To describe maps in the homotopy category $\Ho(\bE)$ we use the Dwyer-Kan enhancement of the homotopy category. This is a simplicially enriched category, $\mathcal{L}(\bE,\mathscr{W})$, which has the same objects as $\bE$ and, for any two objects $X,Y\in\ob(\bE)$, a space of maps $\mathbb{R}\Map(X,Y)$.\footnote{Here, the $\mathbb{R}$ is to indicate that we are taking the ``right derived'' version of the $\Map(-,-)$ functor. This is also commonly denoted $\Map^h(X,Y)$ in the literature, e.g. \cite{hirsch}.} By considering $\bE$ to be a discrete simplicial category, one can define a natural inclusion $\bE\rightarrow\mathcal{L}(\bE, \mathscr{W})$, with the property that, for every $X,Y\in\ob(\bE)$, \[\Hom_{\Ho(\bE)}(X,Y) =\pi_0\mathbb{R}\Map_{\bE}(X,Y).\] The fact that the connected components of $\mathbb{R}\Map_{\bE}(X,Y)$ correspond to morphisms in $\bE$ reflects that the simplicial category $\mathcal{L}(\bE,\mathscr{W})$ captures the higher homotopical data of morphisms in $\bE$. In particular, for any weak equivalence $X\rightarrow X'$ and any $Y\in\ob(\bE)$ in $\bE$ the induced maps
\[\begin{tikzcd} \mathbb{R}\Map_{\bE}(X',Y)\arrow[r]& \mathbb{R}\Map_{\bE}(X,Y)\end{tikzcd} \quad \text{and} \quad \begin{tikzcd} \mathbb{R}\Map_{\bE}(Y,X)\arrow[r]& \mathbb{R}\Map_{\bE}(Y,X')\end{tikzcd}\] are weak equivalences of spaces.  

In general, it is difficult to compute the derived mapping spaces $\mathbb{R}\Map_{\bE}(X,Y)$. However, if $\bE$ admits the structure of a simplicial model category (\cite[Chapter 9]{hirsch}), we have the following useful fact (\cite[Corollary 4.7]{dk}):

\begin{thm}\label{thm: identifying mapping spaces in a simplicial model category}
Let $\bE$ be a simplicial model category. If $X$ is cofibrant and $Y$ is fibrant in $\bE$, then there exists a natural zigzag of weak equivalences \[\Map_{\bE}(X,Y)\simeq \mathbb{R}\Map_{\bE}(X,Y).\]
\end{thm}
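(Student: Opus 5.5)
We will deduce this from the Dwyer--Kan description of the hammock localization, together with the standard facts about simplicial model categories. The plan has three steps.

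\textbf{Step 1: cosimplicial resolutions of $X$.} Since $\bE$ is simplicial, the assignment $[n]\mapsto X\otimes\Delta^n$ is a cosimplicial object $X^\bullet$. When $X$ is cofibrant, the pushout--product axiom (SM7) shows that $X^\bullet$ is Reedy cofibrant. Every coface and codegeneracy map is a weak equivalence, because $X\otimes -$ preserves weak equivalences between simplicial sets when $X$ is cofibrant. So $X^\bullet$ is a cosimplicial resolution of $X$ in the sense of \cite{hirsch}. By adjunction,
\[
\Hom_{\bE}(X\otimes\Delta^n,Y)\cong \Map_{\bE}(X,Y)_n .
\]
Hence $\Map_{\bE}(X,Y)$ is exactly the simplicial set $\Hom(X^\bullet,Y)$ attached to this resolution.

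\textbf{Step 2: resolutions compute the Dwyer--Kan mapping space.} We invoke the comparison of \cite{dk}: for a cosimplicial resolution $X^\bullet$ of a cofibrant $X$ and a fibrant $Y$, there is a natural zigzag of weak equivalences
\[
\Hom(X^\bullet,Y)\simeq \mathbb{R}\Map_{\bE}(X,Y).
\]
The route goes through the ``two-sided'' hammock description. One passes through the diagonal of the bisimplicial set $\Hom(X^\bullet,Y_\bullet)$, where $Y_\bullet$ is a simplicial resolution of $Y$ given by $Y^{\Delta^\bullet}$. Each side maps to this diagonal by a weak equivalence; this uses Reedy (co)fibrancy together with the fact that $\Hom(X^\bullet,-)$ sends acyclic fibrations between fibrant objects to acyclic Kan fibrations. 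Naturality holds because all resolutions are functorial in $X$ and $Y$.

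\textbf{Step 3: the main obstacle.} The substantive point is the identification of the hammock localization $\mathcal{L}(\bE,\mathscr{W})$ with this resolution model. We would simply cite \cite[Corollary 4.7]{dk}, or \cite[Chapter 17]{hirsch}, where homotopy function complexes are shown to be independent of the choice of resolution and equivalent to the hammock mapping spaces. Given that citation, Steps 1 and 2 assemble into the claimed natural zigzag.
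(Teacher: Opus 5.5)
Your proposal is correct and takes the same approach as the paper, which gives no argument of its own and simply cites \cite[Corollary~4.7]{dk}. Your Steps~1--2 are accurate context: they identify $\Map_{\bE}(X,Y)$ with $\Hom(X\otimes\Delta^\bullet,Y)$ for the cosimplicial resolution $X\otimes\Delta^\bullet$. However, the comparison with the hammock localization is exactly the content being cited, so these steps are not needed for the deduction.
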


Let $\mathscr{C}$ and $\mathscr{D}$ be two simplicial model categories. A simplicial Quillen adjunction between two simplicial model categories $
\begin{tikzcd} \mathscr{C}  \arrow[r, shift left =1, "\text{F}"]& \arrow[l, shift left =1, "\text{G}"]\mathscr{D} \end{tikzcd}$ gives rise to a homotopy adjunction \begin{equation}
\begin{tikzcd} \mathcal{L}(\mathscr{C},\mathscr{W}_{\mathscr{C}})  \arrow[r, shift left =1, "\mathbb{L}\text{F}"]& \arrow[l, shift left =1, "\mathbb{R}\text{G}"]\mathcal{L}(\mathscr{D},\mathscr{W}_{\mathscr{D}}). \end{tikzcd}
\end{equation} This means that there exists a weak equivalence of spaces \[\mathbb{R}\Map_{\mathscr{D}}(\mathbb{L}\text{F}(X),Y)\simeq \mathbb{R}\Map_{\mathscr{C}}(X,\mathbb{R}\text{G}(Y)),\] for all $X\in\mathscr{C}$ and $Y\in\mathscr{D}$. Here, the notion $\mathbb{L}$ and $\mathbb{R}$ denotes that we are using the left and right derived functors of $\text{F}$ and $\text{G}$, respectively. 

\subsection{Adjunctions between spaces and groupoids}\label{sec: classifying space functor}  Let $\sSet$ denote the category of simplicial sets with its usual Kan-Quillen simplicial model structure (e.g. \cite[Chapter 6.1]{balchin}) and let $\Grpd$ denote the category of groupoids equipped with the model structure in which weak equivalences are equivalences of categories, cofibrations are morphisms that are injective on objects, and fibrations are isofibrations (e.g. \cite{And}, \cite{Bous_model_cat_groupoid}).  Note that in this model structure, every object in $\Grpd$ is both fibrant and cofibrant.  

The functor which sends a space $X$ to its fundamental groupoid $\Pi_1(X)$, is the left adjoint in an adjunction \begin{equation}\label{classifying space adjunction}\begin{tikzcd} \Pi_{1}:\sSet  \arrow[r, shift left =1]& \arrow[l, shift left =1]\Grpd: \cs. \end{tikzcd}\end{equation} The right adjoint is called the \emph{classifying space} functor. The category of groupoids becomes a simplicial category via this adjunction. In particular, one defines the space of maps between two groupoids as \[\Map_{\Grpd}(\mathscr{C}, \mathscr{D}):=\Map_{\sSet}(\cs\mathscr{C}, \cs\mathscr{D}).\]  The classifying space functor preserves and reflects weak equivalences and fibrations which makes \eqref{classifying space adjunction} into a simplicial Quillen adjunction in which the right adjoint is \emph{homotopically fully faithful} in the sense that the induced map \[\begin{tikzcd}\mathbb{R}\Map_{\Grpd}(\mathscr{C},\mathscr{D})\arrow[r]& \mathbb{R}\Map_{\sSet}(\cs \mathscr{C},\cs \mathscr{D})\end{tikzcd}\] is a weak equivalence of spaces for all $\mathscr{C},\mathscr{D}\in\Grpd$. 

\subsection{Profinite completion of spaces and groupoids}
For any small category $\bE$, the category of pro-objects in $\bE$, denoted $\Pro(\bE)$, can be thought of as the category obtained by freely adding all co-filtered limits to $\bE$. More precisely, one can describe the \emph{category of pro-objects} in $\bE$, $\Pro(\bE)$, as the category whose objects are pairs $(I,X)$ where $I$ is a cofiltered category and $X=\{X_{i}\}_{i\in I}$ is a diagram $X:I\rightarrow \bE$. If $X:I\to\bE$ and $Y:J\to\bE$ are cofiltered diagrams, then morphisms in $\Pro(\bE)$ are defined by
\[
\Hom_{\Pro(\bE)}(\{X\}_{i}, \{Y\}_{j})
=\lim_{j\in J}\,\colim_{i\in I}\,\Hom_{\bE}(X_i,Y_j).
\]
Similarly, given a full subcategory $\mathscr{C}\subseteq \bE$, we write $\Pro_{\bE}(\mathscr{C})\subseteq \Pro(\bE)$ for the full subcategory spanned by those pro-objects $(I,X)$ such that $X_i\in \mathscr{C}$ for all $i\in I$; equivalently, $\Pro_{\bE}(\mathscr{C})\cong \Pro(\mathscr{C})$.

If the category $\bE$ is complete and $\mathscr{C}\subseteq\bE$ is a full subcategory closed under finite limits, we can define an adjunction \begin{equation}\label{profinite compeletion adjunction}
\begin{tikzcd} \widehat{(-)}:\bE\arrow[r, shift left =1] & \arrow[l, shift left =1] \Pro_{\bE}(\mathscr{C}): |-|. \end{tikzcd}
\end{equation} The left adjoint, $\widehat{(-)}$, is called the \emph{pro-$\mathscr{C}$-completion} functor. The right adjoint, $|-|$, sends a pro-object $\{X_i\}_{i\in I}$ to its limit in $\mathscr{E}$.  If, moreover, $\bE$ is a cofibrantly generated simplicial model category and $\mathscr{C}\subseteq \bE$ satisfies a specific list of conditions (e.g., the conditions of a \emph{fibration test category} in \cite[Definition 5.1]{BM20}) then $\Pro_{\bE}(\mathscr{C})$ admits a fibrantly generated model category structure and the adjunction \eqref{profinite compeletion adjunction} becomes a simplicial Quillen pair. 

\subsubsection{Profinite groupoids}
Let $\Gr$ denote the category of discrete groups and let $\Gr_{\mathrm{fin}}\subseteq \Gr$ be the full subcategory of finite groups. The pro-category
\[
\widehat{\Gr}:=\Pro_{\Gr}(\Gr_{\mathrm{fin}})
\] is the category of \emph{profinite groups}. There is an adjunction
\[
\begin{tikzcd}
\widehat{(-)}:\Gr \arrow[r, shift left=1] & \arrow[l, shift left=1] \widehat{\Gr}:|-|
\end{tikzcd}
\]
in which the right adjoint sends a pro-object $X=\{X_j\}_{j\in J}$ to its limit in $\Gr$.
The left adjoint is the classical profinite completion functor which sends a group $\mathsf G$ to the pro-object $\{\mathsf G/\mathsf N\}_{\mathsf N\triangleleft_f \mathsf G}$ indexed by the cofiltered poset of finite-index normal subgroups $\mathsf N\triangleleft_f \mathsf G$. Equivalently, one may identify $\widehat{\mathsf G}$ with the inverse limit
\[
\widehat{\mathsf G}:= \lim_{\mathsf N\triangleleft_f \mathsf G}\,\mathsf G/\mathsf N
\]
in $\Gr$.

\begin{example}\label{example: etale fundamental groups}
Let $X$ be a connected, locally Noetherian scheme and let
$\bar{x}\to X$ be a geometric point.  The \emph{$\acute etale$
fundamental group} $\pi_1^{\acute et}(X,\bar{x})$ may be defined as the
automorphism group of the fibre functor from finite $\acute etale$ covers of
$X$ to finite sets.  Equivalently, it is the inverse limit
\[
\pi_1^{\acute et}(X,\bar{x})
\cong
\lim_{(Y,y)\to (X,\bar{x})}\Aut_X(Y),
\]
where the limit ranges over connected finite $\acute etale$ Galois covers
$Y\to X$ equipped with a lift $y:\bar{x}\to Y$ of $\bar{x}$.  In
particular, $\pi_1^{\acute et}(X,\bar{x})$ is a profinite group. See \cite[\href{https://stacks.math.columbia.edu/tag/0BQ8}{0BQ8}]{stacks-project} or \cite[Expos\'e V]{grothendieck2003revetements} for more details. 
\end{example}

\begin{example}\label{example: mapping class groups}
Let $\calM_g^n$ denote the moduli stack of smooth genus $g$ curves with
$n$ marked points, defined over $\mathbb Q$.  For $2g-2+n>0$, the
geometric $\acute etale$ fundamental group of $\calM_g^n$ identifies,
after choosing a geometric base point, with the profinite completion of the
mapping class group of a genus $g$ surface with $n$ marked points fixed
pointwise:
\[
\pi_1^{\acute et}\bigl(\calM_g^n\otimes_{\mathbb Q}\overline{\mathbb Q}\bigr)
\cong \widehat{\Gamma_g^n}.
\]
See \cite{matsumoto2000arithmetic,boggi2009fundamental} for more details. 
%Later in this paper we will also work with mapping class groups of genus $g$ surfaces with $n$ boundary components, with each boundary component fixed pointwise. We denote this group by $\Gamma_{g,n}$ (see Definition~\ref{def: mapping class group}). Filling in each boundary component by a disc determines a natural surjection $\Gamma_{g,n}\twoheadrightarrow \Gamma_g^{n}$ whose kernel is generated by the Dehn twists about the boundary components. More precisely, there is a central short exact sequence \begin{equation}\label{eq:boundary_twist_extension} 1 \longrightarrow \mathbb{Z}^{n} \longrightarrow \Gamma_{g,n} \longrightarrow \Gamma_g^{n} \longrightarrow 1, \end{equation} where the map $\mathbb{Z}^{n}\to \Gamma_{g,n}$ sends the $i$th standard basis vector to the Dehn twist about the $i$th boundary component.
\end{example}

The profinite completion functor for groups admits a direct analogue for groupoids. We say that a
groupoid $\mathscr{E}$ is \emph{finite} if it has finitely many morphisms (hence finitely many objects),
and write $\Grpd_{\mathrm{fin}}\subseteq \Grpd$ for the full subcategory of finite groupoids. Since
$\Grpd_{\mathrm{fin}}$ is closed under finite limits, we define the category of \emph{profinite groupoids} by
\[
\widehat{\Grpd}:=\Pro_{\Grpd}(\Grpd_{\mathrm{fin}}).
\]
%Equivalently, the profinite completion of a groupoid $\mathscr{E}$ may be described as the cofiltered
%diagram of its finite quotients (obtained by quotienting by congruence relations so that the resulting
%groupoid has finitely many objects and morphisms). In particular, every finite groupoid is profinite,
%viewed as a constant pro-object.

The category of profinite groupoids $\widehat{\Grpd}$ admits a cocombinatorial model structure, and
the profinite completion functor fits into a Quillen adjunction
\[
\begin{tikzcd}
\widehat{(-)}:\Grpd \arrow[r, shift left=1] &
\arrow[l, shift left=1] \widehat{\Grpd}:|-|
\end{tikzcd}
\]
(see \cite[Theorem~4.12 and Proposition~4.22]{Horel_profinite_groupoids}), where $|-|$ denotes the
limit functor $\widehat{\Grpd}\to \Grpd$. 
The following proposition shows that, for groupoids with a finite set of objects, profinite completion
preserves binary products.

\begin{prop}\cite[Proposition~4.23]{Horel_profinite_groupoids}\label{prop: profinite completion preserves products of groupoids}
Let $\mathscr{E}$ and $\mathscr{D}$ be groupoids with a finite set of objects. Then the canonical map
\[
\widehat{\mathscr{E}\times \mathscr{D}}\longrightarrow
\widehat{\mathscr{E}}\times \widehat{\mathscr{D}}
\]
is an isomorphism in $\widehat{\Grpd}$.
\end{prop}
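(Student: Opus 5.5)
The plan is to reduce the statement to a computation with finite quotients, where the product is visible, and then pass to the limit using the universal property of pro-objects.

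\emph{Setup.} Write $\widehat{\mathscr{E}}$ as the cofiltered diagram $\{\mathscr{E}/\sim_i\}_{i\in I}$ of finite quotient groupoids of $\mathscr{E}$. Since $\mathscr{E}$ has finitely many objects, we may take these to be the quotients by normal subgroupoids of finite index that are the identity on objects. Such a normal subgroupoid is the same as a compatible family of finite-index normal subgroups $N_x\triangleleft \Aut(x)$, one for each object $x$. Do the same for $\mathscr{D}$, indexed by $J$.

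\emph{Step 1: the target as a pro-object.} The product $\widehat{\mathscr{E}}\times\widehat{\mathscr{D}}$ in $\Pro(\Grpd_{\mathrm{fin}})$ is represented by the diagram $\{\mathscr{E}_i\times \mathscr{D}_j\}_{(i,j)\in I\times J}$. This is indexed by the cofiltered category $I\times J$, and each term is a finite groupoid because finite groupoids are closed under finite products. So the task is to show that this family is cofinal among all finite quotients of $\mathscr{E}\times\mathscr{D}$. Concretely, every functor $q\colon \mathscr{E}\times\mathscr{D}\to F$ with $F$ finite should factor through some $\mathscr{E}_i\times\mathscr{D}_j$, and such a factorization should be unique in the colimit sense.

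\emph{Step 2: cofinality.} Given such a $q$, we may assume $q$ is full and surjective on objects, after replacing $F$ by its image. Let $K$ be the kernel subgroupoid of $q$.

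Restrict $q$ to the pieces $\mathscr{E}\times\{y\}$ for each object $y$ of $\mathscr{D}$. Each restriction $\mathscr{E}\to F$ has a finite-index kernel $K^{y}$ in each automorphism group $\Aut_{\mathscr{E}}(x)$. Intersect these over the finitely many objects $y$, and then conjugate across $\mathscr{E}$ to get normality; this is where finiteness of objects is used. The result is a finite-index normal subgroupoid $N\subseteq\mathscr{E}$. Construct $M\subseteq\mathscr{D}$ symmetrically.

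Now use that $\Aut(x,y)=\Aut(x)\times\Aut(y)$ and $(g,h)=(g,1)(1,h)$ to show that $N\times M\subseteq K$. Hence $q$ factors through $\mathscr{E}/N\times \mathscr{D}/M$.

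For uniqueness, note that both sides are quotients of $\mathscr{E}\times\mathscr{D}$ that are bijective on objects, so maps out of them are determined by their restriction. This gives a canonical factorization.

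\emph{Step 3: conclusion.} Steps 1 and 2 show that the canonical map induces a bijection on $\Hom(-,F)$ for every finite $F$. Since maps between pro-objects are computed as a limit of such Hom-sets over $F$, the canonical map is an isomorphism in $\widehat{\Grpd}$.

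\emph{Main obstacle.} I expect Step 2 to be the hardest part, specifically making $N$ a genuinely normal subgroupoid, that is, stable under conjugation by non-automorphism morphisms. For connected components this reduces to intersecting finitely many conjugates of a finite-index subgroup, which remains of finite index. The hypothesis of finitely many objects is exactly what makes the intersection over all $y$ finite; with infinitely many objects the intersection could fail to have finite index.
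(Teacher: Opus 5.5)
Your proof is correct. The paper gives no argument for this statement; it imports it verbatim from Horel. Your proposal is therefore a self-contained verification of the cited fact. It uses the same mechanism as the classical group statement $\widehat{G\times H}\cong\widehat{G}\times\widehat{H}$: every finite quotient of a product is dominated by a product of finite quotients, and uniqueness comes from the quotient maps being epimorphisms. What your version adds over the bare citation is that it shows exactly where the hypothesis is used. Finiteness of $\ob\mathscr{D}$ makes $N_x=\bigcap_y K^y_x$ a finite intersection, and symmetrically for $M$. Finiteness of $\ob\mathscr{E}$ makes each $\mathscr{E}/N$ a finite groupoid, which is what lets such quotients compute $\widehat{\mathscr{E}}$ at all.

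A few points to tighten.

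\emph{The anticipated obstacle is not one.} $K^y$ is the vertex kernel of the functor $q(-,y)\colon\mathscr{E}\to F$. For any $g\colon x\to x'$ and $k\in K^y_x$,
$q(gkg^{-1},\id_y)=q(g,\id_y)\,q(k,\id_y)\,q(g,\id_y)^{-1}=\id$.
So each $K^y$, and hence their intersection, is already normal, and no conjugation step is needed.

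\emph{Drop the reduction to $q$ full and surjective on objects.} This is not available in general. If $q$ identifies objects, its image need not be closed under composition, and passing to the generated subgroupoid does not make $q$ full. You never use the reduction, so simply remove it.

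\emph{Take $K$ to be the vertex kernel.} All you need is $N_x\times M_y\subseteq\ker(\Aut(x,y)\to\Aut(q(x,y)))$, so that $([g],[h])\mapsto q(g,h)$ is well defined.

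\emph{Justify the setup in one line.} For any $f\colon\mathscr{E}\to F$ with $F$ finite, the vertex kernel of $f$ is a finite-index normal subgroupoid, and $f$ factors through the corresponding quotient. The factorization is unique because the quotient maps are surjective on objects and morphisms. Hence these quotients form an initial subsystem of $\mathscr{E}\downarrow\Grpd_{\mathrm{fin}}$. The same surjectivity gives the injectivity half of your Step~3.
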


\subsubsection{Profinite spaces}\label{sec: profinite spaces}
A simplicial set $X$ is called \emph{$\pi$-finite} if $\pi_0(X)$ is finite and, for every basepoint
$x_0$, the groups $\pi_n(X,x_0)$ are finite for $n\ge 1$ and trivial for $n\gg 0$.
Let $\sSet_{\pi}\subseteq \sSet$ denote the full subcategory of $\pi$-finite spaces; it is closed under
finite limits. Using \cite[Theorem~5.2]{BM20}, we define the category of \emph{profinite spaces} to be
\[
\widehat{\sSet}:=\Pro_{\sSet}(\sSet_{\pi}).
\]
Quick \cite[Theorem~2.12]{Quick} equips $\widehat{\sSet}$ with a fibrantly generated model structure in
which cofibrations are monomorphisms and weak equivalences are detected by $\pi_0$, $\pi_1$, and
(twisted) cohomology with finite coefficients. The induced adjunction
\[
\begin{tikzcd}
\widehat{(-)}:\sSet \arrow[r, shift left=1] &
\arrow[l, shift left=1] \widehat{\sSet}:|-|
\end{tikzcd}
\]
is a Quillen adjunction \cite[Proposition~2.28]{Quick}, where $|-|$ denotes the limit functor.

The classifying space functor sends finite groupoids to $\pi$-finite simplicial sets. %Indeed, any finite groupoid is equivalent to a finite disjoint union of classifying groupoids of finite groups, so its classifying space is a finite disjoint union of spaces of the form $\cs\mathsf{G}$ with $\mathsf{G}$ finite and, in particular, it is $\pi$-finite. 
It follows that we have a simplicial Quillen adjunction
\[
\begin{tikzcd}
\widehat{\Pi}_1:\widehat{\sSet} \arrow[r, shift left=1] &
\arrow[l, shift left=1] \widehat{\Grpd} : \widehat{\cs}.
\end{tikzcd}
\]
Moreover, these functors fit into a commutative square of simplicial Quillen adjunctions
(e.g.\ \cite[Example~7.14]{BM20}):
\[
\begin{tikzcd}
\Grpd \arrow[d, shift right =1, "\widehat{(-)}"'] \arrow[r, shift right=1, "\cs"'] & \sSet \arrow[l, shift right=1, "\Pi_1"'] \arrow[d, shift left=1, "\widehat{(-)}"] \\
\widehat{\Grpd} \arrow[u, shift right =1, "|-|"'] \arrow[r, shift right=1, "\widehat{\cs}"'] & \widehat{\sSet} \arrow[l, shift right=1, "\widehat{\Pi}_1"'] \arrow[u, shift left=1, "|-|"]
\end{tikzcd}
\]

\begin{example}\label{example: Oda}
Artin--Mazur \cite{Artin_Mazur} and Friedlander \cite{Friedlander}
construct the $\acute etale$ homotopy type of a scheme as a pro-simplicial
set.  We write $\Pi^{\acute et}(X)$
for this pro-simplicial set, and write $\widehat{\Pi}^{\acute et}(X)$  for its profinite completion, viewed as a profinite space.  The same notation
will be used for stacks, using $\acute etale$ hypercovers.  In Section~\ref{sec: tower of etale homotopy types} we use the corresponding $\infty$-categorical formulation, where the $\acute etale$ homotopy type is viewed as a pro-object in the $\infty$-category of spaces.

Let $\calM_g^n$ denote the moduli stack of smooth genus $g$ curves with $n$ distinct ordered marked points, defined over $\mathbb Q$.  Oda considers the $\acute etale$ homotopy type of the geometric stack
$\calM_g^n\otimes_{\mathbb Q}\overline{\mathbb Q}$.  If $\Gamma_g^n$ denotes the mapping class group of a genus $g$ surface with $n$ marked points fixed pointwise, then for $2g-2+n>0$,  \cite[Theorem 1]{Oda} shows that there is a weak equivalence of profinite spaces
\[
\Pi^{\acute et}
  \bigl(\calM_g^n\otimes_{\mathbb Q}\overline{\mathbb Q}\bigr)
\simeq
\widehat{\cs\Gamma_g^n}.
\]
Equivalently, the profinite $\acute etale$ homotopy type is the profinite completion of the classifying space of the mapping class group $\Gamma_g^n$.
\end{example}

\subsection{Graphs} We model the operations of operads, cyclic operads, modular operads and their $\infty$-versions with decorated graphs. 

\begin{notation}
Let $\Sigma_n=\Aut(\{1,\ldots,n\})$ and $\Sigma_n^+=\Aut(\{0,1,\ldots,n\})$; then $\Sigma_n$ identifies naturally with the subgroup of $\Sigma_n^+$ consisting of permutations that fix $0$.
\end{notation}

\begin{definition}\label{def: graph}
A \emph{graph} $G$ consists of a (possibly empty) finite set of \emph{vertices} $V_G$, a finite set of \emph{half-edges} $A_G$,\footnote{Half-edges are also called ``arcs'' in the literature, hence the use of the letter $A$ for the set of half-edges.} together with gluing data encoded by a diagram of finite sets
\[
\begin{tikzcd}
\arrow[loop left]{l}{\dagger} A_G & D_G \arrow[l,hook'] \arrow[r,"t"] & V_G .
\end{tikzcd}
\] Here $\dagger:A_G\to A_G$ is a fixed-point-free involution, and $D_G\subseteq A_G$ is the subset of half-edges adjacent to vertices, with $t:D_G\to V_G$ recording the incident vertex.
\end{definition}

An \emph{edge} of $G$ is an orbit of $\dagger$, written $e=[a,a^\dagger]$, and we write $E_G=\{[a,a^\dagger]\mid a\in A_G\}$ for the set of edges. For $v\in V_G$, the \emph{neighborhood} of $v$ is the set of incident half-edges $\nb(v):=t^{-1}(v)\subseteq D_G$; its elements are said to be \emph{adjacent} to $v$. An edge $e=[a,a^\dagger]$ is \emph{internal} if $a,a^\dagger\in D_G$; we write $iE_G\subseteq E_G$ for the set of internal edges. The remaining half-edges
\[
\partial(G):=A_G\setminus D_G
\] are called \emph{legs}, and $\partial(G)$ is called the \emph{boundary} of $G$.

\begin{example}
\begin{enumerate}
    \item The graph consisting of a single \emph{edge} and no vertices is denoted $\updownarrow$. Concretely, it has $V_{\updownarrow}=\varnothing$, $D_{\updownarrow}=\varnothing$, and $A_{\updownarrow}=\{a,a^{\dagger}\}$ with $\dagger(a)=a^{\dagger}$.

    \item The $(n+1)$st \emph{corolla}, denoted $C_{n+1}$, is the graph with a single vertex $v$ and no internal edges. It has $n+1$ half-edges adjacent to $v$, and each of these is paired under $\dagger$ with a boundary half-edge. These boundary half-edges are the $n+1$ legs of the corolla.
\end{enumerate}
Many additional examples of graphs can be found in \cite{hry1}, \cite{hry_modular_nerve}, and in the survey paper \cite{hackney2022segal}.
\end{example}

A graph $H$ is a \emph{subgraph} of a graph $G$ if there is a morphism of the defining diagrams
\[
\begin{tikzcd}
\arrow[loop left]{l}{\dagger}A_{H}\arrow[d] & D_{H}\arrow[l,hook']\arrow[d] \arrow[r, "t"] & V_{H}\arrow[d]\\
\arrow[loop left]{l}{\dagger}A_{G}          & D_{G}\arrow[l,hook']          \arrow[r, "t"] & V_{G}
\end{tikzcd}
\]
such that the square
\[
\begin{tikzcd}
D_H \arrow[r]\arrow[d,"t"'] & D_G \arrow[d,"t"]\\
V_H \arrow[r] & V_G
\end{tikzcd}
\]
is a pullback and the induced map $V_H\to V_G$ is injective (cf.\ \cite[Definition~1.13]{hry1}).

\begin{example}\label{example: corolla at a vertex}
If $G$ has at least one vertex and $v\in V_G$, there is a canonical subgraph $C_v\hookrightarrow G$, called the \emph{corolla at $v$} (cf.\ \cite[Definition~1.5]{hry1}). It has a single vertex $\{v\}$, and its adjacent half-edges are exactly the elements of $\nb(v)$. Each such half-edge is paired with a boundary half-edge in $C_v$, so the legs of $C_v$ are naturally in bijection with $\nb(v)$.
\end{example}

For graphs with at least one vertex, the gluing data of $G$ can be assembled into a coequalizer diagram (cf.\ \cite[Construction~1.18]{hry1}).

\begin{lemma}\label{lem:coequalizer-graph}
Let $\mathrm{Fin}$ denote the category of finite sets, and let $\mathscr{I}$ be the category with three objects and three generating morphisms of the shape
\[
\begin{tikzcd}
\arrow[loop left] \bullet & \bullet \arrow[l, hook'] \arrow[r] & \bullet .
\end{tikzcd}
\]
A graph $G$ with at least one vertex, viewed as an object of $\mathrm{Fin}^{\mathscr I}$, is the coequalizer
\[
\begin{tikzcd}
\coprod_{e\in iE_G}\updownarrow \arrow[r, shift left=1.2] \arrow[r, shift right=1.2] & \coprod_{v\in V_G} C_v \arrow[r] & G .
\end{tikzcd}
\]
The two arrows send each internal edge to the two corresponding boundary legs of the corollas at its incident vertices.
\end{lemma}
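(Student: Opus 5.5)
The plan is to verify the universal property of the coequalizer directly in $\mathrm{Fin}^{\mathscr I}$, where colimits are computed objectwise. So it suffices to compute the coequalizer separately in each of the three components: vertices, adjacent half-edges, and half-edges. We then check that the induced involution and incidence maps agree with $\dagger$ and $t$ of $G$.

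\textbf{Step 1: the two parallel maps.} Make the data explicit. For $v\in V_G$, the corolla $C_v$ has $V_{C_v}=\{v\}$ and $D_{C_v}=\nb(v)$. Its half-edge set is $A_{C_v}=\nb(v)\sqcup \nb(v)^{\dagger'}$, where $a'$ denotes the boundary leg paired with $a\in\nb(v)$. For an internal edge $e=[a,a^\dagger]$ with $a\in\nb(v)$ and $a^\dagger\in\nb(w)$, the copy $\updownarrow_e$ has $A=\{x_e,x_e^\dagger\}$ and no vertices. The two maps are the unique maps of $\mathscr I$-diagrams given as follows:
\begin{itemize}
\item the first sends $x_e\mapsto a'\in A_{C_v}$ and $x_e^\dagger\mapsto a\in A_{C_v}$;
\item the second sends $x_e\mapsto a^\dagger\in A_{C_w}$ and $x_e^\dagger\mapsto (a^\dagger)'\in A_{C_w}$.
\end{itemize}
These are morphisms in $\mathrm{Fin}^{\mathscr I}$ because they commute with the involutions, and the $D$ and $V$ components are empty. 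One must fix an orientation of each internal edge to define them; different choices give isomorphic coequalizers.

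\textbf{Step 2: componentwise coequalizers.} On $V$ and $D$ the source is empty, so the coequalizer is $\coprod_v\{v\}=V_G$ and $\coprod_v\nb(v)=D_G$, using that $t$ partitions $D_G$ into the neighborhoods. On $A$, the coequalizer of $\coprod_e\{x_e,x_e^\dagger\}\rightrightarrows\coprod_v A_{C_v}$ identifies $a'\sim a^\dagger$ and $a\sim (a^\dagger)'$ for each internal edge.

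Define a map $\coprod_v A_{C_v}\to A_G$ by sending $a\mapsto a$ and $a'\mapsto a^\dagger$. This is surjective. Every $b\in D_G$ is hit. Every leg $b\in\partial(G)$ satisfies $b^\dagger\in D_G$, since $\dagger$ has no fixed points and $G$ has no vertex-free edge. I expect this last point needs care: a graph with at least one vertex might still have isolated edge components $\updownarrow$, and these would not be captured. So I would either note that such components are excluded implicitly, as in \cite[Construction~1.18]{hry1}, or add a coproduct of copies of $\updownarrow$.

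Next, the fibres of this map are exactly the equivalence classes generated by the relations. Over a half-edge $b$ of an internal edge, the fibre is $\{b,(b^\dagger)'\}$. Over a leg or an element of $D_G$ attached to a leg, the fibre is a singleton.

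\textbf{Step 3: the structure maps.} Finally, the involution on the quotient induced by $a\leftrightarrow a'$ corresponds to $\dagger$ on $A_G$, and the inclusion $D\hookrightarrow A$ and the map $t$ are clearly the original ones. This identifies the coequalizer with $G$. The main obstacle is bookkeeping rather than substance: fixing edge orientations and handling the isolated-edge caveat.
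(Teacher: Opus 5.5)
Your proposal is correct. The paper gives no proof of its own: it only points to \cite[Construction~1.18]{hry1}. Your objectwise computation in $\mathrm{Fin}^{\mathscr I}$ is therefore a complete, self-contained argument for what the paper leaves to the reference.

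You handle the one substantive point correctly. The two parallel maps must glue each internal edge with \emph{opposite} orientations, giving the identifications $a'\sim a^\dagger$ and $a\sim (a^\dagger)'$. If both maps sent $x_e$ to the boundary legs $a'$ and $(a^\dagger)'$, the quotient would identify $a$ with $a^\dagger$, and $D\to A$ would fail to be injective. Your fibre description also covers the case $v=w$ of a loop at a vertex without change.

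Your caveat about isolated edges is a genuine defect in the statement, not mere bookkeeping. With the paper's definition of graph, connectedness is not required. For $G=C_{n+1}\sqcup\updownarrow$, the coequalizer is only $C_{n+1}$. The lemma should therefore assume that $G$ is connected, or at least has no vertex-free edge components. This holds in every use the paper makes of it, since objects of $\Upsilon$ and of $\bM$ are connected.

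Two small corrections:
\begin{itemize}
\item For a leg $b$, the fact that $b^\dagger\in D_G$ comes only from the absence of isolated edges. Fixed-point-freeness of $\dagger$ plays no role there.
\item Reversing the orientation of an internal edge produces literally the same identifications, so the coequalizers are equal, not merely isomorphic.
\end{itemize}
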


\subsubsection{Additional structure on graphs}
Our combinatorial model admits a geometric realization $|G|$ obtained by gluing a point for each vertex and a closed interval for each edge, identifying the endpoints of each interval with the incident vertices via the incidence map $t:D_G\to V_G$. We say that $G$ is \emph{connected} (respectively, \emph{simply connected}) if $|G|$ is connected (respectively, simply connected) (see also \cite[Definition~1.7]{hry1}).

\begin{definition}\label{def: tree}
A \emph{tree} is a simply connected graph. A tree $T$ is \emph{rooted} if it is equipped with a distinguished leg $r\in \partial(T)$, called the \emph{root}.
\end{definition}

\begin{remark}
Any rooted tree admits a canonical orientation by directing every edge toward the root. Since $T$ is simply connected, each vertex then has a unique \emph{outgoing} incident edge (the one on the path to the root), and all remaining incident edges are \emph{incoming} (cf.\ \cite[Definition~3.4]{hackney2022segal}).
\end{remark}

In order to use graphs to model operations in modular and cyclic operads we will equip them with additional labeling data.

\begin{definition}\label{def: labelled graph}
A graph $G$ is said to be \emph{labelled} if:
\begin{enumerate}
\item it comes with a total ordering of its vertices, i.e.\ a bijection
      $\lambda:\{1,\ldots,k\}\xrightarrow{\cong} V_G$;
\item for each vertex $v\in V_G$, it comes with a bijection 
      \[
      \ell_v:\{0,1,\ldots,|\nb(v)|-1\}\xrightarrow{\cong}\nb(v);
      \]
\item it comes with a bijection of the boundary (legs)
      \[
      \ell_G:\{0,1,\ldots,n\}\xrightarrow{\cong} \partial(G),
      \qquad\text{where } n=|\partial(G)|-1.
      \] 
\end{enumerate}
We write $(G,\lambda,\ell)$ for this data. 
\end{definition}

A \emph{genus grading} on a graph $G$ is a function $\epsilon:V_G\to \mathbb{N}$. The \emph{genus}
of a genus-graded graph $(G,\epsilon)$ is
\[
\beta_1(G)+\sum_{v\in V_G}\epsilon(v),
\]
where $\beta_1(G)$ is the first Betti number of the geometric realization of $G$.
If $G$ is connected and $G\neq \updownarrow$, then
\[
\beta_1(G)=|iE_G|-|V_G|+1.
\]
%A genus-graded graph $(G,\epsilon)$ is \emph{stable}\mrnote{I am not sure we use stable anywhere} if $G$ is connected and, for every vertex $v$,
%\[
%2\epsilon(v)+|\nb(v)|-2>0.
%\]
We adopt the convention that the edge $\updownarrow$ has genus zero.

\begin{notation}\label{not:decorated-graphs}
Graphs in this paper are often equipped with additional structure, such as a vertex-ordering $\lambda$,
labelings $\ell$ (Definition~\ref{def: labelled graph}), and/or a genus grading $\epsilon$.
When we wish to display this data explicitly we write
\[
G=(G,\lambda,\ell,\epsilon),
\]
omitting any entries that are not present. When no confusion can arise, we suppress some or all of
these decorations and simply write $G$ for the underlying graph.
\end{notation}

\begin{definition}\label{def: iso of graphs}
An \emph{isomorphism of graphs} $\varphi:G\to G'$ is a triple of bijections
\[
\varphi_A:A_G\xrightarrow{\cong} A_{G'},\qquad
\varphi_D:D_G\xrightarrow{\cong} D_{G'},\qquad
\varphi_V:V_G\xrightarrow{\cong} V_{G'}
\]
such that the following diagram commutes:
\[
\begin{tikzcd}
A_G \arrow[d,"\varphi_A"'] \arrow[loop left,"\dagger_G"] &
D_G \arrow[l,hook'] \arrow[d,"\varphi_D"] \arrow[r,"t_G"] &
V_G \arrow[d,"\varphi_V"] \\
A_{G'} \arrow[loop left,"\dagger_{G'}"] &
D_{G'} \arrow[l,hook'] \arrow[r,"t_{G'}"'] &
V_{G'} .
\end{tikzcd}
\]
If $G$ and $G'$ are labelled graphs, a \emph{strict isomorphism} $\varphi:G\to G'$ is an isomorphism that preserves the vertex ordering and the labelings.
\end{definition}

\subsection{Operads and cyclic operads}
A symmetric \emph{sequence} in $\bE$ is a collection of objects $\calP=\{\calP(n)\}_{n\ge 1}$ in $\bE$, together with a right action of the symmetric group $\Sigma_n$ on each $\calP(n)$. In particular, for each $n\ge 1$ and each $\sigma\in\Sigma_n$, there is a morphism
\[
\sigma^*:\calP(n)\to \calP(n).
\]

\begin{definition}\label{def:operad}
An \emph{operad} in $\bE$ consists of a symmetric sequence $\calP=\{\calP(n)\}_{n\ge 1}$ together with:
\begin{itemize}
\item a distinguished element $\id\in\calP(1)$;
\item partial compositions
\[
\begin{tikzcd}
\calP(n)\times \calP(m)\arrow[r,"\circ_i"]& \calP(n+m-1),
\end{tikzcd}
\qquad 1\le i\le n,
\]
\end{itemize}
satisfying the following axioms, for all $x\in\calP(n)$, $y\in\calP(m)$, $z\in\calP(\ell)$:

\begin{enumerate}
\item For $1\le i\le n$ and $1\le j\le m$,
\[
(x\circ_i y)\circ_{i+j-1} z = x\circ_i (y\circ_j z).
\]

\item For $1\le i<k\le n$,
\[
(x\circ_i y)\circ_{k+m-1} z = (x\circ_k z)\circ_i y.
\]

\item For $x\in\calP(n)$ and $1\le i\le n$, $\id\circ_1 x = x,$ and $x\circ_i \id = x.$ 

\item  For $\sigma\in\Sigma_n$ and $\tau\in\Sigma_m$,
\[
(\sigma^*x)\circ_i (\tau^*y) =
(\sigma\circ_i \tau)^*(x\circ_{\sigma^{-1}(i)} y),
\]
where $\sigma\circ_i\tau\in\Sigma_{n+m-1}$ denotes the standard block permutation determined by
inserting $\tau$ into the $i$th input of $\sigma$.
\end{enumerate}
\end{definition}

A map $f:\calP\to\calQ$ of operads is a $\Sigma_n$-equivariant map of symmetric sequences that preserves the unit and partial compositions. We write $\Op(\bE)$ for the category of operads in $\bE$.

\begin{remark}
In the literature, operads often have operations of arity $0$. In this paper, however, we work exclusively with \emph{reduced} operads, and therefore restrict to arities $n\ge 1$.
\end{remark}

\medskip 

A cyclic operad is an operad in which the symmetric group actions on inputs extend compatibly to actions on the output together with the inputs. We write
\[
\Sigma_n^+ := \Aut(\{0,1,\ldots,n\})
\]
and let $z_n^+\in\Sigma_n^+$ denote the cyclic permutation $z_n^+(i)=i+1\pmod{n+1}$.

\begin{definition}\label{def: cyclic structure}
Let $\calP=\{\calP(n)\}_{n\ge 1}$ be an operad in $\bE$. A \emph{cyclic structure} on $\calP$ consists, for each $n\ge 1$, of a right action of
$\Sigma_n^+$ on $\calP(n)$, written
\[
\sigma^*:\calP(n)\to \calP(n),
\]
such that the following conditions hold:
\begin{enumerate}
\item the restriction along $\Sigma_n\subset \Sigma_n^+$ agrees with the given $\Sigma_n$-action on $\calP(n)$;

\item for $x\in\calP(n)$ and $y\in\calP(m)$,
\[
(z_{n+m-1}^+)^*(x\circ_i y)=
\begin{cases}
(z_n^+)^*(x)\circ_{i-1} y, & 2\le i\le n,\\[2pt] (z_m^+)^*y\circ_m (z_n^+)^*x, & i=1.
\end{cases}
\]
\end{enumerate}
A \emph{cyclic operad} is an operad equipped with a cyclic structure.
\end{definition}

A map $f:\calP\to\calQ$ of cyclic operads is a map of operads commuting with the $\Sigma_n^+$-actions. We write $\Cyc(\bE)$ for the category of cyclic operads in $\bE$. As discussed in Appendix~\ref{sec: homotopy theory of mod op and truncated mod op}, the categories $\Op(\bE)$ and $\Cyc(\bE)$ admit model category structures in which a map $f:\calP\to\calQ$ is a weak equivalence if each $f(n):\calP(n)\to\calQ(n)$ is a weak equivalence in $\bE$ for all $n\ge 1$.

\begin{remark}\label{rmk:reduce-circij}
Cyclic operads are often presented using two-slot partial compositions
\[
\circ_i^j:\calP(n)\times \calP(m)\longrightarrow \calP(n+m-1),
\]
where $i\in\{0,\ldots,n\}$ and $j\in\{0,\ldots,m\}$, rather than using the one-slot compositions $\circ_i$ of an ordinary operad together with an extended
symmetric group action. The two formalisms are equivalent.

Indeed, using the $\Sigma_m^+$-action on $\calP(m)$, one may first relabel the second operation so that its $j$th slot becomes the distinguished slot $0$.
One then composes using the operation $\circ_i^0$, and finally reindexes the output using the induced $\Sigma_{n+m-1}^+$-action. For example, if
$z_m^+\in\Sigma_m^+$ denotes the cycle $z_m^+(r)=r+1\pmod{m+1}$, then, up to the chosen convention for the final labelling, one may write
\[
x\circ_i^j y
:= \rho^*\Bigl(x\circ_i^0 \bigl(((z_m^+)^{-j})^*y\bigr)\Bigr),
\]
where $\rho\in\Sigma_{n+m-1}^+$ is the permutation determined by the chosen labelling convention for the boundary of the output. For an explicit computation
of $\rho$, see \cite{DCH}.
\end{remark}

\subsection{Modular operads}\label{sec: modular operads}
For a fixed monoidal category $\bE$, a \emph{genus-graded sequence} in $\bE$ is a collection of objects
\[
\calP=\{\calP(g,n+1)\}_{g\ge 0,\; n\ge 0}
\]
in $\bE$, such that each $\calP(g,n+1)$ carries a right action of the extended symmetric group $\Sigma_n^+ := \Aut(\{0,1,\ldots,n\})$.

\begin{definition}\label{def: modular operad}
A \emph{modular operad} in $\bE$ consists of a genus-graded sequence $\calP=\{\calP(g,n+1)\}$ together with:
\begin{enumerate}
 \item a distinguished unit object $\id\in\calP(0,2)$;
 \item a family of partial compositions
 \[
 \begin{tikzcd}
 \calP(g,n+1)\times \calP(h,m+1)\arrow[r,"\circ_{i}^{j}"]& \calP(g+h, n+m),
 \end{tikzcd}
 \]
 for $i\in\{0,1,\ldots,n\}$ and $j\in\{0,1,\ldots,m\}$;
 \item a family of contractions
 \[
 \begin{tikzcd}
 \calP(g,n+1)\arrow[r,"\xi_{i}^{j}"]& \calP(g+1,n-1),
 \end{tikzcd}
 \]
 for $0\leq i<j\leq n$ and $n\geq 2$.
\end{enumerate}
Moreover, we require the composition maps to satisfy associativity, equivariance, and unitality axioms as in \cite[Definition~3.3.1]{hv_cyclic} or \cite[Theorem~3.7]{gk_modular}. In addition, we require that the coherence relations governing contractions satisfy the following axioms: 
\begin{enumerate}
\setcounter{enumi}{3}
\item For $\sigma\in\Sigma_n^+$ and $a\in\calP(g,n+1)$,
\[
\bar\sigma^*(\xi_i^j a) = \xi_{\sigma(i)}^{\sigma(j)}(\sigma^*a),
\]
where $\xi_{\sigma(i)}^{\sigma(j)}$ means contraction along the unordered pair $\{\sigma(i),\sigma(j)\}$, and $\bar\sigma$ is the induced permutation of the remaining labels after the two contracted labels have been removed.
\item If $\{i,j\}\cap\{k,\ell\}=\varnothing$, then the two contractions commute:
\[
\xi_i^j\circ \xi_k^\ell
=
\xi_k^\ell\circ \xi_i^j,
\]
with the evident relabelling of indices after the first contraction.

\item For $a\in\calP(g,n+1)$ and $b\in\calP(h,m+1)$, with $n\geq 2$,
\[
\xi_1^2\bigl(a\circ_n^0 b\bigr) = (\xi_1^2 a)\circ_{n-2}^0 b.
\]

\item For $a\in\calP(g,n+1)$ and $b\in\calP(h,m+1)$, with $m\geq 2$,
\[
\xi_n^{n+1}\bigl(a\circ_n^0 b\bigr) = a\circ_n^0(\xi_1^2 b).
\]

\item For $a\in\calP(g,n+1)$ and $b\in\calP(h,m+1)$ with $n\ge 1$,
\[
\xi_{n-1}^{n}\bigl(a\circ_n^0 b\bigr)
=
\xi_{n+m-2}^{\,n+m-1}\Bigl(a\circ_{n-1}^0 (z_m^+)^*b\Bigr),
\]
where $z_{m}^+\in \Sigma_m^+$ is the cycle $(0\,1\,\dots\,m)$.
\end{enumerate}

\end{definition}

A map $f:\calP\to\calQ$ of modular operads is a map of the underlying genus-graded sequences that is equivariant for the $\Sigma_n^+$-actions and commutes with the unit, partial compositions, and contractions. We write $\Mod(\bE)$ for the category of modular operads in $\bE$. As we discuss in Theorem~\ref{model structure on Mod(E)}, $\Mod(\bE)$ admits a model category structure in which a map $f:\calP\to\calQ$ is a weak equivalence if each component
\[
f(g,n+1):\calP(g,n+1)\longrightarrow \calQ(g,n+1)
\]
is a weak equivalence in $\bE$ for all $g\ge 0$ and $n\ge 0$.

\begin{remark}\label{rmk:indexing-modular-operads}
Our indexing conventions for modular operads are chosen to match the mapping class group conventions of Section~\ref{subsec: mapping class groups}. Thus we write a genus-graded sequence as $\{\calP(g,n+1)\}_{g\geq 0,\;n\geq 0}$, so that the boundary labels are $\{0,1,\ldots,n\}$ and the symmetric group action is by $\Sigma_n^+=\Aut(\{0,1,\ldots,n\})$. This is a slight shift from the convention used in parts of the literature, where one often writes $\{\calP(g,n)\}_{g,n\geq 0}$.

We also exclude operations with no boundary components. Equivalently, in the above structure maps we only include compositions and contractions whose targets still have nonempty boundary; see Appendix~\ref{sec: homotopy theory of mod op and truncated mod op}.
\end{remark}

\begin{definition}\label{def:decorated graph}
Let $G=(G,\lambda,\ell,\epsilon)$ be a connected labelled, genus-graded graph with nonempty boundary, and let $\calP=\{\calP(g,n+1)\}$ be a genus-graded sequence in $\bE$. A \emph{$\calP$-decoration} of $G$ is an ordered tuple 
$\underline{p}=(p_{\lambda(1)},\ldots,p_{\lambda(k)})$, where $V_G=\{v_{\lambda(1)},\ldots,v_{\lambda(k)}\}$ and 
$p_{\lambda(i)}\in \calP\bigl(\epsilon(v_{\lambda(i)}),\,|\nb(v_{\lambda(i)})|\bigr)$ for each $i$.
Equivalently,
\[
\underline{p}\in \prod_{v_{\lambda(i)}\in V_G}\calP\bigl(\epsilon(v_{\lambda(i)}),\,|\nb(v_{\lambda(i)})|\bigr).
\]
\end{definition}

Let $G=(G,\lambda,\ell,\epsilon)$ and $G'=(G',\lambda',\ell',\epsilon')$ be labelled graphs with nonempty boundary. For
\[
i\in\{0,\ldots,|\partial(G)|-1\}
\qquad\text{and}\qquad
j\in\{0,\ldots,|\partial(G')|-1\},
\]
the \emph{grafting} $G\circ_i^j G'$ is the labelled graph obtained by gluing the boundary leg $\ell_G(i)\in\partial(G)$ to the boundary leg $\ell_{G'}(j)\in\partial(G')$, with the remaining structure inherited in the evident way. Similarly, for $0\le i<j\le |\partial(G)|-1$, the \emph{contraction} $\xi_i^j G$ is the labelled graph obtained by gluing together the boundary legs $\ell_G(i)$ and $\ell_G(j)$.

\begin{definition}\label{def: free modular operad}
Let $\calP=\{\calP(g,n+1)\}$ be a genus-graded sequence in $\Set$. The \emph{free modular operad} $\mathcal{F}\calP$ is defined by letting $\mathcal{F}\calP(g,n+1)$ be the set of isomorphism classes of $\calP$-decorated, connected, labelled, genus-graded graphs of total genus $g$ with $n+1$ boundary legs.

The group $\Sigma_n^+$ acts on $\mathcal{F}\calP(g,n+1)$ by permuting the boundary labels. The partial compositions and contractions in $\mathcal{F}\calP$ are given by grafting and contracting decorated graphs.
\end{definition}

To make this more explicit, for each $g\ge 0$ and $n\ge 0$ one may describe $\mathcal{F}\calP(g,n+1)$ as the quotient
\[
\mathcal{F}\calP(g,n+1)=
\Biggl(
\coprod_{[G]} \prod_{v\in V_G}\calP\bigl(\epsilon(v),|\nb(v)|\bigr)
\Biggr)\Big/\sim,
\]
where the coproduct ranges over isomorphism classes $[G]$ of connected labelled genus-graded graphs $G=(G,\lambda,\ell,\epsilon)$
with $|\partial(G)|=n+1$ and total genus
\[
g=\beta_1(G)+\sum_{v\in V_G}\epsilon(v).
\]

The equivalence relation $\sim$ identifies two decorated labelled genus-graded graphs $(G,\lambda,\ell,\epsilon,\underline{p})$ and $(G',\lambda',\ell',\epsilon',\underline{p}')$ whenever there exists an isomorphism of genus-graded graphs $\iota:G\xrightarrow{\cong} G'$ such that, for each vertex $v_{\lambda(i)}\in V_G$, the induced bijection $\iota:\nb(v_{\lambda(i)})\xrightarrow{\cong}\nb(\iota(v_{\lambda(i)}))$ determines a permutation
\[
\sigma_{v_{\lambda(i)}}:=
(\ell'_{\iota(v_{\lambda(i)})})^{-1}\circ \iota\circ \ell_{v_{\lambda(i)}}
\in \Sigma_{|\nb(v_{\lambda(i)})|-1}^+,
\]
and the decorations satisfy
\[
p'_{\lambda'(j)}=\bigl(\sigma_{v_{\lambda(i)}}\bigr)^*(p_{\lambda(i)})
\]
whenever $\iota(v_{\lambda(i)})=v'_{\lambda'(j)}$.  For further details and variants of this construction, see \cite[2.20]{gk_modular}, \cite[2.1]{hry_modular_nerve}, and \cite[13.2.2]{yau_modular}.

\subsubsection{Truncations of modular operads and adjunctions}\label{subsec: intro to truncations}
In Appendix~\ref{subsec: truncations} we define the \emph{genus-$g$ truncation} of a modular operad
\[
\calP=\{\calP(h,n+1)\}_{h\geq 0,\;n\geq 0},
\]
denoted $\trun{g}\calP$, as the truncated modular operad obtained by retaining only the operations whose output has genus at most $g$. Thus compositions and contractions whose output has genus greater than $g$ are no longer defined. More concretely, we say 
\[
x\circ_i^j y
\qquad\text{and}\qquad
\xi_i^j w
\]
is undefined whenever
\begin{itemize}
\item $x\in\calP(h_1,m_1+1)$ and $y\in\calP(h_2,m_2+1)$ with
$h_1+h_2>g$, or
\item $w\in\calP(h,m+1)$ with $h\geq g$, so that
$\xi_i^j w\in\calP(h+1,m-1)$ has genus greater than $g$.
\end{itemize}
We write $\Mod_{\leq g}(\bE)$ for the category of genus-$g$ truncated modular
operads in $\bE$. The truncation construction defines a functor
\[
\trun{g}^*:\Mod(\bE)\longrightarrow \Mod_{\leq g}(\bE),
\]
which admits both a left adjoint and a right adjoint:
\[
\begin{tikzcd}
\Mod(\bE) \arrow[r, "\trun{g}^*"'] &
\Mod_{\leq g}(\bE)
\arrow[l, bend right=50, "(\trun{g})_!"']
\arrow[l, bend left=50, "(\trun{g})_*"]
.
\end{tikzcd}
\]

\begin{prop}
The category of cyclic operads is naturally equivalent to the category of genus-$0$ truncated modular operads. Moreover, the truncation functor
\[
\trun{0}^*:\Mod(\bE)\longrightarrow \Cyc(\bE)
\]
preserves weak equivalences and is the right adjoint in a Quillen adjunction.
\end{prop}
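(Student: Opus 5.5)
The plan has two parts: first identify genus-$0$ truncated modular operads with cyclic operads, then treat the homotopical statement about $\trun{0}^*$.

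\textbf{The equivalence.} Given $\calP\in\Mod_{\leq 0}(\bE)$, every defined operation has genus $0$. Contractions $\xi_i^j$ always raise genus, so none of them survive truncation. The only compositions left are
\[
\circ_i^j:\calP(0,n+1)\times\calP(0,m+1)\to\calP(0,n+m).
\]
Set $\calP^{\mathrm{cyc}}(n):=\calP(0,n+1)$ for $n\geq 1$, with its $\Sigma_n^+$-action. Define $\circ_i:=\circ_i^0$ for $1\le i\le n$, and take the unit to be $\id\in\calP(0,2)$.

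\begin{enumerate}
\item Check that the associativity, equivariance and unitality axioms of \cite[Definition~3.3.1]{hv_cyclic}, restricted to genus $0$, give the operad axioms (1)--(4) of Definition~\ref{def:operad}.
\item Check that the compatibility of $\circ_i^j$ with the $\Sigma^+$-actions gives condition (2) of Definition~\ref{def: cyclic structure}. This is a direct relabelling computation, using the cycle $z^+$ to move the slot being composed.
\item Conversely, given a cyclic operad, recover all the $\circ_i^j$ from the $\circ_i$ by the formula of Remark~\ref{rmk:reduce-circij}. Verify that they satisfy the genus-$0$ modular axioms. This step is the classical equivalence of the two presentations of cyclic operads \cite{DCH}.
\end{enumerate}

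The two constructions are mutually inverse and functorial, since maps on both sides are equivariant maps preserving the compositions. I expect the main obstacle to be the bookkeeping in (3): reconciling the labelling conventions of $\circ_i^j$ with Definition~\ref{def: cyclic structure}(2). I would handle it by reducing every axiom to the case $j=0$ through equivariance, which leaves only a check of the permutation $\rho$.

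\textbf{The Quillen adjunction.} Weak equivalences in $\Mod(\bE)$ and in $\Cyc(\bE)$ are defined levelwise. Since $\trun{0}^*$ just forgets the components with $g>0$, it preserves weak equivalences directly.

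The introduction of \S\ref{subsec: intro to truncations} already gives the left adjoint $(\trun{0})_!$. To get a Quillen pair, I would use the transferred model structures of Appendix~\ref{sec: homotopy theory of mod op and truncated mod op}, which are right-induced from the genus-graded sequences. In these structures fibrations are levelwise, so $\trun{0}^*$ also preserves fibrations, being a restriction of levelwise data. Hence $\trun{0}^*$ is a right Quillen functor.

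If the model structures are instead obtained by some other method, the fallback is to check that $(\trun{0})_!$ sends generating (acyclic) cofibrations to (acyclic) cofibrations. The generators are free maps on corollas, and $(\trun{0})_!$ sends free genus-$0$ operads to free modular operads.
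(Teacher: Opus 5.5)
Your proposal is correct. Its homotopical half is the paper's argument in Appendix~\ref{sec: homotopy theory of mod op and truncated mod op}: in the transferred Berger--Moerdijk model structures, weak equivalences and fibrations are entrywise. Since $\trun{0}^*$ is restriction along the inclusion of coloured operads $\iota_0:\mathcal{MOp}_{\leq 0}\hookrightarrow \mathcal{MOp}$, it preserves both and is right Quillen. Your fallback via generating cofibrations is therefore not needed.

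The equivalence, however, the paper does not obtain by comparing axioms. It works one level up. A connected graph of total genus $0$ with non-empty boundary is exactly a labelled unrooted tree, so $\mathcal{MOp}_{\leq 0}$ is identified with the coloured operad $\mathcal C$ governing cyclic operads. The paper then quotes $\Alg_{\mathcal C}(\bE)\cong\Cyc(\bE)$ from the literature. This route gives an isomorphism of categories uniformly in $\bE$. It also makes the model structure on $\Cyc(\bE)$ and the adjunction $(\trun{0})_!\dashv\trun{0}^*$ (algebraic extension and restriction along $\iota_0$) come from the same source, so the Quillen statement is immediate.

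Your direct comparison is more elementary, and it shows concretely why contractions disappear and how the $\circ_i^j$ collapse to the $\circ_i$. But its actual content is deferred to \cite{DCH}, just as the paper defers to its references. That content is the relabelling $\rho$, the output slot $i=0$ (which reducing to $j=0$ does not cover), and the symmetry between $x\circ_i^j y$ and $y\circ_j^i x$, which is where cyclic axiom (2) with $i=1$ enters. It also gives nothing extra for the homotopical half.

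One convention point affects both versions. With the paper's indexing, $\Mod_{\leq 0}(\bE)$ has an entry $\calP(0,1)$, while cyclic operads here are reduced, $\{\calP(n)\}_{n\geq 1}$. Your $\calP^{\mathrm{cyc}}$ silently drops $\calP(0,1)$, so your two constructions are mutually inverse only under one of two fixes. Either exclude that colour, equivalently require $\calP(0,1)=\varnothing$; this is consistent, since every composition landing in $\calP(0,1)$ has a factor from $\calP(0,1)$. Or admit arity-$0$ cyclic operads. The paper's identification $\mathcal C\cong\mathcal{MOp}_{\leq 0}$ passes over the same mismatch, so you should simply state which convention you adopt.
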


Similarly, there is a forgetful functor $u^*:\Cyc(\bE)\to \Op(\bE)$ which forgets the cyclic structure. This functor
admits a left adjoint, called the \emph{cyclic envelope}:
\[
\begin{tikzcd}
\Op(\bE)\arrow[r, shift left=1, "u_!"] &
\Cyc(\bE)\arrow[l, shift left=1, "u^*"]
.
\end{tikzcd}
\]
An explicit description of $u_!$ is somewhat involved; see \cite[Section~3.1]{DCH} or \cite[Section~9]{Ward}. The forgetful functor $u^*$ preserves weak equivalences, and the adjunction above is a Quillen adjunction.

\subsection{Mapping Class Groups}\label{subsec: mapping class groups}
A surface of type $(g,n+1)$ is a compact, oriented surface $S$ of genus $g$ with $n+1$ boundary components. 
% We equip each boundary component with a chosen \emph{collar}, i.e.\ a fixed neat embedding $[0,1)\times S^1\hookrightarrow S$ whose image is a neighbourhood of that boundary component.

\begin{definition}\label{def: labelled surface}
A \emph{labelled surface} of type $(g,n+1)$ is a pair $(S,\delta)$, where
\[
\begin{tikzcd}
\delta: \{0,\ldots,n\} \arrow[r, "\cong"] & \pi_0(\partial S)
\end{tikzcd}
\]
is a labelling of the boundary components, i.e. we write $\partial_i S:=\delta(i)$ for the $i$th boundary component. A \emph{parameterized labelled surface} $(S,\delta,\{x_i\})$ consists of a labelled surface together with a choice of point $x_i\in \partial_i S$ for each $0\le i\le n$. 
\end{definition}

\begin{definition}\label{def: mapping class group}
Let $S$ be a surface of type $(g,n+1)$. The associated \emph{pure mapping class group} is
    \[ \Gamma(S)\;:=\;\pi_0\,\Diff^{+}(S;\partial S), \]
where $\Diff^{+}(S;\partial S)$ denotes the group of orientation-preserving diffeomorphisms of $S$ that fix $\partial S$ pointwise. Elements of the mapping class group $\Gamma(S)$ are called \emph{mapping classes}. The group structure on $\Gamma(S)$ is induced by composition, and we use the usual order of composition of functions when writing the group multiplication.

Any two surfaces of type $(g,n+1)$ have isomorphic pure mapping class groups and we denote this group by $\Gamma_{g,n+1}$.
\end{definition}

\begin{remark}
    More generally, a \emph{mapping class} may refer to the isotopy class of a diffeomorphism between different manifolds that is not required to fix the boundary pointwise. Mapping classes that do fix the boundary pointwise are often called \emph{pure mapping classes}. Since all but one of the mapping classes considered in this paper (see Figure~\ref{fig:B-move}) fix the boundary pointwise, we will use the term \emph{mapping class} to mean \emph{pure mapping class}.
\end{remark}

\subsubsection{Presentations of mapping class groups}\label{subsubsec:presentations of mapping class groups} 

Mapping class groups of orientable surfaces are well understood, and several presentations are available \cite{GERVAIS2001703,humphries,Hatcher_Thurston,Wajnryb}. We highlight two cases that will play an important role throughout the paper. For a comprehensive reference, see \cite{farb-marg}.

\begin{example}
    The mapping class group of the disc is trivial, i.e. $\Gamma(D^2)=*$. This is known as Alexander’s lemma \cite[Lemma~2.1]{farb-marg}.
\end{example}

We now recall the mapping classes that are used as standard generators for higher genus surfaces.
Throughout this paper, we use the terms \emph{curve} and \emph{loop} to mean the isotopy class of an embedded closed curve in $S$. The geometric \emph{intersection number} $|a\cap b|$ of two curves $a$ and $b$ is the minimum number of intersection points among all representatives in their isotopy classes. Similarly, the \emph{self-intersection number} of a curve is the minimum self-intersection number among all representatives. A \emph{simple loop} is a loop of self-intersection number $0$. 

A fundamental family of mapping classes is given by \emph{Dehn twists} about simple loops. We denote the Dehn twist about a simple loop $a$ by $D_a$. 
% The mapping class group $\Gamma_{g,n+1}$ is generated by finitely many Dehn twists. In practice we work with a standard Humphries-type generating set, represented in Figure~\ref{fig: humphries generators}. 
See \cite[Section~4.4]{MR2850125} for details.

\begin{prop}\cite{Gervais96,Luo97}\label{prop: generators and relations of mcg}
    For a surface $S$ of type $(g,n+1)$ the mapping class group $\Gamma(S)$ is generated by all Dehn twists along simple closed curves together with the following relations:
    \begin{itemize}
        \item[(C)] If $|a\cap b|=0$,  then $D_aD_b=D_bD_a$.
        \item[(B)] If $|a\cap b|=1$ and $c=D_b(a)$, then $D_c = D_b D_a D_b^{-1}$.

        Relation (B) implies the \emph{braid relation} $D_a D_b D_a=D_b D_a D_b$, if $|a\cap b|=1$.
        \item[(D)] The \emph{doughnut relation}, $(D_{a}D_{b}D_{a})^4=D_\partial$ which takes place on a subsurface of $S$ of type $(1,1)$, $a$ and $b$ are the curves represented in Figure~\ref{fig:curves-for-doughnut-rel}, and $D_\partial$ denotes the twist around the boundary. 
        \item[(L)] The \emph{lantern relation}, $D_{\partial_0}D_{\partial_1}D_{\partial_2}D_{\partial_3}=D_{b_1}D_{b_2}D_{b_3}$, where the $\partial_i$ are loops in $S$ bounding a subsurface of type $(0,4)$ and the $b_j$ are as in Figure~\ref{fig:curves-for-lantern-rel}.
    \end{itemize}
\end{prop}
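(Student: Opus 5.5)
The plan is to compare the group abstractly presented by the relations (C), (B), (D), (L) with a known finite presentation of $\Gamma(S)$, such as Wajnryb's presentation or Gervais' \emph{star} presentation \cite{Wajnryb,Gervais96}. Let $G$ be the group with one generator $T_a$ for each isotopy class $a$ of simple closed curve in $S$, subject to (C), (B), (D) and (L). We only count essential curves here, plus curves parallel to boundary components; null-homotopic curves are sent to the identity. Since Dehn twists satisfy all four relations, there is a homomorphism $\phi\colon G\to\Gamma(S)$ with $T_a\mapsto D_a$. Dehn twists generate $\Gamma(S)$ by the Lickorish--Humphries theorem \cite{humphries,farb-marg}, so $\phi$ is surjective. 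The whole task is to prove that $\phi$ is injective.

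For injectivity I would construct an inverse homomorphism. Fix a finite Humphries-type set of curves $c_1,\dots,c_k$ on $S$ for which Wajnryb/Gervais give a finite presentation $\langle D_{c_1},\dots,D_{c_k}\mid R\rangle$ of $\Gamma(S)$. Define $\psi$ on generators by $D_{c_i}\mapsto T_{c_i}$. The proof then has two steps.

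First, I would show that the subgroup $H=\langle T_{c_1},\dots,T_{c_k}\rangle\subseteq G$ is all of $G$.
\begin{itemize}
\item Relation (B) says $T_{D_b(a)}=T_bT_aT_b^{-1}$ whenever $|a\cap b|=1$. Together with (C), this gives $T_{f(a)}=wT_aw^{-1}$ whenever $f=\phi(w)$ is a product of twists and $w$ is built from allowed moves.
\item Every simple closed curve is obtained from some $c_i$, or from a curve of the same topological type, by a sequence of twists along curves meeting the current curve once or not at all. This reduces to connectivity of a suitable curve complex (for example the nonseparating curve complex with edges given by intersection one), which is classical.
\item Separating curves are handled using (D) and (L). Lantern relations express twists about separating curves in terms of nonseparating ones, and the doughnut relation does the same for curves cutting off a $(1,1)$-subsurface.
\end{itemize}

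Second, I would check that every relation in $R$ holds in $G$. Braid and commutation relations are immediate from (B) and (C), and lantern relations are (L). The remaining relations are the chain relations, the hyperelliptic relation in genus at least $2$, and Gervais' star relation on a $(1,3)$-subsurface.

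This second step is the main obstacle. Following Luo \cite{Luo97}, the approach I would try first is the following.
\begin{itemize}
\item The $2$-chain relation is (D) itself.
\item The $3$-chain relation on a $(1,2)$-subsurface, and the star relation, are derived by embedding the subsurface in lantern configurations. One then uses (B) to rewrite the twists as conjugates, so that the star relation becomes a product of two lantern relations and one doughnut relation.
\item The hyperelliptic relation then follows from the chain and lantern relations by Gervais' argument \cite{Gervais96}.
\end{itemize}

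Once $\psi$ is well defined, both composites $\phi\circ\psi$ and $\psi\circ\phi$ are the identity on generators. For $\psi\circ\phi$ this uses the first step: each $T_a$ has been written as a word in the $T_{c_i}$ compatibly with $\phi$. Hence $\phi$ is an isomorphism.
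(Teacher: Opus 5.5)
The paper gives no proof of this statement; it quotes \cite{Gervais96,Luo97}. Your overall plan (map onto a known finite presentation and check its relations in $G$) has the right shape, but Step~1 is circular as written.

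Relation (B) only lets you conjugate $T_a$ by $T_b$ when $i(a,b)=1$. So from $a=D_{b_m}^{\pm1}\cdots D_{b_1}^{\pm1}(c_i)$ you get $T_a=wT_{c_i}w^{-1}$, where $w$ is a word in the $T_{b_j}$. This puts $T_a$ in $H$ only if you already know $T_{b_j}\in H$, which is what you are trying to prove.

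Connectivity of the intersection-one graph does not break the circle. Along an edge $a_j,a_{j+1}$ it only gives $T_{a_{j+1}}=(T_{a_j}T_{a_{j+1}})\,T_{a_j}\,(T_{a_j}T_{a_{j+1}})^{-1}$, i.e.\ that all nonseparating twists are conjugate in $G$.

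Nor can you require the $b_j$ to be among the $c_i$. Twisting along $c_j$ never changes intersection with $c_j$, so the last step of such a sequence always outputs a curve meeting some $c_j$ at most once. For example, on the one-holed torus with $c_1,c_2$ of slopes $0$ and $\infty$, the slope-$2/3$ curve meets them $2$ and $3$ times and is never reached.

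What is missing is the conjugation identity $T_bT_yT_b^{-1}=T_{D_b(y)}$ in $G$ for curves $y$ meeting $b$ arbitrarily often; it suffices for $b\in\{c_1,\dots,c_k\}$. With it, Step~1 and $\psi\circ\phi=\mathrm{id}$ are immediate. For $f=\phi(w)$ with $w$ a word in the $T_{c_i}$ one gets $T_{f(c)}=wT_cw^{-1}$. Every curve has the form $f(c)$ for one of finitely many reference curves $c$: the $c_i$ themselves, or separating curves handled via (L) and (D) as you indicate.

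This identity is not one of (C), (B), and deriving it from the stated relations is a genuine lemma. It is an induction on $i(y,b)$; on the one-holed torus this amounts to splitting the slope $y$ into its two Farey parents, and in general it takes real work. This is exactly where restricting (B) to $|a\cap b|=1$ has to be paid for. Your proposal skips it and puts all the difficulty in Step~2.

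Two smaller points:
\begin{itemize}
\item For $g=0$ every curve separates and there is no Humphries or Wajnryb presentation. In that case (B) and (D) are vacuous and everything rests on (C) and (L), so you need a separate base presentation, e.g.\ of $\PRB_n\cong\Gamma_{0,n+1}$.
\item Since $\partial S\neq\varnothing$ throughout, the hyperelliptic relation never enters.
\end{itemize}
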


    % \[\Gamma(S) \cong \left<D_{a_{1}},D_{b_1},D_{a_2}, D_{b_2}\ldots,D_{b_{g-1}},D_{a_g},D_{m_1},D_{m_2},D_{c_0}, D_{c_1}\ldots,D_{c_{n}}\mid (\text{C}), (\text{B}), (\text{D}), \ \text{and} \ (\text{L})\right>\] where $D_{a_i}$, $D_{b_i}$, $D_{m_i}$, and $D_{c_i}$ denote the Dehn twists along the corresponding curves represented in Figure \ref{fig: humphries generators}. The relations are given as follows: 
% 

\begin{figure}[ht]
        \centering
        \hfill\hfill
        \begin{subfigure}[t]{0.45\textwidth}
            \centering
            \includegraphics[]{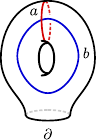}
            \caption{}
            \label{fig:curves-for-doughnut-rel}
            \end{subfigure}
                \hfill
            \begin{subfigure}[t]{0.45\textwidth}
                \centering
                \includegraphics[]{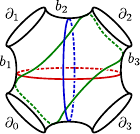}
                \caption{}
                \label{fig:curves-for-lantern-rel}
            \end{subfigure}
            \hfill\hfill
            \caption{}
        \label{fig:curves for presentation of mcg}
\end{figure}

We will also make repeated use of the following:
\begin{lemma}\label{lemma: conjugation by mapping classes}
    Let $S,S'$ be parametrized labelled surfaces and let $f:S\to S'$ be a diffeomorphism taking the marked point on the $i$th boundary component of $S$ to the corresponding marked point of $S'$. Let $a$ be a simple loop in $S$. Then
    \[ D_{f(a)} = f\cdot D_a\cdot f^{-1} \]
    as elements of $\Gamma(S')$. Moreover, the mapping class $f\cdot D_a\cdot f^{-1}$ depends only on the isotopy class of $f$ relative to the boundary.
\end{lemma}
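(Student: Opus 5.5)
We check the defining property of a Dehn twist directly: choose an annular neighbourhood and a representative diffeomorphism supported there, then conjugate. Choose a representative diffeomorphism $f$ fixing the boundary in the sense required, and a closed annular neighbourhood $N\subset S$ of a representative of $a$, disjoint from $\partial S$. Fix an orientation-preserving parametrisation $\phi:S^1\times[0,1]\to N$. Then $D_a$ is represented by the diffeomorphism $T_a$ equal to $\phi\circ T\circ\phi^{-1}$ on $N$ and the identity elsewhere, where $T(\theta,t)=(\theta+2\pi t,t)$ is the standard twist map.

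Since $f$ is an orientation-preserving diffeomorphism, $f(N)$ is an annular neighbourhood of the simple loop $f(a)$, and $f\circ\phi$ is an orientation-preserving parametrisation of it. Compute $f\circ T_a\circ f^{-1}$ directly. On $f(N)$ it equals
\[
(f\circ\phi)\circ T\circ(f\circ\phi)^{-1}.
\]
Off $f(N)$ it equals the identity. This is precisely the standard representative of the Dehn twist about $f(a)$ built from the annulus $f(N)$. The Dehn twist is independent of the chosen annulus and its orientation-preserving parametrisation, by the usual uniqueness of tubular neighbourhoods up to isotopy, as in \cite[Section~4.4]{MR2850125}. Hence $D_{f(a)}=f\cdot D_a\cdot f^{-1}$.

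One must check that this conjugate lies in $\Gamma(S')$, i.e.\ fixes $\partial S'$ pointwise. It is supported in $f(N)$, which lies in the interior of $S'$, so it is the identity near $\partial S'$. Consequently the class is well defined even though $f$ itself is only required to match the boundary marked points, not to fix boundaries pointwise.

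For the second claim, let $f_s$ be an isotopy from $f_0=f$ to $f_1$, relative to the boundary, i.e.\ through diffeomorphisms agreeing on $\partial S$. Then $f_s\circ T_a\circ f_s^{-1}$ is a path of diffeomorphisms of $S'$. Each member is supported in the interior annulus $f_s(N)$, so the whole path fixes $\partial S'$ pointwise. This gives an isotopy in $\Diff^+(S';\partial S')$, and the class is unchanged.

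The same argument also shows the result depends only on the isotopy class of $a$. Indeed, $f(a)$ varies by isotopy as $a$ or $f$ varies, so it stays within a single isotopy class.

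The only delicate point is the orientation bookkeeping. The twist direction, left versus right, is determined by the orientation of $N$ together with $T$. It is preserved because $f$ is orientation-preserving and $T$ is conjugated by the orientation-preserving parametrisation $f\circ\phi$. Everything else is formal.
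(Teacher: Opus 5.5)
Your argument is correct and is essentially the standard proof of \cite[Fact~3.7]{farb-marg}, which is all the paper cites. Conjugating an annular twist representative by $f$ gives the annular twist about $f(a)$ built from $f(N)$ with parametrisation $f\circ\phi$, and the fact that the conjugate is supported in the interior of $S'$ gives both membership in $\Gamma(S')$ and independence of the isotopy class of $f$. You are right to make orientation-preservation of $f$ explicit: it is only implicit in the statement, through the paper's use of $\Diff^+$, and an orientation-reversing $f$ would instead give $D_{f(a)}^{-1}$.
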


\begin{proof}
    The proof is standard; see \cite[Fact~3.7]{farb-marg}.
\end{proof}

\subsubsection{A diagrammatic representation of mapping classes}\label{subsubsec: diagrammatical representation of mapping classes} 
The goal of this subsection is to give a concrete combinatorial model for mapping classes in genus $0$. More precisely, we describe how isotopy classes of boundary-fixing diffeomorphisms can be encoded by embedded trees. This provides the diagrammatic framework needed later to give a presentation of our modular operad.

Let $(S,\delta,\{x_i\}_{i=0}^{n})$ and $(S',\delta',\{x'_i\}_{i=0}^{n})$ be parametrised labelled surfaces of type $(0,n+1)$, each equipped with a chosen point $x_i\in \partial_i S$ and $x'_i\in \partial_i S'$ on the $i$th boundary component. We write $\Diff^+\bigl(S,S'; \partial \bigr)$ for the space of
orientation-preserving diffeomorphisms $\phi:S\to S'$ that preserve the boundary labels and satisfy $\phi(x_i)=x'_i$ for all $0\le i\le n$. Its set of path components
\[
\pi_0\ \Diff^+\bigl(S,S'; \partial \bigr)
\]
is the set of isotopy classes of such diffeomorphisms. In genus $0$ we will encode these isotopy classes by combinatorial data coming from embedded trees.

\begin{definition}\label{def: marking of genus 0 surface}
Let $T$ be a tree with $|\partial(T)|=n+1$ equipped with a boundary labelling
\[
\begin{tikzcd}
\ell_T: \{0,1,\ldots,n\} \arrow[r, "\cong"] & \partial(T)
\end{tikzcd}
\]
Let $(S,\delta,\{x_i\})$ be a labelled surface of type $(0,n+1)$ with chosen points $x_i\in \partial_i S$ for $0\le i\le n$. A \emph{marking} of $(S,\delta,\{x_i\})$ by $T$ is an equivalence class of embeddings $m:T\hookrightarrow S$ such that
\[
m\bigl(\ell_T(i)\bigr)=x_i\in \partial_i S, \qquad \text{ for all }i\in\{0,\ldots,n\},
\] 
where $m\sim m'$ if $m'$ is obtained from $m$ by an isotopy relative to $\partial S$ or by precomposition with an element of $\Homeo(T)$

\medskip 

A \emph{marked genus-zero surface} is a pair $\bigl((S,\delta,\{x_i\}), [m]\bigr)$ consisting of a labelled genus-zero surface $(S,\delta,\{x_i\})$ and a marking $m:T\hookrightarrow S$ by a boundary-labelled tree $T$. 
\end{definition}

Any marked surface determines additional combinatorial structure on the underlying tree. Since $T$ is embedded in the oriented surface $S$, the incident half-edges at each vertex inherit a cyclic ordering from the orientation of $S$. A graph equipped with a cyclic ordering of the incident half-edges at each vertex is called a \emph{ribbon graph}, and the corresponding data is called a \emph{ribbon structure}.

In addition, if $U_T$ is a sufficiently small tubular neighbourhood of $T$ in $S$, then $\partial U_T$ is an oriented circle, and the legs of $T$ meet $\partial U_T$ in a cyclically ordered set. Via the boundary labelling $\ell_T$, this induces a cyclic order on the set of boundary labels $\{0,\ldots,n\}$.

\begin{figure}[h!t]
    \includegraphics[width=\textwidth]{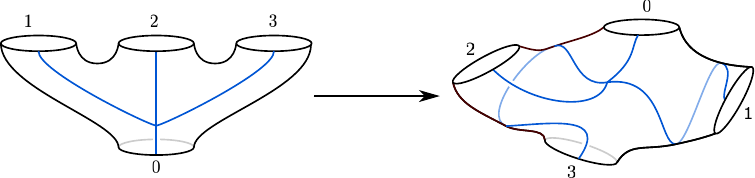}
    \centering
    \caption{Diagrammatic representation of a diffeomorphism $S\rightarrow S'$.}
    \label{fig: diagramatic rep of diffeo}
\end{figure}

\medskip 

Let $S=(S,\delta,\{x_i\}_{i=0}^{n})$ and $S'=(S',\delta',\{x'_i\}_{i=0}^{n})$ be parametrized labelled surfaces of type $(0,n+1)$, and let $[m]$ and $[m']$ be markings on $S$ and $S'$, respectively, that induce the same cyclic order on the boundary labels. Choose representatives
\[
m:T\hookrightarrow S
\qquad\text{and}\qquad
m':T'\hookrightarrow S'
\]
of the markings $[m]$ and $[m']$. Using these representatives, we construct an orientation-preserving diffeomorphism $S\to S'$ up to isotopy relative to the boundary as follows.

Choose small tubular neighbourhoods $U_T\subset S$ and $U_{T'}\subset S'$ of the embedded trees $m(T)$ and $m'(T')$. After smoothing corners, both $U_T$ and $U_{T'}$, as well as their complements
\[
S\setminus \operatorname{int}(U_T)
\qquad\text{and}\qquad
S'\setminus \operatorname{int}(U_{T'}),
\]
are discs. The markings and boundary data determine an orientation-preserving homeomorphism
\[
\partial U_T \longrightarrow \partial U_{T'}
\]
well-defined up to isotopy relative to the marked boundary points. Such a boundary map extends over each of the two discs uniquely up to isotopy relative to the boundary, since the mapping class group of the disc is trivial (Alexander’s lemma). We therefore obtain diffeomorphisms 
\[
\phi_{T,T'}^1:U_T\to U_{T'}
\qquad\text{and}\qquad
\phi_{T,T'}^2:S\setminus \operatorname{int}(U_T)\to S'\setminus \operatorname{int}(U_{T'}).
\]
Gluing these along $\partial U_T$ produces an orientation-preserving diffeomorphism
\[
\phi:S\to S',
\]
well-defined up to isotopy relative to $\partial S$.  The following lemma shows that the isotopy class of $\phi:S\rightarrow S'$ depends only on the isotopy classes of the markings on $S$ and $S'$, respectively.

\begin{lemma}\label{lemma: marking-independence} 
Let $S=(S,\delta,\{x_{i}\}_{i=0}^{n})$ and $S'=(S',\delta',\{x'_{i}\}_{i=0}^{n})$ be two parameterized labelled surfaces of type $(0,n+1)$, and let $[m]$ and $[m']$
be markings on $S$ and $S'$, respectively, inducing the same cyclic order on the boundary labels $\{0,\ldots,n\}$.

Let
\[
m:T\hookrightarrow S\qquad \text{and}\qquad \tilde m:\widetilde T\hookrightarrow S
\]
be two representatives of the marking $[m]$, and let
\[
m':T'\hookrightarrow S'\qquad \text{and}\qquad  \tilde m':\widetilde T'\hookrightarrow S'
\]
be two representatives of the marking $[m']$.
Then the diffeomorphisms obtained from the above construction using $(m,m')$ and $(\tilde m,\tilde m')$ determine the same isotopy class of the diffeomorphism $\phi: S\to S'.$ \qed
\end{lemma}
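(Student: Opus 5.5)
The plan is to reduce independence of the choice of representatives to two elementary moves, and to show that neither move changes the isotopy class of the glued map $\phi$. By definition of the equivalence relation on markings, $\tilde m$ is obtained from $m$ by a finite sequence of isotopies relative to $\partial S$ and precompositions with homeomorphisms of trees. The same holds for $\tilde m'$ and $m'$. It therefore suffices to change one side at a time, say $m\rightsquigarrow\tilde m$ with $m'$ fixed; the case of $S'$ is symmetric, or follows by applying the argument to $\phi^{-1}$. It also suffices to treat each elementary move separately.

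\textbf{Reparametrization.} If $\tilde m=m\circ h$ with $h\in\Homeo(T)$ (and $\widetilde T=T$), then $m(T)=\tilde m(T)$ as subsets of $S$. The tubular neighbourhood $U_T$ can be taken to be the same for both. The boundary map $\partial U_T\to\partial U_{T'}$ is determined only by the marked points $x_i$, the legs, and the cyclic order in which the legs meet $\partial U_T$. These data depend only on the image $m(T)$ and the labelling, and $h$ must respect the labelling of legs. Hence the boundary map is unchanged up to isotopy rel marked points. The same gluing construction then gives the same $\phi$ up to isotopy.

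\textbf{Isotopy.} If $m_t$ is an isotopy rel $\partial S$ from $m$ to $\tilde m$, isotopy extension gives an ambient isotopy $F_t$ of $S$ fixing $\partial S$ pointwise, with $F_1\circ m=\tilde m$. We may take $U_{\tilde T}=F_1(U_T)$. The boundary data for $\tilde m$ is then the transport under $F_1$ of that for $m$. Consequently the pieces $\phi^1,\phi^2$ constructed from $\tilde m$ can be chosen as the pieces constructed from $m$ precomposed with $F_1^{-1}$. This uses the uniqueness given by Alexander's lemma: any two extensions over a disc agree up to isotopy rel boundary. Therefore $\tilde\phi\simeq\phi\circ F_1^{-1}$. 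Since $F_1$ is isotopic to the identity rel $\partial S$, we get $\tilde\phi\simeq\phi$ rel $\partial S$.

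Before either step, I would also record well-definedness for a fixed representative. The choice of tubular neighbourhood is irrelevant because tubular neighbourhoods are unique up to ambient isotopy fixing the tree. The boundary homeomorphism $\partial U_T\to\partial U_{T'}$ is unique up to isotopy rel the marked points, because both circles are cut by the legs into arcs matched by the common cyclic order. The extensions over the two discs are unique by Alexander's lemma.

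The main obstacle is this last point, together with making the isotopy step rigorous. One must ensure the boundary-circle map is determined up to isotopy \emph{relative to the points where $\partial S$ meets $\partial U_T$}, so that the glued map is well-defined rel $\partial S$ and not merely up to boundary twists. I would handle this by choosing $U_T$ so that $U_T\cap\partial S$ consists of small arcs around the $x_i$, on which all maps are fixed to agree. The remaining arcs of $\partial U_T$ are intervals mapped to intervals, and such maps are isotopic rel endpoints.
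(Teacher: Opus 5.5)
The paper gives no proof of this lemma (it is closed with \qed as a routine consequence of the construction and Alexander's lemma), so there is no written argument to compare against. Your proof is correct and is the verification the authors leave implicit: you reduce to the two generating moves of the equivalence relation, establish well-definedness for a fixed representative via uniqueness of regular neighbourhoods, uniqueness of the boundary-circle map up to isotopy fixing the marked points, and Alexander's lemma, and handle the isotopy move by isotopy extension and $\tilde\phi\simeq\phi\circ F_1^{-1}$.

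One cosmetic point: the lemma allows $\widetilde T\neq T$, so the reparametrization move should be a label-preserving homeomorphism $\widetilde T\to T$ rather than an element of $\Homeo(T)$. Your argument covers this case verbatim, since the construction depends only on the image of the tree and the points $x_i$.
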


\subsection{The Complex of Markings}\label{sec: complex of markings}
In the study of presentations of mapping class groups, it is standard to consider a $2$-dimensional CW complex whose vertices encode decompositions of a surface and whose edges correspond to elementary moves between them. To give a presentation of our modular operad of mapping class groups, we use a variant of the complex of maximal markings of \cite[Section~5]{bk_marked_surfaces}.

A \emph{cut system} on a surface $S$ is a finite collection of pairwise disjoint simple loops $C=\{c_1,\ldots,c_k\}$, up to isotopy, such that the complement is a disjoint union of genus-zero punctured surfaces. Each puncture can be canonically replaced by a boundary component, yielding a disjoint union of genus-zero surfaces with boundary. By abuse of notation, we denote the resulting disjoint union of surfaces by
\[
\overline{S\setminus C}=\overline{S\setminus \bigcup_i c_i} \;=\; \coprod_j S_j.
\]
Each surface $S_j$ is canonically associated to a subsurface of $S$, and we refer to the surfaces $S_j$ as the \emph{cut-subsurfaces} determined by $C$. See, for example, \cite{Hatcher_Thurston,hls,moore1989classical}. 
%\lbnote{Slightly re-written: if a reader is very nitpicky, they would interpret the closure $\overline{S\setminus \bigcup_i c_i}$ in $S$. But then this closure is just the whole $S$. Original below}\mrnote{good point. }

We will enhance a cut system by equipping the cut-subsurfaces with compatible markings. These markings assemble into an embedding of a graph $G$ recording how the genus-zero pieces are connected: the vertices of $G$ correspond to cut-subsurfaces, while the internal edges correspond to the cut curves.

\begin{definition}\label{def: marked cut system}\cite[Definition~3.3]{bk_marked_surfaces}
Let $S=(S,\delta,\{x_i\}_{i=0}^n)$ be a labelled parameterized surface of type $(g,n+1)$ with $n\geq 0$. A \emph{marked cut system} on $S$ consists of a pair $(C,[m])$, where:
\begin{itemize}
    \item $C=\{c_1,\ldots,c_k\}$ is a cut system on $S$;

    \item $[m]$ is an equivalence class of embeddings $m:G\hookrightarrow S$, where $m\sim m'$ if $m'$ is obtained from $m$ by an isotopy relative to $\partial S$ or by precomposition with an element of $\Homeo(G)$.
\end{itemize}

These data are required to satisfy the following conditions:
\begin{enumerate}
    \item The embedding $m$ is compatible with the decomposition determined by
    $C$ in the following sense:
    \begin{itemize}
        \item the internal edges $iE_G$ of $G$ are in bijection with the curves in $C$, and the image of each internal edge meets the corresponding curve transversely in a single point and is disjoint from all other curves in $C$;
        
        \item the image of each vertex lies in a distinct cut-subsurface of $S$, and this determines a bijection of the vertices $V_G$ with the connected components of $\overline{S\setminus C}$;
        
        \item for each vertex $v\in V_G$, the half-edges incident to $v$ are in bijection with the boundary components of the corresponding cut-subsurface $S_v$.
    \end{itemize}

    \item The embedding $m$ sends each leg of $G$ to a marked boundary point of $S$. Equivalently, for every leg $e\in \partial(G)$ there exists $i\in\{0,\ldots,n\}$ such that the endpoint of $e$ is mapped to $x_i\in \delta(i)\subset \partial S$.

    \item For each vertex $v\in V_G$, the restriction of $m$ to the corolla subgraph $C_v\subset G$ induces a marking
    \[
    m_v:C_v\hookrightarrow S_v
    \]
    in the sense of Definition~\ref{def: marking of genus 0 surface}, where $S_v$ is the cut-subsurface corresponding to $v$ under (1).
\end{enumerate}

A marked cut system $(C,[m])$ is called a \emph{marked pants decomposition} if every cut-subsurface $S_j$ is a surface of type $(0,3)$.
\end{definition}

\begin{example}\label{ex:standard-markings}
For the pair of pants $P$, markings are represented by embeddings of the corolla $m:C_2\hookrightarrow P$. Figure~\ref{fig:standard-markings} depicts one such marking, which we call the \emph{standard marking} $m_{P}$ on $P$.

The surfaces $S_{\forkone}$ and $S_{\forktwo}$ are obtained by gluing two copies of $P$ in the two ways shown in Figure~\ref{fig:standard-markings}. The same figure shows the corresponding marked pants decompositions, which we call the \emph{standard markings} on $S_{\forkone}$ and $S_{\forktwo}$.

The surface $S_{\contraction}$ is obtained from a single copy of $P$ by gluing the boundary components $\partial_1$ and $\partial_2$. Its standard marking, also shown in Figure~\ref{fig:standard-markings}, is induced by gluing the corresponding half-edges of the embedded corolla in $P$.

\begin{figure}[h!]
    \centering
    \includegraphics[width=\linewidth]{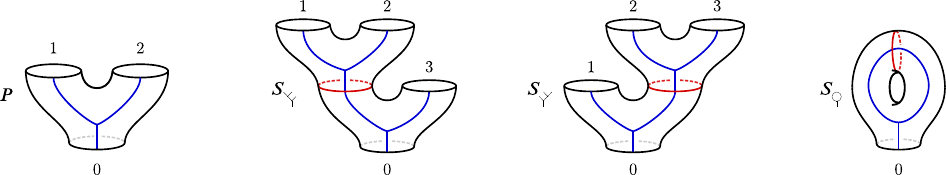}
    \caption{Standard markings on $P$, $S_{\forkone}$, $S_{\forktwo}$, and
    $S_{\contraction}$.}
    \label{fig:standard-markings}
\end{figure}
\end{example}

The complex of markings is a CW complex whose vertices are marked cut systems. Its edges are generated by the following mapping classes, written using the diagrammatic representation introduced in
Section~\ref{subsubsec: diagrammatical representation of mapping classes}.

\begin{definition}\label{def:standard moves}
    Let $P$, $S_{\forkone}$, $S_{\forktwo}$, and $S_{\contraction}$ be the surfaces described in Example~\ref{ex:standard-markings}.
    \begin{itemize}
        \item The standard $T$-move is the mapping class
            \[
            \tau:P\longrightarrow P
            \]
            represented in  Figure~\ref{fig:T-move}. This is precisely the Dehn twist about the boundary component labelled $0$.
        
        \item The standard $B$-move on the pair of pants, is the mapping class 
            \[
            \beta:P\longrightarrow P
            \]
            represented in Figure~\ref{fig:B-move}. Note that this mapping class is actually an isotopy class of a diffeomorphism that does \emph{not} fix the boundary of $P$. In fact, it exchanges boundaries labelled $1$ and $2$.
            
            \begin{figure}[h!]
                \centering
                \begin{subfigure}[t]{0.45\textwidth}
                    \centering
                \includegraphics[]{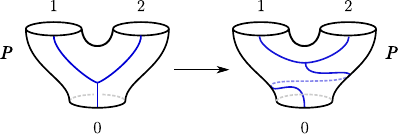}
                \caption{Standard $T$-move, $\tau$}
                \label{fig:T-move}
                \end{subfigure}
                \hfill
                \begin{subfigure}[t]{0.45\textwidth}
                    \centering
                    \includegraphics[]{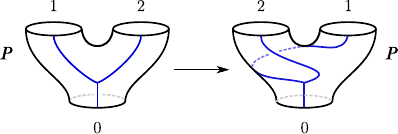}
                    \caption{Standard $B$-move, $\beta$}
                    \label{fig:B-move}
                \end{subfigure}
                \caption{The standard $T$ and $B$-moves.}
                \label{fig:standard-moves-t-b}
            \end{figure}
        
        \item The standard $A$-move is the mapping class
            \[
            \alpha:S_{\forkone}\longrightarrow S_{\forktwo}
            \]
            represented in Figure~\ref{fig:A-move}.    
            \begin{figure}[h]
                \centering
                \includegraphics[]{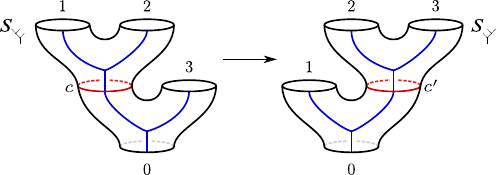}
                \caption{Standard $A$-move, $\alpha$}
                \label{fig:A-move}
            \end{figure}
        
        \item The standard $S$-move is the mapping class
            \[
            \sigma:S_{\contraction}\longrightarrow S_{\contraction}
            \] 
            given by
            \[
            \sigma=D_aD_bD_a\in \Gamma(S_{\contraction}),
            \] where $a$ and $b$ are the curves represented in Figure~\ref{fig:S-move}. It is simple to check that $\sigma$ sends the marked cut system $(\{a\},[m_{\contraction}])$ to $(\{b\},[\sigma\circ m_{\contraction}])$, as in Figure~\ref{fig:S-move}. 
            \begin{figure}[h]
                \centering
                \includegraphics[]{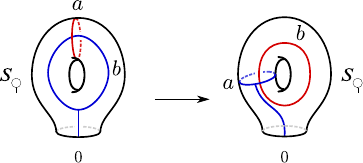}
                \caption{Standard $S$-move, $\sigma$}
                \label{fig:S-move}
            \end{figure}
    \end{itemize}
\end{definition}

We now define the CW complex of markings used throughout the paper. It is a variant of the \emph{complex of maximal markings} of \cite[Section~5]{bk_marked_surfaces}. The main difference is that we restrict to markings over pants decompositions, so the cut-subsurfaces are all pairs of pants. We refer to Appendix~\ref{app: complex of markings} for the full list of $2$-cell relations, and record below those most relevant for this paper.

\begin{definition}\label{def: complex of markings}

Let $S=(S,\delta,\{x_i\}_{i=0}^{n})$ be a parametrized, labelled surface of type $(g,n+1)$, with $n\ge 0$. The \emph{complex of markings over pants decompositions on $S$}, denoted $\markS$, is the $2$-dimensional CW complex defined as follows.

\smallskip
\noindent\textbf{Vertices.}
The vertices of $\markS$ are marked pants decompositions $(C,[m])$ on $S$.

\smallskip
\noindent\textbf{$1$-cells.}
The $1$-cells are generated by the following moves.

\begin{enumerate}
\item \textbf{$T$-moves:}
Given marked pants decompositions $(C,[m])$ and $(C',[m'])$, we define a $1$-cell
\[
(C,[m])\overset{T_a}{\rightsquigarrow}(C',[m'])
\]
whenever $C=C'$ and the markings $m$ and $m'$ differ only on a cut-subsurface $S'\subset \overline{S\setminus C}$ having $a$ as one of its boundary curves, in the following way: under an identification $\phi:S'\xrightarrow{\cong} P$ satisfying $\phi\circ m|_{S'}=m_P$ and $\phi(a)=\partial_0$,
we have
    \[\phi\circ m'|_{S'}=\tau\circ m_P.\]

Thus a $T$-move keeps the underlying pants decomposition fixed and changes the
marking by a Dehn twist along one cut curve.

\item \textbf{$B$-moves:}
Given marked pants decompositions $(C,[m])$ and $(C',[m'])$, we define a $1$-cell
\[
(C,[m])\overset{B_{a,b,c}}{\rightsquigarrow}(C',[m'])
\]
whenever $C=C'$ and the markings $m$ and $m'$ differ only on a cut-subsurface $S'\subset \overline{S\setminus C}$ with boundary curves $a,b,c$, in the following way: under an identification
$\phi:S'\xrightarrow{\cong}P$ satisfying
\[
\phi\circ m|_{S'}=m_P,\qquad
\phi(a)=\partial_1,\qquad
\phi(b)=\partial_2,\qquad \text{and} \qquad
\phi(c)=\partial_0,
\]
we have
    \[ \phi\circ m'|_{S'}=\beta\circ m_P. \]
When the curves $a$ and $b$ uniquely determine the pair of pants $S'$, we omit $c$ from the notation and write $B_{a,b}$.

\item \textbf{$A$-moves:} 
Given marked pants decompositions $(C,[m])$ and $(C',[m'])$, we define a $1$-cell
\[
(C,[m])\overset{A_{a,b}}{\rightsquigarrow}(C',[m'])
\] 
whenever $C\setminus\{a\}=C'\setminus\{b\}$ and the markings $m$ and $m'$ differ only on a cut-subsurface 
\[
S'\subset
\overline{S\setminus (C\setminus\{a\})}
=
\overline{S\setminus (C'\setminus\{b\})}
\]
of type $(0,4)$, in the following way: under an identification $\phi:S'\xrightarrow{\cong} S_{\forkone}$ taking the local marked cut system $(\{a\},m|_{S'})$ to the standard marking on $S_{\forkone}$, the composite
\[
\alpha\circ\phi:S'\xrightarrow{\cong} S_{\forktwo}
\]
takes the local marked cut system $(\{b\},m'|_{S'})$ to the standard marking on $S_{\forktwo}$.

\item \textbf{$S$-moves:} 
Given marked pants decompositions $(C,[m])$ and $(C',[m'])$, we define a $1$-cell
\[
(C,[m])\overset{S_{c',c}}{\rightsquigarrow}(C',[m'])
\] 
whenever $C\setminus\{c\}=C'\setminus\{c'\}$ and the markings $m$ and $m'$ differ only on a cut-subsurface 
\[
S'\subset
\overline{S\setminus (C\setminus\{c\})}
=
\overline{S\setminus (C'\setminus\{c'\})}
\] 
of type $(1,1)$, in the following way: under an identification $S'\xrightarrow{\cong} S_{\contraction}$ taking the local marked cut system $(\{c\},m|_{S'})$ to the standard marking on $S_{\contraction}$, we have
\[
c'=\sigma(c)
\qquad\text{and}\qquad
m'|_{S'}=\sigma\circ m|_{S'}.
\]

\end{enumerate}
\smallskip
\smallskip
\smallskip
\noindent\textbf{$2$-cells.}
The $2$-cells are attached along loops expressing relations among the $1$-cells.
Most of these coincide with the relations in \cite{bk_marked_surfaces}; see also
Appendix~\ref{app: complex of markings}.

In describing these relations, we use the following conventions. For
$f\in\Gamma(S)$ and a marking $m:G\hookrightarrow S$, we write $f\circ m$ for
the marking represented by $G\xrightarrow{m}S\xrightarrow{f}S$. For a subsurface
$S'\subset S$, we write $m|_{S'}$ for the induced marking obtained by restricting
the image of $m$ to the corresponding subgraph. In particular, if $S'$ is a
cut-subsurface, then $m|_{S'}$ is the
corolla marking $m_v:C_v\hookrightarrow S'$ associated to the corresponding
vertex $v\in V_G$.

We include the following families of relations, which are described in detail in Appendix~\ref{app: complex of markings}:
\begin{enumerate}
\item commutativity of disjoint moves;
\item boundary twist relations;
\item pentagon relation;
\item two hexagon relations;
\item self-duality of associativity,
% \[
% A_{c,c'}A_{c',c}=\id;
% \]
\item  twist--braiding relation;
\item  gluing relations for Dehn twists.
\end{enumerate}

\smallskip
\noindent
In addition to the local relations above, we impose the following genus-one relations. For surfaces of type $(1,1)$, we impose
\begin{align}
    % S_{a,b} S_{b,a} B_{\partial_0,a,a} &= \id, \tag{G1.1}\label{eq:g1.1}\\
    SSB &= \id, \tag{G1.1}\label{eq:g1.1}\\
    T^{-1}ST^{-1}ST^{-1} &= S\tag{G1.2}\label{eq:g1.2}
\end{align}
shown in more detail in Figures~\ref{fig:G2-cell-G1-1}
and~\ref{fig:G2-cell-G1-2}.

\begin{figure}[h!]
    \centering
    \begin{subfigure}[t]{0.415\linewidth}
        \centering
        \includegraphics[width=\linewidth]{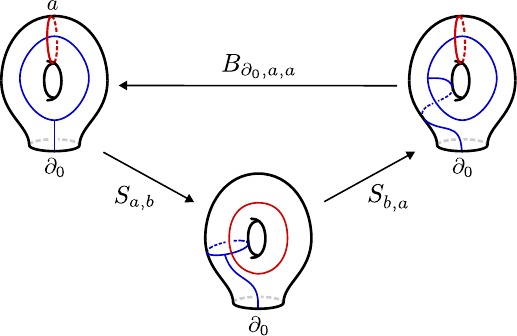}
        \caption{$2$-cell (G1.1)}
        \label{fig:G2-cell-G1-1}
    \end{subfigure}
    \hfill
    \begin{subfigure}[t]{0.545\linewidth}
        \centering
        \includegraphics[width=\linewidth]{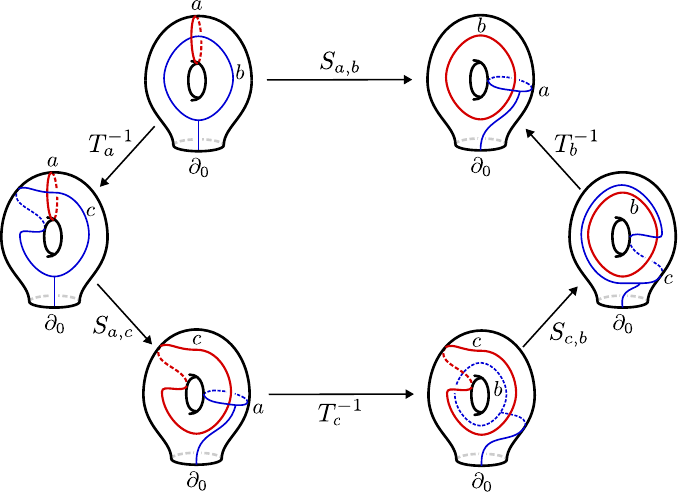}
        \caption{$2$-cell (G1.2)}
        \label{fig:G2-cell-G1-2}
    \end{subfigure}
    \caption{The genus-one $2$-cells (G1.1) and (G1.2).}
    \label{fig:G2-cells-G1}
\end{figure}
For surfaces of type $(1,2)$, we also impose the relation shown in
Figure~\ref{fig:G2-cell-NS-complex-of-markings}. This relation differs slightly
from the corresponding relation in \cite{bk_marked_surfaces}.

\begin{figure}[h!]
    \centering
    \includegraphics[width=\linewidth]{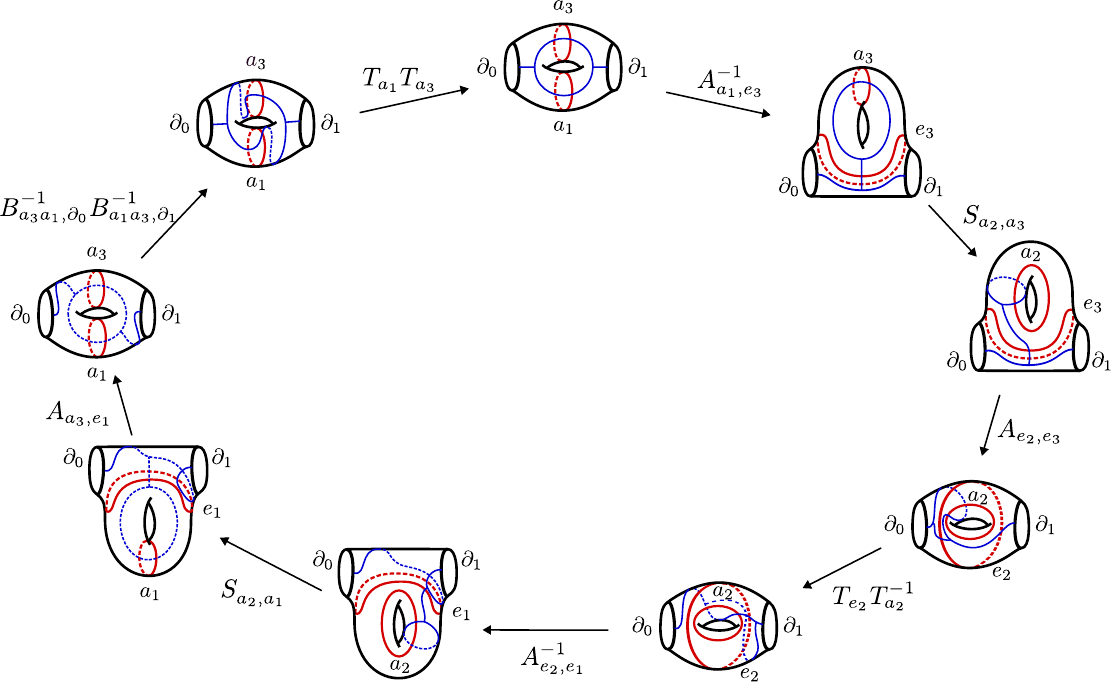}
    \caption{$2$-cell (G2)}
    \label{fig:G2-cell-NS-complex-of-markings}
\end{figure}
\end{definition}

The relations above are chosen so that the resulting complex has the expected connectivity properties. The result we will use later is the following.

\begin{prop}\label{prop:our-complex-of-markings-is-simply-connected}
Let $S=(S,\delta,\{x_i\}_{i=0}^{n})$ be a parameterized, labelled surface of type $(g,n+1)$, with $n\geq 0$. Then the complex of markings over pants decompositions on $S$, $\markS,$ is connected and simply connected.
\end{prop}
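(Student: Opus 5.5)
We deduce connectivity and simple connectivity of $\markS$ from the corresponding results for the Hatcher--Thurston pants complex and for the complex of maximal markings of \cite[Section~5]{bk_marked_surfaces}. We do this by analysing the forgetful map that sends a marked pants decomposition $(C,[m])$ to its underlying cut system $C$. Let $\mathcal{P}(S)$ be the pants decomposition complex of \cite{Hatcher_Thurston,hls}. Its vertices are pants decompositions, its edges are elementary moves (replacing one curve inside a $(0,4)$ or $(1,1)$ subsurface), and its $2$-cells are the triangle, square, pentagon and the $(1,2)$ relations. It is known to be connected and simply connected. The forgetful map sends $T$- and $B$-moves to constant paths and $A$- and $S$-moves to elementary moves of $\mathcal{P}(S)$, so it defines a cellular map $p:\markS\to\mathcal{P}(S)$, after collapsing degenerate cells.

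\emph{Step 1: fibres.} Fix a pants decomposition $C$ and let $F_C\subset\markS$ be the subcomplex of markings over $C$ joined by $T$- and $B$-moves. For a fixed graph type, markings over $C$ form a torsor for the subgroup of $\Gamma(S)$ stabilising $C$ up to the obvious symmetries. This is essentially $\mathbb{Z}^{|C|+n+1}$ (twists) extended by local braidings and by the permutations of pants allowed by the graph automorphisms. So $F_C$ is connected, because $T_a$ realises every twist along a cut curve and $B$ realises every local rearrangement of a pair of pants. Its fundamental group is generated by loops made of commuting disjoint moves, boundary twist relations ($B^2$ equals a combination of $T$'s on a pair of pants) and twist--braiding relations. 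These are exactly the $2$-cells (1), (2), (6), (7) in Definition~\ref{def: complex of markings}, so $F_C$ is simply connected.

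\emph{Step 2: lifting.} Every edge of $\mathcal{P}(S)$ from $C$ to $C'$ lifts to an $A$- or $S$-move starting at some marking over $C$. By Step~1 we may first move within $F_C$ to a marking that is standard on the relevant $(0,4)$ or $(1,1)$ subsurface. Connectivity of $\mathcal{P}(S)$ then gives connectivity of $\markS$. For simple connectivity, take a loop $\gamma$ in $\markS$. Its image $p(\gamma)$ bounds a disc in $\mathcal{P}(S)$, built from triangles, squares, pentagons and the $(1,1)$ and $(1,2)$ relations. It suffices to show that each such $2$-cell of $\mathcal{P}(S)$, together with any lift of its boundary, is null-homotopic in $\markS$ modulo loops in the fibres. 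The fibre loops are trivial by Step~1. We also need that two lifts of the same edge differ by fibre paths, which holds because, once the endpoints are standardised, an $A$-move is determined up to $T$/$B$-moves.

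\emph{Step 3, the main obstacle: matching $2$-cells.} Pentagons in $\mathcal{P}(S)$ lift to the pentagon relation (3). Triangles in $(0,4)$ subsurfaces lift to the hexagon relations (4) and self-duality (5). Squares lift to commutativity (1). In genus one, the triangle in a $(1,1)$ subsurface (three curves pairwise intersecting once) lifts to (G1.2), $T^{-1}ST^{-1}ST^{-1}=S$. The relation $S^2$, which is the hyperelliptic involution acting as the braiding on the pants, is (G1.1). The $(1,2)$ relation of Hatcher--Thurston lifts to (G2). The hard part is checking that the specific lifts close up exactly, with the correct powers of boundary twists, rather than only up to an element of the fibre group. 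This is where (G2) differs from \cite{bk_marked_surfaces}. I would do this check in $\Gamma_{1,1}$ and $\Gamma_{1,2}$ using Proposition~\ref{prop: generators and relations of mcg}: write each side as a product of Dehn twists via Lemma~\ref{lemma: conjugation by mapping classes}, and verify the identity using the doughnut and braid relations. As an alternative, one could compare directly with \cite[Theorem~5.1]{bk_marked_surfaces}: realise $\markS$ as the full subcomplex of the maximal-marking complex on pants vertices, and check that the retraction of arbitrary cut systems onto pants decompositions (by $A$-moves inside the non-pants pieces) carries their $2$-cells onto ours.
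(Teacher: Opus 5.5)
Your overall strategy matches the paper's. You project to the Hatcher--Lochak--Schneps pants complex, show the fibres are simply connected, compare lifts of edges, and lift each $2$-cell: (3A) to a hexagon, (5A) to the pentagon, (3S) to (G1.2), (6AS) to (G2), and (C) to commutativity. The gap is the step you dispose of in one sentence, that two lifts of the same edge ``differ by fibre paths''.

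\textbf{The missing square condition.} Take two lifts $e'\colon (C,m_1)\to(C',m_1')$ and $e''\colon (C,m_2)\to(C',m_2')$ of the same edge. Fibre paths $e_1\colon m_1\to m_2$ and $e_2\colon m_1'\to m_2'$ always exist by connectivity of the fibres, so their existence proves nothing. What the lifting argument needs is that the square $e_2e'\simeq e''e_1$ is \emph{null-homotopic in $\markS$}. Without this you cannot contract even the loop $e_1^{-1}(e'')^{-1}e_2e'$: its image in the pants complex is a backtrack, so no $2$-cell there helps you.

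Your justification (``once the endpoints are standardised, an $A$-move is determined up to $T/B$-moves'') does not supply this. Both $m_1$ and $m_2$ are already standard on the $(0,4)$ piece, since each is the source of an $A$-move. What must be shown is precisely that transporting $e'$ along a fibre path from $m_1$ to $m_2$ gives $e''$ up to homotopy. The paper does this as follows:
\begin{itemize}
\item It shows that two source markings with $A$-moves to the same $C'$ differ only by boundary twists of the $(0,4)$ piece, or by the simultaneous pair of braidings on its two pants. Twists along the replaced curve are excluded by intersection numbers, after normalising the identifications with Lemma~\ref{lemma:aux-for-A-move}.
\item The first case is closed by the boundary twist cells; the second needs the explicit square of Figure~\ref{fig:condition-3}.
\item For $S$-moves, the square uses commutativity together with \eqref{eq:g1.1}.
\end{itemize}
This is where (G1.1) really enters. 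The pants complex has no $2$-cell for ``$S^2$'', since the loop $SSB$ projects to a backtrack, so in your Step~3 it is matched against the wrong thing.

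\textbf{The fibre step.} Your Step~1 likewise asserts, rather than proves, that $\pi_1(F_C)$ is killed by cells (1), (2), (6), (7). The torsor picture is only heuristic. $B$-moves change the ribbon structure, so you have not exhibited a group acting simply transitively on the vertices, let alone shown that these cells present it. The paper argues instead as follows:
\begin{itemize}
\item It glues the complexes $\mathcal{M}(S_i)$ of the pants of $C$. Each is simply connected by comparison with the maximal-marking complex of \cite{bk_marked_surfaces}.
\item It identifies $F_C$ with the quotient of their product by a free $\mathbb{Z}^{C}$-action that slides the basepoints on the cut curves.
\item The resulting $\pi_1\cong\mathbb{Z}^{C}$ is generated by the loops $(\id_{S_i}\cup_c T_c)(T_c^{-1}\cup_c\id_{S_j})$, which are exactly the gluing $2$-cells.
\end{itemize}

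\textbf{Step~3 computations.} The mapping-class-group checks you propose only confirm that the (G1.2) and (G2) cells are attached along closed loops lying over (3S) and (6AS). Null-homotopy of an arbitrary lifted loop does not come from them. It comes from the fibre and square conditions above, combined with lifting each cell boundary to a cell boundary of $\markS$.
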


A proof is given in Appendix~\ref{app: complex of markings}.

\section{A Modular Operad of Mapping Class Groupoids}\label{sec: modular operad in groupoids}
In this section, we define a modular operad in groupoids $\bS=\{\bS(g,n+1)\}_{g\ge 0,\, n\ge 0},$ which homotopically approximates a modular operad constructed from the mapping class groups in the sense that, \[\cs\bS(g,n+1)\simeq \cs\Gamma_{g,n+1}.\] In Section~\ref{sec: presentation of modular operad} we will compare each groupoid $\bS(g,n+1)$ with the corresponding complex of markings. This comparison will allow us to give a finite presentation of $\bS$ by generators and relations.

\subsection{The Modular Operad $\bS$}\label{sec: modular operad of mapping class groups}
Recall that we write $\Sigma_n^+ = \Aut({0,1,\ldots,n})$ for the extended symmetric group. If $C_{n+1}$ is a corolla with (n+1) boundary legs, then a labelling of its boundary by the set ${0,1,\ldots,n}$ is equivalent to a choice of element of $\Sigma_n^+$. Indeed, once a reference labelling is fixed, every other labelling is obtained by precomposition with a unique permutation in $\Sigma_n^+$.

In the case $n=2$, the alternating subgroup of $\Sigma_2^+ = \Aut(\{0,1,2\})$ is given by $
A_2^+ = \{\id,(012),(021)\}$. The quotient $\Sigma_2^+/A_2^+$ has two elements, represented by
\[
\id\cdot A_2^+
\qquad\text{and}\qquad
(12)\cdot A_2^+.
\]
%Geometrically, these elements correspond to the two planar orientations of the trivalent corolla with legs labelled by $\{0,1,2\}$. 
The permutations in $A_2^+$ preserve the cyclic ordering of the labels, so the three labellings in each coset determine the same cyclic ordering of the boundary legs. 

\begin{definition}\label{def:free modular operad generated by mu}
Let $\Upsilon=\{\Upsilon(g,n+1)\}$ be the free modular operad in $\Set$ generated by the set \[
\Upsilon(0,3)=\Sigma_2^+/A_2^+.
\] 
The set $\Upsilon(0,3)$ consists of the two planar trivalent corollas with boundary labelled by $\{0,1,2\}$, which we denote by
\[
P=\id\cdot A_2^+
\qquad\text{and}\qquad
(12)^*P=(12)\cdot A_2^+.  
\] Figure~\ref{fig:objects-of-PP2} depicts elements of the set $\Upsilon(0,3)$.
\end{definition}

 \begin{figure}[ht]
        \centering
        \includegraphics[width=0.4\textwidth]{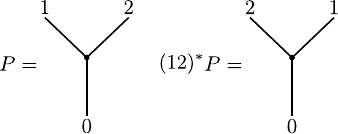}
        \caption{Generators of the modular operad $\Upsilon$.}
        \label{fig:objects-of-PP2}
    \end{figure}
    
\medskip 
Following Definition~\ref{def: free modular operad}, we see that an element of $\Upsilon(g,n+1)$ is an isomorphism class of connected decorated graphs $G=(G,\lambda, \ell_G,\epsilon,\underline{\upsilon})$ where:
\begin{enumerate}
    \item every vertex $v\in V_G$ is trivalent and has genus $\epsilon(v)=0$;
    \item The vertex decoration $\underline{\upsilon} = (\upsilon_{\lambda(1)}, \ldots, \upsilon_{\lambda(k)})$ consists of a sequence of elements of $\Upsilon(0,3)$. In other words, each vertex $v_{\lambda(i)}$ is decorated by one of the two planar trivalent corollas in $\Upsilon(0,3)$, equivalently by a bijection
    \[
    \ell_{v_{\lambda(i)}}:\{0,1,2\}\xrightarrow{\cong}\nb(v_{\lambda(i)})
    \]
    defined up to the action of $A_2^+$;
    \item the boundary of $G$ is equipped with a labelling
    \[
    \ell_G:\{0,\ldots,n\}\xrightarrow{\cong}\partial(G);
    \]
    \item the total genus of $G$ is $g$, equivalently $\beta_1(G)=g$.
\end{enumerate}

Condition~(2) is equivalent to equipping each vertex of $G$ with a cyclic ordering of its incident half-edges. In other words, $G$ is a ribbon graph, and the isomorphism classes above are isomorphism classes of ribbon graphs. Accordingly, we will often regard elements of $\Upsilon(g,n+1)$ as ribbon graphs with first Betti number $g$ and $n+1$ ordered legs. However, the interpretation of $\Upsilon(g,n+1)$ as the free modular operad generated by a symmetric sequence will be essential in later sections.

\medskip

Our next goal is to define, for each $g\geq 0$ and $n\geq 1$, a groupoid $\bS(g,n+1)$ whose objects are the decorated graphs in $\Upsilon(g,n+1)$. The construction is arranged so that, for every $G\in \Upsilon(g,n+1)$, there is a natural isomorphism
\[
\Aut_{\bS(g,n+1)}(G)\cong \Gamma(S_G),
\] where $S_G$ is the surface associated to $G$.  

We now describe this surface.  Let $G=(G,\lambda,\ell_G,\epsilon,\underline{\upsilon})\in \Upsilon(g,n+1)$
be a decorated graph. From $G$ we construct a labelled, parametrized surface
\[
S_G=(S_G,\delta,\{x_i\}_{i=0}^n)
\]
of type $(g,n+1)$, together with a marking over a pants decomposition in the
sense of Definition~\ref{def: marked cut system}, as follows.
\begin{enumerate}
    \item For each vertex $v_{\lambda(i)}\in V_G$, let $P_{v_{\lambda(i)}}=(P_{v_{\lambda(i)}},\delta_{v_{\lambda(i)}},\{x_0,x_1,x_2\})$ denote a copy of the standard pair of pants corresponding to the decoration $\upsilon_{\lambda(i)}\in \Upsilon(0,3).$ Thus the boundary components of $P_{v_{\lambda(i)}}$ are labelled by $\{0,1,2\}$ according to the labelling of the legs of the corolla decorated by $\upsilon_{\lambda(i)}$, i.e.  \[
    \begin{tikzcd}
    \delta := \ell_{P_{v_{\lambda(i)}}}: \{0,1,2\} \arrow[r] & \partial(P_{v_{\lambda(i)}}).
    \end{tikzcd}
    \] Moreover, $P_{v_{\lambda(i)}}$ comes equipped with the standard marking (Figure~\ref{fig:standard-markings})
\[
m_{v_{\lambda(i)}}:C_{v_{\lambda(i)}}\hookrightarrow P_{v_{\lambda(i)}},
\]
where the image of the $j$-th leg determines the marked point $x_j$ on the $j$th boundary component of $P_{v_{\lambda(i)}}$.

\item If two vertices of $G$ are joined by an internal edge, we glue the corresponding boundary components of the associated pairs of pants. In this way one obtains a surface of type $(g,n+1)$, which we denote by $S_G$. 

    More precisely, if the graph $G$ is presented as the coequalizer
    \[
    \begin{tikzcd}
        \coprod\limits_{e\in iE_G}\updownarrow 
        \arrow[r, shift left=1.2] 
        \arrow[r, shift right=1.2]
        & 
        \coprod\limits_{v_{\lambda(i)}\in V_G} C_{v_{\lambda(i)}} 
        \arrow[r] 
        & 
        G,
    \end{tikzcd}
    \]
    as in Lemma~\ref{lem:coequalizer-graph}, then the associated surface $S_G$ is the coequalizer
    \[
    \begin{tikzcd}
        \coprod\limits_{e\in iE_G} S^1 
        \arrow[r, shift left=1.2] 
        \arrow[r, shift right=1.2]
        & 
        \coprod\limits_{v_{\lambda(i)}\in V_G} P_{v_{\lambda(i)}} 
        \arrow[r] 
        & 
        S_G.
    \end{tikzcd}
    \]
We note that to ensure that $S_G$ is a smooth surface one should perform the gluing along the collar neighbourhoods of the boundary components. We omit these technical details for brevity. 

\item The boundary components of $S_G$ are labelled via the boundary labelling of $G$, that is,
    \[
    \begin{tikzcd}
    \delta := \ell_G : \{0,1,\ldots,n\} \arrow[r] & \partial(S_G).
    \end{tikzcd}
    \]
    For each $i\in\{0,\ldots,n\}$, let $x_i\in \delta(i)$ denote the point where the marking $m_G$ meets the boundary component $\delta(i)$.

\item     By construction, the image of $\coprod_{e\in iE_G} S^1$ in $S_G$ defines a pants decomposition.  The local markings $m_{v_{\lambda(i)}}:C_{v_{\lambda(i)}}\hookrightarrow P_{v_{\lambda(i)}}$ glue together to give a natural embedding $m_G:G\hookrightarrow S_G.$ Together this defines a marked pants decomposition $(iE_G, [m_G])$ of the surface $S_G$. 

\end{enumerate}

\begin{definition}\label{def: surface modular operad}
We define a family of groupoids $\bS(g,n+1)$, for $2g+n\geq 1$, as follows. 

For $(g,n+1)=(0,2)$, let $\bS(0,2)$ be the groupoid with a single object $|$ and $\Hom_{\bS(0,2)}(|,|)=\mathbb Z.$ We regard this as the groupoid version of the mapping class group of a thin cylinder, whose morphisms correspond to Dehn twists along the boundary; e.g.~\cite[3.1.1]{Wahl_infinite_loop_space}.

\medskip

For $g\ge 0$ and $n\ge 1$, the objects of $\bS(g,n+1)$ are the decorated graphs in $\Upsilon(g,n+1)$:
\[
\ob(\bS(g,n+1))=\Upsilon(g,n+1).
\]
Given any two graphs $G,G'\in \Upsilon(g,n+1)$, the morphisms are isotopy classes of diffeomorphisms
\[
\phi:S_G\to S_{G'}
\]
preserving the boundary labelling and sending the marked points on the boundary of $S_G$ to the corresponding marked points on the boundary of $S_{G'}$. Thus 
\[\Hom_{\bS(g,n+1)}(G,G'):=\pi_0\Diff(S_G,S_{G'};\partial).\] 
Composition in $\bS(g,n+1)$ is defined to be the composition of diffeomorphisms.

\end{definition} 

For $(g,n)=(0,2)$, the symmetric group $\Sigma_1^+$ acts trivially on $\bS(0,2)$. For all other pairs $(g,n)$, the extended symmetric group $\Sigma_n^+$ acts on $\bS(g,n+1)$ by permuting the labels on the boundary legs of the underlying decorated graphs. Thus $\bS=\{\bS(g,n+1)\}_{g\geq 0,\; n\geq 0}$ is a genus-graded sequence in groupoids. 

We now define the modular operad structure. For $0\leq i\leq n$ and
$0\leq j\leq m$, the composition functor
\[
\circ_i^j:\bS(g,n+1)\times \bS(h,m+1)\longrightarrow \bS(g+h,n+m)
\]
is defined on objects by grafting decorated graphs, so that
$(G,H)$ is sent to $G\circ_i^j H$, as in
Definition~\ref{def: free modular operad}. 

For $(g,n)=(0,2)$, there is a single object $|$ in $\bS(0,2)$ which acts as the identity for graph grafting. The generating morphism $1\in \mathbb Z=\Hom_{\bS(0,2)}(|,|)$ acts by Dehn twists: composing it at the $i$th input with a morphism $[\phi]:S_G\to S_H$ produces the mapping class $[D_{\partial_i}\circ \phi]:S_G\to S_H,$
where $\partial_i$ denotes the $i$th boundary component of $S_H$.

For all other pairs $(g,n)$, the associated surface $S_{G\circ_i^j H}$ is obtained by gluing $S_G$ and $S_H$ along the boundary components labelled $\ell_G(i)$ and $\ell_H(j)$. The same gluing operation defines the functor on morphisms. Namely, if $\phi:S_G\to S_{G'}$ and $\psi:S_H\to S_{H'}$ are boundary-label-preserving diffeomorphisms, then they carry the chosen boundary components to the corresponding boundary components of $S_{G'}$ and $S_{H'}$. Hence they glue to a diffeomorphism $\phi\circ_i^j\psi:
S_{G\circ_i^j H}
\longrightarrow
S_{G'\circ_i^j H'}.$
This construction is compatible with isotopy classes, and therefore defines the desired functor.

% For $(g,n)=(0,2)$, there is a single object $|$ in $\bS(0,2)$ which behaves as the identity in graph grafting. The generating morphism $1\in\Z\in S(0,2)$ when composed with with a morphism $[\phi]:S_G\to S_H$ at an input $i$, gives as a result the mapping class $[D_{\partial_i}\cdot \phi]:S_G\to S_H$, where $\partial_i$ denotes the $i$th boundary of $S_H$. For all other pairs $(g,n)$, we see that the associated surface
% $S_{G\circ_i^j H}$ is obtained by gluing $S_G$ and $S_H$ along the boundary
% components labelled $\ell_G(i)$ and $\ell_H(j)$.

% The same gluing operation defines the functor on morphisms. Namely, if
% $\phi:S_G\to S_{G'}$ and $\psi:S_H\to S_{H'}$ are boundary-label-preserving
% diffeomorphisms, then they carry the chosen boundary components to the
% corresponding boundary components of $S_{G'}$ and $S_{H'}$. Hence they glue to a
% diffeomorphism
% $\phi\circ_i^j\psi:S_{G\circ_i^j H}\to S_{G'\circ_i^j H'}$.
% This construction is compatible with isotopy classes, and therefore defines the
% desired functor. 

Similarly, for distinct labels $i$ and $j$, the contraction functor
\[
\xi_i^j:\bS(g,n+1)\longrightarrow \bS(g+1,n-1)
\]
is defined on objects by grafting together the two boundary legs of $G$ labelled
$\ell_G(i)$ and $\ell_G(j)$. On surfaces, this is the operation of gluing the
two corresponding boundary components of $S_G$, thereby increasing the genus by
one and reducing the number of boundary components by two.

On morphisms, a boundary-label-preserving diffeomorphism
$\phi:S_G\to S_{G'}$ glues along the corresponding boundary components to give
a diffeomorphism $S_{\xi_i^jG}\to S_{\xi_i^jG'}$. This is again compatible with
isotopy classes, and hence defines the contraction functor.

\begin{definition}\label{def: modular operad of mapping class groups}
The \emph{modular operad of surfaces}, 
\[
\bS=\{\bS(g,n+1)\}_{2g+n\geq 1},
\]
is the modular operad in groupoids whose $(g,n+1)$-component is the groupoid $\bS(g,n+1)$ of genus $g$ surfaces with $n+1$ marked boundary components.  The modular operad structure is induced by gluing boundary components, or equivalently by the grafting and contraction operations on graphs.

\medskip

The \emph{cyclic operad of genus-zero surfaces}, denoted $\trun{0}\bS$, is
obtained by restricting $\bS$ to genus zero. Its component with $n+1$ boundary
components is
\[
\trun{0}\bS(n+1)=\bS(0,n+1).
\]
The cyclic operad structure is induced by the gluing operations in $\bS$ which glue distinct genus-zero surfaces along boundary components. On markings, these operations correspond to grafting the underlying labelled trees.
\end{definition} 

By construction, for every object $G\in \ob(\bS(g,n+1))$, the automorphism group of
$G$ is the mapping class group of the associated surface:
\[
\Aut_{\bS(g,n+1)}(G)=\Gamma(S_G)\cong \Gamma_{g,n+1}.
\]
Since $\bS(g,n+1)$ is a connected groupoid, the following lemma follows
immediately.

\begin{lemma}
For each $g\geq 0$ and $n\geq 0$ satisfying $2g+n\geq 1$, there is a weak equivalence
\[
\cs\bS(g,n+1)\simeq \cs\Gamma_{g,n+1}.
\]
\end{lemma}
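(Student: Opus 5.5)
The idea is to use the fact that $\bS(g,n+1)$ is a connected groupoid. For a connected groupoid $\mathscr{G}$ and any object $G$, the inclusion of the one-object groupoid $\Aut_{\mathscr{G}}(G)$ into $\mathscr{G}$ is an equivalence of categories. Since $\cs$ sends equivalences of groupoids to weak equivalences (Section~\ref{sec: classifying space functor}), the lemma then follows from the identification $\Aut_{\bS(g,n+1)}(G)=\Gamma(S_G)\cong\Gamma_{g,n+1}$ recorded just before the statement. So the plan has three steps: treat the degenerate case, prove connectedness, and identify the automorphism group.

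\textbf{The degenerate case.} For $(g,n+1)=(0,2)$ there is nothing to prove. By definition $\bS(0,2)$ is the one-object groupoid $\mathbb Z$. Moreover $\Gamma_{0,2}\cong\mathbb Z$, generated by the boundary twist of the annulus. Hence $\cs\bS(0,2)=\cs\mathbb Z\simeq\cs\Gamma_{0,2}$.

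\textbf{Connectedness.} For $n\ge 1$, take objects $G,G'\in\Upsilon(g,n+1)$, i.e.\ trivalent ribbon graphs of Betti number $g$ with $n+1$ labelled legs. The associated surfaces $S_G$ and $S_{G'}$ are both connected, oriented, of genus $g$ with $n+1$ boundary components. This holds because $G$ is connected, and an Euler characteristic count gives the type $(g,n+1)$. By the classification of surfaces there is an orientation-preserving diffeomorphism $S_G\to S_{G'}$.

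We then adjust this diffeomorphism so that it respects the boundary data. First postcompose with a diffeomorphism of $S_{G'}$ that permutes boundary components, so that labels are preserved. Such a diffeomorphism exists since any permutation of boundary components is realized by an orientation-preserving diffeomorphism. Next, isotope near each boundary circle, rotating inside a collar, so that each marked point $x_i$ goes to $x_i'$. This produces a morphism $G\to G'$, so the groupoid is connected. Alternatively, connectedness follows from Proposition~\ref{prop:our-complex-of-markings-is-simply-connected}, via a comparison between marked pants decompositions and objects, but the direct argument suffices.

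\textbf{Automorphism group and conclusion.} We identify $\Aut(G)=\pi_0\Diff(S_G,S_G;\partial)$ with $\Gamma(S_G)$. There is a mild subtlety: morphisms are only required to fix the marked points $x_i$ and preserve labels, whereas $\Gamma(S_G)$ requires fixing $\partial S_G$ pointwise. We resolve this by the standard fact that the space of orientation-preserving diffeomorphisms of a circle fixing a point is contractible. Using collars, any such diffeomorphism is therefore isotopic, through diffeomorphisms of the same kind, to one fixing the boundary pointwise, and this gives a bijection on $\pi_0$. This is the only step requiring care, and it is the convention implicit in the paper's identification. Finally, $\cs\bS(g,n+1)\simeq\cs\Aut(G)=\cs\Gamma(S_G)\cong\cs\Gamma_{g,n+1}$.
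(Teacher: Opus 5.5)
Your proposal is correct and follows the paper's route: the paper deduces the lemma immediately from the connectedness of $\bS(g,n+1)$ together with $\Aut_{\bS(g,n+1)}(G)=\Gamma(S_G)\cong\Gamma_{g,n+1}$, which it asserts holds ``by construction.'' Your classification-of-surfaces argument for connectedness, and your collar argument identifying marked-point-preserving isotopy classes with $\Gamma(S_G)$, simply supply details the paper leaves implicit. The argument also applies verbatim to the cases $n=0$, $g\geq 1$ (for instance $\bS(1,1)$), which your phrase ``for $n\ge 1$'' inadvertently omits.
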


%\begin{proof}
%By construction, $\bS(g,n+1)$ is a connected groupoid. Moreover, for any object $G\in \bS(g,n+1)$, its automorphism group is naturally isomorphic to the mapping class group:
%\[
%\Aut_{\bS(g,n+1)}(G)\cong \Gamma_{g,n+1}.
%\]
%Since a connected groupoid is equivalent to the one-object groupoid associated to the automorphism group of any of its objects, it follows that $\bS(g,n+1)$ is equivalent to the one-object groupoid with endomorphism group $\Gamma_{g,n+1}$. Therefore
%\[
%\cs\bS(g,n+1)\simeq \cs\Gamma_{g,n+1}.
%\]
%\end{proof}

\begin{remark}
The modular operad in Definition~\ref{def: modular operad of mapping class groups} is closely related to the surface operads of \cite[Construction 2.2]{Till_surface_operad} and \cite[Section 3.1]{Wahl_infinite_loop_space}. These constructions all build surfaces by gluing basic pieces along boundary components, but they differ in which pieces are taken as ``atomic''. In the constructions of Tillmann and Wahl, the atomic surfaces include the pair of pants, the one-holed torus, and the disc. In our setting, the pair of pants is the only atomic surface.
\end{remark}

\subsubsection{A presentation of the modular operad $\bS$}\label{sec: presentation of modular operad}
Fix an object $G\in\ob(\bS(g,n+1))$, regarded as a decorated graph, and let $S_G$ denote the associated surface equipped with its standard marked pants
decomposition
\[
(iE_G,[m_G]:G\hookrightarrow S_G).
\]
We compare the topological encoding of markings of $S_G$, given by the complex of markings, with the categorical encoding given by the coslice category $G\downarrow \bS(g,n+1)$.

Recall the complex of markings $\calM(S_G)$ from Definition~\ref{def: complex of markings}. We write $\mathbb{M}_G$ for the groupoid presented by this $2$-complex. Its objects are the $0$-cells of $\calM(S_G)$, its generating morphisms are the edges of
$\calM(S_G)$, and its relations are the boundaries of the $2$-cells. Equivalently, $\Hom_{\mathbb{M}_G}(v,w)$ consists of homotopy classes, relative endpoints, of edge paths from $v$ to $w$.

\begin{definition}\label{defn: coslice S under G}
Fix a graph $G\in \ob(\bS(g,n+1))$. The \emph{coslice category} $G\downarrow \bS(g,n+1)$ is the category whose objects are morphisms $\phi:G\to H$ in $\bS(g,n+1)$, as $H$ ranges over the objects of $\bS(g,n+1)$.

A morphism from $\phi_1:G\to H_1$ to $\phi_2:G\to H_2$ is a morphism $\psi:H_1\to H_2$ in $\bS(g,n+1)$ such that $\psi\phi_1=\phi_2$. In other words, a morphism from $\phi_1\rightarrow \phi_2$ is a morphism $\psi:H_1\to H_2$ which makes the diagram
\[
\begin{tikzcd}
& G \arrow[dl, "\phi_1"'] \arrow[dr, "\phi_2"] & \\
H_1 \arrow[rr, "\psi"'] && H_2
\end{tikzcd}
\]
commute.
\end{definition}

\begin{lemma}\label{lemma: the complexes of morphisms and markings}
Fix a graph $G\in \ob(\bS(g,n+1))$ and let $S_G$ be the associated labelled, parametrized surface. Then the coslice category $G\downarrow \bS(g,n+1)$ is isomorphic to the groupoid $\mathbb{M}_G$.
\end{lemma}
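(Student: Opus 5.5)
We construct a functor $\Phi:G\downarrow \bS(g,n+1)\to \mathbb{M}_G$ and show that it is bijective on objects and on morphisms.

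\textbf{Objects.} An object of the coslice is a mapping class $\phi:S_G\to S_H$, taken up to isotopy rel boundary. Send it to the pulled-back marked pants decomposition
\[
\Phi(\phi)=\bigl(\phi^{-1}(iE_H),[\phi^{-1}\circ m_H]\bigr)
\]
on $S_G$. This is well defined, because isotopic $\phi$ give isotopic markings.

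For injectivity, suppose $\Phi(\phi_1)=\Phi(\phi_2)$. Then $\phi_2\phi_1^{-1}:S_{H_1}\to S_{H_2}$ carries the standard marked pants decomposition of $S_{H_1}$ to that of $S_{H_2}$. Cutting along the pants curves and applying the genus-zero construction of Section~\ref{subsubsec: diagrammatical representation of mapping classes} on each pair of pants shows two things. First, $H_1=H_2$ as decorated graphs: the marking recovers the graph, the cyclic orders at the vertices, and the leg labels. Second, by Lemma~\ref{lemma: marking-independence} together with Alexander's lemma on the complementary discs, $\phi_2\phi_1^{-1}$ is isotopic to the identity rel $\partial$.

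There is one point to check here. The marking contains an edge that crosses each pants curve transversely once, so it fixes the twist parameter along that curve. Hence the only ambiguity left is absorbed by $\Homeo(G)$ and by the equivalence relation on markings.

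For surjectivity, take a marked pants decomposition $(C,[m])$ on $S_G$. It determines a ribbon graph $H$: its vertices are the pants, its cyclic orders come from the orientation, and its leg labels come from $\delta$. It also determines a diffeomorphism $S_G\to S_H$, obtained by gluing the marking-determined diffeomorphisms on each pair of pants. These local maps match along the cut curves because the marking meets each curve in exactly one point.

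\textbf{Morphisms.} A morphism $\phi_1\to\phi_2$ in the coslice is the unique $\psi=\phi_2\phi_1^{-1}$. So $G\downarrow\bS(g,n+1)$ is a groupoid in which each hom-set has exactly one element. In other words, it is the indiscrete groupoid on its objects.

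The groupoid $\mathbb{M}_G$ is the fundamental groupoid of $\calM(S_G)$, up to presentation. By Proposition~\ref{prop:our-complex-of-markings-is-simply-connected}, this complex is connected and simply connected. Hence $\mathbb{M}_G$ is also indiscrete: it has exactly one morphism between any two objects.

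It follows that $\Phi$ extends uniquely to an isomorphism of groupoids. We should also check compatibility with the generating moves, so that the identification is the natural one used later. Each $T$, $B$, $A$ or $S$ move $(C,[m])\rightsquigarrow(C',[m'])$ corresponds to the coslice morphism given by the local standard mapping class $\tau,\beta,\alpha,\sigma$, extended by the identity off the cut-subsurface. This gives a concrete formula for $\Phi$ on generators.

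\textbf{Main obstacle.} The hardest step is the object-level bijection: showing that a marked pants decomposition determines the mapping class uniquely up to isotopy rel $\partial$. This requires making precise the gluing of the genus-zero marking-independence argument across the cut curves, including how the transverse edge pins down the twist. Everything else reduces formally to simple connectivity.
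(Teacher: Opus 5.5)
Your proposal is correct and follows essentially the same route as the paper. Objects correspond by pulling back the standard marked pants decomposition of $S_H$; the paper phrases this as a functor $I:\mathbb{M}_G\to G\downarrow\bS(g,n+1)$ built by gluing local pair-of-pants identifications, with pullback as its inverse. Morphisms match because both groupoids have exactly one morphism between any two objects: the coslice because $\bS(g,n+1)$ is a groupoid, and $\mathbb{M}_G$ by the simple connectivity of $\calM(S_G)$. Your injectivity argument, including the point that the transverse edge removes the twist ambiguity along each cut curve, spells out what the paper compresses into ``this construction is inverse to the one above''.
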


%This lemma will be crucial in our future proofs. In particular, the observation that morphisms that compose to $\id_G$ correspond to loops in $\calM(S_G)$.

\begin{proof}
We first construct a functor $I:\mathbb{M}_G\longrightarrow G\downarrow \bS(g,n+1)$ and then show that it is an isomorphism.

To define $I$ on objects,  let $(C,[m]:H\hookrightarrow S_G)$ be a vertex of the complex of markings $\calM(S_G)$. Since $C$ is a pants decomposition, the components of $\overline{S_G\setminus C}$ are pairs of pants. Thus the graph $H$ is trivalent, with internal edges corresponding to the curves of $C$ and vertices corresponding to the pair-of-pants
components. Since every pants decomposition of a surface of type $(g,n+1)$ has $2g+n-1 = |\chi(S_G)|$ pairs of pants, the graph $H$ has $2g+n-1$ trivalent vertices. An Euler characteristic argument then shows that $\beta_1(H)=g$.

The marking $[m]$ determines the additional structure required to regard $H$ as an object of $\bS(g,n+1)$. Indeed, the orientation of $S_G$ induces a cyclic ordering on the half-edges incident to each vertex, while the labelling of the boundary components of $S_G$ induces a labelling of the boundary legs of $H$. Thus the marked pants decomposition determines an object of $\bS(g,n+1)$, which we again denote by $H$.

%We now convert the marked cut system $(C,m)$ into an object of $\ob(\bS(g,n+1))$.  For each trivalent vertex $v\in V_{H}$ the corresponding subsurface $S'_v\subset S_G$ is a pair of pants. The embedding $m: H\hookrightarrow S_G$ determines a unique labelling permutation $\ell_v\in\Sigma_2^+$ (relative to the chosen ordering of the boundary components of the standard pair of pants), and we decorate $v$ by the corresponding element \[\mu_v = \mu(\ell_v(0),\ell_v(1),\ell_v(2)).\]

The object $H$ comes with its standard surface $S_H$ and standard marked pants decomposition $(C_H,[m_H])$. By construction, the marked pants decomposition $(C,[m])$ of $S_G$ and the standard marked pants decomposition of $S_H$ have the same associated graph. Therefore the local identifications of the
corresponding pairs of pants glue to a boundary-label-preserving diffeomorphism $\phi:S_G\longrightarrow S_H,$ which is well defined up to isotopy. We set
\[
I(C,[m]) = [\phi:G\to H],
\]
viewed as an object of the coslice category $G\downarrow \bS(g,n+1)$.

%Thus we obtain a planar, labelled, decorated graph \[H = (H, \lambda_{m}, \ell_{m}, {\mu}) \in \ob(\bS(g,n+1)).\] Recall that to such an object $H$ we associate a surface $S_H$ equipped with its standard marking over a pants decomposition $(C_H,m_H)$. For each vertex $v\in V_H$, let $S_v\subset S_G\setminus C$ and $S'_v\subset S_H\setminus C_H$ denote the corresponding subsurfaces, both of type $(0,3)$. The markings on $S_G$ and $S_H$ restrict to markings on $S_v$ and $S'_v$, which determine a unique mapping class \[\phi_v : S_v \longrightarrow S'_v\] as in Definition~\ref{def: diagramatic description of diffeo}. 

%Since the boundary parameterizations agree along the cut curves, the mapping classes $\phi_v$ are compatible along the glued boundary components and therefore assemble to a unique global mapping class    \[[\phi] : S_G \longrightarrow S_H.\] We define    \[I(C,m) := [\phi] \in \Hom_{\bS(g,n+1)}(G,H).\]

\medskip

We now define $I$ on morphisms. Let $\alpha:(C,[m])\longrightarrow (C',[m'])$ be a morphism in $\mathbb{M}_G$, and suppose
\[
I(C,[m])=[\phi:G\to H]
\qquad \text{and} \qquad
I(C',[m'])=[\phi':G\to H'].
\]
There is a unique morphism in the coslice category from $[\phi]$ to $[\phi']$, namely the mapping class
\[
[\phi'\phi^{-1}]:H\longrightarrow H'.
\]
We set $I(\alpha)=[\phi'\phi^{-1}]$. %Equivalently, $I(\alpha)$ is the unique morphism making the triangle
%\[
%\begin{tikzcd}
%& G \arrow[dl, "{[\phi]}"'] \arrow[dr, "{[\phi']}"] & \\
%H \arrow[rr, "{[\phi'\phi^{-1}]}"'] && H'
%\end{tikzcd}
%\]
%commute.

\medskip 
It remains to show that $I$ is an isomorphism. On objects, $I$ is bijective. Indeed, given an object $[\phi]:G\to H$ of $G\downarrow \bS(g,n+1)$, one can pull back the standard marked pants decomposition $(C_H,[m_H])$ of $S_H$ along $\phi$ to define a marked pants decomposition
\[
(\phi^{-1}(C_H),[\phi^{-1}m_H])
\] of $S_G$. This construction is inverse to the one above.

For morphisms, first observe that the coslice category $G\downarrow \bS(g,n+1)$ has exactly one morphism between any two objects. Indeed, if the objects are $[\phi]:G\to H$ and $[\phi']:G\to H'$, then a morphism between them is a morphism $[\psi]:H\to H'$ satisfying $[\psi]\circ[\phi]=[\phi']$. Since $\bS(g,n+1)$ is a groupoid, $[\phi]$ is invertible, so $[\psi]$ is necessarily $[\phi']\circ[\phi]^{-1}$. %Conversely, this morphism clearly satisfies the required relation.

On the other hand, $\mathbb{M}_G$ is the fundamental groupoid of $\calM(S_G)$ on its vertices. Since $\calM(S_G)$ is connected and simply connected by Proposition~\ref{prop:our-complex-of-markings-is-simply-connected}, there is exactly one homotopy class of paths between any two vertices. Thus $\mathbb{M}_G$ also has exactly one morphism between any two objects. Therefore $I$ is bijective on all Hom-sets. Hence $I$ is an isomorphism of categories. It follows that
\[
\mathbb{M}_G\cong G\downarrow \bS(g,n+1).
\]
\end{proof}

\medskip

Recall the mapping classes that described the standard $T$-, $B$-, $A$-, and $S$-moves in Definition~\ref{def:standard moves}. We write
\[
\tau \in \Hom_{\bS(0,2)}(|,|),\quad
\beta \in \Hom_{\bS(0,3)}\bigl(\triv,(12)^*\triv\bigr),\quad
\alpha \in \Hom_{\bS(0,4)}\bigl(\forkone,\forktwo\bigr),\quad
\sigma \in \Hom_{\bS(1,1)}\bigl(\contraction,\contraction\bigr)
\]
for the corresponding morphisms in $\bS$.

\begin{prop}\label{prop:S-generated-by-standard-moves}
Every morphism in $\bS$ can be obtained from $\tau$, $\beta$, $\alpha$, $\sigma$, and their inverses using categorical composition, operadic composition, symmetric group actions, and contractions.
\end{prop}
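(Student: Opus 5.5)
The plan is to reduce the statement to the connectivity of the complex of markings, via Lemma~\ref{lemma: the complexes of morphisms and markings}. Fix a morphism $[\phi]:G\to H$ in $\bS(g,n+1)$. The case $(g,n+1)=(0,2)$ is immediate, since $\Hom_{\bS(0,2)}(|,|)=\mathbb Z$ is generated by $\tau$. Otherwise, $[\phi]$ and $\id_G$ are both objects of the coslice $G\downarrow\bS(g,n+1)$. Under the isomorphism $I:\mathbb{M}_G\cong G\downarrow\bS(g,n+1)$ they correspond to two vertices of $\calM(S_G)$: the standard marking $(iE_G,[m_G])$ and the pulled-back marking $(\phi^{-1}(C_H),[\phi^{-1}m_H])$. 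By Proposition~\ref{prop:our-complex-of-markings-is-simply-connected}, $\calM(S_G)$ is connected, so these two vertices are joined by an edge path
\[
v_0\rightsquigarrow v_1\rightsquigarrow\cdots\rightsquigarrow v_k,
\]
consisting of $T$-, $B$-, $A$- and $S$-moves and their inverses. Applying $I$, and writing $I(v_r)=[\phi_r:G\to H_r]$, we get
\[
[\phi]=[\phi_k\phi_{k-1}^{-1}]\circ\cdots\circ[\phi_1\phi_0^{-1}],
\]
with $\phi_0=\id$ and $\phi_k=\phi$. It therefore suffices to show that the morphism $I(e)=[\phi_{r+1}\phi_r^{-1}]:H_r\to H_{r+1}$ attached to a single elementary move $e$ lies in the sub-structure generated by $\tau,\beta,\alpha,\sigma$.

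For this local step, observe that each move changes the marking only on one cut-subsurface $S'$. Depending on the move, $S'$ is a pair of pants, a $(0,4)$-piece cut along one curve, or a $(1,1)$-piece cut along one curve, and in the graph $H_r$ it corresponds to a subgraph $K$: a single vertex, or two vertices joined by an edge, or one vertex with a loop. Using Lemma~\ref{lem:coequalizer-graph}, write $H_r$ as obtained from $K$ and the remaining corollas by iterated grafting $\circ_i^j$ and contractions $\xi_i^j$, preceded by a symmetric group relabelling so that $K$ is presented in the standard form $P$, $S_{\forkone}$, or $S_{\contraction}$ of Example~\ref{ex:standard-markings}. The definition of the moves in Definition~\ref{def: complex of markings} says precisely that, under the identification $\phi:S'\cong$ standard surface, the change of marking is $\tau$, $\beta$, $\alpha$ or $\sigma$. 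Hence $\phi_{r+1}\phi_r^{-1}$ is the gluing of that standard mapping class on $S'$ with identities on the complementary pants. That is, it is obtained as an operadic composite and contraction of the generator with identity morphisms of $\bS(0,3)$, conjugated by a relabelling. For a $T$-move along an interior curve $a$, we instead insert $\tau\in\bS(0,2)$ into the corresponding edge. Gluing along that boundary realises $D_a$, by the definition of composition with $\bS(0,2)$.

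I expect the main obstacle to be bookkeeping rather than substance. The issue is that the operadic expression has to produce exactly the morphism $\phi_{r+1}\phi_r^{-1}$ between the correct objects $H_r$ and $H_{r+1}$, and not merely one that agrees up to a relabelling or a different choice of identification $S'\cong$ standard surface. I would handle this by noting that the identification in the move is fixed by matching the markings. By Lemma~\ref{lemma: marking-independence} the resulting mapping class depends only on the marking classes, and by Lemma~\ref{lemma: conjugation by mapping classes} transporting a Dehn twist or half-twist along an identification is conjugation. The remaining ambiguity of cyclic relabelling is absorbed by the $\Sigma^+$-actions, since objects of $\bS(0,3)$ are only defined up to $A_2^+$. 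Inverse moves give the inverses of the generators, which completes the argument.
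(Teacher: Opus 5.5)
Your proposal is correct and follows essentially the same route as the paper: the coslice lemma identifies morphisms out of $G$ with vertices of $\mathcal{M}(S_G)$, connectivity gives an edge path from the standard marking, and each elementary move is realised as one generator glued to identities (the paper works out only the $B$-move and declares the other cases analogous). Your additional remarks on the $(0,2)$ case, on realising $T$-moves by inserting $\tau$, and on traversing edges backwards to obtain inverses simply make explicit details that the paper leaves implicit.
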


\begin{proof} 
Let $[\phi]:G\to H$ be a morphism in $\bS(g,n+1)$, represented by a boundary-label-preserving diffeomorphism
\[
\phi:S_G\longrightarrow S_H.
\]
By Lemma~\ref{lemma: the complexes of morphisms and markings}, this morphism corresponds to the marked pants decomposition of $S_G$ obtained by pulling back the standard marked pants decomposition of $S_H$ along $\phi$.

We first translate a single edge of the complex of markings into a morphism of $\bS$. Suppose that
\[
(C,[m])\rightsquigarrow (C',[m'])
\]
is an edge of $\calM(S_G)$ of type $T$, $B$, $A$, or $S$. Let
\[
[\phi]:G\to H
\qquad\text{and}\qquad
[\psi]:G\to H'
\]
be the objects of the coslice category $G\downarrow\bS(g,n+1)$ corresponding to $(C,[m])$ and $(C',[m'])$, respectively, under Lemma~\ref{lemma: the complexes of morphisms and markings}. The edge determines the unique morphism in the coslice category from $[\phi]$ to $[\psi]$; equivalently, there is a unique morphism $[f]:H\to H'$ such that
\[
[\psi]=[f]\circ[\phi].
\]
We claim that $[f]$ is obtained from identities and one of the standard generators $\tau,\beta,\alpha,\sigma$ by operadic composition, symmetric group actions, and contractions.

We verify the claim for a $B$-move; the other cases are analogous. Suppose that
\[
(C,[m]:H\hookrightarrow S_G)
\overset{B_{a,b,c}}{\rightsquigarrow}
(C',[m']:H'\hookrightarrow S_G)
\]
is a $B$-move. Then $C=C'$, so the two marked pants decompositions have the same underlying cut system. The move is supported on a single cut-subsurface
\[
S'\subset \overline{S_G\setminus C}
\]
with boundary components $a,b,c$, and the markings $m$ and $m'$ agree on the complement of $S'$.

Choose an identification $\xi:S'\xrightarrow{\cong}P$ with the standard pair of pants such that
\[
\xi\circ m|_{S'}=m_P,\qquad
\xi(a)=\partial_1,\qquad
\xi(b)=\partial_2,\qquad
\xi(c)=\partial_0.
\]
By the definition of the $B$-move, under this identification the new marking is given by
\[
\xi\circ m'|_{S'}=\beta\circ \xi\circ m|_{S'}.
\]

Let $v\in V(H)$ and $v'\in V(H')$ be the vertices corresponding to the component $S'$. Since the move is supported on $S'$, the graphs $H$ and $H'$ agree away from these vertices, with all boundary labels preserved. We therefore define a mapping class
\[
f:S_H\longrightarrow S_{H'}
\]
by taking it to be the identity on every pair-of-pants component except the one corresponding to $v$, and by taking it on that component to be the standard $B$-move $\beta$ under the above identifications. If the local identification with the standard pair of pants differs by a cyclic relabelling, then the local map is the corresponding cyclic translate of $\beta$.

Thus $[f]:H\to H'$ is obtained from $\beta$ by operadic composition with identities, together with the relevant symmetric group action. Moreover, the marking corresponding to $[\psi]$ is exactly the marking obtained from the one corresponding to $[\phi]$ by applying $[f]$ after $[\phi]$. Hence
\[
[\psi]=[f]\circ[\phi].
\]

We now complete the proof. Let $[\phi]\in \bS(g,n+1)(G,H)$, and let $(C_\phi,[m_\phi])$ be the corresponding marked pants decomposition of $S_G$. Since $\calM(S_G)$ is connected, there is an edge path
\[
(C_0,[m_0])\rightsquigarrow (C_1,[m_1])
\rightsquigarrow \cdots \rightsquigarrow (C_k,[m_k])
\]
from the standard marking $(C_0,[m_0])=(iE_G,[m_G])$ to $(C_k,[m_k])=(C_\phi,[m_\phi])$.

Under Lemma~\ref{lemma: the complexes of morphisms and markings}, the standard marking corresponds to the identity morphism $\id_G:G\to G$. By the single-edge argument above, each edge in the path corresponds to postcomposition with a morphism
\[
[f_i]:H_{i-1}\to H_i
\]
obtained from identities and one of $\tau,\beta,\alpha,\sigma$ by operadic composition, symmetric group actions, and contractions. Hence
\[
[\phi]=[f_k]\circ\cdots\circ [f_1]\circ \id_G.
\]
Thus $[\phi]$ lies in the subcategory generated by $\tau,\beta,\alpha,\sigma$ under categorical composition, operadic composition, symmetric group actions, and contractions. Since $[\phi]$ was arbitrary, the claim follows.
\end{proof}

%By definition of the complex $\calM(S_{G})$, a loop based at the standard cut system $(iE_G, m_G: G\hookrightarrow S_G)$ in $\calM_{S_G}$ is a sequence of edges \[(iE_G,m_G)\overset{e_1}{\rightarrow} (C_1,m_1) \overset{e_2}{\rightarrow}\dots \overset{e_{k-1}}{\rightarrow}(C_{k-1},m_{k-1})\overset{e_k}{\rightarrow} (iE_G,m_G)\] and, by Lemma~\ref{lemma: the complexes of morphisms and markings}, it corresponds to a sequence of diffeomorphisms \[\phi_{e_{k-1}}\phi_{e_{k-2}}\dots\phi_{e_1}=\id_{G} \in \Hom_{\bS(g,n+1)}(G,G).\]  Theorem 5.1 of \cite{bk_marked_surfaces} says that the complex $\calM(S_G)$ is connected and simply connected. In particular the fundamental group $\pi_1(\calM(S_G); (iE_G, m_G))$ is trivial and the boundaries of the $2$-cells based at the point $(iE_G, m_G)$ in $\calM(S_{G})$ translate to sequences of diffeomorphisms $\phi_{e_{k-1}}\phi_{e_{k-2}}\dots\phi_{e_1}=\id_{G} \in \Hom_{\bS(g,n+1)}(G,G)$ in which each of the $\phi_{e_i}$ are one of the elementary diffeomorphisms $\tau, \beta, \alpha$ and $\sigma$.  Translating the diffeomorphisms $\phi_{e_{k-1}}\phi_{e_{k-2}}\dots\phi_{e_1}=\id_{G} \in \Hom_{\bS(g,n+1)}(G,G)$ into a series of modular operadic and categorical compositions, provides a series of equations which are satisfied by the mapping classes $\tau, \beta, \alpha$ and $\sigma$ in $\bS(g,n+1)$. The following theorem states that the equations imposed by the $2$-cells of $\calM(S_{G})$ are sufficient to determine a map of modular operads out of $\bS$. 

The preceding proposition identifies the generators of the morphisms in $\bS$. The relations among these generators are encoded by the $2$-cells of the complex of markings.  Fix $G\in \ob(\bS(g,n+1))$. A loop in $\calM(S_G)$ based at the standard marking $(iE_G,[m_G])$ is represented by an edge path
\[
(iE_G,[m_G])
\overset{e_1}{\rightsquigarrow}
(C_1,[m_1])
\overset{e_2}{\rightsquigarrow}
\cdots
\overset{e_k}{\rightsquigarrow}
(iE_G,[m_G]).
\]
Under Lemma~\ref{lemma: the complexes of morphisms and markings}, this loop corresponds to a sequence of morphisms in $\bS(g,n+1)$ whose composite is the identity morphism of $G$:
\[
[f_k]\circ \cdots \circ [f_1]=\id_G
\qquad\text{in }\Aut_{\bS(g,n+1)}(G).
\]
Each edge $e_i$ is one of the elementary moves $T$, $B$, $A$, or $S$, so each $[f_i]$ is obtained from $\tau,\beta,\alpha,\sigma$ by operadic composition, symmetric group actions, and contractions.

The boundaries of the $2$-cells in $\calM(S_G)$ therefore give equations among
the generators $\tau,\beta,\alpha,\sigma$. The goal of the next few sections is
to show that these are the only equations satisfied by these generators. This
will give a presentation of $\bS$, and hence a classification of maps out of the
modular operad $\bS$.

Before proving this in full generality, we first study the genus-zero part of $\bS$ and relate it to operads whose presentations are already known.

\subsection{Genus-zero surfaces and cyclic ribbon braids} \label{sec: operad of ribbon braids}
As a first step toward a presentation of the modular operad of surfaces, we study its genus-zero truncation. We use the known presentation of the operad of parenthesized ribbon braids to obtain a presentation of $\trun{0}\bS$, and then show that this identification is compatible with the cyclic structures. %The key input is the classical identification of pure ribbon braid groups with mapping class groups of genus-zero surfaces with boundary.

\subsubsection{Braids and ribbon braids}

For $n\geq 1$, let $\Br_n$ denote the braid group on $n$ strands. It may be defined as the fundamental group of the configuration space of $n$ unordered distinct points in the complex plane. It admits the standard presentation
\[
\Br_n=
\left\langle
\beta_1,\ldots,\beta_{n-1}
\middle|
\beta_i\beta_j=\beta_j\beta_i \ \text{for } |i-j|\ge 2,
\beta_i\beta_{i+1}\beta_i=\beta_{i+1}\beta_i\beta_{i+1}
\right\rangle .
\]
For each $n\geq 1$, there is a short exact sequence
\[
\begin{tikzcd}
1 \arrow[r] & \PB_n \arrow[r] & \Br_n \arrow[r, "\pi"] & \Sigma_n \arrow[r] & 1,
\end{tikzcd}
\]
where $\pi$ sends the braid generator $\beta_i$ to the transposition $(i\ i+1)$. The kernel of $\pi$ is the \emph{pure braid group}, denoted $\PB_n$. It is generated by the elements
\[
x_{ij}=\beta_{j-1}\cdots \beta_{i+1}\beta_i^2\beta_{i+1}^{-1}\cdots \beta_{j-1}^{-1},
\qquad 1\le i<j\le n.
\]

The \emph{ribbon braid group} on $n$ strands, denoted $\RB_n$, may be defined as the fundamental group of the configuration space of $n$ unordered distinct points in the plane, each equipped with a framing (equivalently, each equipped a point of $S^1$). It is generated by the braid generators $\beta_1,\ldots,\beta_{n-1}$ together with twists $\twist_1,\ldots,\twist_n$, subject to the braid relations above and the additional relations
\[
\beta_i\twist_j=\twist_j\beta_i \quad \text{for } j\notin\{i,i+1\},\qquad
\beta_i\twist_{i+1}=\twist_i\beta_i,\qquad
\twist_i\twist_j=\twist_j\twist_i \quad \text{for } i\neq j.
\]
The \emph{pure ribbon braid group}, denoted $\PRB_n$, is the kernel of the natural projection
\[
\RB_n\longrightarrow \Sigma_n.
\]

The ribbon braid groups are closely related to mapping class groups of genus-zero surfaces. Indeed, there is a natural homomorphism
\[
\PRB_n\longrightarrow \Gamma_{0,n+1}
\]
sending the pure braid generator $x_{ij}$ to the Dehn twist about the simple closed curve separating the boundary components labelled $i$ and $j$ from the remaining boundary components, and sending the twist $\twist_k$ to the full Dehn twist about the $k$th boundary component. The following proposition is standard, c.f., for example, \cite[Section 1.5]{Wahl_Thesis}. 

\begin{prop}\label{prop: RBn is Gamma0n+1}
There is an isomorphism of groups
\[
\PRB_n\cong \Gamma_{0,n+1}.
\]
\end{prop}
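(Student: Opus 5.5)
We prove $\PRB_n\cong\Gamma_{0,n+1}$ by comparing two central extensions, using the framing circle bundle over the configuration space of disc centres.

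\emph{Step 1: the unframed case.} Model $\Gamma_{0,n+1}$ as the mapping class group of a disc $D$ with $n$ open subdiscs $D_1,\dots,D_n$ removed, with $\partial_0=\partial D$. Capping each inner boundary $\partial_k$ by a disc with a marked point gives a surjection
\[
\Gamma_{0,n+1}\to \Gamma(D;n \text{ pts}),
\]
where the target is the mapping class group fixing $\partial D$ and the $n$ points individually. Classically, the target is isomorphic to $\PB_n$: the Birman exact sequence and Alexander's lemma identify it with $\pi_1$ of the ordered configuration space of $n$ points in the interior of $D$. The capping homomorphism is surjective. By the standard capping sequence (see \cite{farb-marg}), its kernel is the free abelian group $\Z^n$ generated by the boundary twists $D_{\partial_1},\dots,D_{\partial_n}$, and this kernel is central. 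So we have
\[
1\to\Z^n\to\Gamma_{0,n+1}\to\PB_n\to 1.
\]

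\emph{Step 2: the ribbon side.} Restricting the presentation of $\RB_n$ to the kernel of $\RB_n\to\Sigma_n$ shows that $\PRB_n\cong \PB_n\times\Z^n$. Here the $\Z^n$ factor is generated by the twists $\twist_k$, which are central in $\PRB_n$ by the relations $\beta_i\twist_{i+1}=\twist_i\beta_i$. Topologically, the framed ordered configuration space is a trivial $(S^1)^n$-bundle over the ordered configuration space.

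\emph{Step 3: the comparison map.} The homomorphism $\PRB_n\to\Gamma_{0,n+1}$ described before the statement sends $x_{ij}\mapsto D_{c_{ij}}$ and $\twist_k\mapsto D_{\partial_k}$. To see it is well defined, realise it as a point-pushing map: a loop of framed configurations is sent to the mapping class obtained by dragging the discs $D_k$ along the loop and rotating them according to the framing. This is the boundary map of the fibration
\[
\Diff^+(D;\partial D)\to \text{(framed ordered configurations of } n \text{ subdiscs)},
\]
whose total space is contractible by Smale's theorem.

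Alternatively, one can argue directly from the long exact sequence of this fibration. The contractibility of the total space, together with the fact that the fibre of the restriction map is $\Diff^+(S;\partial S)$, whose components are $\Gamma_{0,n+1}$ with contractible identity component (Earle--Schatz), gives
\[
\pi_1\bigl(\text{framed configurations}\bigr)\cong\Gamma_{0,n+1}.
\]
Since $\PRB_n=\pi_1$ of this framed configuration space, this is exactly the claimed isomorphism.

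\emph{Step 4: compatibility with the extensions.} The map sends the $\Z^n$ of twists isomorphically onto the boundary-twist kernel, and it induces the classical identification $\PB_n\cong\Gamma(D;n\text{ pts})$ on quotients. The five lemma then gives an isomorphism.

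The main obstacle is the bookkeeping needed to check that $x_{ij}$ goes to the twist about the curve $c_{ij}$ enclosing $\partial_i$ and $\partial_j$, with no stray boundary twist. The point-pushing of one disc around another also rotates the pushed disc relative to the framing, so the image of a full twist of two strands could differ from $D_{c_{ij}}$ by some $D_{\partial_k}$. I would check this against the lantern/chain relation $D_{c_{ij}}=(\text{push})\cdot D_{\partial_i}^{\epsilon}D_{\partial_j}^{\epsilon}$ and fix a framing convention (for instance, the blackboard framing) under which the formula holds exactly. If a correction term appears, it only changes the isomorphism by an automorphism of the split extension, so the conclusion is unaffected.
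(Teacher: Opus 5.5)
Your proposal is correct. It also gives more than the paper does: the paper offers no argument, only calls the isomorphism standard and cites \cite[Section~1.5]{Wahl_Thesis}.

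Your Step~3 is already a complete proof on its own. The evaluation map $\Diff^+(D;\partial D)\to\mathrm{Emb}$, onto the space of ordered embeddings of $n$ discs, is a fibration whose fibre is (homotopy equivalent to) $\Diff^+(S;\partial S)$. Contractibility of the total space (Smale) then gives a bijection $\pi_1(\mathrm{Emb})\to\pi_0\Diff^+(S;\partial S)=\Gamma_{0,n+1}$. This bijection is a group homomorphism because $\mathrm{Emb}$ is a homogeneous space of the group $\Diff^+(D;\partial D)$. Finally, $\mathrm{Emb}\simeq\mathrm{Conf}_n\times(S^1)^n$ has fundamental group $\PB_n\times\Z^n=\PRB_n$. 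Earle--Schatz is only needed to kill the higher homotopy of $\mathrm{Emb}$, not for this statement.

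Steps~1, 2 and~4 are therefore redundant, since the five-lemma route uses the same connecting map. What they add is the explicit central extension $\Z^n\to\Gamma_{0,n+1}\to\PB_n$, matched with the splitting $\PRB_n=\PB_n\times\Z^n$.

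Your ``main obstacle'' is likewise irrelevant, because the statement claims only an abstract isomorphism. Since you raise it, though, your proposed fix points the wrong way.
\begin{itemize}
\item With constant (blackboard) framing, the push map sends $x_{ij}$ to $D_{c_{ij}}D_{\partial_i}^{-1}D_{\partial_j}^{-1}$ (with the paper's conventions), not to $D_{c_{ij}}$. This is exactly the paper's twist--braiding relation $B_{1,2}B_{2,1}=T_0T_1^{-1}T_2^{-1}$ in $\Gamma_{0,3}$, i.e.\ the balancing identity.
\item So $x_{ij}\mapsto D_{c_{ij}}$ holds on the nose only for the loop that rigidly rotates a disc containing the $i$th and $j$th points, which is $x_{ij}\tau_i\tau_j$. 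Blackboard framing is precisely the convention that produces the correction terms.
\end{itemize}
Your claim that the correction is an automorphism is true but needs one line. The assignment $x_{ij}\mapsto e_i+e_j$ defines a homomorphism $\chi\colon\PB_n\to\Z^n$, because $\PB_n^{\mathrm{ab}}$ is free abelian on the classes of the $x_{ij}$. Then $(p,z)\mapsto(p,z+\chi(p))$ is an automorphism of $\PB_n\times\Z^n$.
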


\subsubsection{The operad of parenthesized ribbon braids} 
The objects of the operad of parenthesized ribbon braids are parenthesized binary products. We encode these products by planar rooted binary trees.% whose input legs are labelled.
\begin{definition}\label{def: magma operad}
The \emph{magma operad} $\Omega=\{\Omega(n)\}_{n\geq 1}$ is the operad in $\Set$ whose elements are isomorphism classes of planar rooted binary trees with $n$ input legs, equipped with a labelling
\[
\ell_T:\{0,1,\dots,n\}\xrightarrow{\cong}\partial(T),
\]
where $\ell_T(0)$ is the root.

Equivalently, $\Omega$ is the free operad generated by the two rooted binary corollas
\[
\mu=\mu(0;1,2)
\qquad\text{and}\qquad
(12)^*\mu=\mu(0;2,1).
\]
These are the two corollas obtained by decorating the rooted binary corolla by the two elements of $\Sigma_2$, or equivalently by choosing the order of the two input legs relative to the fixed root. Operadic composition is given by grafting the root of one tree onto an input leg of another. For further details, see \cite[Section~6.1]{FresseBook1} and \cite[Definition~6.10]{Boavida-Horel-Robertson}.
\end{definition}

\begin{remark}
Elements of $\Omega(n)$ may be identified with fully parenthesized words in the symbols $\{1,\dots,n\}$. Under this identification, the two generators $\mu$ and $(12)^*\mu$ correspond to the binary words $(12)$ and $(21)$, respectively, and every parenthesized word is obtained from these generators by operadic composition. For example,
\[
(((12)3)4), \qquad ((32)(14)) \qquad \text{and} \qquad (1((23)4))
\]
are elements of $\Omega(4)$.
\end{remark}

\begin{definition}
For each $n\geq 1$, let $\PaRB(n)$ be the groupoid whose objects are the elements of $\Omega(n)$. %Thus an object is a parenthesized word, or equivalently a labelled planar rooted binary tree with $n$ input legs.

Given two objects $T_1,T_2\in\Omega(n)$, their boundary labellings determine a unique bijection
\[
\rho_{T_1,T_2}:\partial(T_1)\xrightarrow{\cong}\partial(T_2)
\]
such that $\rho_{T_1,T_2}\circ \ell_{T_1}=\ell_{T_2}$. A morphism $r\in \Hom_{\PaRB(n)}(T_1,T_2)$ is a ribbon braid $r\in \RB_n$ whose underlying permutation is the permutation of the input legs induced by $\rho_{T_1,T_2}$.
\end{definition}

Composition in $\PaRB(n)$ is given by multiplication in the ribbon braid group $\RB_n$. In particular, for any object $T\in\Omega(n)$, the endomorphism group is naturally identified with the pure ribbon braid group:
\[
\Aut_{\PaRB(n)}(T)\cong \PRB_n.
\] The symmetric group $\Sigma_n$ acts on $\PaRB(n)$ by permuting the labels of the input legs. The resulting symmetric sequence
\[
\PaRB=\{\PaRB(n)\}_{n\geq 1}
\] forms an operad in groupoids. On objects, operadic composition is the grafting operation in $\Omega$. On morphisms, it is given by inserting a ribbon braid in $\RB_m$ into the $i$th strand of a ribbon braid in $\RB_n$. See \cite[Definition~6.11]{Boavida-Horel-Robertson} for details.

\medskip
An important feature of $\PaRB$ is that its morphisms are generated by three elementary morphisms:
\[
\tau\in\Hom_{\PaRB(1)}(\mid,\mid),\qquad
\beta\in\Hom_{\PaRB(2)}(\mu,(12)^*\mu),
\qquad
\alpha\in\Hom_{\PaRB(3)}(\mu\circ_1\mu,\mu\circ_2\mu).
\]
Here $\tau$ is the strand twist, $\beta$ is the braiding, and $\alpha$ is the re-parenthesization isomorphism from $\mu\circ_1\mu$ to $\mu\circ_2\mu$. More precisely, every morphism in $\PaRB(n)$ can be written as an iterated categorical and operadic composite of these morphisms and their inverses; see \cite[Section~7]{Boavida-Horel-Robertson}, following the general framework of \cite[Chapter~6]{FresseBook1}.  This presentation above allows one to classify maps out of $\PaRB$. 

The following is a restatement of Lemmas~7.1 and~7.4 of \cite{Boavida-Horel-Robertson}, based on \cite[Theorem~6.2.4]{FresseBook1}. We write $\cdot$ for categorical composition of morphisms in the groupoids $\PaRB(n)$, and $\circ_i$ for operadic composition.
\begin{thm}\label{thm: maps_out_of_P_0}
Let $\calQ$ be an operad in groupoids. To give an operad map
\[
f:\PaRB\longrightarrow \calQ
\]
is equivalent to giving a quadruple $(\mathbf m,\mathbf t,\mathbf b,\mathbf a)$ consisting of an object $f(\mu)=\mathbf m\in \ob(\calQ(2))$
and morphisms
\[
f(\tau)=\mathbf t\in \Hom_{\calQ(1)}(\mathbf 1,\mathbf 1),\qquad
f(\beta)=\mathbf b\in \Hom_{\calQ(2)}(\mathbf m,(12)^*\mathbf m) \qquad \text{and} \qquad f(\alpha)= \mathbf a\in \Hom_{\calQ(3)}(\mathbf m\circ_1 \mathbf m,\mathbf m\circ_2 \mathbf m),
\]
satisfying the relations below.

\begin{equation}\label{twist_rel_morph}\tag{T}
\mathbf t\circ_1 1_{\mathbf m}
=
\mathbf b\cdot (12)^*\mathbf b\cdot
\bigl(1_{\mathbf m}\circ (\mathbf t,\mathbf t)\bigr)
\qquad \text{in } \calQ(3).
\end{equation}

\begin{equation}\label{hex_rel_1_morph}\tag{H1}
\bigl((213)^*(1_{\mathbf m}\circ_{1}\mathbf b)\bigr)
\cdot \bigl((213)^*\mathbf a\bigr)
\cdot (1_{\mathbf m}\circ_{1}\mathbf b)
=
\bigl((231)^*\mathbf a\bigr)
\cdot (\mathbf b\circ_{2}1_{\mathbf m})
\cdot \mathbf a
\qquad \text{in } \calQ(4).
\end{equation}

\begin{equation}\label{hex_rel_2_morph}\tag{H2}
\bigl((132)^*(1_{\mathbf m}\circ_{1}\mathbf b)\bigr)
\cdot \bigl((132)^*\mathbf a^{-1}\bigr)
\cdot (1_{\mathbf m}\circ_{1}\mathbf b)
=
\bigl((312)^*\mathbf a^{-1}\bigr)
\cdot (\mathbf b\circ_{1}1_{\mathbf m})
\cdot \mathbf a^{-1}
\qquad \text{in } \calQ(4).
\end{equation}

\begin{equation}\label{pent_rel_1_morph}\tag{P}
(\mathbf a\circ_1 1_{\mathbf m})
\cdot (\mathbf a\circ_3 1_{\mathbf m})
=
(1_{\mathbf m}\circ_1 \mathbf a)
\cdot (\mathbf a\circ_2 1_{\mathbf m})
\cdot (1_{\mathbf m}\circ_2 \mathbf a)
\qquad \text{in } \calQ(5).
\end{equation}
\end{thm}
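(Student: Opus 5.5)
The plan is to realize $\PaRB$ as a quotient of a free operad in groupoids, following Fresse's approach to $\PaB$, and then read off maps out of it. First, note that the forgetful map from $\PaRB$ to its object operad is the free operad $\Omega$ on $\mu$ and $(12)^*\mu$. Hence a map $f$ is determined on objects by the single object $\mathbf m=f(\mu)\in\ob(\calQ(2))$. We also have $f(\id)=\mathbf 1$ by unitality, and $f((12)^*\mu)=(12)^*\mathbf m$ by equivariance. Conversely, any $\mathbf m$ extends uniquely to an operad map $\Omega\to\ob\calQ$ by freeness.

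On morphisms, I would use the generation statement recalled above: every morphism of $\PaRB$ is an iterated categorical and operadic composite of $\tau,\beta,\alpha$ and their inverses. This gives uniqueness: $f$ is determined by $(\mathbf m,\mathbf t,\mathbf b,\mathbf a)$. Necessity of (T), (H1), (H2), (P) comes from checking that the corresponding identities already hold in $\PaRB$. The twist relation holds in $\RB_2$: the full twist of a ribbon equals the square of the braiding times the twists on each strand. The hexagons and the pentagon hold in $\RB_3,\RB_4$ as equalities of underlying braids, because $\alpha$ has trivial underlying braid. Applying $f$ to these identities then yields the relations in $\calQ$.

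For existence, I would form the free operad in groupoids $\F$ generated by the object $\mu$ and the morphisms $\tau,\beta,\alpha$. A tuple satisfying the relations gives a map $\F\to\calQ$ that factors through the quotient $\F/\langle T,H1,H2,P\rangle$. The key claim is that the canonical map $\F/\langle T,H1,H2,P\rangle\to\PaRB$ is an isomorphism. It is bijective on objects and surjective on morphisms by generation. For faithfulness, it suffices to check that the automorphism group of a fixed object, say the left-bracketed word, in the quotient maps isomorphically onto $\PRB_n$.

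This faithfulness step is the main obstacle. I would handle it in three stages.
\begin{itemize}
\item First, use the pentagon and unit axioms to invoke Mac Lane coherence: all composites of associators between two parenthesizations coincide. This reduces the problem to the strict setting, where the automorphism group is generated by the braidings $\beta_i$ conjugated by associators, together with the twists $\tau_j$.
\item Second, the hexagons plus naturality, which is automatic from operadic composition with identities, yield the braid relations. This is exactly Fresse's argument for $\PaB$ (Theorem~6.2.4 of Fresse's book), giving an isomorphism with $\Br_n$ on the $\beta$-part.
\item Third, relation (T) together with naturality of $\tau$ under operadic insertion gives $\beta_i\twist_{i+1}=\twist_i\beta_i$ and the commutation relations. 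So the resulting group surjects from $\RB_n$ via the standard presentation, and its pure part is therefore identified with $\PRB_n$.
\end{itemize}
Checking that no further relations are needed then amounts to matching these with the presentation of $\RB_n$ given above. I expect the bookkeeping of permutation labels in (H1), (H2) to be the delicate part; there I would rely on the explicit verification in Lemmas~7.1 and~7.4 of \cite{Boavida-Horel-Robertson}.
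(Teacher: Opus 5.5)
Your architecture is the Fresse-style argument behind \cite[Theorem~6.2.4]{FresseBook1} and Lemmas~7.1 and~7.4 of \cite{Boavida-Horel-Robertson}: take the free operad in groupoids on $\mu,\tau,\beta,\alpha$, quotient by (T), (H1), (H2), (P), and test faithfulness on one automorphism group against the presentation of $\RB_n$. The paper itself gives no proof here and simply cites those results. Your uniqueness and necessity arguments are fine. The gap is in the faithfulness sketch. There you misidentify the role of (T), the one relation not already present for parenthesized braids, and the step where it is actually needed is missing.

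The relations $\beta_i\tau_{i+1}=\tau_i\beta_i$, $\beta_i\tau_j=\tau_j\beta_i$ for $j\notin\{i,i+1\}$, and $\tau_i\tau_j=\tau_j\tau_i$ need no input from (T). They follow from functoriality of the partial compositions together with equivariance.

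What (T) is needed for is the twist of a \emph{block} of strands. For a word $x$ of length at least $2$, the morphism $\tau\circ_1 1_x$ (and its insertions into inputs of larger words) is an independent generator of the free operad. Neither Mac Lane coherence nor the hexagons rewrite it in terms of the $\beta_i$ and the single-strand twists $\tau_j$. So your first-stage claim, that the automorphism group is generated by conjugated $\beta_i$ and the $\tau_j$, is unjustified, and without it you do not get the surjection from $\RB_n$.

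This is a real obstruction. The tuple $(\mu,\id,\beta,\alpha)$ in $\PaRB$ satisfies (H1), (H2) and (P), so it defines a map out of $\F/\langle \mathrm{H1},\mathrm{H2},\mathrm{P}\rangle$. That map sends $\tau\circ_1 1_\mu$ to the identity, but sends $\beta\cdot(12)^*\beta\cdot(1_\mu\circ(\tau,\tau))$ to the nontrivial pure braid $x_{12}$. Hence these two automorphisms of $\mu$ are distinct in that quotient, although they have the same image in $\PaRB(2)$.

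The repair is to express every block twist through elementary braidings and single-strand twists, which is the operadic form of $\theta_{V\otimes W}=c_{W,V}c_{V,W}(\theta_V\otimes\theta_W)$. Use (T), propagated by $\tau\circ_1(1_x\circ_i 1_y)=(\tau\circ_1 1_x)\circ_i 1_y$ and an induction on word length. Use the hexagons to break up the block braidings $\beta\circ(1_x,1_y)$ that (T) produces. This makes the morphisms of the quotient generated by the images of the standard generators of $\RB_n$, and after that your comparison with the presentation goes through.
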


\medskip 

Recall from Proposition~\ref{prop:S-generated-by-standard-moves} that the
genus-zero generating morphisms in $\bS$ are
\[ 
\tau \in \Hom_{\bS(0,2)}(|,|),\quad \beta \in \Hom_{\bS(0,3)}\bigl(\triv,(12)\triv\bigr)\quad \text{and} \quad \alpha \in \Hom_{\bS(0,4)}\bigl(\forkone,\forktwo\bigr),
\]
where these are the mapping classes associated to the standard $T$-, $B$-, and $A$-moves in the complex of markings, respectively. To distinguish these mapping classes from the generating morphisms of $\PaRB$, throughout this subsection we denote the corresponding morphisms in $u^*\trun{0}\bS$ by
\[
\mathbf{t} \in \Hom_{\bS(0,2)}(|,|),\quad \mathbf{b} \in \Hom_{\bS(0,3)}\bigl(\triv,(12)^*\triv\bigr)\quad \text{and} \quad \mathbf{a} \in \Hom_{\bS(0,4)}\bigl(\forkone,\forktwo\bigr).
\]

The genus-zero truncation is naturally a cyclic operad, whose arity-$(n+1)$ component is $\trun{0}\bS(n+1)=\bS(0,n+1)$. Its underlying ordinary operad $u^*\trun{0}\bS$ is obtained by choosing the boundary component labelled $0$ as the root, so the same morphisms appear with one fewer input, i.e. $u^*\trun{0}\bS(n)=\trun{0}\bS(n+1)=\bS(0,n+1)$. The main difference in these groupoids are the actions by the symmetric groups. Notably, they all have the same objects. In particular, the two generating objects in arity $2$ are still $P$ and $(12)^*P$, corresponding to the two rooted planar corollas obtained from the two classes in $\Sigma_2^+/A_2^+$ after choosing the boundary labelled $0$ as the root. 

\begin{lemma}\label{lemma:operad-map-PaRB-to-S0}
There is an isomorphism of operads
\[
I:\PaRB\longrightarrow u^*\trun{0}\bS
\]
defined on generators by
\[
I(\mu)=P\in\ob\bigl(u^*\trun{0}\bS(2)\bigr),\quad I(\tau)=\mathbf{t}\in\Hom_{u^*\trun{0}\bS(1)}(|,|),
\]
\[
I(\beta)=\mathbf{b}\in\Hom_{u^*\trun{0}\bS(2)}\bigl(\triv,(12)^*\triv\bigr)\quad \text{and} \quad
I(\alpha)=\mathbf{a}\in\Hom_{u^*\trun{0}\bS(3)}\bigl(\forkone,\forktwo\bigr).
\]
\end{lemma}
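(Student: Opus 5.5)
The argument has two parts. First, Theorem~\ref{thm: maps_out_of_P_0} produces a well-defined operad map $I$ from the data $(P,\mathbf t,\mathbf b,\mathbf a)$. Second, we check that $I$ is bijective on objects and on morphisms in every arity.

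For existence, Theorem~\ref{thm: maps_out_of_P_0} requires only that the quadruple satisfy the relations (T), (H1), (H2) and (P) in $u^*\trun{0}\bS$. Each relation is an equality of mapping classes between genus-zero surfaces $S_G\to S_{G'}$. In $\bS$, the two sides of each relation are the composites of the elementary morphisms attached to two edge paths in $\calM(S_G)$ with the same endpoints. These edge paths are exactly the boundaries of the $2$-cells listed in Definition~\ref{def: complex of markings}: the twist--braiding and boundary twist cells give (T), the two hexagon cells give (H1) and (H2), and the pentagon cell gives (P). By Lemma~\ref{lemma: the complexes of morphisms and markings}, $\mathbb M_G$ has a unique morphism between any two objects, so both sides define the same morphism of $G\downarrow\bS(0,n+1)$. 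Hence they are equal in $\bS(0,n+1)$. Alternatively, each relation can be checked directly on surfaces by drawing the markings, using Lemma~\ref{lemma: marking-independence}.

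For bijectivity on objects, $\Omega(n)$ is the free operad on $\mu$ and $(12)^*\mu$. The objects of $u^*\trun{0}\bS(n)$ are labelled trivalent trees, i.e.\ elements of the free modular operad $\Upsilon$ in genus $0$. Rooting such a tree at leg $0$ and using the planar structure at each vertex identifies it with a planar rooted binary tree. Since $I(\mu)=P$ and $I((12)^*\mu)=(12)^*P$, the map $I$ sends each $\Omega$-tree to this tree, and the two free structures match. So $I$ is a bijection on objects.

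For bijectivity on morphisms, both groupoids are connected, since $\PaRB(n)$ is connected and $\calM(S_G)$ is connected. It therefore suffices to show that $I$ induces an isomorphism $\PRB_n=\Aut_{\PaRB(n)}(T)\to \Gamma(S_{I(T)})\cong\Gamma_{0,n+1}$ for a single object $T$. This is the step I expect to be the main obstacle. My plan is to show that this map is the standard isomorphism of Proposition~\ref{prop: RBn is Gamma0n+1}. To do so, I would compute the images of the generators $\twist_k$ and $x_{ij}$. The twist $\twist_k$ is an operadic composite of $\tau$ inserted at input $k$, so it maps to $D_{\partial_k}$. The generator $x_{ij}$ is a conjugate by associators of $\beta\cdot(12)^*\beta$ placed at a vertex, and this composite maps to the Dehn twist about the curve enclosing $\partial_i,\partial_j$. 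The key local computation is $\beta^2=D_{\partial_0}D_{\partial_1}^{-1}D_{\partial_2}^{-1}$ on $P$, which is the lantern/twist relation on the pair of pants. Once the images of the generators agree with the standard map, Proposition~\ref{prop: RBn is Gamma0n+1} gives injectivity and surjectivity. Surjectivity could also be deduced from Proposition~\ref{prop:S-generated-by-standard-moves}, restricted to genus zero, since no $S$-moves occur there. Finally, equivariance of $I$ with respect to $\Sigma_n$ and compatibility with operadic composition follow from the construction, because both operad structures are given by grafting trees and inserting strands, respectively gluing surfaces.
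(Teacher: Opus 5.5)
Your route is the paper's. You get existence from Theorem~\ref{thm: maps_out_of_P_0} by reading \eqref{twist_rel_morph}, \eqref{hex_rel_1_morph}, \eqref{hex_rel_2_morph} and \eqref{pent_rel_1_morph} off $2$-cells of the marking complexes via Lemma~\ref{lemma: the complexes of morphisms and markings}. You then note that $I$ is bijective on objects, and identify $\Aut_{\PaRB(n)}(T)\to\Gamma(S_{I(T)})$ with $\PRB_n\cong\Gamma_{0,n+1}$; the paper only asserts this last identification. Your existence and object arguments are fine, and so is the reduction to a single object by connectedness.

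The step that fails as written is the generator check you use to make that identification explicit. Your own local computation gives $I(\beta\cdot(12)^*\beta)=D_{\partial_0}D_{\partial_1}^{-1}D_{\partial_2}^{-1}$ on $P$; this is the twist--braiding cell, not a lantern relation. Since $\beta\cdot(12)^*\beta$ is $x_{12}\in\PRB_2$, and the curve enclosing $\partial_1,\partial_2$ in $P$ is parallel to $\partial_0$, the image of $x_{12}$ is \emph{not} the Dehn twist $D_{\partial_0}$.

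In general you must conjugate by associators \emph{and} braidings, since associators alone do not bring strands $i$ and $j$ together when $j>i+1$. This gives $I(x_{ij})=D_{c_{ij}}D_{\partial_i}^{-1}D_{\partial_j}^{-1}$. So $I$ does not agree on generators with the homomorphism $\Phi$ described before Proposition~\ref{prop: RBn is Gamma0n+1}, which sends $x_{ij}\mapsto D_{c_{ij}}$. Relation \eqref{twist_rel_morph} forces this boundary-twist correction, so ``images agree with the standard map'' cannot be verified.

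The repair is short. One has $I_*=\Phi\circ\theta$, where $\theta(\tau_k)=\tau_k$ and $\theta(x_{ij})=x_{ij}\tau_i^{-1}\tau_j^{-1}$. This $\theta$ is an automorphism of $\PRB_n\cong\PB_n\times\Z^n$, because the $\tau_k$ are central and $x_{ij}\mapsto -e_i-e_j$ extends to a homomorphism $\PB_n\to\Z^n$ through $\PB_n^{ab}\cong\Z^{\binom n2}$.

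Alternatively, prove only surjectivity. Then conclude using that $\PRB_n$ is finitely generated and residually finite, hence Hopfian, and abstractly isomorphic to $\Gamma_{0,n+1}$. If you take surjectivity from Proposition~\ref{prop:S-generated-by-standard-moves}, note that it uses $\Sigma_n^+$-translates of $\beta$ and $\alpha$. You then need the formulas for $(01)^*\mathbf b$ and $(01)^*\mathbf a$ in the proof of Proposition~\ref{prop:PaRB-cyclic-is-S0} to see that these translates lie in the image of the non-cyclic map $I$.
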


\begin{proof}
By Theorem~\ref{thm: maps_out_of_P_0}, it is enough to check that the values assigned to the generators of the operad $\PaRB$ satisfy the defining relations \eqref{twist_rel_morph}, \eqref{hex_rel_1_morph}, \eqref{hex_rel_2_morph}, and \eqref{pent_rel_1_morph}.  

We first check the twist--braiding relation \eqref{twist_rel_morph}. Under the assignment in the statement of the lemma, this relation becomes
\begin{equation}\label{eq:I-applied-to-twist}
\mathbf t\circ_1 \id_P = \mathbf b\cdot (12)^*\mathbf b\cdot \bigl(\id_P\circ(\mathbf t,\mathbf t)\bigr)
\end{equation}
in $\Aut_{u^*\trun{0}\bS(2)}(P)$. We interpret this equation using the equivalence $\mathbb{M}_{P}\cong P\downarrow\bS(0,3)$ from Lemma~\ref{lemma: the complexes of morphisms and markings}. Under this equivalence, the left-hand side is the full Dehn twist about the boundary
component labelled $0$, which we denote by $T_0$. The right-hand side identifies with $B_{1,2}B_{2,1}T_1T_2$: here
$(12)^*B_{1,2}=B_{2,1}$, and the two inserted twists in $\id_P\circ(\mathbf t,\mathbf t)$ become the boundary twists $T_1$ and
$T_2$.

Thus \eqref{eq:I-applied-to-twist} is equivalent to $T_0 = B_{1,2}B_{2,1}T_1T_2.$  Since Dehn twists about boundary components commute, this is equivalent to
\[
B_{1,2}B_{2,1}=T_0T_1^{-1}T_2^{-1},
\]
which is precisely the twist--braiding $2$-cell in the marking complex $\mathbb{M}_{P}$. Hence the image of \eqref{twist_rel_morph} under $I$ holds.

We next check the pentagon relation \eqref{pent_rel_1_morph}. We rewrite it as the statement that
\[
(\alpha\circ_1 \id_{\mu})^{-1}\cdot (\alpha\circ_3 \id_{\mu})^{-1}\cdot (\id_{\mu} \circ_2 \alpha)\cdot (\alpha\circ_2 \id_{\mu})\cdot (\id_{\mu}\circ_1\alpha)=1
\] in $\Aut_{\PaRB(4)}((\mu\circ_1\mu)\circ_1\mu)$.

Let $\forkone=(P\circ_1P)\circ_1P$. After applying $I$, and then using the equivalence $\mathbb{M}_{S_{\forkone}}\cong \forkone\downarrow \bS(0,5)$ from Lemma~\ref{lemma: the complexes of morphisms and markings}, the left-hand side is identified with the loop of $A$-moves
\[
(A_{b,e})^{-1}(A_{a,d})^{-1}A_{c,e}A_{b,d}A_{a,c}
\]
in the complex of markings. The curves defining these $A$-moves are shown in Figure~\ref{fig:pentagon}. This loop is the boundary of the pentagon $2$-cell, and hence represents the identity morphism in $\mathbb{M}_{S_{\forkone}}$. Therefore its image is the identity in $\Aut_{u^*\trun{0}\bS(4)}(\forkone)$, so $I$ respects the pentagon relation \eqref{pent_rel_1_morph}.

The hexagon relations \eqref{hex_rel_1_morph} and \eqref{hex_rel_2_morph} are verified in the same way. Each may be rewritten as a closed composite in $\PaRB(3)(\mu\circ_1\mu,\mu\circ_1\mu)$ and $\PaRB(3)(\mu\circ_2\mu,\mu\circ_2\mu)$, respectively. Applying $I$, and then using the corresponding equivalences with the fundamental groupoids of the marking complexes, these automorphisms are identified with the two hexagon loops shown in Figure~\ref{fig:hexagons}. Each loop bounds a hexagon $2$-cell in the complex of markings. Hence both automorphisms are trivial, and $I$ respects the hexagon relations \eqref{hex_rel_1_morph} and \eqref{hex_rel_2_morph}. It follows from Theorem~\ref{thm: maps_out_of_P_0}
that the assignment on generators extends to a map of operads
\[
I:\PaRB\longrightarrow u^*\trun{0}\bS.
\]

It remains to show that this map is an isomorphism. On objects, $I$ is the identity. For every object $T\in\PaRB(n)$, the induced map on automorphism groups is the standard identification $\PRB_n\cong \Gamma_{0,n+1}$ from Proposition~\ref{prop: RBn is Gamma0n+1}.
Thus each functor $I(n):\PaRB(n)\longrightarrow u^*\trun{0}\bS(n)$ is fully faithful. Since it is also bijective on objects, it is an isomorphism of groupoids for each $n$. Therefore $I$ is an isomorphism of operads.
\end{proof}

\medskip

The operad $\PaRB$ admits a cyclic structure in the sense of Definition~\ref{def: cyclic structure}; see \cite{campos2019configuration,MW,robertson2025grothendieck}. We write $\PaRB^{\mathrm{cyc}}$ for the resulting cyclic operad.

It is enough to describe the action of the transposition $(01)$, since this action, together with the ordinary symmetric group actions, determines the full extended symmetric group action. On objects, $(01)$ changes the choice of root: the leg labelled $1$ becomes the new root. In particular, on the generating object one has $(01)^*\mu=(12)^*\mu.$  Since $\PaRB$ admits a finite presentation,  it remains to specify the action of $(01)$ on the generating morphisms. This action is given by
\begin{equation}\label{cyclic action on PaRB generators}\tag{$\star$}
(01)^*(\tau)=\tau,\qquad (01)^*(\beta)=\beta^{-1}\cdot(\id_{\mu}\circ_2\tau^{-1}) \qquad \text{and} \qquad (01)^*(\alpha)=(231)^*\alpha^{-1}.
\end{equation}

We now show that the operad isomorphism constructed in Lemma~\ref{lemma:operad-map-PaRB-to-S0} respects this root-changing action, and therefore upgrades to an isomorphism of cyclic operads.

\begin{prop}\label{prop:PaRB-cyclic-is-S0}
The isomorphism of operads $I:\PaRB\to u^*\trun{0}\bS$ is compatible with the cyclic structure on $\PaRB$. Hence it induces an isomorphism of cyclic operads
\[
\PaRB^{\mathrm{cyc}}\cong \trun{0}\bS.
\]
\end{prop}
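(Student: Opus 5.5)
The plan is to reduce compatibility with the full cyclic structure to a check on the generators $\tau,\beta,\alpha$ for the single transposition $(01)$. The extended symmetric group $\Sigma_n^+$ is generated by $\Sigma_n$ together with $(01)$. The isomorphism $I$ of Lemma~\ref{lemma:operad-map-PaRB-to-S0} is already $\Sigma_n$-equivariant, being an operad map. So it suffices to show that $I\bigl((01)^*x\bigr)=(01)^*I(x)$ for every object and morphism $x$ of $\PaRB$.

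The first step is to reduce this to generators. Both $\PaRB^{\mathrm{cyc}}$ and $\trun{0}\bS$ are cyclic operads, so in each the action of $(01)$ (more precisely, of the cycle $z^+$) interacts with partial compositions by the formula of Definition~\ref{def: cyclic structure}(2). Together with functoriality in the groupoid direction, this gives the following: if $I$ intertwines $(01)^*$ on the generating object $\mu$ and on the generating morphisms $\tau,\beta,\alpha$, then it intertwines the cyclic actions on all iterated operadic and categorical composites. By the presentation of $\PaRB$ (Theorem~\ref{thm: maps_out_of_P_0}), these composites exhaust all objects and morphisms. I would write this as an induction on the number of compositions, applying the identity in (2) at each step.

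On objects, $(01)^*\mu=(12)^*\mu$ in $\PaRB^{\mathrm{cyc}}$. In $\trun{0}\bS(0,3)$, the transposition $(01)$ lies in the nontrivial coset $(12)A_2^+$, so $(01)^*P=(12)^*P$. Hence $I$ commutes with $(01)$ on objects. For $\tau$, both sides are the Dehn twist about the boundary of the cylinder, on which $\Sigma_1^+$ acts trivially.

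The substantive checks are the formulas in \eqref{cyclic action on PaRB generators}:
\[
(01)^*\mathbf b=\mathbf b^{-1}\cdot(\id_P\circ_2\mathbf t^{-1})
\qquad\text{and}\qquad
(01)^*\mathbf a=(231)^*\mathbf a^{-1}
\]
in $\bS(0,3)$ and $\bS(0,4)$ respectively. I would verify these using Lemma~\ref{lemma: the complexes of morphisms and markings} and the uniqueness of morphisms in coslice categories. Relabelling the boundary by $(01)$ sends the $B$-move $B_{1,2}$ to a half-twist exchanging boundaries $0$ and $2$. Expressed as a composite of moves based at the standard marking of $P$, this is an inverse $B$-move followed by a boundary twist. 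Because the complex of markings is simply connected, it then suffices to exhibit a single edge path in $\mathcal M(P)$ with the same endpoints, obtained from the twist--braiding $2$-cell and the boundary twist relations.

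Similarly, relabelling the $A$-move $\alpha\colon S_{\forkone}\to S_{\forktwo}$ by $(01)$ exchanges the roles of the two curves. By the self-duality relation $A_{c,c'}A_{c',c}=\id$, the result is the inverse $A$-move, relabelled by $(231)$. Here the work is to track the boundary labels carefully.

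I expect the braiding computation to be the main obstacle. The $B$-move does not fix the boundary pointwise, so one must determine exactly which boundary twist appears, and on which leg, when the root is moved. Concretely, one has to pin down whether the correction term is $\id_P\circ_2\mathbf t^{-1}$ and not a twist on another leg. I would settle this by drawing the half-twist on $P$ with the marked corolla, applying the relabelling, and comparing markings directly via the diagrammatic description in Lemma~\ref{lemma: marking-independence}.
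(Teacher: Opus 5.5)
Your proposal is correct and follows the paper's proof essentially step for step. Both reduce to $(01)^*$ on $\mu,\tau,\beta,\alpha$, dispose of $\mu$ via the coset $(12)A_2^+$ and of $\tau$ via the trivial action on $\bS(0,2)$, then verify the two formulas in $(\star)$ through the coslice/marking-complex description: the twist--braiding relation handles $\mathbf b$, and for $\mathbf a$, after identifying $(01)^*(P\circ_1P)=(231)^*(P\circ_2P)$, the self-duality relation $A_{c,c'}A_{c',c}=\id$. You go beyond the paper only in justifying the reduction to generators, which the paper just asserts; if you write that induction out, note that composites in $\PaRB$ involve $\Sigma_n$-translates of generators, so the inductive step via axiom (2) needs $I$ to intertwine every $\rho\in\Sigma_n^+$ (not only $(01)$) on each generator, which you can recover from $(\star)$ using the operad relations (T), (H1) and (H2).
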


\begin{proof}
Since the $(01)$-action determines the cyclic structure on $\PaRB$, it suffices to verify $I((01)^*x)=(01)^*I(x)$ for $x=\mu,\tau,\beta,\alpha$.

For the generating object, recall that $I(\mu)=P$, where $P$ is the rooted trivalent corolla with boundary labels $\{0,1,2\}$. The objects of $\trun{0}\bS$ are generated by the two rooted planar corollas represented by $P=\id\cdot A_2^+$ and $(12)^*P=(12)\cdot A_2^+$, where $A_2^+=\langle(012)\rangle$. Since $(01)$ and $(12)$ represent the same nontrivial coset modulo $A_2^+$, we have $(01)^*P=(12)^*P$. Therefore $I((01)^*\mu)=(01)^*I(\mu)$.

For the twist generator, $I(\tau)=\mathbf{t}$. Recall that, by definition, the transposition
$(01)$ acts trivially on $\bS(0,2)$. Hence \[ I((01)^*\tau)=I(\tau)=\mathbf{t}=(01)^*I(\tau). \]

For the braiding generator, we compare the cyclic translate of $\mathbf b:P\to(12)^*P$ using the marking complex for the correct source object. Applying $(01)^*$ gives a morphism $(01)^*\mathbf{b}:(12)^*P\longrightarrow P.$
Through the equivalence
\[
\mathbb{M}_{(12)^*P}\cong (12)^*P\downarrow \trun{0}\bS(3),
\]
the morphism $(01)^*\mathbf{b}$ is identified with the opposite $B$-move, together with the boundary-twist correction needed to rewrite the $(01)$-relabelled marking in standard form. With our conventions this gives
\[
(01)^*B_{1,2}=T_0^{-1}T_1\cdot B_{2,1}.
\]
Using the twist--braiding relation in the marking complex,
\[
B_{1,2}B_{2,1}=T_0T_1^{-1}T_2^{-1},
\]
we obtain
\[
(01)^*B_{1,2} = T_0^{-1}T_1\cdot B_{1,2}^{-1}T_0T_1^{-1}T_2^{-1} = B_{1,2}^{-1}T_2^{-1}.
\]
The twist $T_2^{-1}$ is the twist inserted on the second input. In operadic notation, this says
\[
(01)^*\mathbf{b} = \mathbf{b}^{-1}\cdot(\id_P\circ_2\mathbf{t}^{-1}).
\]
This is exactly the image under $I$ of the relation $(01)^*\beta=\beta^{-1}\cdot(\id_{\mu}\circ_2\tau^{-1}).$  Therefore $I((01)^*\beta)=(01)^*I(\beta)$.

It remains to check the associativity generator. Applying $(01)^*$ to the morphism $\mathbf{a}:P\circ_1P\longrightarrow P\circ_2P$ in $\trun{0}\bS(4)$   gives a morphism
\[
(01)^*\mathbf{a}: (01)^*(P\circ_1P)\longrightarrow (01)^*(P\circ_2P).
\]
A direct calculation on labelled rooted corollas gives
\[
(01)^*(P\circ_1P)=(231)^*(P\circ_2P)
\qquad \text{and} \qquad (01)^*(P\circ_2P)=(231)^*(P\circ_1P).
\]
Thus both $(01)^*\mathbf{a}$ and $(231)^*\mathbf{a}^{-1}$ are morphisms
\[
(231)^*(P\circ_2P)\longrightarrow (231)^*(P\circ_1P).
\]

To compare them, we compose with $(231)^*\mathbf{a}$ and consider the resulting
automorphism:
\[
(01)^*\mathbf{a}\cdot (231)^*\mathbf{a}: (231)^*(P\circ_2P)\rightarrow (231)^*(P\circ_2P). 
\]
Passing through the equivalence $\mathbb M_{(231)^*(P\circ_2P)} \cong (231)^*(P\circ_2P)\downarrow \trun{0}\bS(4),$  this loop is identified with the loop in the marking complex obtained by performing the $(01)$-relabelled $A$-move and then the corresponding
$(231)$-relabelled $A$-move back. This is precisely the self-duality loop for the $A$-move. By the self-duality relation
\[
A_{c,c'}A_{c',c}=\id,
\]
the loop is trivial. Hence $(01)^*\mathbf{a}=(231)^*\mathbf{a}^{-1}.$  Therefore $I((01)^*\alpha)=(01)^*I(\alpha)$.

It follows that $I$ is compatible with the cyclic structures and thus induces an isomorphism $\PaRB^{\mathrm{cyc}}\cong \trun{0}\bS.$ 
\end{proof}

\begin{remark}
In the proof of Lemma~\ref{lemma:operad-map-PaRB-to-S0}, we freely used diagrammatic depictions of genus-zero mapping classes and their corresponding paths in the complexes of markings. This is justified by Lemma~\ref{lemma: the complexes of morphisms and markings}, together with the discussion in Section~\ref{subsubsec: diagrammatical representation of mapping classes}, which shows that these diagrammatic representatives are well defined. For example, Figure~\ref{fig: cyclic} depicts the action of the $(01)$-transposition on the $B$- and $A$-moves.

\begin{figure}[h!t]
    \centering
    \includegraphics[width=\textwidth]{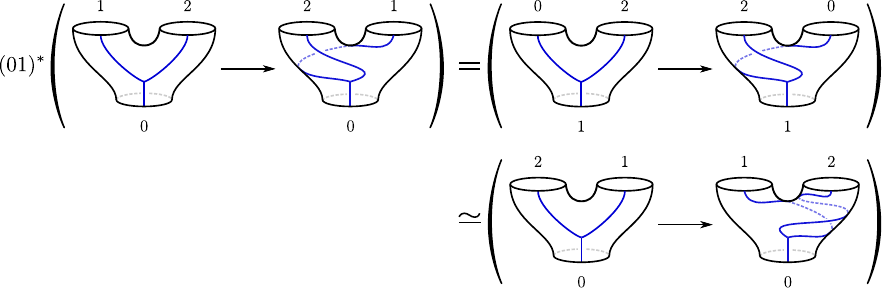}
    
    \vspace{.5cm}

    \includegraphics[width=\textwidth]{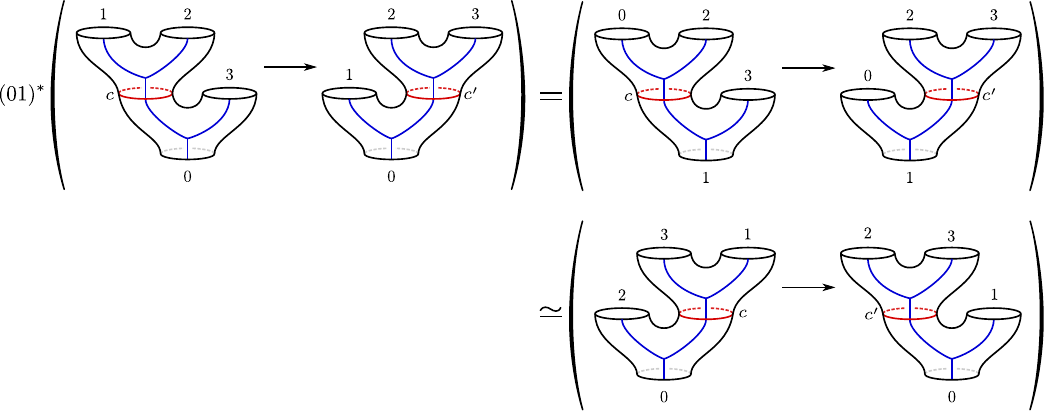}
    \caption{Action of the $(01)$-transposition on the $B$- and $A$-moves.}
    \label{fig: cyclic}
\end{figure}
\end{remark}

\subsection{A presentation for maps out of $\bS$}
\label{sec: maps out of bS}

We use $\cdot$ for categorical composition in the groupoids $\bS(g,n+1)$, write $\circ_j^i$ for cyclic operadic composition, and write $\xi_i^j$ for
contractions. When only the underlying operad structure is involved, we use the convention $\circ_j^0=\circ_j$.

The goal of this subsection is to prove the main presentation theorem for maps out of the modular operad of surfaces $\bS$.  As in the genus-zero case, the proof uses the comparison between the groupoids $\bS(g,n+1)$ and the $2$-dimensional CW complexes of reduced marked cut systems on surfaces of type $(g,n+1)$. This comparison is established in Lemma~\ref{lemma: the complexes of morphisms and markings}. It allows us to apply Proposition~\ref{prop:our-complex-of-markings-is-simply-connected}, which states that $\calM(S)$ is connected and simply connected, and hence to obtain a presentation for the morphisms of $\bS(g,n+1)$.

%\subsubsection{A presentation of the modular operad $\bS$} Proof of the main theorem. 

\begin{thm}\label{thm: presentation for maps out of bS}
Let \(\calQ\) be a modular operad in groupoids. A map of modular operads \(f:\bS\longrightarrow \calQ\) is uniquely determined by a tuple
\[
(\mathbf{m},\mathbf{t},\mathbf{b},\mathbf{a},\mathbf{s})
\]
consisting of an object $f(P)=\mathbf{m}\in \ob(\calQ(0,3))$ 
and morphisms
\[
f(\tau)=\mathbf{t}\in \Hom_{\calQ(0,2)}(|,|),\quad
f(\beta)=\mathbf{b}\in
\Hom_{\calQ(0,3)}(\mathbf{m},(12)^*\mathbf{m}),
\]
\[
f(\alpha)=\mathbf{a}\in
\Hom_{\calQ(0,4)}
(\mathbf{m}\circ_1\mathbf{m},\mathbf{m}\circ_2\mathbf{m}),
\quad
f(\sigma)=\mathbf{s}\in
\Hom_{\calQ(1,1)}(\xi_{12}\mathbf{m},\xi_{12}\mathbf{m}),
\]
satisfying the following relations. %All equations below are equations between morphisms in the indicated groupoid.

\begin{equation}\label{sym_rel_morph}\tag{S}
(012)^*(\mathbf{m})=\mathbf{m}
\qquad \text{in } \ob(\calQ(0,3)).
\end{equation}
%where $z_3=(012)$ is the cyclic permutation of the three boundary labels.

\begin{equation}\label{twist_rel_morphS}\tag{T}
(\mathbf{t}\circ_1 \id_{\mathbf{m}})
=
\mathbf{b}\cdot (12)^*\mathbf{b}\cdot
(\id_{\mathbf{m}}\circ(\mathbf{t},\mathbf{t}))
\qquad \text{in } \calQ(0,3).
\end{equation}

\begin{equation}\label{hex_rel_1_morphS}\tag{H1}
((213)^*(\id_{\mathbf{m}}\circ_{1}^{0}\mathbf{b}))
\cdot ((213)^*\mathbf{a})
\cdot (\id_{\mathbf{m}}\circ_{1}^{0}\mathbf{b})
=
((231)^*\mathbf{a})
\cdot (\mathbf{b}\circ_{2}^{0}\id_{\mathbf{m}})
\cdot \mathbf{a}
\qquad \text{in } \calQ(0,4).
\end{equation}

\begin{equation}\label{hex_rel_2_morphS}\tag{H2}
((132)^*(\id_{\mathbf{m}}\circ_{1}^{0}\mathbf{b}))
\cdot ((132)^*\mathbf{a}^{-1})
\cdot (\id_{\mathbf{m}}\circ_{1}^{0}\mathbf{b})
=
((312)^*\mathbf{a}^{-1})
\cdot (\mathbf{b}\circ_{1}^{0}\id_{\mathbf{m}})
\cdot \mathbf{a}^{-1}
\qquad \text{in } \calQ(0,4).
\end{equation}

\begin{equation}\label{pent_rel_1_morphS}\tag{P}
(\mathbf{a}\circ_1 \id_{\mathbf{m}})
\cdot(\mathbf{a}\circ_3 \id_{\mathbf{m}})
=
(\id_{\mathbf{m}}\circ_1\mathbf{a})
\cdot (\mathbf{a}\circ_2 \id_{\mathbf{m}})
\cdot (\id_{\mathbf{m}}\circ_2\mathbf{a})
\qquad \text{in } \calQ(0,5).
\end{equation}

\begin{equation}\label{g1_rel_morph_1}\tag{G1a}
\mathbf{s}^2
=
\xi_{0,2}\bigl(\mathbf{b}^{-1}\bigr)
\qquad \text{in } \calQ(1,1).
\end{equation}

\begin{equation}\label{g1_rel_morph_2}\tag{G1b}
\mathbf{d}^{-1}
\cdot \mathbf{s}
\cdot \mathbf{d}^{-1}
\cdot \mathbf{s}
\cdot \mathbf{d}^{-1}
=
\mathbf{s}
\qquad \text{in } \calQ(1,1),
\end{equation}
where $\mathbf{d}=\xi_{12}(\id_{\mathbf{m}}\circ_1\mathbf{t}).$

\begin{multline}\label{g2_rel_morph_1}\tag{G2}
\xi_{12}\bigl((012)^*\mathbf{b}\circ_2
(\mathbf{t}^{-1},\mathbf{t}^{-1})\circ \mathbf{b}\bigr)
\\
=
\xi_{12}(\mathbf{a})
\cdot (\id_{\mathbf{m}}\circ_1\mathbf{s})
\cdot \xi_{12}(\mathbf{a}^{-1})
\cdot
\xi_{12}\bigl(\id_{\mathbf{m}}\circ_2
(\mathbf{t}^{-1},\mathbf{t})\circ\id_{\mathbf{m}}\bigr)
\\
\cdot \xi_{12}(\mathbf{a})
\cdot (\id_{\mathbf{m}}\circ_1\mathbf{s})
\cdot \xi_{12}(\mathbf{a}^{-1})
\qquad \text{in } \calQ(1,2).
\end{multline}
\end{thm}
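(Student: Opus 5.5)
The proof splits into uniqueness and existence.

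\emph{Uniqueness.} Objects of $\bS$ are the elements of the free modular operad $\Upsilon$ on $\Upsilon(0,3)=\{P,(12)^*P\}$. Hence $f$ on objects is determined by $f(P)=\mathbf{m}$, and $(12)^*P$ must go to $(12)^*\mathbf m$. Relation \eqref{sym_rel_morph} is forced because $(012)^*P=P$ in $\Upsilon(0,3)=\Sigma_2^+/A_2^+$. By Proposition~\ref{prop:S-generated-by-standard-moves}, every morphism of $\bS$ is built from $\tau,\beta,\alpha,\sigma$ by categorical composition, operadic composition, symmetric actions and contractions. So $f$ on morphisms is determined by $(\mathbf t,\mathbf b,\mathbf a,\mathbf s)$. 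Necessity of the relations: \eqref{twist_rel_morphS}, \eqref{hex_rel_1_morphS}, \eqref{hex_rel_2_morphS}, \eqref{pent_rel_1_morphS} hold in $\bS$ by Lemma~\ref{lemma:operad-map-PaRB-to-S0}. The relations \eqref{g1_rel_morph_1}, \eqref{g1_rel_morph_2}, \eqref{g2_rel_morph_1} are the translations, through Lemma~\ref{lemma: the complexes of morphisms and markings}, of the $2$-cells (G1.1), (G1.2) and (G2) of $\calM(S)$. One checks, for instance, that $SSB=\id$ becomes $\mathbf s^2=\xi_{0,2}(\mathbf b^{-1})$ and that the $T$-move on the cut curve of $S_{\contraction}$ is $\mathbf d$. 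So any operad map sends them to valid equations.

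\emph{Existence.} Given a tuple satisfying the relations, first define $f$ on objects as the unique map of modular operads in sets $\Upsilon\to\ob\calQ$ out of the free operad, sending $P\mapsto\mathbf m$. This is well defined by \eqref{sym_rel_morph}. For morphisms, fix $G$ and use $\mathbb M_G\cong G\downarrow\bS(g,n+1)$. Send each edge of $\calM(S_G)$ to a morphism of $\calQ$ as follows: as in the proof of Proposition~\ref{prop:S-generated-by-standard-moves}, an edge of type $T$, $B$, $A$ or $S$ is an operadic composite of identities with one generator (suitably permuted), and we apply $f$ to that expression. We must check this is well defined, i.e.\ independent of the chosen identifications. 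The ambiguities are cyclic relabellings of a pair of pants, which are absorbed by \eqref{sym_rel_morph}, and different grafting orders, which are absorbed by the associativity and equivariance axioms of $\calQ$.

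Next we show every $2$-cell boundary maps to an identity. Commutativity of disjoint moves follows from the interchange law for operadic composition in $\calQ$. The boundary-twist and twist--braiding cells follow from \eqref{twist_rel_morphS} and the $\bS(0,2)$ action. The pentagon and hexagon cells follow from \eqref{pent_rel_1_morphS}, \eqref{hex_rel_1_morphS}, \eqref{hex_rel_2_morphS}. Self-duality follows from the cyclic formula $(\star)$ proved in Proposition~\ref{prop:PaRB-cyclic-is-S0}, which is itself a consequence of the genus-zero relations. The gluing and genus-one cells follow from \eqref{g1_rel_morph_1}, \eqref{g1_rel_morph_2}, \eqref{g2_rel_morph_1}. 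Each local $2$-cell lives in a subsurface of type $(0,3)$, $(0,4)$, $(0,5)$, $(1,1)$ or $(1,2)$ and is operadically glued to identities elsewhere. Hence its image is the image of the local relation composed with identities, and it is trivial.

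Because $\calM(S_G)$ is connected and simply connected (Proposition~\ref{prop:our-complex-of-markings-is-simply-connected}), we get a functor $\mathbb M_G\to\calQ$. It therefore defines $f$ on $\Hom(G,H)$ for every $H$, and it respects composition. It remains to show that $f$ commutes with compositions, contractions and $\Sigma_n^+$-actions. Gluing a marked pants decomposition of $S_G$ with one of $S_{G'}$ gives one of $S_{G\circ G'}$, and an edge path in one factor maps to an edge path in the glued complex. By construction, the image of such a path is the operadic composite of images, and symmetric actions simply relabel edges.

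The main obstacle is the well-definedness of $f$ on edges together with the bookkeeping that turns each geometric $2$-cell into exactly one of the listed algebraic relations. The case I expect to be hardest is the gluing relations for Dehn twists, in particular a twist along a separating or non-separating cut curve viewed from the two adjacent pants. I would first handle this via the $\bS(0,2)$ unit action, which identifies $\mathbf t$ inserted on either side of an edge; the remaining cases I would check by pictures.
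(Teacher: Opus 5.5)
The overall architecture matches the paper's: uniqueness from Proposition~\ref{prop:S-generated-by-standard-moves}, and existence by assigning images to the edges of $\calM(S_G)$, killing every $2$-cell, and invoking simple connectivity. The uniqueness and necessity half is fine. The existence half has a genuine gap. Nothing in your argument controls how the part of the $\Sigma_n^+$-action of $\calQ$ that moves the label $0$ acts on $\mathbf t,\mathbf b,\mathbf a$, and several of your cell checks silently need exactly this.

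\emph{(i) Self-duality.} The cell $A_{c,c'}A_{c',c}=\id$ evaluates in $\calQ$ to $\pi^*\mathbf a=\mathbf a^{-1}$. Here $\pi$ is the one-step rotation of the four boundary labels exchanging $\mathbf m\circ_1\mathbf m$ and $\mathbf m\circ_2\mathbf m$. This is the identity $(01)^*\mathbf a=(231)^*\mathbf a^{-1}$ of $(\star)$, but read in $\calQ$. The formula $(\star)$ you cite is an identity in $\bS$, and its proof in Proposition~\ref{prop:PaRB-cyclic-is-S0} uses precisely this self-duality cell. Invoking it for $\calQ$ is therefore circular.

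\emph{(ii) Twist orientation.} A $T$-move on a non-root boundary is a cyclic translate of $\mathbf t\circ_1\id_{\mathbf m}$. The cyclic axiom together with (S) gives $(z_2^+)^*(\mathbf t\circ_1\id_{\mathbf m})=\id_{\mathbf m}\circ_2(01)^*\mathbf t$. Likewise the last contraction axiom gives $\xi_{n-1}^{n}(x\circ_n\mathbf t)=\xi_{n-1}^{n}(x\circ_{n-1}(01)^*\mathbf t)$. Hence the twist--braiding cell reduces to (T), which is written with $\id_{\mathbf m}\circ(\mathbf t,\mathbf t)$, only if $(01)^*\mathbf t=\mathbf t$ in $\calQ$. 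The same holds for the gluing cells, where twisting an edge from its two ends must agree. The triviality of $(01)$ on $\bS(0,2)$ is a property of the source, not of $\calQ$.

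\emph{(iii) Relation (S) only identifies objects.} The two identifications of an $A$-move with the standard one (Lemma~\ref{lemma:aux-for-A-move}) differ by $(02)(13)$. This permutation fixes both $\mathbf m\circ_1\mathbf m$ and $\mathbf m\circ_2\mathbf m$, so your edge assignment needs $((02)(13))^*\mathbf a=\mathbf a$. Similarly, the three $B$-moves out of a marked pair of pants evaluate to $\mathbf b$ and its cyclic translates. The cells relating them encode $(01)^*\mathbf b=\mathbf b^{-1}\cdot(\id_{\mathbf m}\circ_2\mathbf t^{-1})$.

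These identities are necessary, since any $f$ commutes with $(01)^*$ and they are the images of $(\star)$. Your argument does not derive them from the listed relations.
\begin{itemize}
\item The relations (T), (H1), (H2), (P) only involve permutations fixing $0$, so they live in the underlying operad $u^*\trun{0}\calQ$.
\item Relation (S) constrains only objects.
\item The genus-one relations cannot help in general. Take $\calQ=(\trun{0})_*\calQ_0$, the extension of a cyclic operad $\calQ_0$ by terminal groupoids in positive genus. Then (G1a), (G1b), (G2) hold vacuously. The statement then asserts that every operad map $\PaRB\to u^*\calQ_0$ with $(012)^*\mathbf m=\mathbf m$ is automatically cyclic, which is what would have to be proved.
\end{itemize}

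So you must either add
\[
(01)^*\mathbf t=\mathbf t,\qquad (01)^*\mathbf b=\mathbf b^{-1}\cdot(\id_{\mathbf m}\circ_2\mathbf t^{-1}),\qquad (01)^*\mathbf a=(231)^*\mathbf a^{-1}
\]
to the hypotheses, or give a genuine derivation of them. If they are added, (iii) follows, because $\pi^2=(02)(13)$ gives $((02)(13))^*\mathbf a=\pi^*(\pi^*\mathbf a)=\pi^*(\mathbf a^{-1})=\mathbf a$, and your cell-by-cell check then goes through as sketched.

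The paper's own proof passes over the same point. It obtains the genus-zero part as a cyclic map from Theorem~\ref{thm: maps_out_of_P_0}, which classifies only non-cyclic operad maps. It also attributes self-duality to Mac Lane coherence, which does not see permutations moving $0$.
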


Relations \ref{g1_rel_morph_1}, \ref{g1_rel_morph_2} and \ref{g2_rel_morph_1} are added to include the higher genus structure. For instance, 
equation~\ref{g2_rel_morph_1} can be pictured diagrammatically as in Figure~\ref{fig:2-cell from NS}.

\begin{figure}
    \centering
    \includegraphics[width=\linewidth]{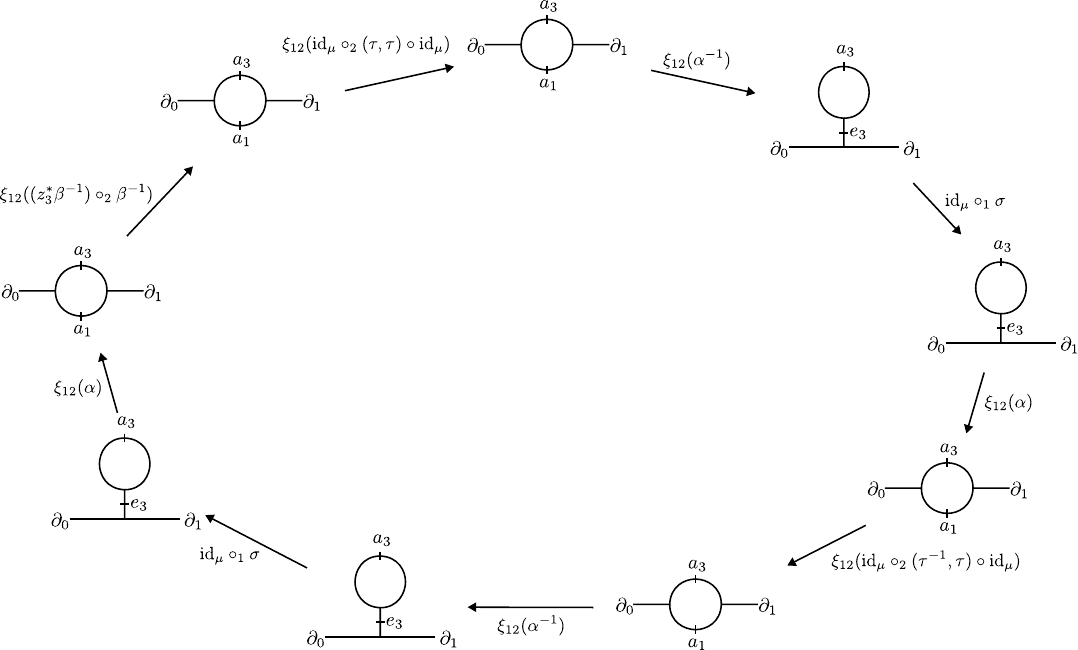}
    \caption{Diagram of relation (G2)}
    \label{fig:2-cell from NS}
\end{figure}

\begin{proof}
A map of modular operads $f:\bS\to \calQ$ necessarily determines the images of the generating morphisms:
\[ f(\mu)=\mathbf{m},\quad f(\tau)=\mathbf{t},\quad f(\beta)=\mathbf{b},\quad f(\alpha)=\mathbf{a},\quad f(\sigma)=\mathbf{s}.
\]
Moreover, such a map consists of functors $f(g,n+1):\bS(g,n+1)\longrightarrow \calQ(g,n+1)$ compatible with the symmetric group actions, operadic compositions, and contractions. Under the equivalence of Lemma~\ref{lemma: the complexes of morphisms and markings}, the boundaries of the relevant $2$-cells in the marking complexes $\calM(S_G)$ are identified with the relations \eqref{sym_rel_morph}, \eqref{twist_rel_morphS}, \eqref{pent_rel_1_morphS}, \eqref{hex_rel_1_morphS}, \eqref{hex_rel_2_morphS}, \eqref{g1_rel_morph_1}, \eqref{g1_rel_morph_2}, and \eqref{g2_rel_morph_1}. Therefore any map $f:\bS\to\calQ$ gives a tuple $(\mathbf{m},\mathbf{t},\mathbf{b},\mathbf{a},\mathbf{s})$ satisfying the stated relations.

\medskip 

We now prove the converse. Suppose we are given $\mathbf{m},\mathbf{t},\mathbf{b},\mathbf{a},\mathbf{s}$ in $\calQ$ satisfying the stated relations.  Since $\ob(\bS)$ is the free modular operad in sets generated by the genus-zero corolla $P$ and $(12)^*P$, the assignment $\ob(f)(P)=\mathbf{m}\in\ob(\calQ(0,3))$ extends uniquely to a map of modular operads in sets
\[
\ob(f):\ob(\bS)\longrightarrow\ob(\calQ).
\]
In particular, the object assignment is compatible with the extended symmetric group actions, operadic compositions, and contractions.  Note that since the object $P\in \bS(0,3)$ is represented by the coset $\id\cdot A_2^+$, where $A_2^+=\langle(012)\rangle$. Thus $(012)^*P=P$, and the condition
\[
(012)^*\mathbf{m}=\mathbf{m}
\]
is precisely the condition needed for the assignment $P\mapsto \mathbf{m}$ to respect the stabiliser of the generating object.

\medskip

Using the cyclic operad isomorphism $\PaRB^{\mathrm{cyc}}\cong \trun{0}\bS$ from Proposition~\ref{prop:PaRB-cyclic-is-S0}, together with Theorem~\ref{thm: maps_out_of_P_0}, the assignments
\[
\mu\mapsto P,\qquad \tau\mapsto \mathbf{t},\qquad \beta\mapsto \mathbf{b},\qquad \text{and} \qquad \alpha\mapsto \mathbf{a}
\]
satisfying \eqref{twist_rel_morphS}, \eqref{pent_rel_1_morphS}, \eqref{hex_rel_1_morphS}, and \eqref{hex_rel_2_morphS} define the genus-zero
part of $f$, namely a map of cyclic operads $\trun{0}f:\trun{0}\bS\longrightarrow \trun{0}\calQ.$

\medskip 

It remains to extend the genus-zero map to all of $\bS$. By Proposition~\ref{prop:S-generated-by-standard-moves}, every morphism in $\bS$ is obtained from the generating morphisms $\alpha,\beta,\tau,\sigma$ by modular operad operations and categorical composition. Thus, for a morphism
\[
\phi\in\Hom_{\bS(g,n+1)}(G,G'),
\]
we choose an expression for $\phi$ in terms of these generators and define its image in $\calQ(g,n+1)$ by replacing $\alpha,\beta,\tau,$ and $\sigma$ by $\mathbf{a},\mathbf{b},\mathbf{t}$ and $\mathbf{s}$. 

We must check that this description of $f(\phi)$ is independent of the chosen expression. So suppose that $\phi$ has two expressions
\[
\phi=\phi^1_k\cdots \phi^1_1=\phi^2_m\cdots \phi^2_1,
\]
in $\bS(g,n+1)$, where each $\phi^r_i$ is one of the generating morphisms of $\bS$. Then
\[
(\phi^2_m\cdots \phi^2_1)^{-1}(\phi^1_k\cdots \phi^1_1)=\id_G \ \in\Aut_{\bS(g,n+1)}(G).
\]

Under the equivalence of Lemma~\ref{lemma: the complexes of morphisms and markings}, this relation is represented by a loop in the marking complex $\calM(S_G)$. Since $\calM(S_G)$ is connected and simply connected \cite[Theorem~5.1]{bk_marked_surfaces}, this loop is generated by the boundaries of the $2$-cells of $\calM(S_G)$.  The boundary relations of these $2$-cells are exactly the relations imposed above, together with the formal relations already forced by the modular operad axioms. 

More precisely, the twist--braiding, pentagon, and two hexagon cells are encoded by \eqref{twist_rel_morphS}, \eqref{pent_rel_1_morphS}, \eqref{hex_rel_1_morphS}, and \eqref{hex_rel_2_morphS}. The self-duality of the associativity move follows from Mac Lane coherence, and therefore from the pentagon and hexagon relations already imposed.  The genus-one cells in types $(1,1)$ and $(1,2)$ are encoded by \eqref{g1_rel_morph_1}, \eqref{g1_rel_morph_2}, and \eqref{g2_rel_morph_1}. The remaining cells express formal compatibility relations for disjointly supported moves and for the modular operad operations. These hold in $\calQ$ because $\calQ$ is a modular operad. 

It follows that
\[
(f(\phi^2_m)\cdots f(\phi^2_1))^{-1} (f(\phi^1_k)\cdots f(\phi^1_1)) = \id_{f(G)}.
\]
Thus the two expressions for $\phi$ have the same image in $\calQ$, so the assignment on morphisms is well defined. By construction it is compatible with categorical composition, symmetric group actions, operadic compositions, and contractions. Hence it defines a map of modular operads $f:\bS\longrightarrow \calQ.$ The theorem follows. 

\end{proof}

\section{The operadic two-level principle}
\label{sec: two level principle}

Grothendieck's \emph{two-level principle} predicts that Galois actions on the Teichmüller tower should be determined by their effect in the first two levels,
namely on
\[
\widehat{\Gamma}_{0,4},\quad
\widehat{\Gamma}_{0,5},\quad
\widehat{\Gamma}_{1,1},\quad
\text{and}\quad
\widehat{\Gamma}_{1,2}.
\]
In this section we formulate and prove an operadic version of this principle. Our main goal is to show that there exists an action of $\galQ$ on the profinite completion of the surface modular operad $\widehat{\bS}$ (Theorem~\ref{main theorem NS action}). Moreover, this action is completely determined by the genus one truncation of $\widehat{\bS}$ (Corollary~\ref{cor: truncation and endomorphisms of the surface mod operad}). 

We begin with a brief discussion of profinite completion for modular operads in groupoids. 

\subsection{Profinite completion of modular operads in groupoids}\label{sec: profinite completion of modular operads in groupoids}
Proposition~\ref{prop: profinite completion preserves products of groupoids} allows us to define profinite completion entrywise for modular operads in groupoids satisfying the relevant finiteness hypotheses. 

\begin{definition}\label{def: profinite completion of modular operads in groupoids}
Let $\calP=\{\calP(g,n+1)\}$ be a modular operad in groupoids such that each $\calP(g,n+1)$ has finitely many objects. Its profinite completion is the modular operad in profinite groupoids
\[
\widehat{\calP}=\{\widehat{\calP}(g,n+1)\}.
\]
The structure maps are obtained by applying profinite completion to the structure maps of $\calP$, using the product comparison to identify products of completions with completions of products.
\end{definition}

More precisely, the partial composition
\[
\widehat{\calP}(g,n+1)\times \widehat{\calP}(h,m+1)
\longrightarrow
\widehat{\calP}(g+h,n+m)
\]
is the composite
\[
\begin{tikzcd}
\widehat{\calP}(g,n+1)\times \widehat{\calP}(h,m+1)
\arrow[r, "\cong"]
&
\reallywidehat{\calP(g,n+1)\times \calP(h,m+1)}
\arrow[r, "\widehat{(\circ_i^j)}"]
&
\widehat{\calP}(g+h,n+m).
\end{tikzcd}
\]
Here the first map is the inverse of the product isomorphism from Proposition~\ref{prop: profinite completion preserves products of groupoids}, and the second map is obtained by applying profinite completion to the composition map of $\calP$. The contraction maps
\[
\widehat{(\xi_{i}^{j})}:\widehat{\calP}(g,n+1)\longrightarrow \widehat{\calP}(g+1,n-1)
\]
are defined by applying profinite completion to the contraction maps of $\calP$.

\subsubsection{The genus-one truncation}
The presentation theorem (Theorem~\ref{thm: presentation for maps out of bS}) shows that maps out of $\mathbf S$ are determined by the images of the generators and relations appearing in genus at most one. We express this using the truncation formalism introduced in Section~\ref{subsec: intro to truncations} and developed further in Appendix~\ref{subsec: truncations}: $\trun{1}\calP$ denotes the genus-one truncation of a modular operad $\calP$, and the truncation functor admits a left adjoint $(\trun{1})_!$.

\begin{cor}\label{cor: maps out of bS}
The counit of the truncation adjunction gives an isomorphism of modular operads in groupoids
\[
(\trun{1})_!(\trun{1}\bS)\cong \bS.
\]
\end{cor}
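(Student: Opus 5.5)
Since $(\trun{1})_!$ is left adjoint to $\trun{1}^*$, showing that the counit $\varepsilon:(\trun{1})_!(\trun{1}\bS)\to\bS$ is an isomorphism is equivalent, by Yoneda, to showing that for every modular operad $\calQ$ in groupoids restriction along the truncation gives a bijection
\[
\Hom_{\Mod}(\bS,\calQ)\longrightarrow \Hom_{\Mod_{\leq 1}}(\trun{1}\bS,\trun{1}\calQ).
\]
Indeed, precomposition with $\varepsilon$ identifies $\Hom(\bS,\calQ)$ with $\Hom((\trun{1})_!\trun{1}\bS,\calQ)\cong\Hom(\trun{1}\bS,\trun{1}\calQ)$, and the composite is exactly this restriction map. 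So the plan is to prove that this map is bijective.

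\textbf{Injectivity.} By Theorem~\ref{thm: presentation for maps out of bS}, a map $f:\bS\to\calQ$ is determined by the tuple $(\mathbf m,\mathbf t,\mathbf b,\mathbf a,\mathbf s)$. Each entry lives in a component of genus at most one: $\calQ(0,2)$, $\calQ(0,3)$, $\calQ(0,4)$ or $\calQ(1,1)$. These are all retained by $\trun{1}f$, so the restriction map is injective.

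\textbf{Surjectivity.} Start from a map $h:\trun{1}\bS\to\trun{1}\calQ$ of truncated modular operads. Set $\mathbf m=h(P)$, $\mathbf t=h(\tau)$, $\mathbf b=h(\beta)$, $\mathbf a=h(\alpha)$ and $\mathbf s=h(\sigma)$. The relations (S), (T), (H1), (H2), (P), (G1a), (G1b) and (G2) hold among these in $\bS$, and every operation appearing in them has output in $\bS(0,k)$ with $k\le 5$, or in $\bS(1,1)$ or $\bS(1,2)$. In particular, the contractions $\xi_{12}$ used in (G1a), (G1b) and (G2) take genus zero to genus one, so they are defined in the truncation. Hence each relation is an identity in $\trun{1}\bS$, and applying $h$ shows the tuple satisfies the same relations in $\calQ$. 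Theorem~\ref{thm: presentation for maps out of bS} then yields a map $f:\bS\to\calQ$ with these values on generators.

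It remains to check that $\trun{1}f=h$. Both are maps of truncated modular operads, they agree on objects (the object parts are free on $P$), and they agree on $\tau,\beta,\alpha,\sigma$. By Proposition~\ref{prop:S-generated-by-standard-moves}, every morphism of $\bS(g,n+1)$ with $g\le1$ is a composite of these generators under categorical composition, symmetric actions, compositions and contractions. The two maps therefore agree everywhere, provided the generating expressions can be chosen inside the truncation.

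\textbf{Main obstacle.} The delicate point is exactly this last proviso: every morphism of genus $\le1$ must be expressible by generators using only operations of output genus $\le1$. This holds because the edge paths in $\calM(S_G)$ used in the proof of Proposition~\ref{prop:S-generated-by-standard-moves} stay inside $S_G$. Each edge is a local move, which becomes a genus-$\le1$ operadic expression in $\bS(g,n+1)$. These expressions are built by grafting and contracting pieces whose genus is at most $g\le1$, and no intermediate operation exceeds the output genus. I would verify this genus bound carefully for the $S$-move, which is realised as $\xi_{12}$ of a pants piece followed by gluing.
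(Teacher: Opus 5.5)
Your proposal is correct and follows the same route as the paper. Both use a Yoneda argument that identifies precomposition with the counit with the restriction map $\Hom(\bS,\calQ)\to\Hom(\trun{1}\bS,\trun{1}\calQ)$, and both read off its bijectivity from Theorem~\ref{thm: presentation for maps out of bS}; your injectivity and surjectivity checks and the verification that $\trun{1}f=h$ spell out what the paper leaves implicit. The point you flag as the main obstacle is automatic: compositions and contractions never decrease genus, so any expression of a genus-at-most-one morphism in the generators (as in Proposition~\ref{prop:S-generated-by-standard-moves}) only passes through intermediate terms of genus at most one, and therefore lies in the truncation without any appeal to edge paths in $\calM(S_G)$.
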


\begin{proof}
By Theorem~\ref{thm: presentation for maps out of bS}, a map out of $\bS$ is uniquely determined by the images of the generators appearing in $\trun{1}\bS$, and every choice of such images satisfying the stated relations extends uniquely to a map out of $\bS$. Thus, for every modular operad $\calQ$, restriction induces a natural bijection
\[
\Map_{\Mod(\Grpd)}(\bS,\calQ) \cong \Map_{\Mod_{\leq 1}(\Grpd)}(\trun{1}\bS,\trun{1}\calQ).
\]
By the adjunction $(\trun{1})_!\dashv \trun{1}$, the right-hand side is naturally isomorphic to $\Map_{\Mod(\Grpd)}((\trun{1})_!(\trun{1}\bS),\calQ).$ Under these identifications, the natural bijection is induced by precomposition with the counit
\[
(\trun{1})_!(\trun{1}\bS)\longrightarrow \bS.
\]
Therefore, by Yoneda, the counit is an isomorphism.
\end{proof}

We will use this after profinite completion.  Let $\eta_{\calP}:\calP\to |\widehat{\calP}|$ denote the unit of the profinite-completion adjunction, where $|-|:\widehat{\Grpd}\to\Grpd$ is the forgetful functor. Thus, for any modular operad $\mathcal R$ in profinite groupoids, composition with $\eta_{\calP}$ gives a natural bijection
\[
\Map_{\Mod(\widehat{\Grpd})}(\widehat{\calP},\mathcal R) \cong \Map_{\Mod(\Grpd)}(\calP,|\mathcal R|).
\]
Equivalently, every map $f:\calP\to|\mathcal R|$ extends uniquely to a continuous map $\widetilde f:\widehat{\calP}\to\mathcal R$ with $|\widetilde f|\circ\eta_{\calP}=f$.

In particular, taking $\calP=\bS$ and $\mathcal R=\widehat{\bS}$, the presentation theorem describes maps $\bS\to|\widehat{\bS}|$, and the adjunction upgrades each such map uniquely to a continuous endomorphism $\widehat{\bS}\to\widehat{\bS}$.

\begin{cor}\label{cor: truncation and endomorphisms of the surface mod operad}
The truncation functor $\trun{1}$ induces an isomorphism of monoids
\[
\End_0(\widehat{\bS})
\xrightarrow{\cong}
\End_0(\trun{1}\widehat{\bS}).
\]
\end{cor}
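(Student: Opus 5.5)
The plan is to combine Corollary~\ref{cor: maps out of bS} with the universal property of profinite completion recalled just above. Throughout, $\End_0$ means endomorphisms that are the identity on objects. These form a monoid under composition, and $\trun{1}$ is a functor, so the map $\End_0(\widehat{\bS})\to\End_0(\trun{1}\widehat{\bS})$ sending $F\mapsto\trun{1}F$ is a monoid homomorphism. It therefore suffices to show that this map is bijective.

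The first step is to reduce both sides to maps out of discrete groupoids. By the completion adjunction, precomposition with $\eta_{\bS}$ gives a bijection
\[
\Map_{\Mod(\widehat{\Grpd})}(\widehat{\bS},\widehat{\bS})\cong \Map_{\Mod(\Grpd)}(\bS,|\widehat{\bS}|).
\]
This bijection restricts to object-fixing maps, because $\eta_{\bS}$ is the identity on objects: each $\bS(g,n+1)$ has finitely many objects (finitely many trivalent ribbon graphs), and a connected groupoid with finitely many objects keeps its object set under completion.

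The same argument applies in the truncated setting. Since $\trun{1}\widehat{\bS}=\widehat{\trun{1}\bS}$ (completion is entrywise), I get an analogous identification of $\End_0(\trun{1}\widehat{\bS})$ with object-fixing maps $\trun{1}\bS\to\trun{1}|\widehat{\bS}|$. This requires the truncated analogue of the completion adjunction, which I would note follows verbatim because Definition~\ref{def: profinite completion of modular operads in groupoids} only uses products of entries.

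Next, I apply Corollary~\ref{cor: maps out of bS}, or more precisely the natural bijection appearing in its proof, with $\calQ=|\widehat{\bS}|$:
\[
\Map(\bS,|\widehat{\bS}|)\cong \Map_{\Mod_{\leq1}}(\trun{1}\bS,\trun{1}|\widehat{\bS}|).
\]
This bijection is given by restriction, so it preserves the object-fixing condition in both directions. Restriction does not change objects. Conversely, the object assignment of any extension is the unique map of free modular operads determined by the image $\mathbf m$ of $P$, which is already fixed in genus~$0$.

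Composing the three bijections, I obtain a bijection $\End_0(\widehat{\bS})\to\End_0(\trun{1}\widehat{\bS})$. To conclude, I need to check that this composite agrees with $F\mapsto\trun{1}F$. This follows from naturality of $\eta$ with respect to truncation, namely $\trun{1}(\eta_{\bS})=\eta_{\trun{1}\bS}$.

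The main obstacle is the compatibility of truncation with completion and with the adjunction. Concretely, I must check that $\trun{1}|\widehat{\bS}|=|\trun{1}\widehat{\bS}|$, which holds because $|-|$ is entrywise, and that the truncated completion adjunction holds. I would verify these directly from the entrywise definitions rather than through the model-categorical machinery of the appendix.
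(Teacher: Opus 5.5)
Your proposal is correct and takes the same route as the paper. It uses the completion adjunction, restricted to object-fixing maps because $\eta_{\bS}$ is the identity on objects, then the restriction bijection from Corollary~\ref{cor: maps out of bS} with $\calQ=|\widehat{\bS}|$, and finally a second application of the adjunction using $\trun{1}\widehat{\bS}=\widehat{\trun{1}\bS}$. Your extra check that the composite bijection equals $F\mapsto\trun{1}F$ (via $\trun{1}(\eta_{\bS})=\eta_{\trun{1}\bS}$) is what makes the bijection a monoid isomorphism, a point the paper leaves implicit.
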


\begin{proof}
By the profinite-completion adjunction, endomorphisms of $\widehat{\bS}$ are equivalent to maps $\bS\to|\widehat{\bS}|$. Since the unit $\eta_{\bS}:\bS\to|\widehat{\bS}|$ does not change the object set, this bijection restricts to object-fixing maps:
\[
\End_0(\widehat{\bS})\cong \Map_0(\bS,|\widehat{\bS}|).
\]
Corollary~\ref{cor: maps out of bS} identifies this set with $\Map_0(\trun{1}\bS,|\trun{1}\widehat{\bS}|)$. Since profinite completion is applied entrywise, it commutes with genus truncation, so $\trun{1}\widehat{\bS}$ is the profinite completion of $\trun{1}\bS$. A second application of the profinite-completion adjunction gives
\[
\Map_0(\trun{1}\bS,|\trun{1}\widehat{\bS}|) \cong \End_0(\trun{1}\widehat{\bS}).
\]
Combining these bijections gives the desired isomorphism of monoids.
\end{proof}
\subsection{The action of $\galQ$ on $\widehat{\bS}$}\label{sec: NS}
To study Galois actions on higher-genus mapping class groups, we work with explicit profinite groups containing $\galQ$ and admitting well-understood presentations. The most prominent example is the Grothendieck--Teichmüller group $\GT$, introduced by Drinfeld \cite[Section~4]{Drin}. 
In their study of Galois actions on higher-genus mapping class groups, Nakamura and Schneps introduced in \cite{Nakamura-Schneps} a subgroup $\NS$ of the Grothendieck--Teichmüller group, satisfying
\[
\galQ\hookrightarrow \NS\hookrightarrow \GT.
\]
Rather than defining the Galois action directly on the full profinite modular operad $\widehat{\bS}$, we construct it via these explicit profinite groups.

Let $\widehat{\F}_2$ denote the profinite completion of the free group on two generators $x$ and $y$. We use the following standard convention: if $f=f(x,y)\in \widehat{\F}_2$ and $a,b$ are elements of a profinite group $\mathsf G$, then $f(a,b)$ denotes the image of $f$ under the unique
continuous homomorphism $\widehat{\F}_2\longrightarrow \mathsf G$ sending $x\mapsto a$ and $y\mapsto b$.

\begin{definition}\label{defn:GT}
The \emph{Grothendieck--Teichmüller monoid} $\underline{\GT}$ consists of pairs $(\lambda,f)\in \widehat{\mathbb Z}\times \widehat{\F}_2$ such that the assignment
\[
x\longmapsto f(x,y)^{-1}x^\lambda f(x,y),
\qquad
y\longmapsto y^\lambda
\]
defines a continuous endomorphism $\varphi_{(\lambda,f)}\colon \widehat{\F}_2\longrightarrow \widehat{\F}_2$
and the following equations hold:
\begin{equation}\label{I}\tag{I}
f(x,y)f(y,x)=1,
\end{equation}
\begin{equation}\label{II}\tag{II}
f(z,x)z^\nu f(y,z)y^\nu f(x,y)x^\nu=1,
\qquad
z=(xy)^{-1},
\qquad
\nu=(\lambda-1)/2.
\end{equation}
and
\begin{equation}\label{III}\tag{III}
f(x_{12},x_{23}x_{24})f(x_{13}x_{23},x_{34})
=
f(x_{23},x_{34})f(x_{12}x_{13},x_{24}x_{34})f(x_{12},x_{23}).
\end{equation}
The first two equations hold in \(\widehat F_2\), and the last equation holds
in the profinite completion of the pure braid group \(\widehat{\PB}_4\). The multiplication is defined by composition of the associated endomorphisms.
Thus, for \(F=(\lambda,f)\) and \(G=(\mu,g)\), we set
\[
F\cdot G
=
(\lambda,f)\cdot(\mu,g)
:=
\left(
\lambda\mu,\,
g(x,y)\,
f\bigl(g(x,y)^{-1}x^\mu g(x,y),\,y^\mu\bigr)
\right),
\]
so that
\[
\varphi_{F\cdot G}=\varphi_G\circ\varphi_F.
\]
The Grothendieck--Teichmüller group \(\GT\) is the group of invertible elements of the monoid \(\underline{\GT}\).

%The multiplication is given by\[(\lambda,f)\cdot(\mu,g) = \bigl(\lambda\mu, f(gx^\mu g^{-1},y^\mu)\cdot g\bigr).\] The Grothendieck--Teichmüller group $\GT$ is the group of invertible elements of $\underline{\GT}$.
\end{definition}

\begin{remark}
The reader familiar with $\GT$ might note that our multiplication convention on $\GT$ differs slightly from that of \cite{Drin}. We follow the composition convention used in \cite{hls}, where the product is defined so that the associated endomorphisms of $\widehat{\F}_2$ compose by $\varphi_{F\cdot G}=\varphi_G\circ\varphi_F.$
\end{remark}

Drinfeld described the action of $\GT$ on profinite braid groups in his original definition of the Grothendieck--Teichmüller group~\cite{Drin}. An operadic interpretation of this action was developed in~\cite{Horel_profinite_groupoids}, building on work such as~\cite{BN,FresseBook1}. This was extended to an action on profinite ribbon braid operads by the second author and collaborators in~\cite{Boavida-Horel-Robertson}. The compatibility with the cyclic structure on $\PaRB$ was recently established by the second author and Singh~\cite{robertson2025grothendieck}.

By Proposition~\ref{prop:PaRB-cyclic-is-S0}, there is an isomorphism of cyclic operads
\[
\trun{0}\widehat{\bS}\cong \widehat{\PaRB}^{\mathrm{cyc}}.
\]
We use this identification to regard the genus-zero $\GT$-action on $\widehat{\PaRB}^{\mathrm{cyc}}$ as an action on $\trun{0}\widehat{\bS}$. Explicitly, an element $F=(\lambda,f)\in\underline{\GT}$ determines an object-fixing endomorphism $\iota_F\in\End_0(\trun{0}\widehat{\bS})$ by its action on the standard generators $\tau,\beta,\alpha$:
\[
\iota_F(\tau)=\tau^\lambda,\qquad
\iota_F(\beta)=\beta\cdot\bigl(\beta\cdot(12)^*\beta\bigr)^\nu,\qquad
\iota_F(\alpha)=\alpha\cdot f(D_{c'},D_c),
\]
where $\nu=(\lambda-1)/2$, and where $c$ and $c'$ are the curves shown in Figure~\ref{fig:A-move}.

\begin{prop}\label{prop:genus-zero-GT}
There is an isomorphism of monoids
\[
\End_0(\trun{0}\widehat{\bS})
\cong
\underline{\GT}.
\]
\end{prop}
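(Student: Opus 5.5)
The plan is to reduce the statement to the known genus-zero results for $\PaRB$ via Proposition~\ref{prop:PaRB-cyclic-is-S0}. That proposition gives $\trun{0}\widehat{\bS}\cong\widehat{\PaRB}^{\mathrm{cyc}}$ as cyclic operads in profinite groupoids, and the isomorphism is the identity on objects. Hence $\End_0(\trun{0}\widehat{\bS})\cong\End_0(\widehat{\PaRB}^{\mathrm{cyc}})$ as monoids. It therefore suffices to identify object-fixing endomorphisms of $\widehat{\PaRB}^{\mathrm{cyc}}$ with $\underline{\GT}$.

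The first step uses the profinite-completion adjunction, exactly as in the proof of Corollary~\ref{cor: truncation and endomorphisms of the surface mod operad}. This identifies $\End_0(\widehat{\PaRB}^{\mathrm{cyc}})$ with object-fixing maps of cyclic operads $\PaRB^{\mathrm{cyc}}\to|\widehat{\PaRB}^{\mathrm{cyc}}|$. By Theorem~\ref{thm: maps_out_of_P_0}, an object-fixing operad map is determined by images $\mathbf t\in\widehat{\mathbb Z}=\Aut(|)$, $\mathbf b\colon\mu\to(12)^*\mu$ and $\mathbf a\colon\mu\circ_1\mu\to\mu\circ_2\mu$, subject to (T), (H1), (H2), (P). The cyclic condition adds the requirement that these images commute with the $(01)$-action recorded in \eqref{cyclic action on PaRB generators}.

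The second step parametrizes these images.
\begin{itemize}
\item Write $\mathbf t=\tau^\lambda$ with $\lambda\in\widehat{\mathbb Z}$.
\item Write $\mathbf b=\beta\cdot h$ with $h\in\widehat{\PRB}_2\cong\widehat{\Gamma}_{0,3}\cong\widehat{\mathbb Z}^3$, generated by the three boundary twists.
\item Write $\mathbf a=\alpha\cdot f$ with $f\in\widehat{\PRB}_3\cong\widehat{\Gamma}_{0,4}$.
\end{itemize}
The twist relation (T) together with the cyclic compatibility $(01)^*\mathbf b=\mathbf b^{-1}(\id\circ_2\mathbf t^{-1})$ should force $h=(\beta\cdot(12)^*\beta)^\nu$ with $\nu=(\lambda-1)/2$; this is where $\lambda$ odd, i.e. $\nu$ well defined, is forced. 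The genus-zero cyclic compatibility $(01)^*\mathbf a=(231)^*\mathbf a^{-1}$ and the twist-centrality should let us strip the boundary twist factors of $f$. That leaves $f$ in the free factor generated by $D_{c'}$ and $D_c$, i.e. $f=f(D_{c'},D_c)$ with $f\in\widehat{\F}_2$, since $\Gamma_{0,4}/\text{center}\cong\F_2$.

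The third step translates relations, giving the map $F=(\lambda,f)\mapsto\iota_F$ and its inverse.
\begin{itemize}
\item (P) becomes the pentagon (III) in $\widehat{\PB}_4$.
\item The two hexagons become (II) together with (I); here (I) arises from the self-duality $(01)^*\mathbf a=(231)^*\mathbf a^{-1}$, equivalently from combining (H1) and (H2).
\end{itemize}
This is the cited content of \cite{Boavida-Horel-Robertson,robertson2025grothendieck}, which identifies $\End_0(\widehat{\PaRB}^{\mathrm{cyc}})$ with the $\GT$ monoid, and I would invoke it rather than redo it. Finally, multiplicativity must be checked: composing $\iota_G\circ\iota_F$ on $\alpha$ gives $\alpha\cdot g\cdot \iota_G(f)$, and $\iota_G(f(D_{c'},D_c))=f(g^{-1}D_{c'}^\mu g,D_c^\mu)$. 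This matches the convention $\varphi_{F\cdot G}=\varphi_G\circ\varphi_F$ of Definition~\ref{defn:GT}, possibly after passing to the opposite monoid. I expect the main obstacle to be bookkeeping of conventions: the order of multiplication, which curve plays $x$ versus $y$, and the exact form of the $\beta$-image. Any mismatch should be absorbable by an anti-isomorphism, or by the inversion in $\GT$ on invertible elements.
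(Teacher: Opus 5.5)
Your reduction is exactly the paper's. Proposition~\ref{prop:PaRB-cyclic-is-S0} moves the problem to $\End_0(\widehat{\PaRB}^{\mathrm{cyc}})$, and the paper then simply cites \cite[Theorem~5.4]{robertson2025grothendieck}. The problem is in the sketch you give of why that identification holds, and it is not cosmetic.

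\textbf{The twist-stripping step fails.} Your step ``cyclic compatibility and twist-centrality strip the twist factors, after which (P) becomes (III)'' is false, because of gauge automorphisms. Fix $k\in\widehat{\Z}\setminus\{0\}$.
For each object $G$ of $\bS(0,n+1)$ with $n\geq 2$, let $H_G\in\Aut(G)$ be the product, over the pants $P_v$ of $S_G$, of $(T_{\partial_0P_v}T_{\partial_1P_v}T_{\partial_2P_v})^k$. Set $H_{|}=\id$.
Since $(T_0T_1T_2)^k$ is symmetric in the three boundary labels, we get $H_{G\circ_i^jG'}=H_G\circ_i^jH_{G'}$ and $H_{\sigma^*G}=\sigma^*H_G$. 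By the interchange law, $E_k(\phi):=H_{G'}\,\phi\,H_G^{-1}$ (for $\phi\colon G\to G'$) is then an object-fixing automorphism of $\trun{0}\widehat{\bS}$. Moreover, $H\colon\id\Rightarrow E_k$ is a natural isomorphism compatible with all the structure.

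On generators, $E_k(\tau)=\tau$ because $H_{|}=\id$. Also $E_k(\beta)=\beta$, because $\beta$ is label-preserving and so conjugates $H_P$ to $H_{(12)^*P}$. For $\alpha$, note that $H_{P\circ_1P}=T_\partial^{k}D_c^{2k}$ and $H_{P\circ_2P}=T_\partial^{k}D_{\alpha(c')}^{2k}$. Here $T_\partial$ is the central product of the four boundary twists, and $c,c'$ are the curves in the formula for $\iota_F$. Hence
\[
E_k(\alpha)=\alpha\cdot D_{c'}^{2k}D_c^{-2k}.
\]

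\textbf{$E_k$ is not of the form $\iota_F$.}
\begin{enumerate}
\item Abelianizing (III) in $\widehat{\PB}_4^{\mathrm{ab}}\cong\widehat{\Z}^6$ gives $a\,e_{12}+b\,e_{34}=0$ for $f\equiv x^ay^b$. So every $f$ coming from $\underline{\GT}$ has trivial image in $\widehat{\Z}^2$, and $f(D_{c'},D_c)$ has trivial abelian image.
\item By contrast, $D_{c'}^{2k}D_c^{-2k}$ has image $2k([c']-[c])\neq 0$ in the abelianization of $\widehat{\Gamma}_{0,4}\cong\widehat{\F}_2\times\widehat{\Z}^4$.
\end{enumerate}
In your parametrization, $E_k$ has $\lambda=1$ and $f=x^{2k}y^{-2k}$. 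This $f$ already lies in the free factor, so there are no twist factors to strip. It satisfies (T), (H1), (H2), (P) and every cyclic compatibility, being a conjugate of the identity. Yet it violates (III).

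\textbf{Consequences.} The assignment $F\mapsto\iota_F$ is not surjective onto the strict monoid $\End_0(\trun{0}\widehat{\bS})$. No normalization of twist factors can make your proposed inverse land in $\underline{\GT}$. The $E_k$ are homotopic to the identity via $H$. So what the genus-zero theory actually identifies with $\underline{\GT}$ is $\End_0$ modulo such gauge transformations, i.e.\ homotopy classes, as in \cite{Boavida-Horel-Robertson}. To obtain the statement as written, you need either an extra normalization that excludes the $E_k$, or a precise form of the cited theorem that accounts for them. The paper's proof is the same one-line citation, so this is not peculiar to your write-up, but your sketch cannot close it.

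\textbf{Convention fallback.} Your fallback on multiplication conventions does not work for monoids. An anti-isomorphism $\End_0\to\underline{\GT}$ is an isomorphism with $\underline{\GT}^{\mathrm{op}}$, and inversion is available only on the units $\GT$. You must therefore genuinely match the product convention; note that the paper has $\iota_{F\cdot G}=\iota_G\circ\iota_F$.
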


\begin{proof}
This follows from the identification $\trun{0}\widehat{\bS}\cong \widehat{\PaRB}^{\mathrm{cyc}}$ above and the isomorphism $\underline{\GT}\cong \End_0(\widehat{\PaRB}^{\mathrm{cyc}})$ proved in~\cite[Theorem~5.4]{robertson2025grothendieck}.
\end{proof}

\begin{comment}
\begin{cor}
There exists an isomorphism of monoids \[\End_0(\PaRB^{cyc})\cong\End_0(\trun{0}\hat\bS)\cong \underline{\GT}.\]
\end{cor}

\begin{proof}
    This follows from Lemma~\ref{lemma: iso between S0 and cyclic PaRB}, Theorem~\ref{thm: maps_out_of_P_0}, Theorem~\ref{thm: PaRB and GT}, and Theorem~\ref{thm: presentation for maps out of bS} together with the fact that any endomorphism of $\trun{0}\bS$ fixing the objects will immediately satisfy relation \ref{sym_rel_morph}.
\end{proof}

\end{comment}

In \cite{Nakamura-Schneps}, Nakamura and Schneps introduced a subgroup $\NS\subseteq \GT$ by imposing two additional relations, discovered through comparisons of Galois representations in $\widehat{\Gamma}_{0,5}$ and $\widehat{\Gamma}_{1,2}$. These relations encode the genus-one compatibility needed to extend the genus-zero Grothendieck--Teichmüller action to the full tower of profinite mapping class groups.

\begin{definition}\label{def: NS}
The monoid $\underline{\NS}\subseteq \underline{\GT}$ consists of those elements $(\lambda,f)\in\underline{\GT}$ satisfying the following two additional relations:
\begin{equation}\label{eq: III'}\tag{NS I}
f(\beta_1\beta_3, \beta_2^2) (\beta_1\beta_2\beta_3)^{4\rho_{2}} = h((\beta_1\beta_2)^{3}, (\beta_2\beta_3)^3) f(\beta_1^2,\beta_2^2) f(\beta_3^2,(\beta_1\beta_2)^{3})
\quad \text{in } \widehat{\Gamma}_{0,5},
\end{equation}
and
\begin{equation}\label{eq: IV}\tag{NS II}
f(\beta_1,\beta_2^{4}) = \beta_{2}^{8\rho_{2}} f(\beta_1^2,\beta_2^2) \beta^{4}_{1} (\beta_1\beta_2)^{-6\rho_2} \quad \text{in } \widehat{\Br}_3/\mathbb{Z}\cong \widehat{\Gamma}_{1,1}.
\end{equation}

Here $h=h(x,y)$ is the unique element of $\widehat{\mathbb F}_2$ such that
\[
f(x,y)=h(y,x)^{-1}h(x,y)
\]
\cite{lochak1997cohomological}, and $\rho_2$ denotes the Kummer $1$-cocycle with respect to the roots of $2$. Equivalently,
\[
h(x,y)\equiv (xy)^{\rho_2}
\]
in the abelianisation $\widehat{\mathbb F}_2^{ab}\cong \widehat{\mathbb Z}\times\widehat{\mathbb Z}$ (see \cite[Definition~5.1]{Nakamura-Schneps}). The group $\NS$ is the group of invertible elements of of the monoid $\underline{\NS}$.
\end{definition}

In \eqref{eq: III'}, the elements $\beta_1,\beta_2,\beta_3$ are the standard generators of the braid group $\Br_4$, viewed as elements of the mapping class group $\Gamma_{0,5}$; see \cite[Proposition~5.4]{Nakamura-Schneps}. Nakamura and Schneps prove that $\NS$ is a subgroup of $\GT$ and that the absolute Galois group maps into $\NS$ \cite[Theorem~1.2]{Nakamura-Schneps}. More explicitly, for $g\in\galQ$, the corresponding element is $(\lambda_g,f_g)$, where $\lambda_g$ is the cyclotomic character and $f_g(x,y)$ lies in the commutator subgroup of $\pi_1^{\mathrm{et}}(\calM_{0,4})\cong\widehat{\mathbb F}_2$.

Nakamura and Schneps construct actions of $\NS$ on profinite mapping class groups using surfaces equipped with quilt, or seam, decompositions. For the purposes of the formulae below, a marking $m:G\hookrightarrow S$ provides the same data: it determines the standard curves and Dehn twist generators on which the Nakamura--Schneps action is defined. In \cite[Theorem~11.2]{Nakamura-Schneps}, they give explicit formulae for the action of $\NS$ on these Dehn twists. We translate those formulae into the language of marked surfaces and use them to construct an action on the profinite modular operad $\widehat{\bS}$. The resulting action extends the genus-zero $\underline{\GT}$-action identified in 
Proposition~\ref{prop:genus-zero-GT}.

\begin{theorem}\label{main theorem NS action}
There is an inclusion of monoids $\underline{\NS}\hookrightarrow \End_0(\widehat{\bS})$ whose restriction to genus zero agrees with the isomorphism \[\underline{\GT}\cong \End_0(\trun{0}\widehat{\bS})\] from Proposition~\ref{prop:genus-zero-GT}, after restricting along $\underline{\NS}\hookrightarrow\underline{\GT}$.
\end{theorem}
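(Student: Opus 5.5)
For each $F=(\lambda,f)\in\underline{\NS}$ I will build an object-fixing endomorphism $\Phi_F$ of $\widehat{\bS}$ using the presentation theorem, Theorem~\ref{thm: presentation for maps out of bS}, together with the profinite-completion adjunction. By that adjunction, it suffices to give a map of modular operads $\bS\to|\widehat{\bS}|$. By Theorem~\ref{thm: presentation for maps out of bS}, such a map is the same as a tuple $(\mathbf m,\mathbf t,\mathbf b,\mathbf a,\mathbf s)$ in $|\widehat{\bS}|$ satisfying the listed relations. I take
\[
\mathbf m=P,\qquad \mathbf t=\tau^\lambda,\qquad \mathbf b=\beta\cdot\bigl(\beta\cdot(12)^*\beta\bigr)^\nu,\qquad \mathbf a=\alpha\cdot f(D_{c'},D_c),
\]
which are the genus-zero values of $\iota_F$. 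For the genus-one switch $\mathbf s\in\Aut_{\widehat{\bS}(1,1)}(\xi_{12}P)\cong\widehat{\Gamma}_{1,1}$ I take the Nakamura--Schneps formula \cite[Theorem~11.2]{Nakamura-Schneps}. That formula acts on $\sigma=D_aD_bD_a$ by sending $D_a\mapsto D_a^\lambda$ and $D_b\mapsto f(D_a^2,D_b^2)^{-1}D_b^\lambda f(D_a^2,D_b^2)$, up to the Kummer correction $\rho_2$. So I set
\[
\mathbf s:=\text{the NS image of } \sigma\ \text{in}\ \widehat{\Gamma}_{1,1},
\]
transported to the standard marking of $S_{\contraction}$.

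Next I verify the relations. Relation (S) holds trivially because $\mathbf m=P$. Relations (T), (H1), (H2) and (P) hold because $\iota_F$ is already an endomorphism of $\trun{0}\widehat{\bS}$ (Proposition~\ref{prop:genus-zero-GT}), and these relations live in genus zero. This also gives the compatibility of the restriction with $\underline{\GT}\cong\End_0(\trun{0}\widehat{\bS})$.

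The genus-one relations (G1a), (G1b) and (G2) are the real content. For (G1a) and (G1b), I will work inside $\widehat{\Gamma}_{1,1}\cong\widehat{\Br}_3/\mathbb Z$, where $\mathbf d$ is the image of $D_a$. These two relations are the images of the modular relations $\sigma^2=\xi(\beta^{-1})$ and $T^{-1}ST^{-1}ST^{-1}=S$. I will reduce them to the fact that the NS formulas define a continuous endomorphism of $\widehat{\Gamma}_{1,1}$, and this is exactly where relation (NS II) enters. (G2) is a relation in $\widehat{\Gamma}_{1,2}$ involving $\xi_{12}(\mathbf a)$ and two copies of $\mathbf s$. I will rewrite both sides as words in the Dehn twists of a marked $(1,2)$ surface. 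Comparing them with the NS action on $\widehat{\Gamma}_{1,2}$ requires the relation in $\widehat{\Gamma}_{0,5}$, namely (NS I), transported through the lantern-type embedding of $\Gamma_{0,5}$-curves used in \cite[Proposition~5.4]{Nakamura-Schneps}.

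I expect the main obstacle to be this matching of conventions. The NS quilt/seam decompositions must be identified with our markings, including the $\rho_2$ twist corrections and our composition order for $\underline{\GT}$. Only after that identification can (G2) be read off from (NS I) and (NS II). My first attempt will be to show that the map $F\mapsto\Phi_F$ on each $\Aut(G)\cong\widehat{\Gamma}_{g,n+1}$ agrees with the NS automorphism for $(g,n+1)\in\{(1,1),(1,2)\}$. Their theorem that these are well-defined homomorphisms then implies that every loop relation holds there, and in particular (G1a), (G1b) and (G2).

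Once every $\Phi_F$ is defined, I will prove multiplicativity, $\Phi_{F\cdot G}=\Phi_F\circ\Phi_G$ with the paper's convention $\varphi_{F\cdot G}=\varphi_G\circ\varphi_F$. By the uniqueness part of Theorem~\ref{thm: presentation for maps out of bS}, it suffices to check this on the generators $\tau,\beta,\alpha,\sigma$. It holds for $\tau,\beta,\alpha$ by the genus-zero result, and for $\sigma$ by the multiplicativity of the NS action. Injectivity follows from the genus-zero restriction: $\Phi_F$ restricts to $\iota_F$, and $F\mapsto\iota_F$ is injective by Proposition~\ref{prop:genus-zero-GT}. The unit goes to the identity, which completes the proof of the inclusion of monoids.
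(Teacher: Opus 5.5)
Your architecture matches the paper's. You define the endomorphism on $\tau,\beta,\alpha,\sigma$ through Theorem~\ref{thm: presentation for maps out of bS} and the completion adjunction, take (S), (T), (H1), (H2), (P) from genus zero, and get multiplicativity and injectivity from the generators and Proposition~\ref{prop:genus-zero-GT}.

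\textbf{The gap: the genus-one relations.} The genus-one part is the actual content of the theorem, and you defer (G1a), (G1b), (G2) to a future ``matching'' with the Nakamura--Schneps automorphisms. That matching is not a matter of conventions, for two reasons.

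First, as phrased it is circular. $\Phi_F$ does not exist on $\Aut(G)$ until the relations are checked, so all you can compare is the value on each letter.

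Second, (G1a) and (G2) contain letters that are contracted genus-zero morphisms, whose values are already forced by $\mathbf b_F$ and $\mathbf t_F$.
\begin{itemize}
\item For (G1a): since $\xi_{02}(\beta^{-1})=\sigma^2$ in $\widehat\Gamma_{1,1}$, agreement of an NS homomorphism $\Psi$ (with $\Psi(\sigma)=\mathbf s_F$) with your assignment on this letter says exactly $\mathbf s_F^2=\xi_{02}(\mathbf b_F^{-1})$. That is (G1a) itself, and well-definedness of $\Psi$ cannot supply it. You have to compute. With $\mathbf s_F=\sigma^{\lambda}D_b^{8\rho_2}f(D_b^2,D_a^2)D_a^{-8\rho_2}$, conjugation by $\sigma$ swaps $D_a$ and $D_b$, so relation (I) gives $\mathbf s_F^2=\sigma^{2\lambda}$. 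The chain relation $\sigma^4=D_{\partial_0}$ then turns this into $\sigma^2D_{\partial_0}^{\nu}=\xi_{02}(\mathbf b_F^{-1})$.
\item For (G2): you must show, using (T), that the contracted braiding on the left-hand side goes to $(D_{\partial_0}D_{\partial_1})^{\nu}$ times the half-twist element $\partial_0^{1/2}\partial_1^{1/2}$. Next, rewrite the conjugations by $\xi_{12}(\mathbf a_F)$ as Dehn-twist words in $\widehat\Gamma_{1,2}$ of the surface of $\xi_2^3(P\circ_1P)$, via Lemma~\ref{lemma: conjugation by mapping classes}. Then push the powers $\lambda=2\nu+1$ through until the word becomes the (6AS) identity verified in \cite[Proof of Claim~8.3]{Nakamura-Schneps}.
\end{itemize}
None of this is in your proposal. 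Moreover, $\mathbf s$ is never actually pinned down (``up to the Kummer correction''), yet the $\rho_2$-factors are exactly what (G2) is sensitive to.

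\textbf{Smaller points.} (NS~II) is not what makes (G1a) and (G1b) hold. The paper derives both from (I), the identity \cite[(1.5.1)]{Nakamura-Schneps}, and mapping class group relations. The Nakamura--Schneps-specific input enters only through (G2), via the (6AS) identity.

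Your reduction of (G1b) is legitimate. In the generators $D_a,\sigma$ of $\Gamma_{1,1}$ it is just the braid relation, so it is equivalent to $D_a\mapsto D_a^{\lambda}$, $\sigma\mapsto\mathbf s_F$ extending to an endomorphism of $\widehat\Gamma_{1,1}$. That extension still has to be established; the paper does it by direct computation.

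Finally, with the convention $\varphi_{F\cdot G}=\varphi_G\circ\varphi_F$, the identity to prove is $\iota_{F\cdot G}=\iota_G\circ\iota_F$, not $\Phi_F\circ\Phi_G$. Checking it on $\sigma$ requires the cocycle rule $\rho_2(F\cdot G)=\rho_2(G)+\mu\,\rho_2(F)$ of \cite[Corollary~5.2]{Nakamura-Schneps}.
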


\begin{proof}
Let $F=(\lambda,f)\in\underline{\NS}$. Recall that an object-fixing map of modular operads $\iota_F:\bS\longrightarrow |\widehat{\bS}|$ extends uniquely to a continuous object-fixing endomorphism $\hat{\iota}_F:\widehat{\bS}\longrightarrow \widehat{\bS}.$ Thus it is enough, using Theorem~\ref{thm: presentation for maps out of bS}, to choose morphisms
\[
\hat{\iota}_F(\tau)=\mathbf{t}_F \in \Hom_{\widehat{\bS}(0,2)}(|,|),
\qquad
\hat{\iota}_F(\beta)\mathbf{b}_F \in \Hom_{\widehat{\bS}(0,3)}(P,(12)^*P),
\]
and
\[
\hat{\iota}_F(\alpha)=\mathbf{a}_F \in
\Hom_{\widehat{\bS}(0,4)}(P\circ_1 P,P\circ_2 P),
\qquad
\hat{\iota}_F(\sigma)=\mathbf{s}_F \in
\Hom_{\widehat{\bS}(1,1)}(\xi_{12}P,\xi_{12}P)
\]
and check that these choices satisfy the defining relations. Since $\iota_F$ is object-fixing, the object-level symmetry relation \eqref{sym_rel_morph} is automatic. Thus it remains to check the morphism relations \eqref{twist_rel_morphS}, \eqref{pent_rel_1_morphS}, \eqref{hex_rel_1_morphS}, \eqref{hex_rel_2_morphS}, \eqref{g1_rel_morph_1}, \eqref{g1_rel_morph_2}, and \eqref{g2_rel_morph_1}.

We chose the values of our generating morphisms using the action described in \cite{Nakamura-Schneps}. Explicitly, letting $\nu=(\lambda-1)/2$, we set
\begin{align}
    \mathbf{t}_F& = \tau^\lambda
    &
    \mathbf{b}_F &=\beta \cdot (\beta \cdot (12)^*\beta)^{\nu} \nonumber\\
    \mathbf{a}_F &= \alpha \cdot f(D_{d},D_{c})
    &
    \mathbf{s}_F &= \sigma^{\lambda}\cdot D_{b}^{8\rho_2(F)}\cdot f(D_{b}^2,D_a^2) \cdot D_a^{-8\rho_2(F)} .
    \label{NS Action}
\end{align}
Here $\rho_2(F)$ denotes the value, uniquely determined by $F=(\lambda,f)$, of the Nakamura--Schneps extension of the Kummer $1$-cocycle with respect to the roots of $2$. The symbol $D_i$ denotes the Dehn twist about the curve $i$. The curves $a,b,c,d$ appearing in \eqref{NS Action} are shown in Figures~\ref{fig:S-move} and \ref{fig:action-on-A-move}.
    
    \begin{figure}[ht]
        \centering
        \includegraphics[]{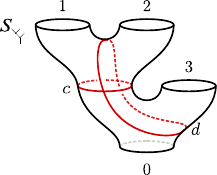}
        \caption{}
        \label{fig:action-on-A-move}
    \end{figure}

We now check the defining relations. The genus-zero relations can be dealt with first. By Proposition~8.1 of \cite{Boavida-Horel-Robertson} and Proposition~\ref{prop:genus-zero-GT}, the object-fixing endomorphisms of $\trun{0}\widehat{\bS}$ are identified with $\underline{\GT}$, and the element $F=(\lambda,f)\in\underline{\NS}\subseteq\underline{\GT}$ acts on the generators by the formulae defining $\mathbf{t}_F$, $\mathbf{b}_F$, and $\mathbf{a}_F$. Therefore these three morphisms define the genus-zero part of $\iota_F$, \[\trun{0}(\iota_F):\trun{0}\bS\longrightarrow |\trun{0}\widehat{\bS}|,\] and the genus-zero relations \eqref{twist_rel_morphS}, \eqref{pent_rel_1_morphS}, \eqref{hex_rel_1_morphS}, and \eqref{hex_rel_2_morphS} are satisfied.

It remains to check the genuinely genus-one relations \eqref{g1_rel_morph_1}, \eqref{g1_rel_morph_2}, and \eqref{g2_rel_morph_1}. In the following calculations we use the mapping class group relations from Proposition~\ref{prop: generators and relations of mcg}, together with the conjugation formulae in Lemma~\ref{lemma: conjugation by mapping classes}.

We first check the two relations in type $(1,1)$, namely \eqref{g1_rel_morph_1} and \eqref{g1_rel_morph_2}. These are relations in
\[
\Aut_{\widehat{\bS}(1,1)}(\xi_{12}P) \cong \widehat{\Gamma}_{1,1}.
\] We will use the curves shown in Figure~\ref{fig: g1-curves}. Since the calculations below are notation-heavy, we write $a$, $b$, and $c$ instead of $D_{a}, D_{b}$ and $D_{c}$ for the Dehn twists about the curves labelled $a$, $b$, and $c$.

\begin{figure}[h!]
    \centering
    \includegraphics[width=0.25\linewidth]{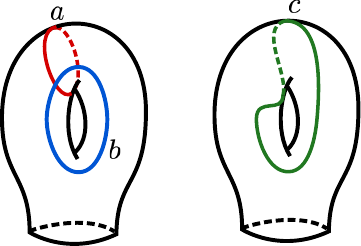}
    \caption{Curves used in the verification of the genus-one relations in $\widehat{\Gamma}_{1,1}$.}
    \label{fig: g1-curves}
\end{figure}

\emph{Checking relation \eqref{g1_rel_morph_1}:}  we first note that $\xi_{02}\beta^{-1}=(aba)^2$ and $\xi_{02}(\beta\cdot (12)^*\beta)=\partial_0^{-1}$. Then 
\begin{align*}
    \hat{\iota}_{F}(G1a)&=(aba)^{\lambda} b^{8\rho_2}f(b^2,a^2)a^{-8\rho_2} \cdot (aba)^{\lambda} b^{8\rho_2}f(b^2,a^2)a^{-8\rho_2} \cdot  (aba)^{-2} \partial_0^{-\nu}.
\end{align*}
Using that $(aba)^{-1}a(aba)=b$ and $(aba)^{-1}b(aba)=a$, we can move the middle $(aba)^\lambda$ term to the left to get:
\begin{align*}
    \hat{\iota}_{F}(G1a)&=(aba)^{2\lambda} a^{8\rho_2}f(a^2,b^2)b^{-8\rho_2} b^{8\rho_2}f(b^2,a^2)a^{-8\rho_2} (aba)^{-2} \partial_0^{-\nu}
\end{align*}
Recalling that $f(a^2,b^2)=f(b^2,a^2)^{-1}$, this gives us
\begin{align*}
    \hat{\iota}_{F}(G1a)&=(aba)^{4\nu} (aba)^2 (aba)^{-2} \partial_0^{-\nu}
\end{align*}
Since $(aba)^4=\partial_0$, relation \eqref{g1_rel_morph_1} follows.

\medskip 

\emph{Checking relation \eqref{g1_rel_morph_2}:} Recall that $d=\xi_{12}(\id_P\circ_1\tau)\in \Aut_{\bS(1,1)}(\xi_{12}P)$. Under the identification $\Aut_{\bS(1,1)}(\xi_{12}P)\cong \Gamma_{1,1}$, the morphism $d$ is the Dehn twist about the curve labelled $a$ in Figure~\ref{fig: g1-curves}. Hence 
    \[a=\xi_{1,2}(\id_P\circ_1 \tau)\]
and therefore its image under $\iota_F$ is represented in $\widehat{\Gamma}_{1,1}$ by $a^\lambda$. We compute the image of the corresponding loop, replacing each occurrence of $d$ by $a^\lambda$:
\begin{align*}
    \iota_F(G1b)
    &=(aba)^{\lambda} b^{8\rho_2}f(b^2,a^2)a^{-8\rho_2}\cdot a^{\lambda}\cdot  a^{8\rho_2}f(a^2,b^2)b^{-8\rho_2}(aba)^{-\lambda}\cdot a^{\lambda}\cdot  a^{8\rho_2}f(a^2,b^2)b^{-8\rho_2}(aba)^{-\lambda}\cdot a^{\lambda}.
\end{align*}

Using that $a^{-1}ba=c$ we get \[\iota_{F}(G1)=(aba)^{\lambda} b^{8\rho_2}f(b^2,a^2)a^{-8\rho_2}\cdot a^{\lambda}\cdot  a^{8\rho_2}f(a^2,b^2)b^{-8\rho_2}(aba)^{-\lambda}\cdot a^{\lambda}\cdot  a^{8\rho_2}f(a^2,b^2)b^{-8\rho_2}\mathbf{(aba)^{-1}a}(aca)^{-2\nu}\cdot a^{2\nu}\]

Since $(aba)^{-1}a= (ba)^{-1}$ and 
    \begin{align*}
        (ba)a(ba)^{-1}&= c & 
        (ba)b(ba)^{-1}&= a,
            \end{align*}
we get
\[\iota_{F}(G1b)=(aba)^{\lambda} b^{8\rho_2}f(b^2,a^2)a^{-8\rho_2}\cdot a^{\lambda}\cdot  a^{8\rho_2}f(a^2,b^2)b^{-8\rho_2}(aba)^{-\lambda}\cdot \mathbf{a(ba)^{-1}} \cdot c^{2\nu}\cdot  c^{8\rho_2}f(c^2,a^2)a^{-8\rho_2}(aca)^{-2\nu}\cdot a^{2\nu}\]

Since $a(ba)^{-1}=b$ and 
    \begin{align*}
        bab^{-1}&=c & 
        bbb^{-1}&=b,
    \end{align*}
we get
\[\iota_{F}(G1b)=(aba)^{\lambda} b^{8\rho_2}f(b^2,a^2)a^{-8\rho_2}\cdot a^{\lambda}\cdot  a^{8\rho_2}f(a^2,b^2)b^{-8\rho_2} \mathbf{(aba)^{-1}b^{-1}} \cdot (cbc)^{-2\nu}\cdot c^{2\nu}\cdot  c^{8\rho_2}f(c^2,a^2)a^{-8\rho_2}(aca)^{-2\nu}\cdot a^{2\nu}\]
Now we use that 
    \begin{align*}
        (b(aba))a((aba)^{-1}b^{-1})&=b &  (b(aba))b((aba)^{-1}b^{-1})&=c 
    \end{align*}
giving us
\[\iota_{F}(G1b)=(aba)^{\lambda} b^{8\rho_2}f(b^2,a^2)a^{-8\rho_2}\cdot \mathbf{a(aba)^{-1}b^{-1}} \cdot b^{2\nu}\cdot  b^{8\rho_2}f(b^2,c^2)c^{-8\rho_2}  (cbc)^{-2\nu}\cdot c^{2\nu}\cdot  c^{8\rho_2}f(c^2,a^2)a^{-8\rho_2}(aca)^{-2\nu}\cdot a^{2\nu}\]

Since $a(aba)^{-1}b^{-1}=(bab)^{-1}$ and 
    \begin{align*}
    (bab)a(bab)^{-1} &=b &  (bab)b(bab)^{-1}&=a
    \end{align*}
we get
\[\iota_{F}(G1b)=\mathbf{(aba)(aba)^{-1}} \cdot(bab)^{2\nu} a^{8\rho_2}f(a^2,b^2)b^{-8\rho_2}\cdot  b^{2\nu}\cdot  b^{8\rho_2}f(b^2,c^2)c^{-8\rho_2}  (cbc)^{-2\nu}\cdot c^{2\nu}\cdot  c^{8\rho_2}f(c^2,a^2)a^{-8\rho_2}(aca)^{-2\nu}\cdot a^{2\nu}\]

We now note that $(aba)^2$, $(aca)^2$, and $(bcb)^2$ are all equal and generate the centre of $\widehat{\Gamma}_{1,1}$, so we have
\[\iota_{F}(G1b)=(aba)^{-2\nu} a^{8\rho_2}f(a^2,b^2)\cdot  b^{2\nu}\cdot  f(b^2,c^2)\cdot c^{2\nu}\cdot  f(c^2,a^2)\cdot a^{2\nu}a^{-8\rho_2}\]
Finally, using that $c=a^{-1}ba$, we can check that $b^2a^2c^2=(aba)^2$ so using \cite[Relation (1.5.1)]{Nakamura-Schneps} (recall they write multiplication in the opposite order), we have
\[\iota_{F}(G1b)=(aba)^{-2\nu} a^{8\rho_2}\cdot(aba)^{2\nu}\cdot a^{-8\rho_2}=1.\]
Hence, relation \eqref{g1_rel_morph_2} holds.

\medskip

\emph{Checking relation \eqref{g2_rel_morph_1}:} We will study relation \eqref{g2_rel_morph_1} through the composition of morphisms depicted in Figure~\ref{fig:2-cell from NS} starting at the top of the diagram. In relation \eqref{g2_rel_morph_1}, the mapping class $\alpha$ appears conjugating certain mapping classes. However, by Lemma~\ref{lemma: conjugation by mapping classes}, these conjugations can be expressed entirely in terms of mapping classes on the surface $S_{\graphgtwo}$.\footnote{Here the graph $\graphgtwo$ is the composition $\xi_{2}^{3}(P\circ_1P)$ in $\Upsilon$.} Hence all our computations can be made on the profinite completion of the mapping class group of $S_{\graphgtwo}$.

For such computations, we will use as a reference the curves in Figure~\ref{fig:curves for g2} :

\begin{figure}[h!]
\centering
\includegraphics[width=0.85\linewidth]{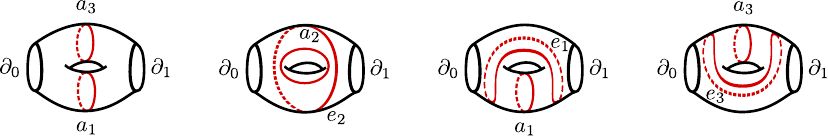}
\caption{}
\label{fig:curves for g2}
\end{figure} 

We start by noting the following equality on mapping class groups
    \[\xi_{12}[(z_3^*\beta)\circ_2(\tau^{-1},\tau^{-1})\circ \beta)]=\partial_0^{\frac{1}{2}}\partial_1^{\frac{1}{2}}\]
where $\partial_0^{\frac{1}{2}}\partial_1^{\frac{1}{2}}$ denotes the simultaneous half-twists along boundaries $\partial_0$ and $\partial_1$. Together with the definition of $\iota_{F}$ and relation \eqref{twist_rel_morphS}, this yields
    \[\hat{\iota}_{F}[\xi_{12}((z_3^*\mathbf{b})\circ_2(\mathbf{t}^{-1},\mathbf{t}^{-1})\circ \mathbf{b})]=\partial_0^{\nu}\partial_1^{\nu}(\partial_0^{\frac{1}{2}}\partial_1^{\frac{1}{2}}).\]
Since $\lambda=2\nu+1$, we abuse notation and denote this element by $\partial_0^{\frac{\lambda}{2}}\partial_1^{\frac{\lambda}{2}}$. Hence, checking relation \eqref{g2_rel_morph_1} amounts to showing that the following expression is trivial in the profinite mapping class group of $S_{\graphgtwo}$:
\begin{multline*}
    \iota_{F}(G2)=\partial_0^{-\frac{\lambda}{2}}\partial_1^{-\frac{\lambda}{2}}f(a_1,e_3)\cdot (a_3 a_2 a_3)^{\lambda} a_2^{8\rho_2} f(a_2^2,a_3^2) a_3^{-8\rho_2} \cdot f(e_3,a_1) \cdot a_3^{-\lambda} a_1^{\lambda} \cdot f(a_1,e_3) \cdot (a_3 a_2 a_3)^{\lambda} a_2^{8\rho_2} f(a_2^2,a_3^2) a_3^{-8\rho_2}\cdot f(e_3,a_1)
\end{multline*}

As in the proof of relation \eqref{g1_rel_morph_2}, we repeatedly use the fact that $\lambda=2\nu+1$ to separate one copy of each profinite power and move it to the left-hand side of the expression. Starting from the rightmost such term:
\begin{multline*}
    \iota_{F}(G2)=\partial_0^{-\frac{\lambda}{2}}\partial_1^{-\frac{\lambda}{2}}f(a_1,e_3)\cdot (a_3 a_2 a_3)^{\lambda} a_2^{8\rho_2} f(a_2^2,a_3^2) a_3^{-8\rho_2} \cdot f(e_3,a_1) \cdot a_3^{-\lambda} a_1^{\lambda} \cdot f(a_1,e_3) \cdot \\\mathbf{(a_3 a_2 a_3)}\cdot (a_3 a_2 a_3)^{2\nu} a_2^{8\rho_2} f(a_2^2,a_3^2) a_3^{-8\rho_2}\cdot f(e_3,a_1)
\end{multline*}

Now using that 
    \begin{align*}
        (a_3 a_2 a_3)^{-1}e_3(a_3 a_2 a_3)&= e_3 & (a_3 a_2 a_3)^{-1}a_1(a_3 a_2 a_3)&=e_2\\
        (a_3 a_2 a_3)^{-1}a_3(a_3 a_2 a_3)&= a_2
    \end{align*}
we get
\begin{multline*}
    \iota_{F}(G2)=\partial_0^{-\frac{\lambda}{2}}\partial_1^{-\frac{\lambda}{2}}f(a_1,e_3)\cdot (a_3 a_2 a_3)^{\lambda} a_2^{8\rho_2} f(a_2^2,a_3^2) a_3^{-8\rho_2} \cdot f(e_3,a_1) \cdot \mathbf{[a_3^{-1}a_1(a_3 a_2 a_3)]}\cdot a_2^{-2\nu} e_2^{2\nu} \cdot  \\ f(e_2,e_3) \cdot(a_3 a_2 a_3)^{2\nu} a_2^{8\rho_2} f(a_2^2,a_3^2) a_3^{-8\rho_2}\cdot f(e_3,a_1)
\end{multline*}

Since $a_1$ and $a_3$ are disjoint, they commute. Hence the expression in bold is equal to $a_1a_2a_3$. We now use the following relations in the mapping class group
    \begin{align*}
        (a_1a_2a_3)^{-1} a_1 (a_1a_2a_3)&=e_2 & (a_1a_2a_3)^{-1}e_3(a_1a_2a_3)&= e_1\\
        (a_1a_2a_3)^{-1} a_3 (a_1a_2a_3)&= a_2 & (a_1a_2a_3)^{-1} a_2 (a_1a_2a_3)&= a_1
    \end{align*}
then pulling the bold terms to the left, we get
\begin{multline*}
    \iota_{F}(G2)=\partial_0^{-\frac{\lambda}{2}}\partial_1^{-\frac{\lambda}{2}}f(a_1,e_3)\cdot \mathbf{[(a_3a_2a_3)(a_1 a_2 a_3)]}\cdot(a_2 a_1 a_2)^{2\nu} a_1^{8\rho_2} f(a_1^2,a_2^2) a_2^{-8\rho_2} \cdot f(e_1,e_2) \cdot  a_2^{-2\nu} e_2^{2\nu} \cdot  \\ f(e_2,e_3) \cdot(a_3 a_2 a_3)^{2\nu} a_2^{8\rho_2} f(a_2^2,a_3^2) a_3^{-8\rho_2}\cdot f(e_3,a_1)
\end{multline*}

Now note that
    \begin{align*}
        [(a_3a_2a_3)(a_1 a_2 a_3)]^{-1}e_3[(a_3a_2a_3)(a_1 a_2 a_3)]&=e_1 \\
        [(a_3a_2a_3)(a_1 a_2 a_3)]^{-1}a_1[(a_3a_2a_3)(a_1 a_2 a_3)]&=a_3
    \end{align*}
And we get
    \begin{multline*}
        \iota_{F}(G2)=\partial_0^{-\frac{\lambda}{2}}\partial_1^{-\frac{\lambda}{2}}\mathbf{[(a_3a_2a_3)(a_1 a_2 a_3)]}\cdot f(a_3,e_1)\cdot (a_2 a_1 a_2)^{2\nu} a_1^{8\rho_2} f(a_1^2,a_2^2) a_2^{-8\rho_2} \cdot f(e_1,e_2) \cdot  a_2^{-2\nu} e_2^{2\nu} \cdot  \\ f(e_2,e_3) \cdot(a_3 a_2 a_3)^{2\nu} a_2^{8\rho_2} f(a_2^2,a_3^2) a_3^{-8\rho_2}\cdot f(e_3,a_1)
    \end{multline*}
Since
    \begin{align*}
        (a_3a_2a_3)(a_1 a_2 a_3)&= \partial_0^{\frac{1}{2}}\partial_2^{\frac{1}{2}},
    \end{align*}
our expression for $\iota_{F}(G2)$ simplifies to 
    \begin{multline*}
        \iota_{F}(G2)=\partial_0^{-\nu}\partial_1^{-\nu}\cdot f(a_3,e_1)\cdot (a_2 a_1 a_2)^{2\nu} a_1^{8\rho_2} f(a_1^2,a_2^2) a_2^{-8\rho_2} \cdot f(e_1,e_2) \cdot  a_2^{-2\nu} e_2^{2\nu} \cdot  \\ f(e_2,e_3) \cdot(a_3 a_2 a_3)^{2\nu} a_2^{8\rho_2} f(a_2^2,a_3^2) a_3^{-8\rho_2}\cdot f(e_3,a_1)
    \end{multline*}
which is shown to be trivial in \cite[Proof of Claim 8.3, relation (6AS), end of page 542]{Nakamura-Schneps} (recall that in that paper they compose mapping classes in the opposite order).

\medskip 

\emph{Homomorphism:} We have now constructed a map $\NS\hookrightarrow\Aut_0(\hat{S})$ and we finish by showing it preserves the composition in $\NS$. Recall that, for \(F=(\lambda,f)\) and \(G=(\mu,g)\), our multiplication
convention is defined by
\[
F\cdot G
=
(\lambda\mu,\,
g(x,y)\,f(g(x,y)^{-1}x^\mu g(x,y),y^\mu)),
\]
so that \(\varphi_{F\cdot G}=\varphi_G\circ\varphi_F\).  It remains to verify that the composition $\iota_G\circ \iota_F$ agrees with $\iota_{F\cdot G}$ on the generators $\tau$, $\beta$, $\alpha$, and $\sigma$. The verification for $\tau$ and $\beta$ is immediate. The arguments for $\alpha$ and $\sigma$ are similar, so we only discuss the latter, which is slightly more involved.

We now show that
    \[\iota_G\circ \iota_F(\sigma)=\iota_{F\cdot G}(\sigma).\]
The left-hand side is equal to 
    \begin{align}\label{eq:homomorphism eq for sigma}
        \iota_G(\sigma^{\lambda}\cdot {b}^{8\rho_2(F)}\cdot f({b}^2,a^2) \cdot a^{-8\rho_2(F)})=\iota_G(\sigma)^{\lambda}\cdot {\iota_G(b)}^{8\rho_2(F)}\cdot f({\iota_G(b)}^2,\iota_G(a)^2) \cdot \iota_G(a)^{-8\rho_2(F)}
    \end{align}
The term $\iota_G(\sigma)$ is explicitly described in \eqref{NS Action} and, as discussed in the proof of \eqref{g2_rel_morph_1}, we have $\iota_G(a)=a^\mu$. Moreover, since $b=\sigma^{-1}a\sigma$, we get 
    \begin{align*}
        \iota_G(b)&=\iota_G(\sigma)^{-1}  \cdot \iota_G(a)\cdot \iota_G(\sigma)\\
        &=[\sigma^{\mu}\cdot b^{8\rho_2(G)}\cdot g(b^2,a^2) \cdot a^{-8\rho_2(G)}]^{-1} \cdot a^\mu \cdot [\sigma^{\mu}\cdot b^{8\rho_2(G)}\cdot g(b^2,a^2) \cdot a^{-8\rho_2(G)}]\\
        &=a^{8\rho_2(G)}\cdot  g(b^2,a^2)^{-1} \cdot b^{-8\rho_2(G)}\cdot \sigma^{-\mu} \cdot a^\mu \cdot \sigma^{\mu}\cdot b^{8\rho_2(G)}\cdot g(b^2,a^2) \cdot a^{-8\rho_2(G)}.
    \end{align*}
Since $\mu$ is of the form $2\nu'+1$, and that $\sigma^2$ commutes with $a$ and $b$, we get
    \begin{align*}
        \iota_G(b)
        &=a^{8\rho_2(G)}\cdot  g(b^2,a^2)^{-1} \cdot b^{-8\rho_2(G)}\cdot  b^\mu \cdot b^{8\rho_2(G)}\cdot g(b^2,a^2) \cdot a^{-8\rho_2(G)}\\
        &=a^{8\rho_2(G)}  g(b^2,a^2)^{-1}  b^\mu g(b^2,a^2)  a^{-8\rho_2(G)}.
    \end{align*}
Substituting these expressions into \eqref{eq:homomorphism eq for sigma}, we obtain
    \begin{multline}\label{eq: homomorphism on sigma-2}
        \iota_G\circ \iota_F(\sigma)=[\sigma^{\mu}\cdot b^{8\rho_2(G)}\cdot g(b^2,a^2) \cdot a^{-8\rho_2(G)}]^\lambda \cdot a^{8\rho_2(G)}  g(b^2,a^2)^{-1}  b^{8\rho_2(F)\mu} g(b^2,a^2)  a^{-8\rho_2(G)}\cdot\\
         \cdot f(a^{8\rho_2(G)}  g(b^2,a^2)^{-1}  b^{2\mu} g(b^2,a^2)  a^{-8\rho_2(G)},a^{2\mu})\cdot a^{-8\rho_2(F)\mu}
    \end{multline}
We begin by simplifying the first factor on the right-hand side using that:
    \begin{align*}
        [\sigma^{\mu}\cdot b^{8\rho_2(G)}\cdot g(b^2,a^2) \cdot a^{-8\rho_2(G)}]^2&=\sigma^{\mu}\cdot b^{8\rho_2(G)}\cdot g(b^2,a^2) \cdot a^{-8\rho_2(G)}\cdot\sigma^{\mu}\cdot b^{8\rho_2(G)}\cdot g(b^2,a^2) \cdot a^{-8\rho_2(G)}\\
        &=\sigma^{2\mu}\cdot a^{8\rho_2(G)}\cdot g(a^2,b^2) \cdot b^{-8\rho_2(G)}\cdot b^{8\rho_2(G)}\cdot g(b^2,a^2) \cdot a^{-8\rho_2(G)}\\
        &=\sigma^{2\mu}
    \end{align*}
Since $\lambda=2\nu+1$, equation~\eqref{eq: homomorphism on sigma-2} becomes
    \begin{multline*}
        \iota_G\circ \iota_F(\sigma)=[\sigma^{\lambda\mu}\cdot b^{8\rho_2(G)}\cdot g(b^2,a^2) \cdot a^{-8\rho_2(G)}] \cdot a^{8\rho_2(G)}  g(b^2,a^2)^{-1}  b^{8\rho_2(F)\mu} g(b^2,a^2)  a^{-8\rho_2(G)}\cdot\\
         \cdot f(a^{8\rho_2(G)}  g(b^2,a^2)^{-1}  b^{2\mu} g(b^2,a^2)  a^{-8\rho_2(G)},a^{2\mu})\cdot a^{-8\rho_2(F)\mu}.
    \end{multline*}
After cancellation, this simplifies to
    \begin{align*}
        \iota_G\circ \iota_F(\sigma)&=\sigma^{\lambda\mu}\cdot b^{8(\rho_2(G)+\rho_2(F)\mu)} \cdot g(b^2,a^2)\cdot f(g(b^2,a^2)^{-1}  b^{2\mu} g(b^2,a^2),a^{2\mu})\cdot a^{-8(\rho_2(G)+\rho_2(F)\mu)}.
    \end{align*}
Using that $\rho_2(F\cdot G)=\rho_2(G)+\rho_2(F)\mu$, as discussed in \cite[Corollary 5.2]{Nakamura-Schneps}, we see that this is precisely the formula defining $\iota_{F\cdot G}(\sigma)$, as required.  Finally, the homomorphism is injective because its restriction to the
genus-zero truncation agrees with the inclusion
\(\underline{\NS}\subseteq \underline{\GT}\) under the isomorphism
\[
\underline{\GT}\cong
\End_0(\operatorname{tr}_{\leq 0}\widehat{\mathbf S})
\]
of Proposition~\ref{prop:genus-zero-GT}.
\end{proof}

\begin{remark}
The curve $e_2$ in Figure~\ref{fig:curves for g2} differs slightly from the curve labelled $e_2$ in \cite{Nakamura-Schneps}. The relation used in their calculation, however, is
\[
(a_2a_3)^{-1}a_1(a_2a_3)=e_2,
\]
which is the relation satisfied by the curve $e_2$ drawn here. Thus our figure should be read as using the curve determined by this conjugation relation.

We also note that Nakamura--Schneps write $\mu$ for the quantity we denote by $\nu=(\lambda-1)/2$, since $\mu$ is already reserved in this paper for the generating object of $\PaRB$.
\end{remark}

Theorem~\ref{main theorem NS action}, together with the inclusion $\galQ\hookrightarrow \NS$ constructed by Nakamura--Schneps, immediately gives the corresponding Galois action on the profinite modular operad of
surfaces.

\begin{cor}
The absolute Galois group $\galQ$ acts on $\widehat{\bS}$.
\end{cor}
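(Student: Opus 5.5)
The corollary is a composition of two homomorphisms already in hand. The plan is to compose the Nakamura--Schneps embedding $\galQ\hookrightarrow\NS$ of \cite[Theorem~1.2]{Nakamura-Schneps}, $g\mapsto(\lambda_g,f_g)$, with the monoid map $\underline{\NS}\hookrightarrow\End_0(\widehat{\bS})$ of Theorem~\ref{main theorem NS action}. Since $\NS$ is the group of invertible elements of $\underline{\NS}$, the image of $\galQ$ lies in $\NS$. A monoid homomorphism sends invertible elements to invertible elements, so the composite lands in the group of units of $\End_0(\widehat{\bS})$, namely $\Aut_0(\widehat{\bS})\subseteq\Aut(\widehat{\bS})$. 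This gives a group homomorphism
\[
\galQ\longrightarrow\Aut_0(\widehat{\bS}),
\]
which is the desired action.

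One convention must be checked. The product on $\underline{\GT}$ is defined so that $\varphi_{F\cdot G}=\varphi_G\circ\varphi_F$, and Theorem~\ref{main theorem NS action} shows $\iota_G\circ\iota_F=\iota_{F\cdot G}$. So $F\mapsto\iota_F$ is an anti-homomorphism into $\End_0(\widehat{\bS})$ with its usual composition. I would first fix whether the embedding $\galQ\to\NS$ of \cite{Nakamura-Schneps} is written with the same composition convention (cf.\ the remarks that they compose in the opposite order). If the conventions match, we obtain a left action. If they do not, we obtain a right action, which is converted to a left action by precomposing with inversion $g\mapsto g^{-1}$. Either way $\galQ$ acts. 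This bookkeeping is the only real obstacle.

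Faithfulness is also immediate. Theorem~\ref{main theorem NS action} states that $\underline{\NS}\to\End_0(\widehat{\bS})$ is injective, because its genus-zero restriction is the inclusion $\underline{\NS}\subseteq\underline{\GT}\cong\End_0(\trun{0}\widehat{\bS})$. The map $\galQ\to\NS$ is injective by Nakamura--Schneps, ultimately via Belyi's theorem. Hence the composite is injective, giving the faithful action stated in the introduction.

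Finally, the action is explicit. Substituting $(\lambda_g,f_g)$ into the formulas \eqref{NS Action} gives the image of $g$ on the generators: $\tau\mapsto\tau^{\lambda_g}$, $\beta\mapsto\beta\cdot(\beta\cdot(12)^*\beta)^{\nu}$, $\alpha\mapsto\alpha\cdot f_g(D_d,D_c)$, and $\sigma\mapsto\sigma^{\lambda_g}D_b^{8\rho_2(g)}f_g(D_b^2,D_a^2)D_a^{-8\rho_2(g)}$.
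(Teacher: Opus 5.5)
Your proposal is correct and follows the paper's proof, which composes the Nakamura--Schneps embedding $\galQ\hookrightarrow\NS$ with the map $\underline{\NS}\hookrightarrow\End_0(\widehat{\bS})$ of Theorem~\ref{main theorem NS action}. Your additional bookkeeping is accurate and fills in details the paper leaves implicit: units land in $\Aut_0(\widehat{\bS})$, the left/right convention is forced by $\iota_G\circ\iota_F=\iota_{F\cdot G}$, and faithfulness follows from injectivity of both maps.
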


\begin{proof}
This follows from Theorem~\ref{main theorem NS action} and the inclusion
$\galQ\hookrightarrow \NS$ of Nakamura--Schneps.
\end{proof}
\section{Modular $\infty$-operads}\label{sec:modular infinity operads}

To take profinite completions of modular operads in spaces, we need a model for homotopy-coherent modular operads. The relevant indexing category is the modular dendroidal category $\bM$, whose objects are connected genus-graded labelled graphs with non-empty boundary. Strict modular operads can be regarded as certain presheaves on $\bM$: their value on a graph is determined by their values on the corollas at its vertices. More precisely, the modular dendroidal nerve of a strict modular operad satisfies
\[
X_G \cong \prod_{v\in V_G} X_{\corolla_v}.
\]
See, \cite[Theorem~3.6]{hry_modular_nerve}. A modular $\infty$-operad is obtained by replacing this strict product condition by a Segal condition.

We first recall the modular dendroidal category. Let $G=(G,\lambda,\ell,\epsilon)$ be a genus-graded labelled graph. For each vertex $v\in V_G$, let $H_v$ be a genus-graded labelled graph whose boundary is identified with the half-edges incident to $v$. We require $H_v$ to have total genus $\epsilon(v)$, equivalently
\[
\epsilon(v)=\beta_1(H_v)+\sum_{w\in V_{H_v}}\epsilon_{H_v}(w),
\] where $\epsilon_{H_v}$ denotes the genus function on $H_v$.  Then we can define a new graph $G\{H_v\}_{v\in V_G}$ by the coequaliser
\[
\begin{tikzcd}
\displaystyle\coprod_{e\in iE_G}\updownarrow
\arrow[r, shift left=1, "u"]
\arrow[r, shift right=1, "d"']
&
\displaystyle\coprod_{v\in V_G} H_v
\arrow[r, "\pi"]
&
G\{H_v\}_{v\in V_G}.
\end{tikzcd}
\] Here, the maps $u$ and $d$ record the two incidences of each internal edge of $G$. The coequaliser glues the corresponding boundary legs of the graphs $H_v$. This is the \emph{graph substitution} construction of \cite[Construction~1.18]{hry1}.

\begin{definition}\label{def: modular dendroidal category}
The \emph{modular dendroidal category} $\bM$ is the category whose objects are connected genus-graded labelled graphs with non-empty boundary. Its morphisms, called \emph{graphical maps}, are generated by isomorphisms and graph substitutions. More precisely, a graphical map records the data of replacing vertices of a graph by graphs with matching boundary and genus data. See \cite[Definition~1.31]{hry1} for the full definition.
\end{definition}

\begin{figure}[h!]
\[
\begin{tikzpicture}[x=0.75pt,y=0.75pt,yscale=-1,xscale=1]
%uncomment if require: \path (0,823); %set diagram left start at 0, and has height of 823

%Straight Lines [id:da6488749677933408] 
\draw    (114.84,123.97) -- (143.88,188.12) ;
%Straight Lines [id:da18732783760567262] 
\draw    (73.89,176.06) -- (107.15,133.2) ;
%Straight Lines [id:da7889407840448126] 
\draw    (51.03,124.07) -- (102.31,123.97) ;
%Straight Lines [id:da4431489597110816] 
\draw    (143.88,188.12) -- (115.59,245.03) ;
%Shape: Circle [id:dp31356981671318795] 
\draw  [color={rgb, 255:red, 4; green, 146; blue, 194 }  ,draw opacity=1 ][fill={rgb, 255:red, 255; green, 255; blue, 255 }  ,fill opacity=1 ][dash pattern={on 4.5pt off 4.5pt}] (102.31,123.97) .. controls (102.31,117.05) and (107.92,111.44) .. (114.84,111.44) .. controls (121.76,111.44) and (127.37,117.05) .. (127.37,123.97) .. controls (127.37,130.89) and (121.76,136.5) .. (114.84,136.5) .. controls (107.92,136.5) and (102.31,130.89) .. (102.31,123.97) -- cycle ;
%Curve Lines [id:da771171194316091] 
\draw    (122.04,114.52) .. controls (166.89,52.51) and (199.98,153.31) .. (126.66,129.14) ;
%Straight Lines [id:da8453143109032858] 
\draw    (170.22,268.12) -- (115.59,245.03) ;
%Shape: Ellipse [id:dp2086114067940088] 
\draw  [color={rgb, 255:red, 237; green, 130; blue, 14 }  ,draw opacity=1 ][fill={rgb, 255:red, 255; green, 255; blue, 255 }  ,fill opacity=1 ][dash pattern={on 4.5pt off 4.5pt}] (156,191.3) .. controls (154.25,197.99) and (147.41,202) .. (140.71,200.25) .. controls (134.01,198.5) and (130.01,191.65) .. (131.76,184.95) .. controls (133.51,178.26) and (140.36,174.25) .. (147.05,176) .. controls (153.75,177.75) and (157.76,184.6) .. (156,191.3) -- cycle ;
%Shape: Boxed Bezier Curve [id:dp583141332441497] 
\draw    (121.93,256.52) .. controls (159.11,323.41) and (53.78,310.69) .. (106.73,254.51) ;
%Shape: Circle [id:dp21641337625529522] 
\draw  [color={rgb, 255:red, 140; green, 219; blue, 54 }  ,draw opacity=1 ][fill={rgb, 255:red, 255; green, 255; blue, 255 }  ,fill opacity=1 ][dash pattern={on 4.5pt off 4.5pt}] (127.71,248.21) .. controls (125.96,254.9) and (119.12,258.91) .. (112.42,257.16) .. controls (105.72,255.41) and (101.72,248.56) .. (103.47,241.86) .. controls (105.22,235.17) and (112.07,231.16) .. (118.76,232.91) .. controls (125.46,234.66) and (129.47,241.51) .. (127.71,248.21) -- cycle ;
%Shape: Circle [id:dp3919998773078771] 
\draw  [color={rgb, 255:red, 4; green, 146; blue, 194 }  ,draw opacity=1 ][dash pattern={on 4.5pt off 4.5pt}] (42.04,70.45) .. controls (42.04,50.91) and (57.88,35.07) .. (77.42,35.07) .. controls (96.96,35.07) and (112.8,50.91) .. (112.8,70.45) .. controls (112.8,89.99) and (96.96,105.83) .. (77.42,105.83) .. controls (57.88,105.83) and (42.04,89.99) .. (42.04,70.45) -- cycle ;
%Shape: Circle [id:dp5469801557415326] 
\draw  [color={rgb, 255:red, 237; green, 130; blue, 14 }  ,draw opacity=1 ][dash pattern={on 4.5pt off 4.5pt}] (172.85,175.51) .. controls (172.85,160.37) and (185.12,148.1) .. (200.26,148.1) .. controls (215.4,148.1) and (227.68,160.37) .. (227.68,175.51) .. controls (227.68,190.65) and (215.4,202.93) .. (200.26,202.93) .. controls (185.12,202.93) and (172.85,190.65) .. (172.85,175.51) -- cycle ;
%Straight Lines [id:da10145939265222326] 
\draw    (453.36,104.93) -- (480.57,165.38) ;
%Straight Lines [id:da6840236886882134] 
\draw    (349.18,152.84) -- (379.77,113.42) ;
%Straight Lines [id:da3915242304438935] 
\draw    (328.16,105.03) -- (375.32,104.93) ;
%Straight Lines [id:da13158542747729218] 
\draw    (398.37,104.93) -- (441.84,104.93) ;
%Straight Lines [id:da9875493906550447] 
\draw    (421.52,51.15) -- (386.85,104.93) ;
%Straight Lines [id:da3512817237369321] 
\draw    (421.52,51.15) -- (453.36,104.93) ;
%Shape: Circle [id:dp11097495085655085] 
\draw  [fill={rgb, 255:red, 255; green, 255; blue, 255 }  ,fill opacity=1 ] (375.32,104.93) .. controls (375.32,98.57) and (380.48,93.41) .. (386.85,93.41) .. controls (393.21,93.41) and (398.37,98.57) .. (398.37,104.93) .. controls (398.37,111.3) and (393.21,116.45) .. (386.85,116.45) .. controls (380.48,116.45) and (375.32,111.3) .. (375.32,104.93) -- cycle ;
%Shape: Circle [id:dp6559922512513051] 
\draw  [fill={rgb, 255:red, 255; green, 255; blue, 255 }  ,fill opacity=1 ] (410,51.15) .. controls (410,44.79) and (415.15,39.63) .. (421.52,39.63) .. controls (427.88,39.63) and (433.04,44.79) .. (433.04,51.15) .. controls (433.04,57.52) and (427.88,62.68) .. (421.52,62.68) .. controls (415.15,62.68) and (410,57.52) .. (410,51.15) -- cycle ;
%Shape: Ellipse [id:dp28458230797732953] 
\draw  [fill={rgb, 255:red, 255; green, 255; blue, 255 }  ,fill opacity=1 ] (441.84,104.93) .. controls (441.84,98.57) and (447,93.41) .. (453.36,93.41) .. controls (459.73,93.41) and (464.89,98.57) .. (464.89,104.93) .. controls (464.89,111.3) and (459.73,116.45) .. (453.36,116.45) .. controls (447,116.45) and (441.84,111.3) .. (441.84,104.93) -- cycle ;
%Straight Lines [id:da7405793901835506] 
\draw    (91.99,70.14) -- (103.35,95.25) ;
%Straight Lines [id:da2704008409978078] 
\draw    (47.65,90.52) -- (60.67,73.75) ;
%Straight Lines [id:da23190885369214975] 
\draw    (42.04,70.45) -- (58.78,70.14) ;
%Straight Lines [id:da3111650015820755] 
\draw    (91.99,70.14) -- (112,78.43) ;
%Straight Lines [id:da6112752101701214] 
\draw    (91.99,70.14) -- (109.19,53.12) ;
%Straight Lines [id:da591579984214321] 
\draw    (68.59,70.14) -- (87.08,70.14) ;
%Straight Lines [id:da42403731742552686] 
\draw    (78.44,47.25) -- (63.68,70.14) ;
%Straight Lines [id:da8604597036244791] 
\draw    (78.44,47.25) -- (91.99,70.14) ;
%Shape: Ellipse [id:dp48890303362623866] 
\draw  [fill={rgb, 255:red, 255; green, 255; blue, 255 }  ,fill opacity=1 ] (58.78,70.14) .. controls (58.78,67.43) and (60.97,65.23) .. (63.68,65.23) .. controls (66.39,65.23) and (68.59,67.43) .. (68.59,70.14) .. controls (68.59,72.85) and (66.39,75.04) .. (63.68,75.04) .. controls (60.97,75.04) and (58.78,72.85) .. (58.78,70.14) -- cycle ;
%Shape: Ellipse [id:dp4835970024442289] 
\draw  [fill={rgb, 255:red, 255; green, 255; blue, 255 }  ,fill opacity=1 ] (73.53,47.25) .. controls (73.53,44.55) and (75.73,42.35) .. (78.44,42.35) .. controls (81.14,42.35) and (83.34,44.55) .. (83.34,47.25) .. controls (83.34,49.96) and (81.14,52.16) .. (78.44,52.16) .. controls (75.73,52.16) and (73.53,49.96) .. (73.53,47.25) -- cycle ;
%Shape: Ellipse [id:dp09294465671430874] 
\draw  [fill={rgb, 255:red, 255; green, 255; blue, 255 }  ,fill opacity=1 ] (87.08,70.14) .. controls (87.08,67.43) and (89.28,65.23) .. (91.99,65.23) .. controls (94.69,65.23) and (96.89,67.43) .. (96.89,70.14) .. controls (96.89,72.85) and (94.69,75.04) .. (91.99,75.04) .. controls (89.28,75.04) and (87.08,72.85) .. (87.08,70.14) -- cycle ;

%Straight Lines [id:da6207155341595856] 
\draw    (187.17,152.19) -- (198.74,177.74) ;
%Straight Lines [id:da49058651435496536] 
\draw    (198.74,177.74) -- (187.47,200.41) ;
%Curve Lines [id:da03965419119301694] 
\draw    (201.73,173.18) .. controls (219.59,148.48) and (232.77,188.63) .. (203.56,179) ;
%Shape: Circle [id:dp18101776392820634] 
\draw  [fill={rgb, 255:red, 255; green, 255; blue, 255 }  ,fill opacity=1 ] (193.75,177.74) .. controls (193.75,174.98) and (195.98,172.75) .. (198.74,172.75) .. controls (201.49,172.75) and (203.73,174.98) .. (203.73,177.74) .. controls (203.73,180.5) and (201.49,182.73) .. (198.74,182.73) .. controls (195.98,182.73) and (193.75,180.5) .. (193.75,177.74) -- cycle ;
%Straight Lines [id:da26185445967315535] 
\draw    (480.57,165.38) -- (441.22,250.03) ;
%Curve Lines [id:da7928497690636905] 
\draw    (487.6,154.65) .. controls (529.62,96.56) and (560.61,190.99) .. (491.93,168.35) ;
%Shape: Ellipse [id:dp5472298127523607] 
\draw  [fill={rgb, 255:red, 255; green, 255; blue, 255 }  ,fill opacity=1 ] (468.83,165.38) .. controls (468.83,158.89) and (474.08,153.64) .. (480.57,153.64) .. controls (487.05,153.64) and (492.31,158.89) .. (492.31,165.38) .. controls (492.31,171.86) and (487.05,177.12) .. (480.57,177.12) .. controls (474.08,177.12) and (468.83,171.86) .. (468.83,165.38) -- cycle ;
%Straight Lines [id:da9275037610543019] 
\draw    (491.23,271.46) -- (440.85,250.17) ;
%Straight Lines [id:da17246183653719316] 
\draw    (441.22,250.03) -- (390.84,228.74) ;
%Shape: Ellipse [id:dp6439493847369149] 
\draw  [fill={rgb, 255:red, 255; green, 255; blue, 255 }  ,fill opacity=1 ] (379.26,228.74) .. controls (379.26,222.34) and (384.45,217.16) .. (390.84,217.16) .. controls (397.24,217.16) and (402.43,222.34) .. (402.43,228.74) .. controls (402.43,235.14) and (397.24,240.33) .. (390.84,240.33) .. controls (384.45,240.33) and (379.26,235.14) .. (379.26,228.74) -- cycle ;
%Shape: Ellipse [id:dp15675917651922633] 
\draw  [fill={rgb, 255:red, 255; green, 255; blue, 255 }  ,fill opacity=1 ] (429.27,250.17) .. controls (429.27,243.78) and (434.45,238.59) .. (440.85,238.59) .. controls (447.25,238.59) and (452.44,243.78) .. (452.44,250.17) .. controls (452.44,256.57) and (447.25,261.76) .. (440.85,261.76) .. controls (434.45,261.76) and (429.27,256.57) .. (429.27,250.17) -- cycle ;
%Shape: Boxed Bezier Curve [id:dp7358717336524252] 
\draw    (447.15,260.8) .. controls (481.33,322.31) and (384.47,310.61) .. (433.17,258.95) ;
%Curve Lines [id:da34900819714117004] 
\draw    (459.98,95.54) .. controls (501.23,38.51) and (531.65,131.21) .. (464.23,108.98) ;
%Shape: Ellipse [id:dp8284371495354373] 
\draw  [color={rgb, 255:red, 4; green, 146; blue, 194 }  ,draw opacity=1 ][dash pattern={on 4.5pt off 4.5pt}] (362.3,89.16) .. controls (362.3,57.92) and (387.62,32.59) .. (418.87,32.59) .. controls (450.11,32.59) and (475.44,57.92) .. (475.44,89.16) .. controls (475.44,120.41) and (450.11,145.73) .. (418.87,145.73) .. controls (387.62,145.73) and (362.3,120.41) .. (362.3,89.16) -- cycle ;
%Shape: Ellipse [id:dp6470131354645342] 
\draw  [color={rgb, 255:red, 237; green, 130; blue, 14 }  ,draw opacity=1 ][dash pattern={on 4.5pt off 4.5pt}] (455.36,165.38) .. controls (455.36,151.45) and (466.64,140.17) .. (480.57,140.17) .. controls (494.49,140.17) and (505.78,151.45) .. (505.78,165.38) .. controls (505.78,179.3) and (494.49,190.59) .. (480.57,190.59) .. controls (466.64,190.59) and (455.36,179.3) .. (455.36,165.38) -- cycle ;
%Shape: Circle [id:dp2882731415599845] 
\draw  [color={rgb, 255:red, 140; green, 219; blue, 54 }  ,draw opacity=1 ][dash pattern={on 4.5pt off 4.5pt}] (368.91,232.75) .. controls (368.91,204.39) and (391.9,181.39) .. (420.27,181.39) .. controls (448.64,181.39) and (471.63,204.39) .. (471.63,232.75) .. controls (471.63,261.12) and (448.64,284.12) .. (420.27,284.12) .. controls (391.9,284.12) and (368.91,261.12) .. (368.91,232.75) -- cycle ;
%Straight Lines [id:da6792600183091927] 
\draw    (79.07,214.04) -- (69.3,235.44) ;
%Straight Lines [id:da7033421088998842] 
\draw    (79.72,240.37) -- (69.13,235.51) ;
%Straight Lines [id:da9885858772107592] 
\draw    (69.3,235.44) -- (46.16,225.67) ;
%Shape: Ellipse [id:dp2330553093227623] 
\draw  [fill={rgb, 255:red, 255; green, 255; blue, 255 }  ,fill opacity=1 ] (40.84,225.67) .. controls (40.84,222.73) and (43.22,220.35) .. (46.16,220.35) .. controls (49.1,220.35) and (51.48,222.73) .. (51.48,225.67) .. controls (51.48,228.61) and (49.1,230.99) .. (46.16,230.99) .. controls (43.22,230.99) and (40.84,228.61) .. (40.84,225.67) -- cycle ;
%Shape: Ellipse [id:dp8591699508470692] 
\draw  [color={rgb, 255:red, 140; green, 219; blue, 54 }  ,draw opacity=1 ][dash pattern={on 4.5pt off 4.5pt}] (36.09,227.51) .. controls (36.09,214.48) and (46.65,203.92) .. (59.67,203.92) .. controls (72.7,203.92) and (83.26,214.48) .. (83.26,227.51) .. controls (83.26,240.54) and (72.7,251.1) .. (59.67,251.1) .. controls (46.65,251.1) and (36.09,240.54) .. (36.09,227.51) -- cycle ;
%Straight Lines [id:da4787390203115717] 
\draw    (69.13,235.51) -- (55.35,250.77) ;
%Straight Lines [id:da5055444695966994] 
\draw    (69.3,235.44) -- (74.85,245.89) ;
%Shape: Ellipse [id:dp14002901248011645] 
\draw  [fill={rgb, 255:red, 255; green, 255; blue, 255 }  ,fill opacity=1 ] (63.81,235.51) .. controls (63.81,232.57) and (66.19,230.19) .. (69.13,230.19) .. controls (72.07,230.19) and (74.45,232.57) .. (74.45,235.51) .. controls (74.45,238.45) and (72.07,240.83) .. (69.13,240.83) .. controls (66.19,240.83) and (63.81,238.45) .. (63.81,235.51) -- cycle ;

%Straight Lines [id:da32349549764022567] 
\draw    (259.5,179.74) -- (318.5,179.74) ;
\draw [shift={(320.5,179.74)}, rotate = 180] [color={rgb, 255:red, 0; green, 0; blue, 0 }  ][line width=0.75]    (10.93,-3.29) .. controls (6.95,-1.4) and (3.31,-0.3) .. (0,0) .. controls (3.31,0.3) and (6.95,1.4) .. (10.93,3.29)   ;

% Text Node
\draw (109.45,118.88) node [anchor=north west][inner sep=0.75pt]   [align=left] {$\displaystyle v$};
% Text Node
\draw (137.57,182.75) node [anchor=north west][inner sep=0.75pt]   [align=left] {$\displaystyle w$};
% Text Node
\draw (110.1,239.69) node [anchor=north west][inner sep=0.75pt]   [align=left] {$\displaystyle u$};
% Text Node
\draw (104.99,24.86) node [anchor=north west][inner sep=0.75pt]   [align=left] {$\displaystyle H_{v}$};
% Text Node
\draw (219.29,134.89) node [anchor=north west][inner sep=0.75pt]   [align=left] {$\displaystyle H_{w}$};
% Text Node
\draw (22,184.83) node [anchor=north west][inner sep=0.75pt]   [align=left] {$\displaystyle H_{u}$};

\end{tikzpicture}
\]
\caption{An example of a graphical map}\label{graph map}
\end{figure}

The category of \emph{modular dendroidal objects} in $\bE$, denoted $\dMod(\bE)$, is the category of contravariant functors $X:\bM^{\mathrm{op}}\longrightarrow \bE$ and natural transformations. We write $X_G$ for the value of $X\in\dMod(\bE)$ on a graph $G\in\bM$, and we write
\[
\bM[G]:=\Hom_{\bM}(-,G)
\]
for the representable functor at $G$.

\begin{definition}\label{defn: Segal core}\cite[Section~3.1]{hry1}
Let $G$ be a graph with at least one vertex. The \emph{Segal core} of $G$ is the coequaliser in $\dMod(\Set)$
\[ \begin{tikzcd}
\displaystyle\coprod_{e\in iE_G}\bM[\updownarrow] \arrow[r, shift left=1] \arrow[r, shift right=1] & \displaystyle\coprod_{v\in V_G}\bM[\corolla_v] \arrow[r] & \Sc[G].
\end{tikzcd} \] The two arrows record the two incidences of each internal edge with the corollas at its endpoints. The Segal core is a subobject of the representable $\bM[G]$, with inclusion $\Sc[G]\longrightarrow \bM[G]$ induced by the inclusions of the local vertex data $\bM[\corolla_v]\to \bM[G]$.
\end{definition}

The Segal core detects whether the value of a modular dendroidal object on a graph is determined, up to homotopy, by its values on the corollas at the vertices. When $\bE$ is a Cartesian monoidal model category, the category $\dMod(\bE)$ admits the projective model structure, in which a map $X\to Y$ is a weak equivalence, respectively a fibration, if and only if $X_G\longrightarrow Y_G$ is a weak equivalence, respectively a fibration, in $\bE$ for every $G\in\bM$; see \cite[Theorem~11.6.1]{hirsch}.

A modular dendroidal object $X$ is called \emph{Segal} if, for every $G\in\bM$, the map induced by the Segal core inclusion
\[
\mathbb{R}\Map(\bM[G],X) \longrightarrow \mathbb{R}\Map(\Sc[G],X)
\]
is a weak equivalence of spaces.\footnote{Here $\bM[G]$ and $\Sc[G]$ are regarded as discrete objects of $\dMod(\bE)$ via the functor $\Set\to\bE$.} A modular dendroidal object $X$ is called \emph{reduced} if $X_{\updownarrow}=*$.
\begin{definition}\label{def: infty modular}
A \emph{modular $\infty$-operad} in $\bE$ is a modular dendroidal object
$X:\bM^{\mathrm{op}}\to\bE$ satisfying:
\begin{itemize}
    \item $X_{\updownarrow}=*$;
    \item for every graph $G\in\bM$, the Segal map
    \[
    \mathbb{R}\Map(\bM[G],X)
    \longrightarrow
    \mathbb{R}\Map(\Sc[G],X)
    \]
    is a weak equivalence of spaces.
\end{itemize}
\end{definition}

The condition $X_{\updownarrow}=*$ means that we are considering one-coloured modular $\infty$-operads. It also simplifies the Segal condition. Indeed, for a reduced object $X$, the edge-compatibility conditions in the Segal core are trivial, since each internal edge contributes a copy of $X_{\updownarrow}=*$. Thus, after replacing $X$ by an entrywise fibrant object if necessary, the Segal map may be identified with the comparison
\[
X_G \longrightarrow \prod_{v\in V_G} (X_{\corolla_v})_f .
\]
Here $(X_{\corolla_v})_f$ denotes a fibrant replacement in $\bE$, so that the product computes the corresponding homotopy product. Equivalently, a reduced
modular dendroidal object is a modular $\infty$-operad precisely when, for every graph $G$, the value $X_G$ is weakly equivalent to the homotopy product
of its values on the corollas at the vertices of $G$. 

In the cases used in this paper we do not need to worry much about fibrant replacements. If $\bE=\Grpd$, every object is fibrant, so the homotopy product is computed by the ordinary product. If $\bE=\sSet$ and $X$ is entrywise fibrant, for example if $X$ is fibrant in the localised Reedy model structure, the ordinary product again computes the homotopy product. In this situation the Segal condition can be written simply as requiring
\[
X_G \longrightarrow \prod_{v\in V_G} X_{\corolla_v}
\]
to be a weak equivalence.

\subsection{The modular dendroidal nerve}
Let $\Mod_{\infty}(\bE)$ denote the $\infty$-category of modular $\infty$-operads, as in Definition~\ref{def: infinity cat of infinity modular operads}.
A strict modular operad assigns operations to genus-graded corollas. Viewing a general genus-graded graph as a pattern for composing such operations, one obtains a modular dendroidal object by decorating each vertex with an operation of the corresponding genus and arity. This construction defines the modular dendroidal \emph{nerve} functor
\[
\nerve:\Mod(\bE)\longrightarrow \Mod_{\infty}(\bE).
\]

\begin{definition}\label{def: modular nerve}
The \emph{nerve} of a modular operad $\calP$ is the contravariant functor $\nerve\calP:\bM^{\mathrm{op}}\to\bE$ defined on a graph $G=(G,\lambda,\ell,\epsilon)$ by
\[
(\nerve\calP)_G := \prod_{v\in V_G} \calP\bigl(\epsilon(v),|\nb(v)|\bigr).
\]
In particular, if $C_v$ is the corolla at a vertex $v$, then $(\nerve\calP)_{C_v}=\calP\bigl(\epsilon(v),|\nb(v)|\bigr).$
\end{definition}

For a strict modular operad $\calP$, the Segal map is an isomorphism rather than just a weak equivalence. Indeed, for every graph $G$, the definition of the nerve identifies \[ (\nerve\calP)_G \cong \prod_{v\in V_G}(\nerve\calP)_{C_v}. \] Thus the nerve of a strict modular operad satisfies the Segal condition strictly. In particular, when ordinary finite products compute finite homotopy products in $\bE$, the object $\nerve\calP$ is a modular $\infty$-operad. This applies in the cases used in this paper, for example when $\bE=\Grpd$, and for entrywise fibrant objects in $\sSet$.

The nerve is fully faithful and admits a left adjoint $\tau:\dMod(\bE)\longrightarrow \Mod(\bE)$ such that $\tau\nerve\cong \id_{\Mod(\bE)}$ (\cite[Theorem~3.6]{hry_modular_nerve}). In Appendix~\ref{sec: nerve is homotopically fully faithful}, we prove a homotopical analogue of this statement. Namely, if $\bE=\sSet$ or $\Grpd$, then the modular dendroidal nerve is homotopically fully faithful. Thus, for any $\calP,\calQ\in\Mod(\bE)$, the induced map
\[ \mathbb{R}\Map_{\Mod(\bE)}(\calP,\calQ) \longrightarrow \mathbb{R}\Map_{\Mod_{\infty}(\bE)}(\nerve\calP,\nerve\calQ) \] is a weak equivalence of spaces.
\section{From profinite groupoids to profinite spaces}\label{sec: homotopy action} 
In previous sections, we showed that the modular operad in profinite groupoids $\widehat{\bS}$ admits an action of the monoid ${\NS}$ (Theorem~\ref{main theorem NS action}). The goal of this section is to show that $\widehat{\cs\nerve\bS}$ forms a modular $\infty$-operad in profinite spaces admitting an induced action of ${\NS}$. This modular $\infty$-operad may be regarded both as the profinite completion of the modular operad $\cs\bS$ of moduli spaces of surfaces and as a model for a Teichmüller tower built from étale homotopy types.

In this setting, the strict action of ${\NS}$ on $\widehat{\bS}$ induces an action in the homotopy category of modular $\infty$-operads. We therefore begin by explaining the passage from strict endomorphisms to homotopy endomorphisms.

\subsection{Profinite completion of modular $\infty$-operads}\label{sec: profinite modular infty operads}
We first explain how profinite completion is applied to modular dendroidal objects. Let $\bE$ be either $\sSet$ or $\Grpd$, and let $\mathscr C\subseteq \bE$ denote the full subcategory of $\pi$-finite spaces or finite groupoids, respectively.

We use the profinite completion of diagram categories developed in \cite[Theorem~2.4]{BM20}. For a suitably filtered small category $\mathscr K$, the pro-$\mathscr C$ completion of the category of coskeletal diagrams in $\operatorname{Fun}(\mathscr K^{\mathrm{op}},\bE)$ can be identified with the category of $\Pro(\mathscr C)$-valued diagrams $\operatorname{Fun}(\mathscr K^{\mathrm{op}},\Pro(\mathscr C))$. In this sense, profinite completion of coskeletal diagrams is computed entrywise.

We apply this result with $\mathscr K=\bM$. The graphical category $\bM$ is a generalised Reedy category \cite[Theorem~4.10 and Remark~4.17]{hry1}, and diagrams $X\in\operatorname{Fun}(\bM^{\mathrm{op}},\bE)$ are coskeletal in the relevant sense; see \cite{riehl2017inductive} and \cite[Section~6]{bm_reedy}. Hence the construction applies to modular dendroidal objects.

\begin{definition}\label{def: profinite modular operad}
Let $X:\bM^{\mathrm{op}}\to\bE$ be a modular dendroidal object. The \emph{profinite completion} of $X$ is the modular dendroidal object
\[
\widehat X:\bM^{\mathrm{op}}\longrightarrow \Pro(\mathscr C)
\]
obtained by applying pro-$\mathscr C$ completion entrywise.
\end{definition}

Since profinite completion is a left adjoint, it does not in general preserve finite limits. Thus, even if a modular dendroidal object $X$ satisfies the Segal condition, its entrywise profinite completion
\[
\widehat X:\bM^{\op}\longrightarrow \Pro(\mathscr C)
\]
need not satisfy the Segal condition. In general, profinite completion produces a modular dendroidal object in profinite spaces or profinite groupoids, rather
than a profinite modular $\infty$-operad.

There is, however, a useful exception in the groupoid case. As discussed in Section~\ref{sec: profinite completion of modular operads in groupoids}, and in particular in Proposition~\ref{prop: profinite completion preserves products of groupoids}, profinite completion is compatible with finite products under suitable finiteness hypotheses. The following lemma applies this product comparison to the Segal condition for modular $\infty$-operads in groupoids. %It gives the form of profinite completion that we will use to pass from the strict action of $\NS$ on the profinite modular operad $\widehat{\bS}$ to a homotopy action.

Recall that we write $C_{(g,n+1)}$ for the corolla of genus $g$ with $n+1$ legs. Thus, if $v$ is a vertex of a graph $G$, the corolla at $v$ is $C_{(\epsilon(v),|\nb(v)|)}$, with the labelling and genus grading inherited from $G$.

\begin{lemma}\label{lemma: profinite-completion-preserves-segal-groupoids}
Let $X:\bM^{\op}\to\Grpd$ be a modular $\infty$-operad such that each corolla value
$X_{C_{(g,n+1)}}$ has finitely many objects. Then entrywise profinite completion defines a modular $\infty$-operad
\[
\widehat X:\bM^{\op}\longrightarrow \widehat{\Grpd}.
\]
\end{lemma}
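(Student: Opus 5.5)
Since $X$ is reduced, $X_{\updownarrow}=*$, and its profinite completion is the terminal profinite groupoid, so $\widehat X$ is again reduced. It remains to check the Segal condition for $\widehat X$ at every graph $G\in\bM$. In $\Grpd$ every object is fibrant, and the same holds for the relevant product comparisons in $\widehat{\Grpd}$: the profinite groupoids arising from finite-object groupoids are fibrant in Horel's model structure, and finite products of them compute homotopy products. So it suffices to show that for every $G$ the strict Segal comparison
\[
\widehat{X}_G \longrightarrow \prod_{v\in V_G}\widehat{X}_{C_v}
\]
is a weak equivalence in $\widehat{\Grpd}$.

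The plan is to factor this map as the composite
\[
\widehat{X_G}\longrightarrow \reallywidehat{\textstyle\prod_{v} X_{C_v}} \longrightarrow \prod_{v}\widehat{X_{C_v}},
\]
where the first map is the completion of the Segal map of $X$ and the second is the canonical product comparison. This factorization holds by naturality of completion and the universal property of the product.

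\textbf{First map.} The Segal map $X_G\to\prod_v X_{C_v}$ is an equivalence of groupoids, by hypothesis and since groupoids are fibrant. The completion functor $\widehat{(-)}$ is left Quillen, and every groupoid is cofibrant, so it preserves weak equivalences between groupoids. Hence the first map is a weak equivalence.

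\textbf{Second map.} Each $X_{C_v}$ has finitely many objects, and a finite product of such groupoids again has finitely many objects. Iterating Proposition~\ref{prop: profinite completion preserves products of groupoids} by induction on $|V_G|$ therefore shows the second map is an isomorphism. The case $|V_G|=0$ is $G=\updownarrow$, where reducedness applies.

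\textbf{Main obstacle.} The delicate step is justifying that the derived Segal condition in $\Pro(\Grpd_{\mathrm{fin}})$ reduces to the strict comparison with the ordinary product. This requires knowing that the completions $\widehat{X_{C_v}}$ are fibrant, or at least that products of completions of finite-object groupoids compute homotopy products in Horel's model structure. I would try first to cite \cite{Horel_profinite_groupoids}: profinite completions of groupoids with finitely many objects are fibrant there, and the fibrant objects are closed under finite products, so the ordinary product is a homotopy product. If this fails, the fallback is to note that equivalences in $\widehat{\Grpd}$ are detected on $\pi_0$ and profinite $\pi_1$ at each object. Both sides then have identical $\pi_0$, namely a finite product of finite sets, and isomorphic profinite automorphism groups, since the product of completions of groups is the completion of the product.
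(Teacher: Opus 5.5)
Your proposal is correct and matches the paper's proof step for step: reducedness comes from completing the point, and the completed Segal map factors through $\bigl(\prod_v X_{C_v}\bigr)^{\wedge}$. The first factor is an equivalence because completion is left Quillen and every groupoid is cofibrant, and the second is an isomorphism by iterating Proposition~\ref{prop: profinite completion preserves products of groupoids}. The paper does not address the fibrancy question you flag; it simply takes the strict comparison with the ordinary product in $\widehat{\Grpd}$ to be the Segal condition, so that discussion is extra care on your part rather than a different route.
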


\begin{proof}
Since profinite completion sends the trivial groupoid to the trivial profinite groupoid, we have $\widehat X_{\updownarrow}=*$.

It remains to check the Segal condition. For every graph $G$, the Segal map for $X$ is an equivalence of groupoids
\[
X_G \longrightarrow \prod_{v\in V_G} X_{C_{(\epsilon(v),|\nb(v)|)}} .
\]
Applying profinite completion gives an equivalence in $\widehat{\Grpd}$
\[
\widehat X_G \longrightarrow
\left(\prod_{v\in V_G} X_{C_{(\epsilon(v),|\nb(v)|)}}\right)^{\wedge}.
\]
The graph $G$ has finitely many vertices, and each corolla value
$X_{C_{(\epsilon(v),|\nb(v)|)}}$ has finitely many objects. Hence Proposition~\ref{prop: profinite completion preserves products of groupoids} identifies the target naturally with $\prod_{v\in V_G}\widehat X_{C_{(\epsilon(v),|\nb(v)|)}}.$
Thus the completed Segal map
\[
\widehat X_G \longrightarrow
\prod_{v\in V_G}\widehat X_{C_{(\epsilon(v),|\nb(v)|)}}
\]
is a weak equivalence in $\widehat{\Grpd}$. Therefore $\widehat X$ satisfies the Segal condition, and hence is a modular $\infty$-operad in profinite groupoids.
\end{proof}

\subsection{From endomorphisms to homotopy endomorphisms}\label{sec: homotopy endomorphisms}

We now compare strict endomorphisms of the profinite modular operad $\widehat{\bS}$ with homotopy endomorphisms of the completed modular dendroidal nerve. The first point is that, under the relevant finiteness hypotheses, profinite completion commutes with the modular dendroidal nerve.

\begin{lemma}\label{lemma: N commutes with completion for groupoids}
Let $\calP$ be a modular operad in groupoids such that $\calP(g,n+1)$ has finitely many objects for all $g\geq 0$ and $n\geq 0$. Then there is a natural isomorphism of modular $\infty$-operads
\[
\widehat{\nerve\calP}\cong \nerve\widehat{\calP}.
\]
\end{lemma}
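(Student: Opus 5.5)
The plan is to compare the two modular dendroidal objects graph by graph, using the explicit formula for the nerve (Definition~\ref{def: modular nerve}) together with the product comparison of Proposition~\ref{prop: profinite completion preserves products of groupoids}. For a graph $G=(G,\lambda,\ell,\epsilon)$ in $\bM$ we have
\[
(\widehat{\nerve\calP})_G=\Bigl(\prod_{v\in V_G}\calP\bigl(\epsilon(v),|\nb(v)|\bigr)\Bigr)^{\wedge},
\qquad
(\nerve\widehat{\calP})_G=\prod_{v\in V_G}\widehat{\calP}\bigl(\epsilon(v),|\nb(v)|\bigr),
\]
since by Definition~\ref{def: profinite modular operad} profinite completion of a modular dendroidal object is computed entrywise. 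There is a canonical comparison map from the left-hand side to the right-hand side, given by the universal property of the product applied to the completions of the projections. Since $V_G$ is finite and every $\calP(g,n+1)$ has finitely many objects, an induction on $|V_G|$ using Proposition~\ref{prop: profinite completion preserves products of groupoids} shows that this comparison is an isomorphism. The case $V_G=\varnothing$, namely $G=\updownarrow$, is trivial, because both sides are the terminal groupoid. Finiteness of objects is preserved by finite products, so the inductive step is legitimate.

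Next I will check naturality in $G$, so that these isomorphisms assemble into an isomorphism of functors $\bM^{\op}\to\widehat{\Grpd}$. A graphical map $G'\to G$ acts on the nerve through the modular operad structure maps of $\calP$: substituting graphs $H_v$ into the vertices of $G$ corresponds to composing and contracting decorations, and isomorphisms correspond to $\Sigma^+$-actions. On $\nerve\widehat{\calP}$ the same maps are built from the structure maps of $\widehat{\calP}$. By Definition~\ref{def: profinite completion of modular operads in groupoids}, those structure maps are exactly the completions of the maps of $\calP$, pre-composed with the inverse of the product isomorphism. So each naturality square reduces to the compatibility of the canonical map $\widehat{A\times B}\to\widehat A\times\widehat B$ with completed maps, which holds because the map is natural in $A$ and $B$. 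It suffices to verify this on generating graphical maps, namely isomorphisms, single-edge graftings and single contractions, since every graphical map factors through these.

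Finally, $\nerve\widehat{\calP}$ is the nerve of a strict modular operad in profinite groupoids, so it satisfies the Segal condition strictly. The isomorphism therefore also shows that $\widehat{\nerve\calP}$ is a modular $\infty$-operad, consistent with Lemma~\ref{lemma: profinite-completion-preserves-segal-groupoids}.

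I expect the main obstacle to be the naturality step. The nerve's functoriality on general graphical maps in the sense of \cite{hry1}, including the degenerate substitutions by the edge $\updownarrow$ at the unit $\id\in\calP(0,2)$, has to be matched carefully with the completed structure maps. I would handle this by invoking the factorization of graphical maps into isomorphisms and elementary inner and outer coface and codegeneracy maps, and checking each type separately.
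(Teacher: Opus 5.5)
Your proposal is correct and follows the paper's proof. Both arguments compute the two sides graph by graph from the nerve formula, then identify them using the product comparison of Proposition~\ref{prop: profinite completion preserves products of groupoids}, which applies because $V_G$ is finite and each $\calP(g,n+1)$ has finitely many objects. There are two differences, both cosmetic. First, you justify naturality in $G$ by reducing it to naturality of the canonical map $\widehat{A\times B}\to\widehat A\times\widehat B$ and to the definition of the completed structure maps, whereas the paper only asserts it. Second, you deduce the Segal property of $\widehat{\nerve\calP}$ from the isomorphism itself, rather than citing Lemma~\ref{lemma: profinite-completion-preserves-segal-groupoids}.
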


\begin{proof}
Let $\Mod(\Grpd)_{\mathrm{fin}}$ denote the full subcategory of $\Mod(\Grpd)$ spanned by modular operads $\calP$ such that each groupoid $\calP(g,n+1)$ has finitely many objects. We show that the diagram
\[
\begin{tikzcd}
\Mod(\Grpd)_{\mathrm{fin}} \arrow[d, "\widehat{(-)}"'] \arrow[r, "\nerve"] 
& \Mod_{\infty}(\Grpd) \arrow[d, "\widehat{(-)}"] \\
\Mod(\widehat{\Grpd}) \arrow[r, "\nerve"] 
& \Mod_{\infty}(\widehat{\Grpd})
\end{tikzcd}
\]
commutes up to natural isomorphism.

Going around the diagram counterclockwise, let $\widehat{\calP}$ be the modular operad in $\widehat{\Grpd}$ obtained from $\calP$ by entrywise profinite completion, as in Definition~\ref{def: profinite completion of modular operads in groupoids}. Its modular dendroidal nerve is the functor
\[
\nerve\widehat{\calP}:\bM^{\op}\longrightarrow \widehat{\Grpd}
\]
defined, for each graph $G\in\bM$, by
\[
(\nerve\widehat{\calP})_G
=
\prod_{v\in V_G}
\widehat{\calP}\bigl(\epsilon(v),|\nb(v)|\bigr).
\]

Going around the diagram in the other direction, profinite completion of the modular dendroidal object
$\nerve\calP:\bM^{\op}\to\Grpd$ is defined entrywise. Hence, for each graph $G\in\bM$,
\[
(\widehat{\nerve\calP})_G
=
\left(
\prod_{v\in V_G}
\calP\bigl(\epsilon(v),|\nb(v)|\bigr)
\right)^{\wedge},
\]
where $(-)^{\wedge}$ denotes profinite completion. Since $G$ has finitely many vertices and each groupoid $\calP(g,n+1)$ has finitely many objects, Proposition~\ref{prop: profinite completion preserves products of groupoids} identifies this groupoid naturally with
\[
\prod_{v\in V_G}
\widehat{\calP}\bigl(\epsilon(v),|\nb(v)|\bigr)
=
(\nerve\widehat{\calP})_G.
\]
These isomorphisms are natural in $G$, and therefore assemble to an isomorphism of modular dendroidal objects
\[
\widehat{\nerve\calP}\cong \nerve\widehat{\calP}.
\]
By Lemma~\ref{lemma: profinite-completion-preserves-segal-groupoids}, the left-hand side is a modular $\infty$-operad in profinite groupoids, and the right-hand side is the nerve of a modular operad in profinite groupoids. Hence this is an isomorphism of modular $\infty$-operads.
\end{proof}

We will use the following elementary path-object notion of homotopy for modular operads in groupoids and profinite groupoids. Let $\mathscr I$ denote the finite groupoid with objects $\{0,1\}$ and a unique morphism between any two objects. If $\calQ$ is a modular operad in groupoids, or in profinite groupoids, we write $\calQ^{\mathscr I}$ for the modular operad defined entrywise by
\[
(\calQ^{\mathscr I})(g,n+1)=\calQ(g,n+1)^{\mathscr I}.
\]
The two objects of $\mathscr I$ induce evaluation maps $d_0,d_1\colon \calQ^{\mathscr I}\to\calQ$. Given morphisms of modular operads $f,g\colon \calP\to\calQ$, a \emph{homotopy} from $f$ to $g$ is a morphism of modular operads $H\colon\calP\to\calQ^{\mathscr I}$ such that $d_0H=f$ and $d_1H=g$.

Because weak equivalences and fibrations of groupoids and profinite groupoids are defined entrywise, $\calQ^{\mathscr I}$ is an entrywise path object for $\calQ$. Thus this agrees with the usual model-categorical right homotopy relation in the corresponding category of modular operads. The relation is compatible with composition. We write $\HoAut(\calP)$ for the group of automorphisms of $\calP$ modulo this homotopy relation, and we write $\Aut_0(\calP)$ for the group of object-fixing automorphisms.

\begin{prop}\label{prop:injection}
    The map 
        \[\Aut_0(\widehat{\bS})\to \HoAut(\widehat{\bS})\]
    is injective.
\end{prop}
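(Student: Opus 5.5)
We need to show: if an object-fixing automorphism $F$ of $\widehat{\bS}$ is homotopic to the identity, then $F=\id$. The map is a group homomorphism, so it suffices to show its kernel is trivial. A homotopy $H\colon \widehat{\bS}\to\widehat{\bS}^{\mathscr I}$ from $\id$ to $F$ amounts to the following data, compatible with compositions, contractions and symmetric group actions: for each object $G\in\ob\widehat{\bS}(g,n+1)$, a morphism $h_G\colon G\to F(G)=G$, natural in morphisms of $\widehat{\bS}$. Thus $h_G\in\Aut(G)$ and
\[
F(\phi)=h_{G'}\cdot\phi\cdot h_G^{-1}
\]
for every $\phi\colon G\to G'$. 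The strategy is to show that operadic compatibility forces all $h_G$ to be trivial, or at least central and cancelling. Then $F(\phi)=\phi$ on the generators $\tau,\beta,\alpha,\sigma$, and so on everything, by Theorem~\ref{thm: presentation for maps out of bS} together with the profinite-completion adjunction.

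The key steps, in order, are as follows.

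First, since $\widehat{\bS}$ is freely generated on objects by $P$ (Definition~\ref{def:free modular operad generated by mu}) and $H$ is a map of modular operads, the whole family $\{h_G\}$ is determined by $h_P\in\Aut_{\widehat{\bS}(0,3)}(P)\cong\widehat{\Gamma}_{0,3}\cong\widehat{\mathbb Z}^3$ and $h_{|}\in\widehat{\mathbb Z}$. Here $h_{|}$ is the component at the unit object of $\bS(0,2)$. Concretely, $h_{G\circ_i^j G'}=h_G\circ_i^j h_{G'}$ and $h_{\xi G}=\xi h_G$.

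Second, impose the constraints coming from the unit and the symmetries.
\begin{itemize}
\item Unit compatibility: $H$ preserves $\id\in\bS(0,2)$, which forces $h_{|}=\id$.
\item Equivariance under $(012)$: since $(012)^*P=P$, we get $(012)^*h_P=h_P$, so $h_P=(\partial_0\partial_1\partial_2)^k$ for some $k\in\widehat{\mathbb Z}$.
\item Grafting: applying the unit axiom $P\circ_i|=P$ at the level of $H$ gives $h_P\circ_i h_{|}=h_P$, which does not yet constrain $k$. Instead, glue two copies of $P$. Then $h_{P\circ_1P}=h_P\circ_1h_P$ contains the factor $D_c^{2k}$ on the internal curve $c$.
\end{itemize}

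Third, use naturality for $\alpha$. The relation $F(\alpha)=h_{P\circ_2P}\,\alpha\,h_{P\circ_1P}^{-1}=D_{c'}^{2k}\alpha D_c^{-2k}\cdot(\text{boundary twists, which cancel})=\alpha D_c^{2k}D_{c'}^{-2k}$ relates $F(\alpha)$ to $\alpha$. An alternative route, which I would try first because it is cleaner, works in $\bS(0,2)$ through the twist. Naturality at the object $|$ gives $F(\tau)=h_{|}\tau h_{|}^{-1}=\tau$. On the other hand, $F$ is object-fixing on the generating object, and the unit compatibility $|\circ P$ together with the relation \eqref{twist_rel_morphS} pins down the boundary twist exponent.

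I expect the main obstacle to be ruling out $k\neq 0$. The conjugation by the element $\exp$ of boundary and internal twists is a genuinely nontrivial automorphism: $(\id\circ\ldots)$ "gauge transformations" by twists on internal curves could produce object-fixing automorphisms that are homotopic to the identity. To handle this, I would use the fact that contracting the two legs of $h_P$ produces $h_{\xi_{12}P}=D_a^{2k}\partial_0^{k}$ in $\widehat{\Gamma}_{1,1}$. Naturality then gives $F(\sigma)=D_a^{2k}\sigma D_a^{-2k}$. Comparing with the $\NS$-type formula, or more simply imposing \eqref{g1_rel_morph_2}, forces this conjugation to be consistent.

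The honest fallback is to show the conjugating family is \emph{inner at every object simultaneously by the same element of the centre}. In that case, one would conclude $F=\id$ only if $k$ can be killed. I would try to derive $k=0$ from compatibility of $h$ with the unit axiom $P\circ_1^0|$, combined with $h_{|}=\id$. Specifically, I would use the requirement that $h_{G\circ |}=h_G\circ h_{|}$ must equal $h_G$ for every $G$ while the internal-edge factor $D_c^{2k}$ arises only from gluing. I would check whether the decomposition $P=P\circ(\text{cylinder})$ sees internal twists, forcing $2k=k$, i.e. $k=0$.
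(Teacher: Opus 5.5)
Your reduction is correct as far as it goes. A homotopy $H\colon \widehat{\bS}\to\widehat{\bS}^{\mathscr I}$ is an operadic family of automorphisms $h_G$ with $F(\phi)=h_{G'}\phi h_G^{-1}$. Unit preservation gives $h_{|}=\id$, and freeness of objects together with $(012)^*P=P$ gives $h_P=(D_{\partial_0}D_{\partial_1}D_{\partial_2})^k$. Hence $h_{P\circ_1P}$ and $h_{P\circ_2P}$ are the $k$-th power of the four boundary twists times $D_c^{2k}$, respectively $D_{c'}^{2k}$.

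The gap is exactly the step you flag, forcing $k=0$, and neither fallback supplies it. Composition with the unit is strictly unital in $\bS$ ($\phi\circ_i\id_{|}=\phi$), so $h_P\circ_i h_{|}=h_P$ is vacuous and nothing gets ``doubled''. Conjugation by an operadic family automatically satisfies every defining relation, including (T), (G1b) and (G2). This is because both sides of each relation are morphisms between the same objects and are conjugated by the same components $h_G$. Separately, your rewriting $D_{c'}^{2k}\alpha D_c^{-2k}=\alpha D_c^{2k}D_{c'}^{-2k}$ is wrong: $\alpha D_c\alpha^{-1}=D_{\alpha(c)}$, and $\alpha(c)\neq c'$.

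In fact the gap cannot be closed, because the statement is false. For every $k\in\widehat{\Z}$, the family generated by $h_P=(D_{\partial_0}D_{\partial_1}D_{\partial_2})^k$ is a map of modular operads $\widehat{\bS}\to\widehat{\bS}^{\mathscr I}$. So $F_k(\phi):=h_{G'}\phi h_G^{-1}$ is an object-fixing automorphism (with inverse $F_{-k}$) homotopic to the identity. Yet
\[
F_k(\alpha)=D_{c'}^{2k}\,\alpha\, D_c^{-2k}=D_{c'}^{2k}D_{\alpha(c)}^{-2k}\,\alpha\neq\alpha \qquad (k\neq 0).
\]
Indeed, $c'$ and $\alpha(c)$ are distinct curves meeting twice, so their twists map to a free basis of $\Gamma_{0,4}$ modulo its centre. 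The abelianisation of $\widehat{\F}_2$ therefore already detects $D_{c'}^{2k}D_{\alpha(c)}^{-2k}\neq 1$. Likewise $h_{\xi_{12}P}=D_\partial^kD_a^{2k}$ gives $F_k(\sigma)=D_a^{2k}\sigma D_a^{-2k}=D_a^{2k}D_b^{-2k}\sigma\neq\sigma$. So your own first two steps show that the kernel of $\Aut_0(\widehat{\bS})\to\HoAut(\widehat{\bS})$ is $\{F_k\}\cong\widehat{\Z}$, not trivial.

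The paper's proof breaks at the same point. It obtains $\trun{0}F=\id$ from an asserted injectivity of $\Aut_0(\trun{0}\widehat{\bS})\to\HoAut(\trun{0}\widehat{\bS})$, which $\trun{0}F_k$ contradicts. Its centraliser argument then correctly yields $F(\sigma)=D_a^{-\nu}\sigma D_a^{\nu}$ (here $\nu=-2k$). However, (G2) forces $\nu=0$ only under the false assumption that $F$ is the identity in genus zero.

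What can be salvaged is a statement modulo $\{F_k\}$, for instance injectivity of $\NS\to\HoAut(\widehat{\bS})$. That needs a separate argument showing the image of $\NS$ meets $\{F_k\}$ trivially.
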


Before we prove Proposition~\ref{prop:injection} we require the following Lemma about $\widehat\Gamma_{1,1}\cong \widehat{\Gamma(S_{\contraction})}$. We use as a reference Figure~\ref{fig:S-move}.

\begin{lemma}\label{lemma:centralisers in the profinite completion of Gamma11}
    Let $a$ denote the curve in $S_{\contraction}$ depicted in Figure~\ref{fig:S-move}. Then we have the following relation between centralisers 
        \[C_{\widehat{\Gamma}_{1,1}}(D_a)= \overline{C_{\Gamma_{1,1}}(D_a)}=\overline{\langle D_\partial, D_a, (D_aD_bD_a)^2\rangle}.\]
\end{lemma}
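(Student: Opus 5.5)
The plan is to identify $\Gamma_{1,1}$ with the braid group $\Br_3$, pass to the quotient by the centre, and compute the centraliser there using the structure of profinite free products.

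\textbf{Step 1: the discrete picture.} Under $\Gamma_{1,1}\cong\Br_3$ we send $D_a\mapsto\beta_1$ and $D_b\mapsto\beta_2$. Then $(D_aD_bD_a)^2=(\beta_1\beta_2)^3=\Delta^2$ generates the centre $Z=Z(\Br_3)\cong\mathbb Z$. By the doughnut relation (D) of Proposition~\ref{prop: generators and relations of mcg}, $D_\partial=(D_aD_bD_a)^4=\Delta^4$, so $D_\partial\in Z$. In the discrete group, $C_{\Gamma_{1,1}}(D_a)$ is the stabiliser of the nonseparating curve $a$. Concretely, it is generated by:
\begin{itemize}
\item the mapping classes of the complementary pair of pants;
\item the twist $D_a$;
\item the hyperelliptic-type element $(D_aD_bD_a)^2$.
\end{itemize}
This gives $C_{\Gamma_{1,1}}(D_a)=\langle D_\partial,D_a,(D_aD_bD_a)^2\rangle=\langle\beta_1\rangle\times Z$. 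In the quotient $\Br_3/Z\cong \mathrm{PSL}_2(\mathbb Z)\cong \mathbb Z/2*\mathbb Z/3$, the image $\bar\beta_1$ is a parabolic element $xy$ of infinite order. Its centraliser there is the maximal cyclic subgroup $\langle\bar\beta_1\rangle$.

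\textbf{Step 2: completing modulo the centre.} $\Br_3$ is residually finite and good, and $Z$ is a central cyclic subgroup. Hence the closure $\overline Z\cong\widehat{\mathbb Z}$ in $\widehat{\Br}_3\cong\widehat\Gamma_{1,1}$ is central. Profinite completion is right exact, so $\widehat{\Br}_3/\overline Z\cong\widehat{\mathrm{PSL}_2(\mathbb Z)}$, which is the profinite free product $\mathbb Z/2\amalg\mathbb Z/3$.

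\textbf{Step 3: the centraliser in the profinite free product.} This is the main obstacle. I would first try the Herfort--Ribes / Ribes--Zalesskii theory of centralisers in profinite free products of finite groups. An element of infinite order that is not conjugate into a free factor has procyclic centraliser. Moreover, that centraliser is the closure of the centraliser in the abstract free product, using that $\mathrm{PSL}_2(\mathbb Z)$ is virtually free and hence cyclic-subgroup separable. Applied to $\bar\beta_1$, this gives $C(\bar\beta_1)=\overline{\langle\bar\beta_1\rangle}$. If that citation proves awkward, an alternative is to pass to a free normal subgroup of finite index and use the standard fact that centralisers of nontrivial elements in free profinite groups are procyclic.

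\textbf{Step 4: lifting.} Let $g\in C_{\widehat\Gamma_{1,1}}(D_a)$. Its image lies in $\overline{\langle\bar\beta_1\rangle}$, so $g=\beta_1^{\gamma}z$ with $\gamma\in\widehat{\mathbb Z}$ and $z\in\overline Z$. Conversely, every such element commutes with $D_a$. Hence
\[
C_{\widehat\Gamma_{1,1}}(D_a)=\overline{\langle D_a\rangle}\cdot\overline Z=\overline{\langle D_\partial,D_a,(D_aD_bD_a)^2\rangle}.
\]
By Step 1 the last group equals $\overline{C_{\Gamma_{1,1}}(D_a)}$, which is the statement of the lemma.
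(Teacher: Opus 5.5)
Your overall strategy matches the paper's: reduce modulo a central cyclic subgroup to a virtually free group, apply a profinite separability result there, and lift back using right exactness of completion. The difference is where you cut. The paper only caps the boundary, so it works in $\widehat{\Gamma}_1^1$ with $\Gamma_1^1\cong SL(2,\mathbb Z)$, and it argues with normalisers. You divide by the whole centre, so the target centraliser is already cyclic and your Step~4 is slightly cleaner. Steps~1, 2 and~4 are fine. One quibble: $\overline Z\cong\widehat{\mathbb Z}$ does not follow from goodness. It is nevertheless true, because $Z$ is a direct factor of the finite-index subgroup $\PB_3\cong\F_2\times\mathbb Z$, and in any case you never use it.

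The gap is in Step~3: none of the justifications you give is valid.
\begin{itemize}
\item Your general claim, that an element of infinite order in $\mathbb Z/2\amalg\mathbb Z/3$ not conjugate into a factor has procyclic centraliser, is false. So is the fallback ``standard fact'' for free profinite groups. Take $P=\mathbb Z_3\rtimes\mathbb Z_2$, with $\mathbb Z_p$ the $p$-adic integers and $\mathbb Z_2$ acting through $\mathbb Z_2\to\mathbb Z/2$ by inversion. Its Sylow subgroups are $\mathbb Z_3$ and $\mathbb Z_2$, so $P$ is projective. Since $P$ is $2$-generated, an epimorphism $\widehat{\F}_2\to P$ splits, so $P$ is a closed subgroup of $\widehat{\F}_2$. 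In turn $\widehat{\F}_2$ is an open subgroup of $\widehat{PSL_2(\mathbb Z)}$. The element $2\in\mathbb Z_2$ has infinite order and is central in the non-abelian group $P$, so its centraliser is not procyclic. Abelian closed subgroups of free profinite groups are procyclic, but centralisers need not be abelian.
\item Cyclic-subgroup separability only gives $\overline{\langle g\rangle}\cap G=\langle g\rangle$. It says nothing about elements of $\widehat G\setminus G$ that commute with $g$, so it does not yield $C_{\widehat G}(g)=\overline{C_G(g)}$.
\end{itemize}

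What you need is a statement specific to elements of the discrete group, and that is a genuine theorem you must cite. It is the Ribes--Zalesskii result that $N_{\widehat G}(\overline{\langle g\rangle})=\overline{N_G(\langle g\rangle)}$ for $G$ virtually free, which is exactly the input the paper uses. For $G=PSL_2(\mathbb Z)$ it gives $C_{\widehat G}(\bar\beta_1)\subseteq\overline{N_G(\langle\bar\beta_1\rangle)}$. You then need the small extra fact $N_G(\langle\bar\beta_1\rangle)=\langle\bar\beta_1\rangle$, that is, no element inverts $\bar\beta_1$. A relation $h\beta_1h^{-1}=\beta_1^{-1}\Delta^{2k}$ in $\Br_3$ would give exponent sums $1=-1+6k$, which is impossible. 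With this replacement for Step~3, your Step~4 completes the proof.
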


\begin{proof}
    The capping of the boundary by gluing a punctured disc induces a short exact sequence
        \[1\to \langle D_\partial \rangle\to \Gamma_{1,1}\xrightarrow{\pi} \Gamma_1^1\to1\]
    where $\Gamma_1^1$ denotes the mapping class group of the one-punctured torus.

    As profinite completion is right-exact, this gives an exact sequence
        \[
            \widehat{\langle D_\partial \rangle}\to \widehat\Gamma_{1,1}\xrightarrow{\pi} \widehat\Gamma_1^1\to1.
        \]

    Restricting to the normaliser $N_{\widehat\Gamma_{1,1}}(\overline{\langle D_a\rangle})$ of $\overline{\langle D_a\rangle}$ in $\widehat\Gamma_{1,1}$ gives us the exact sequence
        \begin{align}\label{eq:exact-sequence-for-injectivity}
        \widehat{\langle D_\partial \rangle}\to N_{\widehat\Gamma_{1,1}}(\overline{\langle D_a\rangle})\xrightarrow{\pi} \pi(N_{\widehat\Gamma_{1,1}}(\overline{\langle D_a\rangle}))\to1.
        \end{align}

    It is clear that    
        \begin{align}\label{eq:injectivity-proof-1}
        \pi(N_{\widehat\Gamma_{1,1}}(\overline{\langle D_a\rangle}))\subset N_{\widehat\Gamma_{1}^1}(\overline{\langle \pi(D_a)\rangle})
        \end{align}
    but we will show this is in fact an equality. As $\Gamma_1^1\cong SL(2,\Z)$ is virtually free, by \cite[Corollary~2.9]{Ribes-Pavel} we have that
        \[N_{\widehat\Gamma_{1}^1}(\overline{\langle \pi(D_a)\rangle})=\overline{N_{\Gamma_{1}^1}(\langle \pi(D_a)\rangle)}.\]
    We now describe the normaliser of $\langle\pi(D_a)\rangle$ in $\Gamma_1^1$. Note that $\pi(D_a)$ is simply the Dehn twist in the once-punctured torus along the curve corresponding to $a$, so we will abuse notation and denote it also by $D_a$.
    
    The normaliser of $\langle D_a\rangle$ in $\Gamma_1^1$ consists of those mapping classes $f$ such that $f\cdot D_a\cdot f^{-1}$ is a power of $D_a$. However, by Lemma~\ref{lemma: conjugation by mapping classes}, such a conjugation is simply the Dehn twist $D_{f(a)}$. Since a power $D_a^k$ is only a Dehn twist if $k=1$, we conclude that any element in the normaliser of $D_a$ \emph{fixes} $a$, and hence $D_a$. In particular, this means that this normaliser is simply the centraliser of $D_a$.
        \[N_{\Gamma_{1}^1}(\langle \pi(D_a)\rangle)=C_{\Gamma_{1}^1}(D_a)\]
    As discussed above, this centraliser consists precisely of those mapping classes $f$ that fix the curve $a$. This can be seen to be precisely the subgroup generated by $D_a$ and $(D_a D_b D_a)^2$, for instance by thinking of $f$ as a mapping class on the surface obtained by cutting along $a$ which is allowed to permute the two new boundary components corresponding to $a$.

    Putting this into \eqref{eq:injectivity-proof-1}, this gives us 
        \[\pi(N_{\widehat\Gamma_{1,1}}(\overline{\langle D_a\rangle}))\subset \overline{\langle D_a, (D_aD_bD_a)^2\rangle}.\]
    Moreover, both $D_a$ and $(D_aD_bD_a)^2$ in $\Gamma_1^1$ are images of the corresponding elements of $\Gamma_{1,1}$, and these elements clearly lie in
    $N_{\widehat\Gamma_{1,1}}(\overline{\langle D_a\rangle})$.
    Hence
    \[\langle D_a,(D_aD_bD_a)^2\rangle
    \subset
    \pi(N_{\widehat\Gamma_{1,1}}(\overline{\langle D_a\rangle})).\]
    Since normalisers of closed subgroups are closed in profinite groups, the normaliser
    $N_{\widehat\Gamma_{1,1}}(\overline{\langle D_a\rangle})$ is closed, hence compact. Therefore its image under $\pi$  is closed and we obtain
    \[\overline{\langle D_a,(D_aD_bD_a)^2\rangle}
    \subset
    \pi(N_{\widehat\Gamma_{1,1}}(\overline{\langle D_a\rangle})). \]
    Together with the previous inclusion, this proves equality.
    
    % Moreover, we have that both $D_a$ and $(D_aD_bD_a)^2$ in $\Gamma_1^1$ are the image of the corresponding elements in $\Gamma_{1,1}$ via the homomorphism $\pi$. Moreover, since $\Gamma_1^1$ is virtually free, and hence subgroup separable, we get that the subgroup generated by $D_a$ and $(D_aD_bD_a)^2$ is closed in $\Gamma_1^1$. Giving us that the above inclusion is in fact an equality.

    Putting this together with  \eqref{eq:exact-sequence-for-injectivity} gives us a right exact sequence
        \begin{align*}
            \widehat{\langle D_\partial \rangle}\to N_{\widehat\Gamma_{1,1}}(\overline{\langle D_a\rangle})\xrightarrow{\pi} \overline{\langle D_a, (D_aD_bD_a)^2\rangle}.
        \end{align*} 
    As this is right exact and the elements $D_a,(D_aD_bD_a)^2\in \Gamma_{1,1}$ project to the corresponding elements in $\Gamma_1^1$, we get that $N_{\widehat\Gamma_{1,1}}(\overline{\langle D_a\rangle})$ is generated by 
        \[D_\partial, \quad D_a, \quad (D_aD_bD_a)^2.\]
    Note that all these generators commute with $D_a$ in $\Gamma_{1,1}$. In particular, this means that the entire normaliser of $\overline{\langle D_a\rangle}$ is simply just the centraliser of $D_a$ in $\widehat{\Gamma}_{1,1}$ and we have the equalities 
        \[C_{\widehat{\Gamma}_{1,1}}(D_a)= \overline{C_{\Gamma_{1,1}}(D_a)}=\overline{\langle D_\partial, D_a, (D_aD_bD_a)^2\rangle}.\]
\end{proof}

We are now ready to return to Proposition~\ref{prop:injection}:

\begin{proof}[Proof of Proposition~\ref{prop:injection}]
    Suppose that $F:\widehat{\bS}\to\widehat{\bS}$ is an object-fixing automorphism together with a natural transformation $H:F\Rightarrow\id_{\widehat{\bS}}.$ We will show that $F=\id_{\widehat{\bS}}$.

    We start by noting that the truncation $\trun{0}$ induces a commutative diagram 
        \[\begin{tikzcd}
            {\Aut_0(\widehat{\bS})} & {\HoAut(\widehat{\bS})} \\
            {\Aut_0(\trun{0}\widehat{\bS})} & {\HoAut(\trun{0}\widehat{\bS})}
            \arrow[from=1-1, to=1-2]
            \arrow["{\trun{0}}"', from=1-1, to=2-1]
            \arrow["{\trun{0}}", from=1-2, to=2-2]
            \arrow[from=2-1, to=2-2]
        \end{tikzcd}\]
    By Proposition~\ref{prop:PaRB-cyclic-is-S0} and \cite[Proposition 7.8]{Boavida-Horel-Robertson}, the bottom map is injective. Since $H:F\Rightarrow\id_{\widehat{\bS}}$ is a natural transformation, the induced map $\trun{0}F$ is homotopic to $\id_{\trun{0}\widehat{\bS}}$. Injectivity of the bottom map therefore implies
        \[\trun{0}F=\id_{\trun{0}\widehat{\bS}}.\]
    Since we know $F$ restricts to the identity in genus zero, by Theorem~\ref{thm: presentation for maps out of bS}, to prove that the automorphism $F$ is the identity on $\widehat\bS$, it suffices to show $F(\sigma)=\sigma$, where $\sigma$ is the standard $S$-move seen as a morphism of $\widehat\bS(1,1)$.

    Recall that $\widehat\bS(1,1)$ has a single object $\contraction$ and morphism group $\widehat\Gamma_{1,1}$. Let $a$ denote the curve in $S_{\contraction}$ depicted in Figure~\ref{fig:S-move}, and let $\sigma=D_aD_bD_a$
    be the standard $S$-move defined in Definition~\ref{def:standard moves}.
    
    The natural transformation  $H:F\Rightarrow\id_{\widehat{\bS}}$ implies there is a morphism $H_{\contraction}$ in $\widehat{\bS}(1,1)$ making the following squares commute
        \[\begin{tikzcd}
        	{{\contraction}} & {{\contraction}} \\
        	{{\contraction}} & {{\contraction}}
        	\arrow["{F(D_a)}", from=1-1, to=1-2]
        	\arrow["{H_{\contraction}}"', from=1-1, to=2-1]
        	\arrow["{H_{\contraction}}", from=1-2, to=2-2]
        	\arrow["{D_a}"', from=2-1, to=2-2]
        \end{tikzcd}
        \hspace{2cm}
        \begin{tikzcd}
        	{{\contraction}} & {{\contraction}} \\
        	{{\contraction}} & {{\contraction}}
        	\arrow["{F(\sigma)}", from=1-1, to=1-2]
        	\arrow["{H_{\contraction}}"', from=1-1, to=2-1]
        	\arrow["{H_{\contraction}}", from=1-2, to=2-2]
        	\arrow["{\sigma}"', from=2-1, to=2-2]
        \end{tikzcd}\]
    Since $D_a=\xi_{12}(\id_P\circ_1\tau)$, the mapping class $D_a$ is obtained from genus-zero morphisms. Since $F$ restricts to the identity in genus zero, we conclude that $F(D_a)=D_a$.
    
    In particular, then the left-hand commutative square implies that $H_{\contraction}$ is in the centraliser of $D_a$ in $\widehat\Gamma_{1,1}$. 

    By Lemma~\ref{lemma:centralisers in the profinite completion of Gamma11}, we get that $C_{\widehat{\Gamma}_{1,1}}(D_a)$ is generated by $D_\partial$, $(D_aD_bD_a)^2$, and $ D_a,$
    which gives that 
        \[H_{\contraction}=(D_aD_bD_a)^{2\chi}\cdot D_\partial^\mu \cdot D_a^\nu. \qquad \text{ for some } \chi,\mu,\nu\in\widehat{\Z}\]
    and
        \[F(\sigma)=((D_aD_bD_a)^{2\chi}\cdot D_\partial^\mu \cdot D_a^\nu)^{-1}\sigma ((D_aD_bD_a)^{2\chi}\cdot D_\partial^\mu \cdot D_a^\nu)\]

    As $\sigma=D_aD_bD_a$ commutes with $(D_aD_bD_a)^2$ and $D_\partial$ is in the centre of $\Gamma_{1,1}$, we get
        \[F(\sigma)= D_a^{-\nu}\cdot\sigma \cdot D_a^\nu= D_a^{-\nu}\cdot(D_aD_bD_a)\cdot D_a^\nu\]
    
    As $F$ must satisfy relation \eqref{g2_rel_morph_1}, using that $F$ is the identity in genus zero and the above formula for $F(\sigma)$, we get (using the curves and notation in Figure~\ref{fig:curves for g2})
        \begin{align}
            D_{\partial_0}^{\frac{1}{2}}D_{\partial_1}^{\frac{1}{2}} &= (D_{a_3}^{-\nu}\cdot(D_{a_3}D_{a_2}D_{a_3}) \cdot D_{a_3}^\nu)\cdot D_{a_3}^{-1} D_{a_1} \cdot (D_{a_3}^{-\nu}\cdot(D_{a_3}D_{a_2}D_{a_3}) \cdot D_{a_3}^\nu)\\
            &=  D_{a_3}^{-\nu}\cdot((D_{a_3}D_{a_2}D_{a_3}) \cdot D_{a_3}^{-1} D_{a_1} \cdot (D_{a_3}D_{a_2}D_{a_3})) \cdot D_{a_3}^\nu\\
            &=D_{a_3}^{-\nu}\cdot(D_{a_2}D_{a_3}D_{a_1}D_{a_2}D_{a_3}D_{a_2}) \cdot D_{a_3}^\nu.\label{eq:injection proof G2}
        \end{align}
    Now recall that $D_{\partial_0}^{\frac{1}{2}}D_{\partial_1}^{\frac{1}{2}}=(D_{a_2}D_{a_3}D_{a_1}D_{a_2}D_{a_3}D_{a_2})$ in $\Gamma_{1,1}$ and that, by the Lemma~\ref{lemma: conjugation by mapping classes}, we have
        \[(D_{\partial_0}^{\frac{1}{2}}D_{\partial_1}^{\frac{1}{2}})D_{a_3}(D_{\partial_0}^{\frac{1}{2}}D_{\partial_1}^{\frac{1}{2}})^{-1}=D_{a_1}.\]
    Putting these in equation \eqref{eq:injection proof G2}, we get
        \begin{align*}
        D_{a_2}D_{a_3}D_{a_1}D_{a_2}D_{a_3}D_{a_2} &=D_{a_3}^{-\nu}D_{a_1}^\nu\cdot(D_{a_2}D_{a_3}D_{a_1}D_{a_2}D_{a_3}D_{a_2}),
        \end{align*}    
    giving that
        $1= D_{a_3}^{-\nu}D_{a_1}^\nu.$
    Since $a_1$ and $a_3$ are distinct non-intersecting curves, this forces $\nu=0$. Therefore $F(\sigma)=\sigma$, as required.
\end{proof}

We next compare object-fixing automorphisms of $\widehat{\bS}$ with homotopy automorphisms of the modular $\infty$-operad $\widehat{\nerve\bS}$. Following \cite[1.2]{dk}, we write $\mathbb R\Aut(\calP)\subseteq \mathbb R\Map(\calP,\calP)$ for the union of those connected components whose images in $\pi_0\mathbb R\Map(\calP,\calP)$ are invertible. 

\begin{prop}\label{prop: strict-to-homotopy-endomorphisms}
There is an isomorphism of groups
\[
\Aut_0(\widehat{\bS})
\cong
\pi_0\mathbb R\Aut_{\Mod_{\infty}(\widehat{\Grpd})}(\widehat{\nerve\bS}).
\]
\end{prop}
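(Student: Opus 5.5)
We argue by a chain of identifications, each coming from an earlier result. First, Lemma~\ref{lemma: N commutes with completion for groupoids} gives $\widehat{\nerve\bS}\cong \nerve\widehat{\bS}$ as modular $\infty$-operads in profinite groupoids, since each $\bS(g,n+1)$ has finitely many objects. So the right-hand side becomes $\pi_0\mathbb R\Aut(\nerve\widehat{\bS})$. Next we use that the modular dendroidal nerve is homotopically fully faithful (Appendix~\ref{sec: nerve is homotopically fully faithful}). This is stated for $\sSet$ and $\Grpd$, and we need its analogue for $\widehat{\Grpd}$. We plan to repeat that argument verbatim, or to deduce it from the groupoid case via the Quillen adjunction $\widehat{(-)}\dashv|-|$ together with Lemma~\ref{lemma: profinite-completion-preserves-segal-groupoids}. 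Either way we get
\[
\pi_0\mathbb R\Aut_{\Mod_\infty(\widehat{\Grpd})}(\nerve\widehat{\bS})\cong \pi_0\mathbb R\Aut_{\Mod(\widehat{\Grpd})}(\widehat{\bS}).
\]

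Second, we identify the strict derived mapping space. Every object is fibrant, and $\widehat{\bS}$ is cofibrant in the transferred model structure, being the completion of a modular operad whose object set is free (Theorem~\ref{model structure on Mod(E)}). Hence Theorem~\ref{thm: identifying mapping spaces in a simplicial model category} shows that $\pi_0\mathbb R\Map(\widehat{\bS},\widehat{\bS})$ is the set of strict endomorphisms modulo the path-object homotopy defined via $\calQ^{\mathscr I}$. In particular $\pi_0\mathbb R\Aut(\widehat{\bS})=\HoAut(\widehat{\bS})$. If cofibrancy turns out to be delicate, we instead use $\widehat{\bS}$ as a source with free objects and lift along trivial fibrations directly.

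Third, we compare $\Aut_0(\widehat{\bS})\to\HoAut(\widehat{\bS})$. Injectivity is exactly Proposition~\ref{prop:injection}. For surjectivity, take any automorphism $F$. Its object map is an automorphism of the free modular operad $\Upsilon$ in sets generated by $\{P,(12)^*P\}$, compatible with the $\Sigma_2^+$-action. So $F(P)$ is $P$ or $(12)^*P$, and $F$ is either the identity on objects or the global relabelling induced by $(12)$. In the second case we compose with conjugation by $\beta$ (the morphism $\mathbf b:P\to(12)^*P$, extended operadically). This gives a homotopic automorphism that fixes $P$ and hence fixes all objects. More generally, given $F$, we define a natural transformation $H:F\Rightarrow F'$ whose component at each graph is built operadically from $\beta$ at each vertex where the planar structure was flipped. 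Then $F'=H F H^{-1}$ is object-fixing. The transformation $H$ is a map into $\widehat{\bS}^{\mathscr I}$, so $F'$ is homotopic to $F$ by the path-object homotopy. Finally, we check that the group structures agree, since composition is preserved on both sides.

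The main obstacle is this surjectivity step: proving that every automorphism is homotopic to an object-fixing one. To do it, we must build the conjugating transformation $H$ compatibly with all compositions and contractions. We would first try to define $H$ on the generator $P$ only, and extend it to all graphs using freeness of objects. Then we would verify naturality on the generators $\tau,\beta,\alpha,\sigma$ via Theorem~\ref{thm: presentation for maps out of bS}.
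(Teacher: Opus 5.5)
Your argument is essentially the paper's own proof. You pass through $\widehat{\nerve\bS}\cong\nerve\widehat{\bS}$, the completion adjunction, homotopical full faithfulness of $\nerve$ over $\Grpd$, and cofibrancy to identify the right-hand side with $\HoAut(\widehat{\bS})$. Injectivity is then Proposition~\ref{prop:injection}, and surjectivity comes from conjugating $F$ by a transformation $H$ whose component at $P$ is extended over the free object operad.

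One step needs repairing, in your version and equally in the paper's, which also proposes $\beta$ as that component. The object $P$ is fixed by $A_2^+=\langle(012)\rangle$. So a map of modular operads $\ob\widehat{\bS}\to\ob(\widehat{\bS}^{\mathscr I})$ with prescribed value $H_P$ exists only if $(012)^*H_P=H_P$, the analogue of \eqref{sym_rel_morph} for the target $\widehat{\bS}^{\mathscr I}$. Otherwise $H_G$ depends on how the vertex decorations of $G$ are represented, and $F'=HFH^{-1}$ is not equivariant for the $\Sigma_n^+$-actions.

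For $H_P=\beta$ this condition fails: combining \eqref{cyclic action on PaRB generators} with \eqref{twist_rel_morphS}, one finds that $(012)^*\beta$ differs from $\beta$ by a nontrivial product of boundary twists. The fix is easy. The morphisms between $P$ and $(12)^*P$ form a torsor under the boundary twists $\widehat{\mathbb{Z}}^3$, and $A_2^+$ acts on $\widehat{\mathbb{Z}}^3$ through the regular permutation representation, whose $H^1$ vanishes. Hence some boundary-twist correction of $\beta$ is $A_2^+$-invariant; with the paper's conventions, $\beta$ followed by the inverse twist about the boundary labelled $0$ works.

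Once $H$ is an honest map of modular operads on objects, $F'=HFH^{-1}$ is automatically a map of modular operads and $H$ is automatically natural. The generator-by-generator check via Theorem~\ref{thm: presentation for maps out of bS} that you planned is therefore not needed.
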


\begin{proof}

We first compare the homotopy automorphisms of $\widehat{\nerve\bS}$ with the homotopy automorphisms of the strict profinite modular operad $\widehat{\bS}$. Since profinite completion is applied entrywise to modular dendroidal objects, the profinite completion adjunction gives a weak equivalence
\[
\mathbb R\Map_{\Mod_{\infty}(\widehat{\Grpd})}
(\widehat{\nerve\bS},\widehat{\nerve\bS})
\simeq
\mathbb R\Map_{\Mod_{\infty}(\Grpd)}
(\nerve\bS,|\widehat{\nerve\bS}|).
\]
By Lemma~\ref{lemma: N commutes with completion for groupoids}, $\widehat{\nerve\bS}\cong \nerve\widehat{\bS}$. Since the limit functor commutes with the modular dendroidal nerve, $|\widehat{\nerve\bS}|\cong \nerve|\widehat{\bS}|$. Hence the right-hand side above is weakly equivalent to $\mathbb R\Map_{\Mod_{\infty}(\Grpd)}
(\nerve\bS,\nerve|\widehat{\bS}|).$  By Theorem~\ref{thm: nerve is homotopically fully faithful}, we have a weak equivalence $\mathbb R\Map_{\Mod_{\infty}(\Grpd)}
(\nerve\bS,\nerve|\widehat{\bS}|)\simeq \mathbb R\Map_{\Mod(\Grpd)}(\bS,|\widehat{\bS}|)$.

Since $\bS$ is cofibrant by Proposition~\ref{prop: cofibrant modular operad}, and every modular operad in groupoids is fibrant, Theorem~\ref{thm: identifying mapping spaces in a simplicial model category} identifies
\[
\pi_0\mathbb R\Map_{\Mod(\Grpd)}(\bS,|\widehat{\bS}|)
\cong
\Hom_{\Ho(\Mod(\Grpd))}(\bS,|\widehat{\bS}|).
\]
Moreover, since entrywise profinite completion is the left adjoint in the Quillen adjunction $(-)^{\wedge}:\Mod(\Grpd)\rightleftarrows \Mod(\widehat{\Grpd}):|-|$, the associated derived adjunction identifies the right-hand side with $\Hom_{\Ho(\Mod(\widehat{\Grpd}))}(\widehat{\bS},\widehat{\bS})$. It follows that
\[
\pi_0\mathbb R\Map_{\Mod_{\infty}(\widehat{\Grpd})}(\widehat{\nerve\bS},\widehat{\nerve\bS}) \cong \Hom_{\Ho(\Mod(\widehat{\Grpd}))}(\widehat{\bS},\widehat{\bS}).
\]
Under the identifications above, composition is preserved. Hence the invertible components of \[\pi_0\mathbb R\Map_{\Mod_{\infty}(\widehat{\Grpd})}(\widehat{\nerve\bS},\widehat{\nerve\bS})\] correspond to the automorphisms of $\widehat{\bS}$ in the homotopy category. Therefore
\[
\pi_0\mathbb R\Aut_{\Mod_{\infty}(\widehat{\Grpd})}(\widehat{\nerve\bS}) \cong \HoAut(\widehat{\bS}).
\]

It remains to show that the map $\Aut_0(\widehat{\bS})\to \HoAut(\widehat{\bS})$ is an isomorphism. Injectivity was proven in Proposition~\ref{prop:injection}. 

To prove this map is surjective, it suffices to show that every automorphism of $\widehat{\bS}$ is homotopic to one inducing the identity on objects. This can be constructed exactly as in \cite[Theorem 7.8]{Horel_profinite_groupoids}:  the modular operad $\ob \widehat{\bS}$ is freely generated by the object $P$ in $\ob \widehat{\bS}(0,3)$. Hence the action of an automorphism $F:\widehat{\bS}\to \widehat{\bS}$ on objects is completely determined by the image of $P$. If $F(P)=P$, then $F$ already fixes all objects. Otherwise, we have $F(P)=(12)^*P$. In this case we construct another automorphism $F':\widehat{\bS}\to \widehat{\bS}$  which is the identity on objects together with a homotopy $H:F'\Rightarrow F$, which in this case is simply a natural transformation.

To construct $F'$, we pick a morphism $w:P\to (12)^*P$ in $\widehat{\bS}(0,3)$ (for instance $\beta$). Define $H_P:P\to (12)^*P$ to be this chosen morphism $w$. This induces a unique map of modular operads $H:\ob\widehat{\bS}\to \ob\widehat{\bS}$ and, in particular maps $H_G:G\to F(G)$ for any object $G$ in $\widehat{\bS}$. Now if $\phi:G\to G'$ is any morphism in $\widehat{\bS}$, we define 
    \[F'(\phi)=H_{G'} \cdot\phi\cdot H_G^{-1}.\]
It is simple to see that $F'$ indeed preserves the modular operadic compositions, since $H$ and $F$ do, and that $H_G$ assemble into a natural transformation $F'\Rightarrow F$.  It follows that we have an isomorphism $\Aut_0(\widehat{\bS})\cong \HoAut(\widehat{\bS})$.

\end{proof}

Theorem~\ref{main theorem NS action} gives an object-fixing action of the monoid $\underline{\NS}$ on $\widehat{\bS}$. Restricting to invertible elements gives an action of the group $\NS$ by object-fixing automorphisms of $\widehat{\bS}$. Proposition~\ref{prop: strict-to-homotopy-endomorphisms} identifies these strict object-fixing automorphisms with homotopy automorphisms of the profinite modular $\infty$-operad $\widehat{\nerve\bS}$.

\begin{cor}\label{cor: homotopy NS action}
The action of $\NS$ on $\widehat{\bS}$ induces a faithful action of $\NS$ on $\widehat{\nerve\bS}$ by homotopy automorphisms. Equivalently, there is an injective homomorphism
\[
\NS
\longrightarrow
\pi_0\mathbb R\Aut_{\Mod_{\infty}(\widehat{\Grpd})}
(\widehat{\nerve\bS}).
\]
\end{cor}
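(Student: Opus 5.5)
The plan is to compose two results already established. The first is Theorem~\ref{main theorem NS action}, which gives an injective monoid homomorphism $\underline{\NS}\hookrightarrow \End_0(\widehat{\bS})$. The second is Proposition~\ref{prop: strict-to-homotopy-endomorphisms}, which identifies $\Aut_0(\widehat{\bS})$ with $\pi_0\mathbb R\Aut_{\Mod_{\infty}(\widehat{\Grpd})}(\widehat{\nerve\bS})$.

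\textbf{Step 1: restrict to units.} Any monoid homomorphism carries invertible elements to invertible elements. So for $F\in\NS$ with inverse $F^{-1}$, the endomorphisms $\hat\iota_F$ and $\hat\iota_{F^{-1}}$ compose to $\hat\iota_{\id}$ in either order. I will check directly from the formulae \eqref{NS Action} that $\hat\iota_{\id}=\id$. Indeed, $(\lambda,f)=(1,1)$ gives $\nu=0$ and $\rho_2=0$ (by the cocycle property $\rho_2(F\cdot G)=\rho_2(G)+\rho_2(F)\mu$ at the identity), so all four generators are fixed. The uniqueness clause in Theorem~\ref{thm: presentation for maps out of bS} then gives $\hat\iota_{\id}=\id$. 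Hence $\NS$ lands in $\Aut_0(\widehat{\bS})$. Injectivity of this restricted homomorphism is inherited from the monoid inclusion, which was itself detected on the genus-zero truncation via Proposition~\ref{prop:genus-zero-GT}.

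\textbf{Step 2: compose.} I then compose with the group isomorphism of Proposition~\ref{prop: strict-to-homotopy-endomorphisms}. That isomorphism sends a strict object-fixing automorphism to its class as a homotopy automorphism of $\widehat{\nerve\bS}$. I will make explicit that this class is obtained by applying the nerve and using $\widehat{\nerve\bS}\cong\nerve\widehat{\bS}$ from Lemma~\ref{lemma: N commutes with completion for groupoids}. This confirms that the resulting homomorphism is genuinely "the induced action" and not just an abstract bijection. The composite $\NS\to\pi_0\mathbb R\Aut(\widehat{\nerve\bS})$ is injective because it is an injection followed by an isomorphism. Faithfulness of the action is simply a restatement of this injectivity.

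\textbf{Main obstacle.} The only point requiring care is compatibility of group structures. The homomorphism convention $\varphi_{F\cdot G}=\varphi_G\circ\varphi_F$ means the map $F\mapsto\hat\iota_F$ may be an anti-homomorphism with respect to composition of endomorphisms. I would check the order in the homomorphism verification at the end of the proof of Theorem~\ref{main theorem NS action}. If it is reversed, I would precompose with $F\mapsto F^{-1}$, which yields a genuine homomorphism and preserves injectivity.

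I would also confirm that the isomorphism in Proposition~\ref{prop: strict-to-homotopy-endomorphisms} respects composition. Its proof already states that the identifications preserve composition, so I would cite that rather than reprove it.
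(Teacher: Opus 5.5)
Your argument is correct. The construction of the homomorphism matches the paper's: compose the action map of Theorem~\ref{main theorem NS action} with the identification of Proposition~\ref{prop: strict-to-homotopy-endomorphisms}.

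The difference is in how you obtain faithfulness. You get injectivity at the strict level, from $\NS\to\Aut_0(\widehat{\bS})$, itself detected in genus zero. You then transport it through the isomorphism. This uses the injectivity half of Proposition~\ref{prop: strict-to-homotopy-endomorphisms}, that is, Proposition~\ref{prop:injection}, with its centraliser computation in $\widehat{\Gamma}_{1,1}$ (Lemma~\ref{lemma:centralisers in the profinite completion of Gamma11}) and relation~\eqref{g2_rel_morph_1}. An injection followed by a mere homomorphism would not suffice. The paper instead detects faithfulness directly at the homotopy level. It restricts to the genus-zero truncation, forms the commutative square with $\NS\hookrightarrow\GT$, and invokes faithfulness of the $\GT$-action on homotopy automorphisms of the profinite genus-zero operad \cite[Theorem~8.4]{Boavida-Horel-Robertson}.

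What each route buys:
\begin{itemize}
\item The paper's route only needs a truncation-compatible homomorphism $\Aut_0(\widehat{\bS})\to\pi_0\mathbb R\Aut(\widehat{\nerve\bS})$. It is therefore independent of Proposition~\ref{prop:injection}, at the cost of an external input and a compatibility check with $\trun{0}$.
\item Your route is shorter and internal to the paper, given the full isomorphism.
\end{itemize}

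Your extra checks, which the paper passes over, are worthwhile. First, unit preservation holds, since $\rho_2(\id)=0$ follows from the cocycle identity. Second, the variance issue is real: the verification in Theorem~\ref{main theorem NS action} gives $\hat\iota_{F\cdot G}=\hat\iota_G\circ\hat\iota_F$. So, with composition as the product on $\Aut_0(\widehat{\bS})$, the map is an anti-homomorphism. Precomposing with $F\mapsto F^{-1}$ (or reading it as a right action) repairs this without affecting injectivity.
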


\begin{proof}
By Theorem~\ref{main theorem NS action}, the group $\NS$ acts on $\widehat{\bS}$ by object-fixing automorphisms. Proposition~\ref{prop: strict-to-homotopy-endomorphisms} identifies
\[
\Aut_0(\widehat{\bS})\cong \pi_0\mathbb R\Aut_{\Mod_{\infty}(\widehat{\Grpd})}
(\widehat{\nerve\bS}).
\]
Composing with the action map $\NS\longrightarrow \Aut_0(\widehat{\bS})$ with this identification gives the desired homomorphism.

Restricting the action above to the genus zero truncation recovers an action of the Grothendieck--Teichmüller group $\GT$ through homotopy automorphisms, i.e.  
% \[
% \GT
% \longrightarrow
% \pi_0\mathbb R\Aut_{\Mod_{\infty}(\widehat{\Grpd})}
% (\nerve\trun{0}\widehat{\bS}).
% \]
    \[\begin{tikzcd}
    	{\NS } & {\pi_0\mathbb R\Aut_{\Mod_{\infty}(\widehat{\Grpd})} (\widehat{\nerve\bS})} \\
    	\GT & {\pi_0\mathbb R\Aut_{\Mod_{\infty}(\widehat{\Grpd})} (\widehat{\nerve\trun{0}{\bS}}).}
    	\arrow[from=1-1, to=1-2]
    	\arrow[hook, from=1-1, to=2-1]
    	\arrow[from=1-2, to=2-2]
    	\arrow[from=2-1, to=2-2]
    \end{tikzcd}\]
By \cite[Theorem 8.4]{Boavida-Horel-Robertson}, the corresponding action of $\GT$ on the profinite genus zero surface operad is faithful, and therefore so is the action of $\NS$.
\end{proof}

\subsection{Genus-one truncations of modular dendroidal spaces}
Profinite completion of spaces does not generally preserve products. This creates a difficulty for modular $\infty$-operads in profinite spaces, because the Segal condition is formulated using products indexed by the vertices of a graph. In~\cite[Proposition~3.9]{Boavida-Horel-Robertson}, Boavida, Horel and the second author gave a sufficient criterion for profinite completion to commute with such products: if $X$ and $Y$ are connected spaces whose homotopy groups are good in the sense of Definition~\ref{def: good groups}, then the natural map
\[
\widehat{X\times Y}\longrightarrow \widehat{X}\times \widehat{Y}
\]
is a weak equivalence of profinite spaces.

To extend the $\NS$-action on $\widehat{\bS}$ to an action in the homotopy category of profinite spaces, we need to establish compatibility between profinite completion and the products appearing in the Segal condition. The available product comparison requires goodness of the relevant homotopy groups. Since the mapping class groups are known to be good only in genus at most one, we first work with genus-one truncated modular dendroidal objects.

We begin by introducing genus truncations for modular dendroidal objects. Let $\bM_{\leq g}\subseteq \bM$ denote the full subcategory spanned by genus-graded graphs of total genus at most $g$.

\begin{lemma}
The full subcategory $\bM_{\leq g}$ is a sieve on $\bM$. That is, if $\phi\colon G\to H$ is a morphism in $\bM$ and $H\in\bM_{\leq g}$, then $G\in\bM_{\leq g}$.
\end{lemma}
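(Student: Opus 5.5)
The plan is to show that graphical maps never increase genus from source to target, i.e. that for every morphism $\phi\colon G\to H$ in $\bM$ the total genus of $G$ is at most the total genus of $H$. Then $H\in\bM_{\leq g}$ forces $G\in\bM_{\leq g}$. Since $\bM_{\leq g}$ is full and contains every graph of genus at most $g$, this is exactly the sieve condition.

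By Definition~\ref{def: modular dendroidal category}, every graphical map factors as a composite of isomorphisms and graph substitutions, so it suffices to check these two generating classes. An isomorphism of genus-graded graphs preserves the Betti number and the genus function, and hence the total genus.

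For a graph substitution, write the target as $H=K\{H_v\}_{v\in V_K}$. The source $G$ is obtained from the data of this substitution in one of two ways:
\begin{enumerate}
\item $G$ is one of the substituted graphs $H_v$, or more generally a subgraph of $H$;
\item $G$ is $K$ itself, whose vertex $v$ carries genus $\epsilon(v)$ equal to the total genus of $H_v$.
\end{enumerate}
In case (2) I will compute the genus of $H$ from the coequaliser description. The vertex genera and first Betti numbers combine additively:
\[
\beta_1(H)+\sum_{w\in V_H}\epsilon_H(w)=\beta_1(K)+\sum_{v\in V_K}\Bigl(\beta_1(H_v)+\sum_{w\in V_{H_v}}\epsilon_{H_v}(w)\Bigr)=\beta_1(K)+\sum_{v}\epsilon(v).
\]
So the total genus is preserved. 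To justify the first equality, I would use an Euler characteristic count: gluing the $H_v$ along the internal edges of $K$ adds $\beta_1(K)$ independent cycles. In case (1), the inclusion of a connected subgraph $G\hookrightarrow H$ with the inherited genus grading gives $\beta_1(G)\le\beta_1(H)$ and a sub-sum of the vertex genera, so $\operatorname{genus}(G)\le\operatorname{genus}(H)$.

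The main obstacle is bookkeeping with the precise definition of graphical maps in \cite[Definition~1.31]{hry1}. A graphical map $G\to H$ is given by an assignment $v\mapsto H_v$ of subgraphs of $H$ together with the identification $H\cong G\{H_v\}$ up to an embedding. I will therefore phrase the argument uniformly: the image of a graphical map is a subgraph $G\{H_v\}\subseteq H$, and the computation above gives
\[
\operatorname{genus}(G)=\operatorname{genus}(G\{H_v\})\le\operatorname{genus}(H).
\]
The inequality uses only that for a connected subgraph the first Betti number cannot exceed that of the ambient graph, since $H_1$ of a subcomplex of a graph injects, and that vertex genera are nonnegative. I also need to treat degenerate cases such as the edge $\updownarrow$, which has genus $0$, separately, but these are immediate.
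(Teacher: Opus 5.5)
Your proposal is correct and follows the paper's argument: factor $\phi$ through the substituted graph $G\{H_v\}_{v\in V_G}$, note that substitution preserves total genus, and bound the genus of its image by that of $H$. You also supply the final inequality, via injectivity on $H_1$ and nonnegativity of vertex genera, which the paper leaves implicit. One caveat on that step: subgraphs in this paper need not be injective on half-edges (e.g.\ the corolla at a vertex carrying a loop), so run the $H_1$ argument on the subcomplex spanned by vertices and internal edges, where the map is injective, rather than on the full realization.
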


\begin{proof}
We use the description of graphical maps by graph substitution from~\cite[Theorem~2.7]{hry1}. A morphism $\phi\colon G\to H$ factors through a substituted graph $G\{K_v\}_{v\in V_G}$, where each graph $K_v$ has total genus equal to the genus assigned to the vertex $v$ of $G$, and the remaining map preserves genus.

The total genus of the substituted graph is
\[
\beta_1(G)+\sum_{v\in V_G}g(K_v)
=
\beta_1(G)+\sum_{v\in V_G}\epsilon_G(v),
\]
which is precisely the total genus of $G$. Since the map $G\{K_v\}_{v\in V_G}\to H$ preserves genus, the total genus of $G$ agrees with the total genus of its image in $H$, and in particular is at most the total genus of $H$. Thus, if $H$ has total genus at most $g$, then so does $G$.
\end{proof}

Let $i_g\colon \bM_{\leq g}\hookrightarrow\bM$ denote the inclusion. Restriction along $i_g$ defines a functor
\[
i_g^*\colon \dMod(\bE)\longrightarrow \dMod_{\leq g}(\bE).
\]
Here, we recall that $\dMod(\bE)=\operatorname{Fun}(\bM^{\op},\bE)$ denotes the category of modular dendroidal objects in $\bE$, and $\dMod_{\leq g}(\bE)=\operatorname{Fun}(\bM_{\leq g}^{\op},\bE)$ denotes the category of genus-$g$ truncated modular dendroidal objects in $\bE$. We call $i_g^*X$ the genus-$g$ truncation of $X$ and write it as $\trun{g}X$.

We now turn to the comparison between profinite completion of groupoids and profinite completion of their classifying spaces. There are two natural ways to pass from a groupoid to a profinite space: one can first take the classifying space and then profinitely complete, or one can first profinitely complete the groupoid and then take its classifying space. These two constructions are related by a natural comparison map, represented by the square
\[
\begin{tikzcd}
\Grpd \arrow[r, "\cs"] \arrow[d, "\widehat{(-)}"'] &
\sSet \arrow[d, "\widehat{(-)}"] \\
\widehat{\Grpd} \arrow[r, "\cs"'] &
\widehat{\sSet}.
\end{tikzcd}
\]
At a groupoid $\mathscr E$, this comparison is the canonical map $\widehat{\cs\mathscr E}\to \cs\widehat{\mathscr E}$.

For a general groupoid $\mathscr E$, this map need not be a weak equivalence. A sufficient condition is goodness in the sense of Serre.

\begin{definition}\label{def: good groups}
A discrete group $\mathsf G$ is \emph{good} if, for every finite discrete $\mathsf G$-module $\mathsf M$, the natural map $H^*_{\mathrm{cts}}(\widehat{\mathsf G},\mathsf M)\to H^*(\mathsf G,\mathsf M)$ is an isomorphism. A groupoid with finitely many objects is called good if the automorphism group of each object is good.
\end{definition}

By~\cite[Proposition~5.9]{Horel_profinite_groupoids}, if $\mathscr E$ is a good groupoid with finitely many objects, then the comparison map $\widehat{\cs\mathscr E}\to\cs\widehat{\mathscr E}$ is a weak equivalence of profinite spaces.

\begin{lemma}\label{lemma: B of good groupoids commutes with hat}
For each $g\leq 1$ and $n\geq 0$ satisfying $2g+n\geq 1$, the canonical map
\[
\widesthat{\cs\bS(g,n+1)}
\longrightarrow
\cs(\widehat{\bS}(g,n+1))
\]
is a weak equivalence of profinite spaces.
\end{lemma}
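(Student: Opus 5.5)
The proof will apply \cite[Proposition~5.9]{Horel_profinite_groupoids}, which says that for a good groupoid $\mathscr E$ with finitely many objects the comparison map $\widehat{\cs\mathscr E}\to\cs\widehat{\mathscr E}$ is a weak equivalence. So it suffices to check, for $g\le 1$ and $2g+n\ge 1$, two things: that $\bS(g,n+1)$ has finitely many objects, and that every automorphism group in it is good.

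\emph{Finitely many objects.} For $(g,n+1)=(0,2)$ this is immediate, since $\bS(0,2)$ has a single object. Otherwise the objects are the elements of $\Upsilon(g,n+1)$. These are isomorphism classes of connected trivalent ribbon graphs with first Betti number $g$ and $n+1$ labelled legs. By the Euler characteristic count in the proof of Lemma~\ref{lemma: the complexes of morphisms and markings}, such a graph has exactly $2g+n-1$ vertices. There are only finitely many graphs with a bounded number of vertices and half-edges, and each admits only finitely many labellings and cyclic orders. Hence $\ob\bS(g,n+1)$ is finite.

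\emph{Goodness.} By construction every automorphism group in $\bS(g,n+1)$ is isomorphic to $\Gamma_{g,n+1}$, and $\bS(0,2)$ has automorphism group $\Z$, which is good. It therefore remains to show that $\Gamma_{0,n+1}$ and $\Gamma_{1,n+1}$ are good.
\begin{itemize}
\item \emph{Genus zero.} By Proposition~\ref{prop: RBn is Gamma0n+1}, $\Gamma_{0,n+1}\cong\PRB_n\cong\PB_n\times\Z^n$. Pure braid groups are iterated extensions of finitely generated free groups, so they are good by Serre's criterion: an extension of a good group by a finitely generated good group, with the relevant cohomology finiteness conditions, is good. Products with $\Z^n$ preserve goodness.
\item \emph{Genus one.} For $\Gamma_{1,1}$, there is the central extension $1\to\Z\to\Gamma_{1,1}\to\Gamma_1^1\cong SL(2,\Z)\to 1$ already used in Lemma~\ref{lemma:centralisers in the profinite completion of Gamma11}. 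Here $SL(2,\Z)$ is virtually free, hence good, and a central extension by $\Z$ of a good group remains good; alternatively, $\Gamma_{1,1}\cong\Br_3$. For $n\ge 1$, use the Birman exact sequences and capping sequences, which exhibit $\Gamma_{1,n+1}$ as built from $\Gamma_{1,1}$ by extensions with kernels that are free groups (surface groups of punctured surfaces) or $\Z$. Each step preserves goodness.
\end{itemize}
For the genus-one case I would cite the known result of Asada (see also Grunewald--Jaikin-Zapirain--Zalesskii) that mapping class groups in genus at most one are good, rather than redo the extension argument.

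The main obstacle is the genus-one boundary case. Goodness is not closed under arbitrary extensions: Serre's criterion needs the kernel to be finitely generated, good, with finite cohomology, and the extension to induce the full profinite topology on the kernel. Verifying this along the capping and Birman sequences is the delicate part, since the sequence for $\Gamma_{1,1}$ is only right exact after completion. I would handle it by citing the established goodness results for $\Gamma_{1,n}$ and the pure ribbon braid groups.
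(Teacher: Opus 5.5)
Your proposal is correct and follows the paper's proof: check that $\bS(g,n+1)$ has finitely many objects and that its automorphism groups $\Gamma_{g,n+1}$ are good for $g\le 1$, then apply Horel's Proposition~5.9. The paper simply cites Oda for goodness, whereas you sketch the standard extension arguments before also deferring to the literature; one small aside is that $\widehat{\Z}\to\widehat{\Gamma}_{1,1}\to\widehat{\Gamma}_1^1\to 1$ is in fact short exact, not merely right exact, because the abelianisation $\Gamma_{1,1}\cong\Br_3\to\Z$ sends the boundary twist $(D_aD_b)^6$ to $12$ and so induces the full profinite topology on $\langle D_\partial\rangle$, though nothing in your argument depends on this.
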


\begin{proof}
The groupoid $\bS(g,n+1)$ has finitely many objects. For every object $A\in\ob(\bS(g,n+1))$, the automorphism group $\Aut_{\bS(g,n+1)}(A)$ is isomorphic to the mapping class group $\Gamma_{g,n+1}$. These groups are good for $g\leq 1$; see~\cite[p.~94]{Oda}. Hence $\bS(g,n+1)$ is a good groupoid and the result therefore follows from~\cite[Proposition~5.9]{Horel_profinite_groupoids}.
\end{proof}

\begin{prop}\label{prop: genus-one B N and completion commute}
There is an equivalence of genus-one truncated modular $\infty$-operads in profinite spaces
\[
\widesthat{\cs\nerve(\trun{1}\bS)}\simeq \cs\nerve(\widehat{\trun{1}\bS}).
\]
\end{prop}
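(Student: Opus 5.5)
The plan is to compare the two genus-one truncated modular dendroidal objects graph by graph, construct a natural comparison map between them, and check that it is a levelwise weak equivalence on every graph of total genus at most one. The comparison map comes from the natural transformation $\widehat{\cs(-)}\to\cs\widehat{(-)}$ of the square relating $\Grpd$, $\sSet$ and their profinite versions, applied entrywise to the modular dendroidal object $\nerve(\trun{1}\bS)$. Composing with the isomorphism $\widehat{\nerve\calP}\cong\nerve\widehat{\calP}$ of Lemma~\ref{lemma: N commutes with completion for groupoids} (applied to the truncated operad, whose groupoids have finitely many objects), we obtain a map of $\bM_{\leq 1}^{\op}$-diagrams
\[
\widesthat{\cs\nerve(\trun{1}\bS)}\longrightarrow \cs\widehat{\nerve(\trun{1}\bS)}\cong\cs\nerve(\widehat{\trun{1}\bS}).
\]
Naturality in $G\in\bM_{\leq 1}$ is automatic, because every step is a natural transformation applied entrywise. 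Since weak equivalences of diagrams are detected entrywise, it will then suffice to check that the map is a weak equivalence at each graph $G$ of total genus at most one.

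Fix such a $G$. The source at $G$ is $\bigl(\cs\prod_{v}\bS(\epsilon(v),|\nb(v)|)\bigr)^{\wedge}$. Since $\cs$ preserves products, this is the profinite completion of $\prod_v\cs\bS(\epsilon(v),|\nb(v)|)$. Each vertex has $\epsilon(v)\leq 1$ because the total genus is at most one, so each factor is, up to equivalence, $\cs\Gamma_{g,n+1}$ with $g\leq 1$: a connected $K(\pi,1)$ with good fundamental group (Oda). The product criterion \cite[Proposition~3.9]{Boavida-Horel-Robertson} then gives
\[
\Bigl(\prod_v\cs\bS(\epsilon(v),|\nb(v)|)\Bigr)^{\wedge}\simeq\prod_v\widesthat{\cs\bS(\epsilon(v),|\nb(v)|)},
\]
after inducting on the number of vertices. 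The induction needs the fact that finite products of good groups are good, so that the intermediate products are again connected spaces with good homotopy groups. Lemma~\ref{lemma: B of good groupoids commutes with hat} identifies each factor with $\cs\widehat{\bS}(\epsilon(v),|\nb(v)|)$. The product of these is $\cs\prod_v\widehat{\bS}(\epsilon(v),|\nb(v)|)=\cs(\nerve\widehat{\trun{1}\bS})_G$, since $\cs$ on profinite groupoids also preserves products. The remaining check is that this chain of equivalences agrees with the comparison map from the first paragraph; this holds because all maps involved are the canonical ones and commute with projections to the factors.

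Finally, the target $\cs\nerve(\widehat{\trun{1}\bS})$ strictly satisfies the Segal condition and is reduced, because $\cs$ preserves products and the point. The source is levelwise equivalent to it, so it also satisfies the Segal condition, and both objects are genus-one truncated modular $\infty$-operads. The graph $\updownarrow$ and the unit corolla $(0,2)$, where $\Gamma=\mathbb Z$ is good, are handled in the same way.

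I expect the main obstacle to be the product step. The cited criterion is stated for binary products of connected spaces, so the induction must keep track of goodness of the intermediate products, and the connectedness hypothesis must be verified. Each $\bS(g,n+1)$ is a connected groupoid, so it has a connected classifying space. If goodness of products is not directly available in the literature, my first approach would be a Künneth/Hochschild--Serre spectral sequence argument in continuous cohomology with finite coefficients for $\widehat{\mathsf G\times\mathsf H}=\widehat{\mathsf G}\times\widehat{\mathsf H}$.
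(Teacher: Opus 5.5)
Your proposal is correct and follows essentially the paper's own argument. The paper likewise writes each graph value as a product over vertices, applies \cite[Proposition~3.9]{Boavida-Horel-Robertson} iteratively (using goodness of the mapping class groups in genus at most one), and uses Lemma~\ref{lemma: B of good groupoids commutes with hat} on corollas, packaged as a commutative square whose vertical maps are the Segal equivalences; your extra remark that the iteration needs finite products of the groups $\Gamma_{g,n+1}$ to stay good (true here, via a Hochschild--Serre argument, since these groups have finite cohomology with finite coefficients) makes explicit a step the paper only calls ``iterative''.
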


\begin{proof}
Both sides are functors $\bM_{\leq 1}^{\op}\to\widehat{\isSet}$. We first verify that both satisfy the truncated Segal condition.

Let $G\in\bM_{\leq 1}$. Since $\trun{1}\bS$ is a truncated modular operad, its modular dendroidal nerve satisfies the strict Segal formula
\[
(\nerve\trun{1}\bS)_G
\cong
\prod_{v\in V_G}
(\nerve\trun{1}\bS)_{\corolla_{(\epsilon_G(v),|\nb_G(v)|)}}
\cong
\prod_{v\in V_G}
\bS(\epsilon_G(v),|\nb_G(v)|),
\]
where the second isomorphism follows from the definition of the modular dendroidal nerve. Since the classifying space functor preserves finite products, we have
\[
(\cs\nerve\trun{1}\bS)_G
\cong
\prod_{v\in V_G}
\cs\bS(\epsilon_G(v),|\nb_G(v)|).
\]

Thus the Segal map for $\widesthat{\cs\nerve(\trun{1}\bS)}$ at $G$ is the natural map
\[
\left(
\prod_{v\in V_G}
\cs\bS(\epsilon_G(v),|\nb_G(v)|)
\right)^{\wedge}
\longrightarrow
\prod_{v\in V_G}
\bigl(\cs\bS(\epsilon_G(v),|\nb_G(v)|)\bigr)^{\wedge}.
\]
Since $G$ has total genus at most one, every vertex has genus at most one. For each $v\in V_G$, the groupoid $\bS(\epsilon_G(v),|\nb_G(v)|)$ is connected, and the automorphism group of each object is isomorphic to $\Gamma_{\epsilon_G(v),|\nb_G(v)|}$ and these mapping class groups are good under our assumptions. Hence the spaces $\cs\bS(\epsilon_G(v),|\nb_G(v)|)$ satisfy the hypotheses of~\cite[Proposition~3.9]{Boavida-Horel-Robertson}. Applying that product comparison iteratively, the displayed Segal map is a weak equivalence. 

For the right-hand side, the nerve of the truncated profinite modular operad $\widehat{\trun{1}\bS}$ satisfies the Segal condition,
\[
(\nerve\widehat{\trun{1}\bS})_G
\cong
\prod_{v\in V_G}
(\nerve\widehat{\trun{1}\bS})_{\corolla_{(\epsilon_G(v),|\nb_G(v)|)}}
\cong
\prod_{v\in V_G}
\widehat{\bS}(\epsilon_G(v),|\nb_G(v)|).
\] Again noting that the classifying space functor preserves this finite products, we have that $\cs\nerve(\widehat{\trun{1}\bS})$ also satisfies the truncated Segal condition.

On a corolla $\corolla_{(g,n+1)}$ with $g\leq 1$, the comparison is the canonical map
$\widesthat{\cs(\bS(g,n+1))}\to \cs(\widehat{\bS}(g,n+1))$, which is a weak equivalence by Lemma~\ref{lemma: B of good groupoids commutes with hat}. The comparison therefore extends to an equivalence on arbitrary graphs by the truncated Segal condition. Indeed, for every $G\in\bM_{\leq 1}$, it fits into a commutative square
\[
\begin{tikzcd}
(\widesthat{\cs\nerve(\trun{1}\bS)})_G
\arrow[r]
\arrow[d, "\simeq"'] &
(\cs\nerve(\widehat{\trun{1}\bS}))_G
\arrow[d, "\simeq"] \\
\displaystyle\prod_{v\in V_G}
(\widesthat{\cs\nerve(\trun{1}\bS)})_{\corolla_{(\epsilon_G(v),|\nb_G(v)|)}}
\arrow[r] &
\displaystyle\prod_{v\in V_G}
(\cs\nerve(\widehat{\trun{1}\bS}))_{\corolla_{(\epsilon_G(v),|\nb_G(v)|)}} .
\end{tikzcd}
\]
The vertical maps are the Segal equivalences established above, and the bottom horizontal map is the product of the comparison maps on the corresponding corollas. Hence the bottom horizontal map is a weak equivalence, and thus so is the top horizontal map.

It follows that $\widesthat{\cs\nerve(\trun{1}\bS)}\to \cs\nerve(\widehat{\trun{1}\bS})$ is an equivalence of genus-one truncated modular $\infty$-operads in profinite spaces.
\end{proof}

The preceding comparison allows us to transport the genus-one part of the $\underline{\NS}$-action from profinite groupoids to profinite spaces.

\begin{prop}\label{prop: genus-one NS action on spaces}
The action of $\NS$ on $\trun{1}\widehat{\bS}$ induces a faithful action on $\widesthat{\cs\nerve(\trun{1}\bS)}$ by automorphisms in the homotopy category of genus-one truncated modular $\infty$-operads in profinite spaces. More precisely, there is an injective homomorphism
\[\NS \longrightarrow \pi_0\mathbb  \Aut_{\dMod_{\leq 1}(\widehat{\isSet})} \bigl(\widehat{\cs\nerve(\trun{1}\bS)}\bigr). \]
\end{prop}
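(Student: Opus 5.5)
We build the homomorphism as a composite of identifications that are already available, and get injectivity by restricting to genus zero.

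\emph{Step 1: the genus-one truncated groupoid action.} By Theorem~\ref{main theorem NS action}, $\NS$ acts on $\widehat{\bS}$ by object-fixing automorphisms. Truncation gives a homomorphism $\NS\to\Aut_0(\trun{1}\widehat{\bS})$. Corollary~\ref{cor: truncation and endomorphisms of the surface mod operad} says this truncation loses no information, since $\End_0(\widehat{\bS})\cong\End_0(\trun{1}\widehat{\bS})$. Because profinite completion is entrywise, $\trun{1}\widehat{\bS}=\widehat{\trun{1}\bS}$.

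\emph{Step 2: from strict groupoid automorphisms to profinite spaces.} Apply the nerve and then the profinite classifying space functor $\widehat{\cs}$. This sends each strict automorphism $F$ of $\widehat{\trun{1}\bS}$ to an automorphism $\cs\nerve F$ of the genus-one truncated modular $\infty$-operad $\cs\nerve(\widehat{\trun{1}\bS})$ in $\widehat{\isSet}$. The assignment is functorial, so we get a homomorphism into automorphisms in the homotopy category. Proposition~\ref{prop: genus-one B N and completion commute} gives a weak equivalence $\widesthat{\cs\nerve(\trun{1}\bS)}\simeq\cs\nerve(\widehat{\trun{1}\bS})$. Conjugating by this equivalence in the homotopy category transports the action to $\widesthat{\cs\nerve(\trun{1}\bS)}$ and yields the desired homomorphism
\[\NS\longrightarrow\pi_0\mathbb{R}\Aut\bigl(\widesthat{\cs\nerve(\trun{1}\bS)}\bigr).\]

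\emph{Step 3: injectivity, the main obstacle.} We want to reduce to genus zero, where faithfulness is known. Restricting along $\bM_{\leq 0}\subseteq\bM_{\leq 1}$ (a sieve, by the preceding lemma) gives a commutative square. Its left column is $\NS\hookrightarrow\GT$, and its bottom row is the $\GT$-action on $\widesthat{\cs\nerve(\trun{0}\bS)}$. By Proposition~\ref{prop:PaRB-cyclic-is-S0}, $\trun{0}\bS\cong\PaRB^{\mathrm{cyc}}$, so \cite[Theorem~8.4]{Boavida-Horel-Robertson} shows that $\GT$ acts faithfully on the profinite completion of $\cs\nerve\PaRB$ in the homotopy category. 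Two points need care.
\begin{enumerate}
\item That faithfulness result is stated for operads, not cyclic operads. We handle this by restricting further along the forgetful functor $u^*$. Faithfulness there still implies faithfulness for the cyclic truncation, since if an element acts trivially on the cyclic object, it acts trivially on its underlying operad.
\item The genus-zero restriction of our transported action must agree with the Boavida--Horel--Robertson action. This holds because both actions come from the same $\underline{\GT}$-formulas on $\tau,\beta,\alpha$, by Proposition~\ref{prop:genus-zero-GT}, and because the equivalence of Proposition~\ref{prop: genus-one B N and completion commute} restricts to the analogous genus-zero comparison. That genus-zero comparison is the one used in \cite{Boavida-Horel-Robertson}, and it rests on the goodness of the $\Gamma_{0,n+1}$.
\end{enumerate}

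With the square commuting, injectivity follows. An element of $\NS$ acting trivially upstairs acts trivially on the genus-zero truncation. It is therefore trivial in $\GT$, and hence trivial in $\NS$.

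An alternative route to injectivity avoids genus zero. One could use homotopical full faithfulness of $\cs\nerve$ into $\pi$-finite-valued Segal objects, together with Proposition~\ref{prop:injection}. However, this requires a profinite-spaces analogue of Theorem~\ref{thm: nerve is homotopically fully faithful}, so we prefer the genus-zero reduction.
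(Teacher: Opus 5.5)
Your argument is correct, but it establishes faithfulness by a different route from the paper.

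\textbf{The paper's route.} The paper builds the homomorphism as a chain of identifications:
\begin{enumerate}
\item the genus-one version of Proposition~\ref{prop: strict-to-homotopy-endomorphisms} identifies $\Aut_0(\trun{1}\widehat{\bS})$ with $\pi_0\mathbb R\Aut(\widehat{\nerve(\trun{1}\bS)})$ in profinite groupoids;
\item Lemma~\ref{lemma: N commutes with completion for groupoids} rewrites the source as $\nerve(\widehat{\trun{1}\bS})$;
\item homotopical full faithfulness of $\cs$ transfers this to profinite spaces;
\item Proposition~\ref{prop: genus-one B N and completion commute} identifies the target.
\end{enumerate}
Injectivity is never argued separately. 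It holds implicitly because every step after $\NS\hookrightarrow\Aut_0(\trun{1}\widehat{\bS})$ is a bijection.

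\textbf{Your route.} You use only functoriality of $\cs\nerve$ to produce the map. You then detect injectivity after restricting to genus zero, via \cite[Theorem~8.4]{Boavida-Horel-Robertson}. This is the argument the paper itself gives for Corollary~\ref{cor: homotopy NS action}, carried over to profinite spaces.

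\textbf{What each buys.} Granting every step, the paper's chain yields more: the whole group $\Aut_0(\trun{1}\widehat{\bS})$ is identified with the homotopy automorphisms. However, it tacitly needs several results that the paper never states in this form:
\begin{itemize}
\item genus-one truncated analogues of Proposition~\ref{prop:injection} and Theorem~\ref{thm: nerve is homotopically fully faithful};
\item cofibrancy of $\trun{1}\bS$;
\item full faithfulness of $\cs\colon\widehat{\Grpd}\to\widehat{\sSet}$ at the level of diagrams on $\bM_{\leq 1}$.
\end{itemize}
Your route avoids all of this and relies only on the known genus-zero faithfulness.

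\textbf{What your route costs.} It requires the two compatibilities you flag:
\begin{itemize}
\item The restricted action must agree with the Boavida--Horel--Robertson action. This includes the passage from $\bM_{\leq 0}$ to their dendroidal setting by a root-forgetting functor, which you assert rather than construct, although this is at the same level of rigour as the paper's Corollary~\ref{cor: homotopy NS action}.
\item The comparison equivalence must restrict to theirs.
\end{itemize}

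A minor point: neither the sieve property nor Corollary~\ref{cor: truncation and endomorphisms of the surface mod operad} is actually used in your argument.
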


\begin{proof}
By Theorem~\ref{main theorem NS action}, the group $\NS$ acts on $\trun{1}\widehat{\bS}$ by object-fixing endomorphisms. The genus-one restriction of Proposition~\ref{prop: strict-to-homotopy-endomorphisms} gives a homomorphism
\begin{equation}\label{ns action up to homotopy}
{\NS}\longrightarrow
\pi_0\mathbb R\Aut_{\dMod_{\leq 1}(\widehat{\Grpd})}
\bigl(\widehat{\nerve(\trun{1}\bS)}\bigr).
\end{equation}
By Lemma~\ref{lemma: N commutes with completion for groupoids}, we identify $\widehat{\nerve(\trun{1}\bS)}$ with $\nerve(\widehat{\trun{1}\bS})$.

Applying the classifying space functor entrywise, and using the homotopical full faithfulness of $\cs$ recalled in Section~\ref{sec: classifying space functor}, the homomorphism \eqref{ns action up to homotopy} induces
\[
{\NS}\longrightarrow
\pi_0\mathbb R\Aut_{\dMod_{\leq 1}(\widehat{\isSet})}
\bigl(\cs\nerve(\widehat{\trun{1}\bS})\bigr).
\]
The comparison map of Proposition~\ref{prop: genus-one B N and completion commute} identifies this target with
$\pi_0\mathbb R\Aut_{\dMod_{\leq 1}(\widehat{\isSet})}
\bigl(\widehat{\cs\nerve(\trun{1}\bS)}\bigr)$,
giving the stated homomorphism.
% Restricting to invertible elements gives the corresponding group homomorphism for $\NS$. Since elements of $\NS$ act by automorphisms on $\trun{1}\widehat{\bS}$, their images are homotopy-invertible after applying $\cs$ and passing through the comparison above. Hence the restricted homomorphism lands in
% \[
% \pi_0\mathbb R\Aut_{\dMod_{\leq 1}(\widehat{\isSet})}
% \bigl(\widehat{\cs\nerve(\trun{1}\bS)}\bigr).
% \]
\end{proof}

\subsection{Étale homotopy types and the profinite-space model}\label{sec: tower of etale homotopy types}
We now show that the entrywise profinite completion of $\cs\nerve\bS$ assembles into a modular $\infty$-operad in profinite spaces. This subsection addresses a different issue from the genus-one comparison above: here we need to verify that profinite completion still satisfies the Segal conditions. Since profinite completion of spaces does not generally preserve products, this requires a separate product-comparison argument.

We write $\isSet$ for the $\infty$-category of spaces, and let $\isSet_{\pi}\subset \isSet$ denote the full subcategory of $\pi$-finite spaces. We write $\widehat{\isSet}:=\Pro(\isSet_{\pi})$ for the $\infty$-category of profinite spaces, and
$\widehat{(-)}\colon \Pro(\isSet)\to\widehat{\isSet}$ for profinite completion.

Let $\isSet_{<\infty}\subset\isSet$ denote the full subcategory of truncated spaces. Following~\cite[Notation~2.11]{haine2024profinite}, let $\mathcal H\subset\Pro(\isSet_{<\infty})$ denote the smallest full subcategory containing
$\widehat{\isSet}$ and closed under geometric realizations, retracts, and cofiltered limits. We call a prospace $X\in\Pro(\isSet)$ \emph{admissible} if its protruncation $\tau_{<\infty}X$ lies in $\mathcal H$.

\begin{prop}\label{prop:haine-product}
Let $X_1,\ldots,X_r$ be admissible prospaces. Then the natural map
\[
\left(\prod_{i=1}^r X_i\right)^{\wedge}
\longrightarrow
\prod_{i=1}^r \widehat{X_i}
\]
is an equivalence in profinite spaces.
\end{prop}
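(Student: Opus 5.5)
The plan is to reduce to the binary case $r=2$ by induction; the case $r=1$ is trivial, and for $r=0$ both sides are the terminal object. For the inductive step we need that the class of admissible prospaces is closed under finite products, so that $X_1\times\cdots\times X_{r-1}$ is again admissible. We then apply the binary statement to the pair $(X_1\times\cdots\times X_{r-1},X_r)$ and compose with the inductive hypothesis.

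For the binary case, the map $(X\times Y)^\wedge\to\widehat X\times\widehat Y$ only depends on protruncations. Profinite completion $\Pro(\isSet)\to\widehat{\isSet}$ factors through $\tau_{<\infty}$, because $\pi$-finite spaces are truncated. Protruncation also commutes with finite products, so we may replace $X,Y$ by $\tau_{<\infty}X,\tau_{<\infty}Y\in\mathcal H$. We then fix $Y\in\mathcal H$ and consider the full subcategory $\mathcal H_Y\subseteq\mathcal H$ of those $X$ for which the comparison map is an equivalence, and show that it contains $\widehat{\isSet}$ and is closed under geometric realizations, retracts and cofiltered limits.
\begin{itemize}
\item For $X$ profinite, we first fix $X$ and run the same closure argument in the variable $Y$. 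This reduces to the case where both $X$ and $Y$ are profinite, where the statement is clear: products of $\pi$-finite spaces are $\pi$-finite, so products of profinite spaces computed in $\Pro(\isSet_{<\infty})$ are already profinite, and completion is the identity on them.
\item Closure under retracts is formal.
\item Closure under cofiltered limits uses three facts: cofiltered limits in pro-categories are computed levelwise, products commute with limits, and $\widehat{(-)}$, being the restriction functor $\Pro(\isSet_{<\infty})\to\Pro(\isSet_\pi)$ left adjoint to inclusion, commutes with cofiltered limits. This last point holds because $\pi$-finite spaces are cocompact in the pro-category.
\item For geometric realizations, $\widehat{(-)}$ is a left adjoint and so preserves colimits. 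We also need $-\times Y$ to commute with geometric realizations both in $\Pro(\isSet_{<\infty})$ and in $\widehat{\isSet}$.
\end{itemize}
Since $\mathcal H$ is the smallest subcategory with these closure properties, we conclude $\mathcal H_Y=\mathcal H$. The same closure argument also shows that $\mathcal H$ is closed under binary products, which is what the induction needs.

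The main obstacle is the geometric realization step. Products in pro-categories do not obviously commute with sifted colimits, and in profinite spaces geometric realizations need not commute with products in general. I would first try to use that $\Delta^{\op}$ is sifted, so $|X_\bullet|\times|Y_\bullet|\simeq|X_\bullet\times Y_\bullet|$ in any $\infty$-topos-like setting. For $\widehat{\isSet}$ this should follow from Haine's results identifying $\widehat{\isSet}$ with a localization in which geometric realizations are computed compatibly with finite products; cf.\ \cite[Notation~2.11]{haine2024profinite} and the surrounding results, which are exactly designed for this class $\mathcal H$. If that route fails, I would instead cite Haine's product theorem for admissible prospaces directly and verify that our definition of admissibility matches his.
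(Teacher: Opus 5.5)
Your first step, replacing each $X_i$ by $\tau_{<\infty}X_i\in\mathcal H$, is exactly the paper's reduction: completion factors through protruncation, and protruncation preserves finite products. Your fallback of citing Haine is exactly the rest of the paper's proof, which invokes \cite[Theorem~2.13]{haine2024profinite} applied iteratively. There is nothing to match, because $\mathcal H$ is taken verbatim from Haine's Notation~2.11.

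Your main route instead reproves that theorem by a closure argument over $\mathcal H$. The skeleton is sound: the base case, retracts and cofiltered limits are handled correctly. But everything rests on the step you leave open, namely that $-\times Y$ commutes with geometric realizations in $\Pro(\isSet_{<\infty})$ and in $\widehat{\isSet}$. Your suggested justification does not work. Neither category is an $\infty$-topos, and siftedness of $\Delta^{\op}$ only reduces $|X_\bullet|\times|Y_\bullet|\simeq|X_\bullet\times Y_\bullet|$ to commutation in each variable separately, which is precisely what is in question. Your remark that geometric realizations need not commute with products in profinite spaces is also mistaken. If it were true, your argument would fail outright, since $\widehat{\isSet}\subseteq\mathcal H$ forces you to use it for arbitrary profinite $Y$.

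The missing idea is truncatedness. Let $\mathcal C$ be $\isSet_{<\infty}$ or $\isSet_\pi$, let $Y=\{Y_j\}\in\Pro(\mathcal C)$, and let $K\in\mathcal C$ be $m$-truncated. Products in $\Pro(\mathcal C)$ are levelwise, $K$ is cocompact, and $K^{Y_j}$ lies in $\mathcal C$ and is again $m$-truncated (mapping spaces between $\pi$-finite spaces are $\pi$-finite). Hence
\[
\Map(X\times Y,K)\simeq\colim_j\Map(X,K^{Y_j}).
\]
For $X=|X_\bullet|$, the comparison map $|X_\bullet\times Y|\to|X_\bullet|\times Y$ therefore induces on $\Map(-,K)$ the interchange map
\[
\colim_j\lim_{\Delta}\Map(X_n,K^{Y_j})\longrightarrow\lim_{\Delta}\colim_j\Map(X_n,K^{Y_j}).
\]
All these cosimplicial spaces are levelwise $m$-truncated. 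Their totalizations are therefore computed by the finite limit over $\Delta_{\mathrm{inj},\le m+1}$, which commutes with filtered colimits. Since maps into objects of $\mathcal C$ detect equivalences in $\Pro(\mathcal C)$, the lemma follows.

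With this lemma, your closure argument is a complete, self-contained proof. It also establishes the closure of $\mathcal H$ under finite products, which the paper's ``applied iteratively'' tacitly uses. Without it, your proof reduces to the citation.
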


\begin{proof}
Profinite completion factors through protruncation, and protruncation preserves finite products. Thus it suffices to prove the corresponding statement for objects of $\mathcal H\subset\Pro(\isSet_{<\infty})$. This is the all-primes finite-product case of~\cite[Theorem~2.13]{haine2024profinite}, applied iteratively.
\end{proof}

For a quasi-compact and quasi-separated (qcqs) scheme $X$, the protruncated étale homotopy type $\Pi^{\acute et}_{<\infty}(X):=\tau_{<\infty}\Pi^{\acute et}_{\infty}(X)$ admits a $\Pro(\isSet_{\pi})$-resolution. Equivalently, $\Pi^{\acute et}_{<\infty}(X)$ belongs to $\mathcal H$. Thus $\Pi^{\acute et}_{\infty}(X)$ is admissible. Haine records the corresponding product comparison for two qcqs schemes in~\cite[Example~2.17]{haine2024profinite}; the finite-product version follows from Proposition~\ref{prop:haine-product}.

\begin{cor}\label{cor:haine-qcqs-schemes}
Let $X_1,\ldots,X_r$ be quasi-compact and quasi-separated schemes. Then the natural map
\[
\left(\prod_{i=1}^r \Pi^{\acute et}_{\infty}(X_i)\right)^{\wedge}
\longrightarrow
\prod_{i=1}^r \widehat{\Pi^{\acute et}_{\infty}(X_i)}
\]
is an equivalence in profinite spaces.
\end{cor}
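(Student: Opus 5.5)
The plan is to reduce the corollary to Proposition~\ref{prop:haine-product}. For that we need each $\Pi^{\acute et}_{\infty}(X_i)$ to be an admissible prospace, i.e.\ its protruncation $\tau_{<\infty}\Pi^{\acute et}_{\infty}(X_i)$ must lie in the subcategory $\mathcal H\subset\Pro(\isSet_{<\infty})$. Once admissibility is known for each $i$, Proposition~\ref{prop:haine-product} applied to $X_1,\ldots,X_r$ gives the stated equivalence directly. Nothing further is needed, since profinite completion factors through protruncation.

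The substance is therefore the admissibility claim for a single qcqs scheme $X$, as recorded before the statement. I would argue it as follows.

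First, for qcqs $X$, choose an étale hypercover $U_\bullet\to X$ by disjoint unions of affine (hence qcqs) schemes. By hypercompleteness/descent for the protruncated shape (Haine's setup, following Lurie's shape of the étale $\infty$-topos), this exhibits $\Pi^{\acute et}_{<\infty}(X)$ as the geometric realization of $\Pi^{\acute et}_{<\infty}(U_\bullet)$. This reduces the problem to affine schemes, and $\mathcal H$ is closed under geometric realizations.

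Second, for an affine qcqs scheme, write it as a cofiltered limit of finite-type $\mathbb Z$-schemes. The protruncated étale homotopy type is compatible with such limits, and $\mathcal H$ is closed under cofiltered limits. For finite-type schemes the étale cohomology with finite coefficients is finite in each degree, and $\pi_0$ and the fundamental groupoid are profinite. This gives a $\Pro(\isSet_{\pi})$-resolution, which places these homotopy types in the closure of $\widehat{\isSet}$.

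Finally, I would invoke that $\tau_{<\infty}$ preserves finite products of prospaces. With this, the map in the corollary is identified with the map in Proposition~\ref{prop:haine-product}, and the case $r=2$ agrees with \cite[Example~2.17]{haine2024profinite}. Induction on $r$ is formal, since a finite product of admissible prospaces is again admissible by closure of $\mathcal H$ under finite products. That closure in turn follows from the $r=2$ comparison and the fact that products of $\pi$-finite spaces are $\pi$-finite.

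The main obstacle is the admissibility step, in particular justifying that the descent and limit arguments take place inside $\mathcal H$ rather than merely in $\Pro(\isSet_{<\infty})$. I would handle it by citing Haine's treatment directly rather than reproving it, so the corollary reduces to an iterated application of Proposition~\ref{prop:haine-product}.
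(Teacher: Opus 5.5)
Your reduction is the same as the paper's: show that each $\Pi^{\acute et}_\infty(X_i)$ is admissible, then apply Proposition~\ref{prop:haine-product}, which is already stated for $r$ factors. The problem is your self-contained sketch of admissibility, whose second step would not go through as written.

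First, finite-type $\mathbb Z$-schemes need not have finite étale cohomology with finite coefficients. By Artin--Schreier theory, $H^1_{\acute et}(\mathbb A^1_{\mathbb F_p},\mathbb Z/p)\cong \mathbb F_p[t]/\{f^p-f : f\in\mathbb F_p[t]\}$, which is infinite. Second, their étale homotopy types need not be profinite at all. For a nodal cubic $X$ over an algebraically closed field one has $H^1_{\acute et}(X,\mathbb Z)\cong\mathbb Z$. So $\Pi^{\acute et}_\infty(X)$ admits a nontrivial map to $K(\mathbb Z,1)$, which is impossible for an object of $\Pro(\isSet_\pi)$, and your claim that the fundamental groupoid is profinite fails. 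This is exactly why $\mathcal H$ is built using geometric realizations. Finiteness of invariants is neither available nor the mechanism that produces a $\Pro(\isSet_\pi)$-resolution.

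The paper needs neither your hypercover step nor Noetherian approximation. The étale $\infty$-topos of any qcqs scheme is spectral. By \cite[Example~A.9]{haine2024profinite}, the protruncated shape of a spectral $\infty$-topos admits a $\Pro(\isSet_\pi)$-resolution, hence lies in $\mathcal H$. If you replace your first two steps by this citation, as you say you would ultimately do, your argument coincides with the paper's.

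You should also drop your final paragraph. The induction is unnecessary, and the justification you give for closure of $\mathcal H$ under finite products is not valid. The $r=2$ comparison identifies the completion of $X\times Y$, but says nothing about whether $X\times Y$ itself lies in $\mathcal H$.
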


As in Example~\ref{example: Oda}, the profinite étale homotopy type of the moduli stack $\mathcal M_g^n$ of smooth genus $g$ curves with $n$ ordered marked points identifies with the classifying space of the profinite completed marked-point mapping class group:
\[
\Pi^{\acute et}_{\infty}(\mathcal M_g^n)
\simeq
\widehat{\cs\Gamma_g^n}.
\]
Our modular operad, however, involves the mapping class groups $\Gamma_{g,n}$ of surfaces with $n$ boundary components. To pass from marked points to boundary components, we add a non-zero tangent vector at each marked point.

Let $\mathcal M_{g,n}^{\tan}$ denote the stack of smooth genus $g$ curves with $n$ ordered marked points equipped with non-zero tangent vectors at the marked points. Equivalently, $\mathcal M_{g,n}^{\tan}$ is the $(\mathbb G_m)^n$-torsor over $\mathcal M_g^n$ obtained by deleting the zero sections from the cotangent-line bundles. Topologically, this replaces $\Gamma_g^n$ by the central extension
\[
1\longrightarrow \mathbb Z^n\longrightarrow \Gamma_{g,n}
\longrightarrow \Gamma_g^n\longrightarrow 1,
\]
where the central subgroup is generated by the boundary Dehn twists.

The stack $\mathcal M_{g,n}^{\tan}$ is an algebraic stack of finite type over the base field. Since it is not generally a scheme, we cannot apply Haine's scheme-level product comparison directly. Instead, we first verify that its étale homotopy type is admissible in the sense above.

\begin{lemma}\label{lemma:tangent-moduli-admissible}
For every pair $(g,n)$, the prospace
$\Pi_\infty^{\acute et}(\mathcal M_{g,n}^{\tan})$
is admissible.
\end{lemma}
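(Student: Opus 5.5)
Since $\mathcal M_{g,n}^{\tan}$ is not a scheme, I will reduce to schemes and use that $\mathcal H$ is closed under geometric realizations, retracts and cofiltered limits. Fix a smooth surjective atlas $U\to\mathcal M_{g,n}^{\tan}$ with $U$ a qcqs scheme; for instance, pull back the $(\mathbb G_m)^n$-torsor along a finite-type atlas of $\mathcal M_g^n$. Such an atlas exists because $\mathcal M_{g,n}^{\tan}$ is of finite type, and it may be taken affine since the stack is quasi-compact. Let $U_\bullet$ be the Čech nerve. Each $U_k=U\times_{\mathcal M}\cdots\times_{\mathcal M}U$ is an algebraic space of finite type. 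Because the diagonal of $\mathcal M_g^n$, and hence of the torsor, is affine (indeed finite for $2g-2+n>0$, and separated in general), the $U_k$ are qcqs schemes. In the remaining unstable cases I will pass instead to a further hypercover by affine schemes.

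By étale (hyper)descent, the étale $\infty$-topos of $\mathcal M_{g,n}^{\tan}$ is the descent object of the simplicial diagram of étale topoi of the $U_k$. The standard shape computation, in the form used by Haine and by Carchedi for Deligne--Mumford and Artin stacks, then gives
\[
\Pi_\infty^{\acute et}(\mathcal M_{g,n}^{\tan})\simeq \colim_{\Delta^{\op}}\Pi_\infty^{\acute et}(U_\bullet)
\]
in $\Pro(\isSet)$. For Artin stacks this uses that smooth covers are covers for the shape, since smooth morphisms are étale-locally sections; alternatively I would refine to an étale hypercover by schemes, which is possible because the stack is Deligne--Mumford after fixing the tangent data, as its stabilisers are finite and reduced. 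Protruncation $\tau_{<\infty}$ is a left adjoint, so it commutes with this geometric realization. By the remark preceding Corollary~\ref{cor:haine-qcqs-schemes}, each $\tau_{<\infty}\Pi_\infty^{\acute et}(U_k)$ lies in $\mathcal H$. Since $\mathcal H$ is closed under geometric realizations, $\tau_{<\infty}\Pi_\infty^{\acute et}(\mathcal M_{g,n}^{\tan})$ lies in $\mathcal H$, which is exactly admissibility.

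The main obstacle is the descent step. One must justify that the shape of the étale topos of the stack is the colimit of the shapes of a hypercover in $\Pro(\isSet)$. Both are computed as pro-left adjoints, and colimits of $\infty$-topoi along hypercovers give limits of the global-sections functors, hence colimits of shapes. One must also check that hypercompleteness is not an issue after protruncation; this is why I work with $\tau_{<\infty}$ throughout.

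If this proves delicate, a fallback is to show directly that $\tau_{<\infty}\Pi_\infty^{\acute et}(\mathcal M_{g,n}^{\tan})$ is profinite. By Lemma~\ref{lemma:tangent-moduli-comparison} it is $\widehat{\cs\Gamma_{g,n}}$ over $\overline{\mathbb Q}$, and a profinite space lies in $\mathcal H$ trivially. Over a general base, one would then combine this with the fibre sequence over $\Pi^{\acute et}_\infty(\operatorname{Spec} k)$, whose pieces are again admissible.
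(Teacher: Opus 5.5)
Your main argument is the paper's proof: a smooth affine atlas, its \v{C}ech nerve of qcqs schemes (your finite-diagonal argument in the stable range is a more explicit version of the paper's appeal to Isom schemes), shape and $\tau_{<\infty}$ both commuting with the geometric realization, and closure of $\mathcal H$ under geometric realizations. The one caveat is your fallback, which you do not need and which would not work as stated: Lemma~\ref{lemma:tangent-moduli-comparison} identifies only the profinite completion $\widehat{\Pi^{\acute et}_\infty(\mathcal M^{\tan}_{g,n})}$, not $\tau_{<\infty}\Pi^{\acute et}_\infty(\mathcal M^{\tan}_{g,n})$ itself, so it does not show that the protruncation is profinite or that it lies in $\mathcal H$.
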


\begin{proof}
The stack $\mathcal M_{g,n}^{\tan}$ is a quasi-compact algebraic stack of finite type over the base field. Since it is quasi-compact, we can choose a smooth surjective atlas $U\to \mathcal M_{g,n}^{\tan}$ with $U$ an affine scheme \cite[\href{https://stacks.math.columbia.edu/tag/04YC}{04YC}]{stacks-project}. We may choose $U$ to be of finite type over the base field, hence quasi-compact and quasi-separated. Let $U_\bullet$ be the Cech nerve of this atlas.

Since smooth morphisms admit étale local sections, the atlas $U\to \mathcal M_{g,n}^{\tan}$ is an effective epimorphism for the étale stack topology. Thus $\mathcal M_{g,n}^{\tan}$ is the geometric realization of the simplicial stack $U_\bullet$. By~\cite[Lemma~2.26]{Carchedi}, the assignment $X\mapsto \operatorname{Sh}_\infty(X_{\acute et})$ extends from affine schemes to higher stacks as a colimit-preserving functor to $\infty$-topoi. Since the shape functor preserves colimits, and since $\Pi_\infty^{\acute et}$ is defined as the shape of the small étale $\infty$-topos, we obtain
\[
\Pi_\infty^{\acute et}(\mathcal M_{g,n}^{\tan})
\simeq
\left|\,\Pi_\infty^{\acute et}(U_\bullet)\,\right|
\]
in $\Pro(\isSet)$.

It remains to note that the terms $U_k$ are quasi-compact and quasi-separated schemes. The diagonal of $\mathcal M_{g,n}^{\tan}$ is representable by algebraic spaces and locally of finite type \cite[\href{https://stacks.math.columbia.edu/tag/04XS}{04XS}]{stacks-project}. In this moduli problem, the diagonal is in fact represented by the Isom scheme between pointed smooth curves equipped with non-zero tangent vectors. Hence the iterated fibre products
$U_k=U\times_{\mathcal M_{g,n}^{\tan}}\cdots\times_{\mathcal M_{g,n}^{\tan}}U$
are schemes. Since $U$ is of finite type over the base field and the relevant Isom schemes are of finite type, each $U_k$ is of finite type over the base field. In particular, each $U_k$ is quasi-compact and quasi-separated.

For each $k$, the scheme $U_k$ is qcqs, so its étale $\infty$-topos is spectral. Hence $\tau_{<\infty}\Pi_\infty^{\acute et}(U_k)$ admits a natural $\Pro(\isSet_\pi)$-resolution by~\cite[Example~A.9]{haine2024profinite}; equivalently, $\Pi_\infty^{\acute et}(U_k)$ is admissible in the sense above. Since protruncation preserves geometric realizations, we have
\[
\tau_{<\infty}\Pi_\infty^{\acute et}(\mathcal M_{g,n}^{\tan})
\simeq
\left|\,\tau_{<\infty}\Pi_\infty^{\acute et}(U_\bullet)\,\right|.
\]
The right-hand side lies in $\mathcal H$, because $\mathcal H$ is closed under geometric realizations. Therefore $\Pi_\infty^{\acute et}(\mathcal M_{g,n}^{\tan})$ is admissible.
\end{proof}

Now that admissibility is established, we can compare the profinite étale homotopy types of these stacks with the profinite completed classifying spaces of the corresponding mapping class groups. This comparison is the topological input that connects the algebro-geometric side of the construction to the modular operadic one.
\begin{lemma}\label{lemma:tangent-moduli-comparison}
For every $g\geq 0$ and $n\geq 0$ satisfying $2g+n\geq 1$, there is an equivalence in profinite spaces
\[
\widehat{\Pi_\infty^{\acute et}(\mathcal M_{g,n+1}^{\tan})}
\simeq
\widehat{\cs\Gamma}_{g,n+1}.
\]
\end{lemma}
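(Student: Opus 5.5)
The plan is to deduce the statement from Oda's comparison (Example~\ref{example: Oda}) for $\mathcal M_g^{n+1}$, using the torsor structure $\mathcal M_{g,n+1}^{\tan}\to\mathcal M_g^{n+1}$. Throughout, the étale homotopy type is understood after base change to $\overline{\mathbb Q}$, as in the introduction. We also assume $2g-2+(n+1)>0$, so that Oda's theorem applies. The unstable cases, namely $(g,n+1)=(0,2)$ and $(0,3)$-type degenerations where $\Gamma_{g,n+1}$ is free abelian, will be handled separately by direct computation. For example, $\mathcal M_{0,2}^{\tan}$ is $B\mathbb G_m$-like with étale homotopy type $\widehat{\cs\mathbb Z}$.

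\textbf{Step 1: fibre sequence.} The projection $p:\mathcal M_{g,n+1}^{\tan}\to\mathcal M_g^{n+1}$ is a $(\mathbb G_m)^{n+1}$-torsor. Since étale homotopy types of stacks are computed from Čech nerves of atlases, as in the proof of Lemma~\ref{lemma:tangent-moduli-admissible}, and $p$ is smooth with geometrically connected fibres $\mathbb G_m^{n+1}$, we obtain a fibre sequence of profinite spaces
\[
\widehat{\Pi^{\acute et}_\infty(\mathbb G_{m,\overline{\mathbb Q}}^{n+1})}\longrightarrow \widehat{\Pi^{\acute et}_\infty(\mathcal M_{g,n+1}^{\tan})}\longrightarrow \widehat{\Pi^{\acute et}_\infty(\mathcal M_g^{n+1})}.
\]
This is a Friedlander-type fibration theorem for a smooth morphism with good fibres, equivalently for a torsor under a connected group. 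The fibre is $(\widehat{\cs\mathbb Z})^{n+1}$, because $\mathbb G_m$ has étale homotopy type $K(\widehat{\mathbb Z},1)$. Here Proposition~\ref{prop:haine-product} is used to identify the completed product.

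\textbf{Step 2: topological side.} The central extension $1\to\mathbb Z^{n+1}\to\Gamma_{g,n+1}\to\Gamma_g^{n+1}\to1$ gives a fibration $(\cs\mathbb Z)^{n+1}\to\cs\Gamma_{g,n+1}\to\cs\Gamma_g^{n+1}$. We need profinite completion to preserve this fibration. This holds if $\Gamma_g^{n+1}$ is good and the completed sequence $1\to\widehat{\mathbb Z}^{n+1}\to\widehat\Gamma_{g,n+1}\to\widehat\Gamma_g^{n+1}\to1$ is still exact on the left, which is the statement that the boundary twists generate a copy of $\widehat{\mathbb Z}^{n+1}$ in the completion. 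This step is the main obstacle, since goodness of $\Gamma_g^{n+1}$ is only known for $g\le 2$. I would first try to avoid goodness entirely by comparing the analytic fibration $\mathcal M_{g,n+1}^{\tan,\mathrm{an}}\to\mathcal M_g^{n+1,\mathrm{an}}$ with its fibre $(\mathbb C^\times)^{n+1}$. The analytic space $\mathcal M_{g,n+1}^{\tan,\mathrm{an}}$ is a $K(\Gamma_{g,n+1},1)$, since it is the quotient of Teichmüller space with framings. Artin--Mazur comparison over $\mathbb C$ then gives
\[
\widehat{\Pi^{\acute et}_\infty(\mathcal M_{g,n+1}^{\tan}\otimes\overline{\mathbb Q})}\simeq \widehat{\mathcal M_{g,n+1}^{\tan,\mathrm{an}}}\simeq\widehat{\cs\Gamma_{g,n+1}},
\]
using invariance of étale homotopy type under extension of algebraically closed fields in characteristic zero. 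This route is the same as Oda's argument and sidesteps the fibration-completion issue.

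\textbf{Step 3: assembly.} Under the analytic route, the equivalence follows directly from the generalized Riemann existence theorem of Artin--Mazur, applied stackily via the Čech nerve from Lemma~\ref{lemma:tangent-moduli-admissible}, levelwise on qcqs schemes $U_k$. It is then compatible with geometric realization. Admissibility ensures that profinite completion commutes with this realization. The fibration of Steps 1--2 is then only needed as a consistency check: the comparison map commutes with projection to Oda's equivalence.
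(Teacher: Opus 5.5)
Your operative argument in Steps~2--3 is exactly the paper's proof: Artin--Mazur comparison over $\mathbb{C}$ (justified levelwise on the \v{C}ech nerve of an atlas and then realized, with invariance under $\overline{\mathbb{Q}}\subset\mathbb{C}$), combined with the fact that $\mathcal M_{g,n+1}^{\tan,\mathrm{an}}$ is the homotopy quotient of the contractible framed Teichm\"uller space by $\Gamma_{g,n+1}$, so the Step~1 fibration and the appeal to Oda are unnecessary. There are a few minor slips, none of which affects the argument, which in any case covers all $2g+n\geq 1$ uniformly: $\mathcal M_{0,2}^{\tan}\cong\mathbb{G}_m$ (it is $\mathcal M_0^{2}$ that is $B\mathbb{G}_m$, of type $K(\mathbb{Z},2)$); $(0,3)$ is not an unstable case; and completion commutes with geometric realization because it is a left adjoint, not because of admissibility.
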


\begin{proof}
By the comparison theorem for algebraic stacks locally of finite type over $\mathbb C$, the profinite étale homotopy type of $\mathcal M_{g,n+1}^{\tan}$ agrees with the profinite completion of the homotopy type of its associated topological stack. The topological stack is presented by the Teichmüller space of genus $g$ surfaces with $n+1$ tangent directions modulo the mapping class group $\Gamma_{g,n+1}$. Since this Teichmüller space is contractible, the weak homotopy type of the topological stack is $\cs\Gamma_{g,n+1}$. Hence
\[
\widehat{\Pi_\infty^{\acute et}(\mathcal M_{g,n+1}^{\tan})}
\simeq
\widehat{\cs\Gamma}_{g,n+1}.
\]
\end{proof}

With the above comparison in hand, we can show that profinite completion of classifying spaces of mapping class groups behaves well on the products that appear in the Segal maps. 

\begin{prop}\label{prop:haine-boundary-product}
Let $g_1,\ldots,g_r\geq 0$ and $n_1,\ldots,n_r\geq 0$. Then the natural map
\[
\left(\prod_{i=1}^r \cs\Gamma_{g_i,n_i+1}\right)^{\wedge}
\longrightarrow
\prod_{i=1}^r(\cs\Gamma_{g_i,n_i+1})^{\wedge}
\]
is an equivalence in profinite spaces.
\end{prop}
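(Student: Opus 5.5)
The plan is to transport the product statement from classifying spaces of mapping class groups to \'etale homotopy types of the tangent moduli stacks, where admissibility gives a product formula (Proposition~\ref{prop:haine-product}). Concretely, for each $i$ I set $X_i:=\Pi_\infty^{\acute et}(\mathcal M_{g_i,n_i+1}^{\tan})$, viewed in $\Pro(\isSet)$ after base change to $\overline{\mathbb Q}$ (equivalently $\mathbb C$). By Lemma~\ref{lemma:tangent-moduli-admissible} each $X_i$ is admissible. Hence Proposition~\ref{prop:haine-product} gives an equivalence
\[
\Bigl(\prod_{i=1}^r X_i\Bigr)^{\wedge}\xrightarrow{\ \simeq\ }\prod_{i=1}^r \widehat{X_i}.
\]
By Lemma~\ref{lemma:tangent-moduli-comparison}, the right-hand side is identified with $\prod_i(\cs\Gamma_{g_i,n_i+1})^{\wedge}$. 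For the degenerate case $(g,n)=(0,0)$, the disc, $\Gamma$ is trivial and the factor can simply be dropped.

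The key step is to identify the left-hand side with $(\prod_i\cs\Gamma_{g_i,n_i+1})^{\wedge}$ compatibly with the comparison maps. I would argue this through the product stack $\mathcal M:=\prod_i\mathcal M^{\tan}_{g_i,n_i+1}$, which is again of finite type.

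First, the comparison theorem for stacks locally of finite type over $\mathbb C$ (the same input used in Lemma~\ref{lemma:tangent-moduli-comparison}) identifies $\widehat{\Pi^{\acute et}_\infty(\mathcal M)}$ with the profinite completion of the homotopy type of the analytic stack $\mathcal M^{an}\simeq\prod_i \mathcal M^{\tan,an}_{g_i,n_i+1}$. That analytic stack has homotopy type $\prod_i\cs\Gamma_{g_i,n_i+1}$, being presented by the contractible space $\prod_i\mathrm{Teich}$ modulo $\prod_i\Gamma$.

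Second, I need $\widehat{\Pi^{\acute et}_\infty(\mathcal M)}\simeq(\prod_i X_i)^{\wedge}$. To get this, I would choose atlases $U^{(i)}\to\mathcal M^{\tan}_{g_i,n_i+1}$ as in the proof of Lemma~\ref{lemma:tangent-moduli-admissible}. The product atlas has \v{C}ech nerve $\prod_i U^{(i)}_\bullet$, and each level is a finite product of qcqs schemes. Corollary~\ref{cor:haine-qcqs-schemes}, together with the scheme K\"unneth statement over an algebraically closed field implicit in Haine's Example~2.17, then handles each simplicial level. Finally, I commute geometric realization with completion, products and protruncation, using that $\mathcal H$ is closed under geometric realizations and that finite products commute with sifted colimits.

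Last, I check that the composite of these identifications is the natural map in the statement. Both maps are induced by the projections to the factors, and all the equivalences above are natural in the factors.

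I expect the main obstacle to be this middle step: justifying that \'etale homotopy type of a product of stacks over $\overline{\mathbb Q}$ is the (profinitely completed) product of \'etale homotopy types. This is not formal, since $\Pi^{\acute et}_\infty$ need not preserve products before completion in positive characteristic. My first attempt would bypass it entirely by working levelwise on the \v{C}ech nerves and invoking Corollary~\ref{cor:haine-qcqs-schemes}, where Haine's theorem already encodes the needed K\"unneth formula. If that bookkeeping becomes awkward, I would instead use the analytic comparison on both sides: show that $(\prod_i X_i)^{\wedge}$ and $\widehat{\Pi^{\acute et}_\infty(\mathcal M)}$ are both completions of $\prod_i\cs\Gamma_{g_i,n_i+1}$ via the Artin comparison applied levelwise.
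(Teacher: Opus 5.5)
Your proposal follows the paper's proof: admissibility from Lemma~\ref{lemma:tangent-moduli-admissible}, the product formula of Proposition~\ref{prop:haine-product}, Lemma~\ref{lemma:tangent-moduli-comparison} applied to each factor, and the comparison theorem applied to the product stack $\prod_i\mathcal M^{\tan}_{g_i,n_i+1}$ to identify the left-hand side. The identification $\bigl(\prod_i\Pi^{\acute et}_\infty(\mathcal M^{\tan}_{g_i,n_i+1})\bigr)^{\wedge}\simeq\widehat{\Pi^{\acute et}_\infty(\prod_i\mathcal M^{\tan}_{g_i,n_i+1})}$ that you single out is the step the paper asserts in one sentence; it rests on a characteristic-zero K\"unneth formula for $\Pi^{\acute et}_\infty$, since Corollary~\ref{cor:haine-qcqs-schemes} only compares products of prospaces, and you could avoid the \'etale detour altogether: mapping class groups are of type $F_\infty$, so $\Map(\cs\Gamma_{g,n+1},K)$ is $\pi$-finite for every $\pi$-finite $K$, and the product comparison then follows directly from the universal property of profinite completion.
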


\begin{proof}
By Lemma~\ref{lemma:tangent-moduli-admissible}, each prospace
$\Pi_\infty^{\acute et}(\mathcal M_{g_i,n_i+1}^{\tan})$ is admissible. Proposition~\ref{prop:haine-product} therefore gives an equivalence
\[
\left(
\prod_{i=1}^r
\Pi_\infty^{\acute et}(\mathcal M_{g_i,n_i+1}^{\tan})
\right)^{\wedge}
\simeq
\prod_{i=1}^r
\widehat{\Pi_\infty^{\acute et}(\mathcal M_{g_i,n_i+1}^{\tan})}.
\]
Using Lemma~\ref{lemma:tangent-moduli-comparison} on each factor identifies the right-hand side with
\[
\prod_{i=1}^r(\cs\Gamma_{g_i,n_i+1})^{\wedge}.
\]
The left-hand side is identified with
\[
\left(\prod_{i=1}^r \cs\Gamma_{g_i,n_i+1}\right)^{\wedge}
\]
by applying the same comparison to the product of the corresponding tangent-direction stacks. This gives the desired product comparison.
\end{proof}

This product comparison is exactly what is needed to verify the Segal condition for the profinite completion of the modular dendroidal nerve of $\bS$.

\begin{lemma}\label{lemma:teichmuller-tower-is-modular-infty-operad}
The composite
\[
\bM^{\op}\xrightarrow{\nerve\bS}\Grpd\xrightarrow{\cs}\sSet\xrightarrow{\widehat{(-)}}\widehat{\isSet}
\]
defines a reduced modular $\infty$-operad in profinite spaces.
\end{lemma}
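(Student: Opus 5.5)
We have to check two things for $Y:=\widehat{(-)}\circ\cs\circ\nerve\bS$: that it is reduced, and that it satisfies the Segal condition of Definition~\ref{def: infty modular}.

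\emph{Reducedness.} $(\nerve\bS)_{\updownarrow}$ is the empty product, which is the terminal groupoid. Its classifying space is a point, and the profinite completion of a point is the terminal profinite space. Hence $Y_{\updownarrow}=*$.

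\emph{Segal condition.} Since $Y$ is reduced, the edge terms in the Segal core $\Sc[G]$ contribute only points. So it suffices to show that for every $G\in\bM$ the Segal map
\[
Y_G\longrightarrow \prod_{v\in V_G} Y_{\corolla_v}
\]
is an equivalence in $\widehat{\isSet}$. Products in $\widehat{\isSet}=\Pro(\isSet_\pi)$ already compute homotopy products, so no fibrant replacement is needed. I will argue in three steps.

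First, by Definition~\ref{def: modular nerve}, $(\nerve\bS)_G=\prod_{v}\bS(\epsilon(v),|\nb(v)|)$ strictly. Since $\cs$ preserves finite products,
\[
(\cs\nerve\bS)_G\cong\prod_{v\in V_G}\cs\bS(\epsilon(v),|\nb(v)|).
\]
Second, each $\bS(g,n+1)$ is a connected groupoid with automorphism group $\Gamma_{g,n+1}$, so $\cs\bS(g,n+1)\simeq\cs\Gamma_{g,n+1}$. For $(g,n+1)=(0,2)$ this is $\cs\mathbb Z$, which is the case $\Gamma_{0,2}\cong\mathbb Z$. Profinite completion sends weak equivalences to equivalences. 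Third, with these identifications the Segal map becomes the comparison map
\[
\Bigl(\prod_{v}\cs\Gamma_{\epsilon(v),|\nb(v)|}\Bigr)^{\wedge}\longrightarrow\prod_v\bigl(\cs\Gamma_{\epsilon(v),|\nb(v)|}\bigr)^{\wedge},
\]
and Proposition~\ref{prop:haine-boundary-product} says this is an equivalence. I also need this map to agree with the map $Y_G\to Y_{\corolla_v}$ induced by the corolla inclusions $\corolla_v\to G$. That holds because the nerve's restriction to $\corolla_v$ is the projection onto the $v$-factor, and the identifications above are natural in $G$.

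The main obstacle is the third step, specifically whether Proposition~\ref{prop:haine-boundary-product} covers every factor that can occur. Graphs in $\bM$ have arbitrary genus, so higher-genus factors appear, and for these goodness is unknown. This is exactly why the product comparison has to go through étale homotopy types rather than through \cite[Proposition~3.9]{Boavida-Horel-Robertson}.

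The degenerate cases also need care. Vertices of type $(0,2)$ give $\cs\mathbb Z$, and $\mathcal M_{0,2}^{\tan}$ should be handled by the same argument: it is a $\mathbb G_m$-quotient type stack, whose homotopy type is $\cs\mathbb Z$. Vertices of type $(0,1)$, which the indexing $2g+n\geq1$ excludes from $\bS$, must simply not appear. If Proposition~\ref{prop:haine-boundary-product} turns out not to cover type $(0,2)$ directly, I would fall back on the observation that $\mathbb Z$ is good with $\cs\mathbb Z$ of finite type. Then \cite[Proposition~3.9]{Boavida-Horel-Robertson} splits off these factors, and Proposition~\ref{prop:haine-boundary-product} handles the remaining factors.
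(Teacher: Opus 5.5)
Your argument is correct and is essentially the paper's proof. Both use the same ingredients: reducedness because the edge has no vertices, the strict Segal formula for $\nerve\bS$, preservation of finite products by $\cs$, the identification $\cs\bS(g,n+1)\simeq\cs\Gamma_{g,n+1}$, and then Proposition~\ref{prop:haine-boundary-product} for the completed product comparison. Two remarks: your fallback for $(0,2)$-vertices is not needed, since that proposition is stated for all $g_i,n_i\geq 0$ and Lemma~\ref{lemma:tangent-moduli-comparison} covers $(0,2)$; and $(0,1)$-vertices do occur in graphs of $\bM$, so rather than saying they ``must not appear,'' read $\bS(0,1)$ as empty, which makes both sides of the Segal map empty there and the condition hold trivially.
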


\begin{proof}
Let $G$ be a graph with vertices $v_1,\ldots,v_r$, and write
$\epsilon(v_i)=g_i$ and $|\nb(v_i)|=n_i+1$. Since $\cs$ preserves finite products and weak equivalences, the Segal equivalence for the modular dendroidal nerve gives
\[
(\cs\nerve\bS)_G
\simeq
\prod_{i=1}^r \cs\bS(g_i,n_i+1).
\]
The groupoid $\bS(g_i,n_i+1)$ is connected with automorphism group $\Gamma_{g_i,n_i+1}$, so
$\cs\bS(g_i,n_i+1)\simeq \cs\Gamma_{g_i,n_i+1}$. Proposition~\ref{prop:haine-boundary-product} then identifies the profinite completion of the product above with the product of the completed corolla values. Thus the Segal map
\[
(\widehat{\cs\nerve\bS})_G
\longrightarrow
\prod_{i=1}^r
(\widehat{\cs\nerve\bS})_{\corolla_{(g_i,n_i+1)}}
\]
is a weak equivalence for every graph $G$. The exceptional edge is still sent to the point. Hence $\widehat{\cs\nerve\bS}$ is a modular $\infty$-operad in profinite spaces.
\end{proof}

Having constructed the profinite-space analogue of our modular operad in groupoids, we can now revisit the two-level principle in this homotopy-coherent setting. The statement for $\bS$ says that genus-one data controls the entire tower (Corollary~\ref{cor: truncation and endomorphisms of the surface mod operad}). The same phenomenon persists after passing to the modular $\infty$-operad in profinite spaces.

\begin{prop}\label{prop: two-level profinite space model}
Restriction to the genus-one truncation induces an isomorphism
\[
\pi_0\mathbb R\End_{\dMod(\widehat{\isSet})}(\widehat{\cs\nerve\bS})
\longrightarrow
\pi_0\mathbb R\End_{\dMod_{\leq 1}(\widehat{\isSet})}(\trun{1}\widehat{\cs\nerve\bS}).
\]
\end{prop}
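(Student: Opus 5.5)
Reduce the statement about profinite spaces to the statement about profinite groupoids, and then apply the two-level principle already proved for $\bS$ (Corollary~\ref{cor: truncation and endomorphisms of the surface mod operad}).

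\emph{Step 1.} Identify both sides with homotopy-category endomorphisms of profinite groupoid models. On the full tower, Lemma~\ref{lemma:teichmuller-tower-is-modular-infty-operad} shows that $\widehat{\cs\nerve\bS}$ is a modular $\infty$-operad, and the Segal condition reduces its values to corolla values. We would then compare $\widehat{\cs\nerve\bS}$ with $\cs\nerve\widehat{\bS}$ using the canonical map $\widehat{\cs\mathscr E}\to\cs\widehat{\mathscr E}$. This comparison is only known to be an equivalence for good groupoids (Lemma~\ref{lemma: B of good groupoids commutes with hat}). So on the full tower we do not compare with $\cs\nerve\widehat{\bS}$ directly. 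Instead we use the derived adjunction
\[
\mathbb R\End(\widehat{\cs\nerve\bS})\simeq \mathbb R\Map_{\dMod(\isSet)}(\cs\nerve\bS,|\widehat{\cs\nerve\bS}|),
\]
valid because completion is entrywise and a left Quillen functor. The same adjunction works on the genus-one side with $\bM_{\le 1}$.

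\emph{Step 2.} Show that restriction along $i_1\colon\bM_{\le1}\hookrightarrow\bM$ induces an equivalence of derived mapping spaces out of $\cs\nerve\bS$ into any modular $\infty$-operad $Y$. The plan is to prove that $\cs\nerve\bS$ is the derived left Kan extension $\mathbb L(i_1)_!$ of its genus-one truncation within Segal objects, that is, after Segal localisation. For this, use homotopical full faithfulness of $\nerve$ (Theorem~\ref{thm: nerve is homotopically fully faithful}) and of $\cs$ to reduce to the strict statement $(\trun{1})_!\trun{1}\bS\cong\bS$ (Corollary~\ref{cor: maps out of bS}). We also need this to be a derived statement, which holds because $\bS$ and $\trun1\bS$ are cofibrant (Proposition~\ref{prop: cofibrant modular operad} and its truncated analogue). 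Combining with the adjunction $\mathbb L(i_1)_!\dashv i_1^*$ gives
\[
\mathbb R\Map(\cs\nerve\bS,Y)\simeq\mathbb R\Map(\trun1\cs\nerve\bS,i_1^*Y).
\]
We apply this with $Y=|\widehat{\cs\nerve\bS}|$. Since the limit functor $|-|$ is computed entrywise, it commutes with $i_1^*$.

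\emph{Step 3.} Reverse the adjunction on the truncated side, using that completion commutes with $i_1^*$ because both act entrywise. Then take $\pi_0$ and check that the composite bijection is the restriction map. It is also multiplicative, since every identification is natural in endomorphisms.

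\textbf{Main obstacle.} The main obstacle is Step 2: promoting the strict freeness $(\trun1)_!\trun1\bS\cong\bS$ to a statement about derived mapping spaces of Segal objects in spaces. We need to know that the derived left Kan extension followed by Segal localisation of $\trun1\cs\nerve\bS$ agrees with $\cs\nerve\bS$. The first thing to try is to use the Quillen equivalence between modular $\infty$-operads in $\sSet$ and simplicial modular operads, compatible with truncation (Appendix). Then $\cs\bS$ is a cofibrant simplicial modular operad whose truncation left-adjoint recovers it, by applying $\cs$ to Corollary~\ref{cor: maps out of bS} and using cofibrancy. If this fails, fall back on the groupoid level. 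Derived mapping spaces into the $1$-truncated target $|\widehat{\cs\nerve\bS}|$ are controlled by groupoids, and there Proposition~\ref{prop: strict-to-homotopy-endomorphisms} together with Corollary~\ref{cor: truncation and endomorphisms of the surface mod operad} already gives the bijection on $\pi_0$.
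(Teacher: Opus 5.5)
Your Steps 1 and 3 are fine. The completion adjunction, together with the fact that completion and $|-|$ both commute with $i_1^*$, reduces the claim to bijectivity of the restriction $\pi_0\mathbb{R}\Map(\cs\nerve\bS,Y)\to\pi_0\mathbb{R}\Map(\trun{1}\cs\nerve\bS,i_1^*Y)$ for $Y=|\widehat{\cs\nerve\bS}|$.

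The gap is Step 2. You correctly flag it, but neither of your routes closes it. Homotopical full faithfulness of $\nerve$ and $\cs$ only computes mapping spaces into objects of the form $\cs\nerve\calQ$ with $\calQ$ a modular operad in groupoids. $Y$ is not known to be of this form: its corolla values are $|(\cs\bS(g,n+1))^{\wedge}|$, not $\cs|\widehat{\bS}(g,n+1)|$.

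In route (a), the appendix gives no rectification of Segal modular spaces by simplicial modular operads; Theorem~\ref{thm: nerve is homotopically fully faithful} is only a full-faithfulness statement. Even granting a rectification, $\mathbb{L}(\trun{1})_!\trun{1}\cs\bS\simeq\cs\bS$ does not follow by applying $\cs$ to Corollary~\ref{cor: maps out of bS}, because the right adjoint $\cs$ has no reason to commute with $(\trun{1})_!$. What does follow is weaker. Since $\Pi_1$ commutes with $(\trun{1})_!$ (their right adjoints $\cs$ and $\trun{1}^*$ commute) and $\trun{1}\bS$ is cofibrant, the derived counit $\mathbb{L}(\trun{1})_!\trun{1}\cs\bS\to\cs\bS$ is an isomorphism on $\pi_0$ and $\pi_1$ of every entry, and no more. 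Nothing in the paper controls its higher homotopy in genus $\geq 2$: the complexes $\calM(S)$ are $2$-dimensional and are only shown to be simply connected. Since $Y$ may have non-trivial higher homotopy, a $\pi_{\leq 1}$-isomorphism of sources gives no bijection on $\pi_0\mathbb{R}\Map(-,Y)$.

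Route (b) hits the same wall from the other side. It needs $Y$ to be entrywise $1$-truncated, which amounts to $(\cs\Gamma_{g,n+1})^{\wedge}\simeq\cs\widehat{\Gamma}_{g,n+1}$, i.e.\ goodness of $\Gamma_{g,n+1}$. The paper has this only for $g\leq 1$ (Lemma~\ref{lemma: B of good groupoids commutes with hat}), which is exactly why Propositions~\ref{prop: genus-one B N and completion commute} and~\ref{prop: genus-one NS action on spaces} are stated only for the genus-one truncation. For the same reason, Proposition~\ref{prop: strict-to-homotopy-endomorphisms} and Corollary~\ref{cor: truncation and endomorphisms of the surface mod operad}, which concern profinite groupoids, transfer to $\widehat{\cs\nerve\bS}$ only through an equivalence $\widehat{\cs\nerve\bS}\simeq\cs\nerve\widehat{\bS}$ in all genera. (They also only handle automorphisms and object-fixing strict maps. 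For $\End$ you would want the derived groupoid-level form of Corollary~\ref{cor: maps out of bS}, using cofibrancy of $\trun{1}\bS$.)

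So your argument proves the proposition only under one of two extra inputs:
\begin{itemize}
\item goodness of all $\Gamma_{g,n+1}$; or
\item a space-level two-level principle making the derived counit above an entrywise equivalence.
\end{itemize}
Without one of these it is incomplete. The paper's own proof does not supply either. Its appeal to the Segal condition for $\widehat{\cs\nerve\bS}$ expresses values on graphs through values on corollas, not genus-$\geq 2$ corollas through genus-$\leq 1$ data, so it is in effect an assertion of your Step 2.
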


\begin{proof}
This is the profinite-space form of the two-level principle. The strict two-level result, Corollary~\ref{cor: maps out of bS}, says that maps out of $\bS$ are determined by their restrictions to the genus-one truncation because all generators and defining relations occur in genus at most one. Passing to the modular dendroidal nerve replaces this strict presentation by the corresponding homotopy-coherent Segal presentation. By Lemma~\ref{lemma:teichmuller-tower-is-modular-infty-operad}, the profinite completion $\widehat{\cs\nerve\bS}$ satisfies the Segal condition, so the higher-genus values are recovered from the same genus-one generators and relations by homotopy-coherent composition. Therefore restriction to $\bM_{\leq 1}$ induces an isomorphism on $\pi_0$ of derived endomorphism monoids.
\end{proof}

The action of $\NS$ on $\widehat{\bS}$ was constructed using the two-level principle for $\bS$, namely the fact that the genus-zero and genus-one generators and relations determine the full modular operad. After applying the modular dendroidal nerve and profinite classifying spaces, the comparison of strict automorphisms with homotopy automorphisms gives an action on the profinite-space model.

\begin{thm}\label{NS action on topological operad}
The action of $\NS$ on $\widehat{\bS}$ induces an action of $\NS$ on $\widehat{\cs\nerve\bS}$ by automorphisms in the homotopy category of modular $\infty$-operads in profinite spaces. More precisely, there is a homomorphism
\[
\NS\longrightarrow
\pi_0\mathbb R\Aut_{\dMod(\widehat{\isSet})}
(\widehat{\cs\nerve\bS}).
\]
\end{thm}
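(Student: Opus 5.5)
We construct the homomorphism as a composite of identifications already established, following the pattern of Proposition~\ref{prop: genus-one NS action on spaces} but now over all of $\bM$. The starting point is Corollary~\ref{cor: homotopy NS action}, which provides an injective homomorphism
\[
\NS\longrightarrow \pi_0\mathbb R\Aut_{\Mod_{\infty}(\widehat{\Grpd})}(\widehat{\nerve\bS})\cong \pi_0\mathbb R\Aut_{\Mod_{\infty}(\widehat{\Grpd})}(\nerve\widehat{\bS}),
\]
where the last identification is Lemma~\ref{lemma: N commutes with completion for groupoids}. Since the strict $\NS$-action on $\widehat{\bS}$ is by object-fixing automorphisms, each $F\in\NS$ gives a strict automorphism of the modular dendroidal object $\nerve\widehat{\bS}$. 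Applying the profinite classifying space $\cs$ entrywise then produces strict automorphisms of $\cs\nerve\widehat{\bS}$ in $\dMod(\widehat{\isSet})$, and this assignment is functorial, so it defines a homomorphism to $\pi_0\mathbb R\Aut_{\dMod(\widehat{\isSet})}(\cs\nerve\widehat{\bS})$. Homotopical full faithfulness of $\cs$ (Section~\ref{sec: classifying space functor}), applied levelwise, shows that this step is compatible with derived mapping spaces.

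The remaining task is to transport the action along a comparison between $\cs\nerve\widehat{\bS}$ and $\widehat{\cs\nerve\bS}$. There is a natural map
\[
\widehat{\cs\nerve\bS}\longrightarrow \cs\widehat{\nerve\bS}\cong\cs\nerve\widehat{\bS},
\]
given entrywise by the canonical maps $\widehat{\cs\mathscr E}\to\cs\widehat{\mathscr E}$. If this map is an equivalence in the projective structure on $\dMod(\widehat{\isSet})$, then conjugating by it yields the desired homomorphism into $\pi_0\mathbb R\Aut_{\dMod(\widehat{\isSet})}(\widehat{\cs\nerve\bS})$.

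I expect this comparison to be the main obstacle. Lemma~\ref{lemma: B of good groupoids commutes with hat} establishes the corolla-level equivalence only in genus at most one, and goodness of $\Gamma_{g,n+1}$ for $g\ge2$ is not available. To get around this, I would use the two-level principle instead of a levelwise equivalence. Proposition~\ref{prop: genus-one B N and completion commute} gives the comparison after applying $\trun{1}$, and Proposition~\ref{prop: genus-one NS action on spaces} gives the $\NS$-action on $\trun{1}\widehat{\cs\nerve\bS}$ by homotopy automorphisms. By Proposition~\ref{prop: two-level profinite space model}, restriction to $\bM_{\le1}$ induces an isomorphism
\[
\pi_0\mathbb R\End(\widehat{\cs\nerve\bS})\cong\pi_0\mathbb R\End(\trun{1}\widehat{\cs\nerve\bS}).
\]
Lemma~\ref{lemma:teichmuller-tower-is-modular-infty-operad} ensures that the Segal condition holds on the full object, which is what lets the two-level principle apply there. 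Because this restriction is an isomorphism of monoids, it preserves invertibility. Each genus-one homotopy automorphism therefore lifts uniquely to a homotopy automorphism of $\widehat{\cs\nerve\bS}$, and uniqueness of the lift forces these lifts to compose compatibly, so we obtain a homomorphism.

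Finally, I would check that this homomorphism is compatible with the one defined directly in the first two paragraphs in genus at most one, so that it is genuinely induced by the action on $\widehat{\bS}$. This follows because both constructions restrict along $\trun{1}$ to the map of Proposition~\ref{prop: genus-one NS action on spaces}.
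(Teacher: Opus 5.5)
Your argument is correct relative to the results the paper states earlier, but it does not follow the paper's route. The paper's proof is essentially your first two paragraphs. It applies $\nerve$ and classifying spaces to the strict object-fixing automorphisms $\hat\iota_F$, asserts that this gives self-maps of $\widehat{\cs\nerve\bS}$, and concludes using Lemma~\ref{lemma:teichmuller-tower-is-modular-infty-operad} and Proposition~\ref{prop: strict-to-homotopy-endomorphisms}. It does not address the point you isolate. The maps $\cs\nerve\hat\iota_F$ act on $\cs\nerve\widehat{\bS}$, and the comparison $c\colon\widehat{\cs\nerve\bS}\to\cs\nerve\widehat{\bS}$ is only shown to be an equivalence in genus at most one (Lemma~\ref{lemma: B of good groupoids commutes with hat}, Proposition~\ref{prop: genus-one B N and completion commute}).

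You instead build the homomorphism in genus at most one via Proposition~\ref{prop: genus-one NS action on spaces}, then extend it through the monoid isomorphism of Proposition~\ref{prop: two-level profinite space model}. For this, note that $\trun{1}\widehat{\cs\nerve\bS}=\widehat{\cs\nerve(\trun{1}\bS)}$, since everything is entrywise and a graph of total genus at most one has only vertices of genus at most one. The step from a bijective monoid map to an isomorphism of unit groups is fine.

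Your route avoids invoking goodness of $\Gamma_{g,n+1}$ for $g\ge 2$. The cost is that all higher-genus content now sits in Proposition~\ref{prop: two-level profinite space model}, which the paper only sketches and which is not obviously easier. The set $\pi_0$ of derived maps $\cs\nerve\bS\to|\widehat{\cs\nerve\bS}|$ can detect the higher homotopy of $\widehat{\cs\Gamma_{g,n+1}}$, which is a $1$-type exactly when $\Gamma_{g,n+1}$ is good. The presentation theorem, by contrast, only controls maps into $1$-truncated targets.

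You also lose the explicit description of the higher-genus action as $\cs\nerve\hat\iota_F$. Your final compatibility check is misphrased as a result. The direct construction lives on $\cs\nerve\widehat{\bS}$, so what you want is $c\circ\Phi_F\simeq\cs\nerve(\hat\iota_F)\circ c$, where $\Phi_F$ is your lifted automorphism. The uniqueness this needs does not come from Proposition~\ref{prop: two-level profinite space model}, which concerns endomorphisms of $\widehat{\cs\nerve\bS}$. It comes from Corollary~\ref{cor: maps out of bS}, the completion adjunction, and homotopical full faithfulness of $\cs$ and $\nerve$. These suffice because $\cs\nerve\widehat{\bS}$ is entrywise $1$-truncated.
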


\begin{proof}
By Theorem~\ref{main theorem NS action}, the group $\NS$ acts on $\widehat{\bS}$ by object-fixing automorphisms. Applying the modular dendroidal nerve and profinite classifying spaces gives self-maps of $\widehat{\cs\nerve\bS}$. Lemma~\ref{lemma:teichmuller-tower-is-modular-infty-operad} shows that $\widehat{\cs\nerve\bS}$ is a modular $\infty$-operad in profinite spaces. The comparison between strict object-fixing automorphisms and homotopy automorphisms, together with the classifying-space functor, therefore sends the action map $\NS\longrightarrow \Aut_0(\widehat{\bS})$ to a homomorphism
\[
\NS\longrightarrow
\pi_0\mathbb R\Aut_{\dMod(\widehat{\isSet})}
(\widehat{\cs\nerve\bS}).
\]
\end{proof}
\appendix
\section{Homotopy Theory of Modular Operads}
\label{sec: homotopy theory of mod op and truncated mod op}

Modular operads can be viewed as algebras over a certain coloured operad (cf.~\cite[Appendix~A]{dch_coextension}). This perspective allows us to apply Theorem~2.1 of~\cite{bm_resolutions}, which ensures that, under suitable hypotheses on the base category $\bE$, categories of algebras over coloured operads inherit transferred model structures. More precisely, if $\bE$ is a cofibrantly generated symmetric monoidal model category with cofibrant unit, a symmetric monoidal fibrant replacement functor, and a cocommutative coalgebra interval, then the category of algebras over such a coloured operad admits a cofibrantly generated model structure in which weak equivalences and fibrations are defined entrywise in $\bE$.

Since our arguments rely on several specific features of the coloured operad governing modular operads, we begin with a brief review of coloured operads and
their algebras.

\begin{definition}
Let $\mathfrak{C}$ be a nonempty set of \emph{colours}. A $\mathfrak{C}$-coloured \emph{sequence} in $\bE$ is a collection of objects
\[
\mathcal{P}(c_0;c_1,\ldots,c_k)
\]
indexed by lists $(c_0;c_1,\ldots,c_k)$ of colours in $\mathfrak{C}$, together with right actions of the symmetric groups. Thus, for each $\sigma\in\Sigma_k$, there is an isomorphism
\[
\begin{tikzcd}
\mathcal{P}(c_0;c_1,\ldots,c_k) \arrow[r,"\sigma^*"] & \mathcal{P}(c_0;c_{\sigma(1)},\ldots,c_{\sigma(k)}).
\end{tikzcd}
\]

A $\mathfrak{C}$-coloured \emph{operad} is a $\mathfrak{C}$-coloured sequence $\mathcal{P}$ equipped with:
\begin{enumerate}
\item distinguished unit objects $\iota_c \in \mathcal{P}(c;c)$;
%equivalently maps $*\to \mathcal{P}(c;c)$ from the terminal object of $\bE$, for each $c\in\mathfrak{C}$;

    \item composition maps
    \[
    \begin{tikzcd}[column sep=large]
    \mathcal{P}(c_0;c_1,\ldots,c_k)\times \mathcal{P}(d_0;d_1,\ldots,d_j) \arrow[r,"\circ_i"] & \mathcal{P}(c_0;c_1,\ldots,c_{i-1},d_1,\ldots,d_j,c_{i+1},\ldots,c_k),
    \end{tikzcd}
    \] defined whenever $c_i=d_0$ and $1\leq i\leq k$.
\end{enumerate}
These data are required to satisfy the standard associativity, unitality, and equivariance axioms.
\end{definition}

A map $f:\mathcal{P}\to\mathcal{Q}$ of coloured operads is a map of the underlying coloured sequences that respects the units, compositions, and symmetric group actions. The category of $\mathfrak{C}$-coloured operads in $\bE$ is denoted $\Op_{\mathfrak{C}}(\bE)$. For further details, see \cite[Definition~1.1]{bm_resolutions} or \cite[§2.2]{DCH}.

\begin{definition}
Let $\mathcal{P}$ be a $\mathfrak{C}$-coloured operad. A \emph{$\mathcal{P}$-algebra} in $\bE$ is a collection of objects $\{X(c)\}_{c\in\mathfrak{C}}$ together with structure maps
\[
\begin{tikzcd}[column sep=large] \mathcal{P}(c_0;c_1,\ldots,c_k)\times X(c_1)\times\cdots\times X(c_k) \arrow[r,"\rho"] & X(c_0) \end{tikzcd}
\]
satisfying associativity, unitality, and equivariance with respect to the operad structure. The category of such algebras is denoted $\Alg_{\mathcal{P}}(\bE)$.
\end{definition}

\begin{example}\label{def: operad for modular operads}
The \emph{operad of modular operads} is an $\mathbb{N}\times\mathbb{N}_{\ge 1}$-coloured operad $\mathcal{MOp}$. For a list of pairs $\bigl((g,r);(g_1,r_1),\ldots,(g_n,r_n)\bigr),$ the object
\[
\mathcal{MOp}\bigl((g,r);(g_1,r_1),\ldots,(g_n,r_n)\bigr)
\]
is the set of strict isomorphism classes of connected genus-graded, labelled graphs $G=(G,\lambda,\ell,\epsilon)$ with $n$ vertices and $r$ legs, satisfying:
\begin{itemize}
    \item the vertex labelled by $i$ has valence $|\nb(v_{\lambda(i)})|=r_i+1$ and genus $\epsilon(v_{\lambda(i)})=g_i$; 
    \item the total genus of $G$ is $g$, equivalently
    \[
    g=\beta_1(G)+\sum_{i=1}^n g_i.
    \]
\end{itemize}

The symmetric group $\Sigma_n$ acts on $\mathcal{MOp}\bigl((g,r);(g_1,r_1),\ldots,(g_n,r_n)\bigr)$ by permuting the vertex labelling $\lambda$.

\smallskip
Operadic composition is defined by graph substitution. Given
\[
G\in \mathcal{MOp}\bigl((g,r);(g_1,r_1),\ldots,(g_n,r_n)\bigr) \quad \text{and} \quad  H_{v_{\lambda(i)}}\in \mathcal{MOp}\bigl((g_i,r_i);(h_1,s_1),\ldots,(h_m,s_m)\bigr),
\]
the composite $G\circ_i H_{v_{\lambda(i)}}$ is obtained by substituting the
graph $H_{v_{\lambda(i)}}$ into the vertex $v_{\lambda(i)}$ of $G$:
\[
G\circ_i H_{v_{\lambda(i)}}:=G\{H_{v_{\lambda(i)}}\},
\]
as in~\cite[Construction~1.18]{hry1}. This gives an element of
\[
\mathcal{MOp}\bigl((g,r);
(g_1,r_1),\ldots,(g_{i-1},r_{i-1}),
(h_1,s_1),\ldots,(h_m,s_m),
(g_{i+1},r_{i+1}),\ldots,(g_n,r_n)\bigr).
\]

Associativity, equivariance, and unitality follow from the corresponding properties of graph substitution. For a detailed construction and verification, see~\cite[Appendix~A]{dch_coextension} or~\cite[§13.4]{yau_modular}.
\end{example}

\begin{prop}\label{prop: modular operads are algebras over operads}
The category of algebras over the coloured operad $\mathcal{MOp}$ is naturally isomorphic to the category of modular operads in $\bE$:
\[
\Alg_{\mathcal{MOp}}(\bE)\cong \Mod(\bE).
\]
\end{prop}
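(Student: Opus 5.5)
The plan is to build an explicit isomorphism by comparing the free-algebra descriptions on both sides. First, write out what an $\mathcal{MOp}$-algebra $X=\{X(g,r)\}$ is in elementary terms. The structure maps $\rho_G:\prod_i X(g_i,r_i)\to X(g,r)$ are indexed by graphs $G$. Among them are the following generating graphs:
\begin{enumerate}
\item the corolla $C_{(g,r)}$ with relabelled legs, which gives the $\Sigma^+$-actions;
\item the two-vertex graph with one internal edge joining leg $i$ of the first vertex to leg $j$ of the second, which gives $\circ_i^j$;
\item the one-vertex graph with a loop joining legs $i<j$, which gives $\xi_i^j$;
\item the zero-vertex edge $\updownarrow$ in colour $(0,2)$, or equivalently the unit colour, which gives $\id\in X(0,2)$.
\end{enumerate}
Thus from an algebra one reads off a genus-graded sequence with unit, compositions and contractions. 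I will check the axioms of Definition~\ref{def: modular operad} by exhibiting, for each axiom, two iterated graph substitutions that yield strictly isomorphic labelled graphs. For example, both sides of axiom (6) correspond to the same graph: two vertices joined by an edge, with a loop on the first vertex. Associativity of the $\mathcal{MOp}$-action then gives the equation.

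Conversely, given a modular operad $\calP$, I will define $\rho_G$ for an arbitrary connected graph $G$. Choose a decomposition of $G$ as an iterated sequence of graftings and contractions: pick a spanning tree, graft along its edges, then contract the remaining edges. Apply the corresponding $\circ_i^j$ and $\xi_i^j$, followed by the boundary relabelling dictated by $\ell$. The central claim is that this is independent of the chosen decomposition and of the representative of the strict isomorphism class. This is exactly the statement that the modular operad axioms are a complete set of relations among elementary graph moves. I will prove it by the standard argument. Any two orderings of edge-gluings differ by a sequence of swaps of adjacent gluings. Each swap type is one of the following:
\begin{itemize}
\item two graftings, covered by the cyclic associativity and commutativity axioms;
\item two contractions, covered by axiom (5);
\item a grafting and a contraction, covered by axioms (6)–(8);
\item a change of spanning tree, covered by axiom (8) together with equivariance (4).
\end{itemize}
For this I will cite \cite[Theorem~3.7]{gk_modular} and \cite[§13]{yau_modular}, where precisely this coherence is established. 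Once well-definedness holds, compatibility of $\rho$ with graph substitution follows, because a decomposition of $G\{H\}$ can be chosen that first assembles $H$ and then assembles $G$. Unitality and $\Sigma_n$-equivariance, that is, permuting the vertex ordering $\lambda$, are immediate.

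Finally, I will check that the two constructions are mutually inverse and functorial. From algebra to operad and back to algebra recovers $\rho_G$, because every graph decomposes into generating graphs and the algebra axioms force $\rho_G$ to be the iterated composite. In the other direction the round trip is the identity by construction. Morphisms correspond on both sides to families of maps commuting with the structure, and commuting with all $\rho_G$ is equivalent to commuting with the generators. This gives a natural isomorphism of categories.

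The one subtlety to address is the exclusion of operations with empty boundary (Remark~\ref{rmk:indexing-modular-operads}). I will restrict colours to $r\geq1$ and check that every generating graph used has nonempty boundary, so that the correspondence respects this convention.

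I expect the main obstacle to be the coherence step: showing that $\rho_G$ does not depend on the chosen decomposition. I would rely on the published coherence theorems rather than redo the case analysis, and only verify that the indexing conventions, in particular the $(g,n+1)$ shift and the $z_m^+$ in axiom (8), match those sources.
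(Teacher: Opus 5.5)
Your proposal is correct and follows the same route as the paper. The paper gives no argument of its own for this identification: it defers to \cite[Appendix~A]{dch_coextension} and \cite[\S 13.4]{yau_modular}, where exactly this comparison between graph-indexed structure maps and the biased operations $\circ_i^j$, $\xi_i^j$, $\id$ is carried out via generating graphs and a coherence theorem. One caveat: \cite[Theorem~3.7]{gk_modular} is stated for stable graphs without a unit, so it does not cover the unit and the entries $(0,1)$, $(0,2)$ needed here, and for the coherence step you should rely on the Yau/DCH version rather than Getzler--Kapranov.
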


This identification allows us to import general results about algebras over coloured operads. In particular, under suitable hypotheses on the base category $\bE$, it gives a transferred model structure on $\Mod(\bE)$.

We will use this when $\bE$ is one of the following Cartesian monoidal model categories:

\begin{itemize}
  \item the category of simplicial sets $\sSet$, equipped with the Kan--Quillen model structure;
  \item the category of groupoids $\Grpd$, equipped with the canonical model structure in which weak equivalences are equivalences of categories~\cite{And};
  \item the category of profinite groupoids $\widehat{\Grpd}$, equipped with the cofibrantly generated model structure described by Horel~\cite[Theorem~4.12]{Horel_profinite_groupoids}.
\end{itemize}

In each case, $\bE$ is a cofibrantly generated Cartesian monoidal model category with cofibrant unit, a symmetric monoidal fibrant replacement functor, and a cocommutative coalgebra interval. Therefore Theorem~2.1 of \cite{bm_resolutions} applies and we have the following.

\begin{thm}\label{model structure on Mod(E)}
For each of the categories $\bE$ listed above, the category $\Mod(\bE)$ admits a cofibrantly generated model category structure in which a map of modular operads
$f:\mathcal{P}\longrightarrow \mathcal{Q}$  is a weak equivalence, respectively a fibration, if and only if each component
\[
f(g,n+1):\mathcal{P}(g,n+1)\longrightarrow \mathcal{Q}(g,n+1)
\]
is a weak equivalence, respectively a fibration, in $\bE$, for all $g\geq 0$ and $n\geq 0$.
\end{thm}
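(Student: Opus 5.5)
The statement is Theorem~\ref{model structure on Mod(E)}. I will obtain it as a transferred structure along the free--forgetful adjunction
\[
\mathcal F_{\mathcal{MOp}}:\prod_{(g,r)}\bE\rightleftarrows \Alg_{\mathcal{MOp}}(\bE)\cong\Mod(\bE):U,
\]
using Proposition~\ref{prop: modular operads are algebras over operads} to identify modular operads with $\mathcal{MOp}$-algebras. Here the product runs over the colours $(g,r)\in\mathbb N\times\mathbb N_{\ge1}$, and $\prod_{(g,r)}\bE$ carries the product model structure. The generating (trivial) cofibrations of $\Mod(\bE)$ will be the images under $\mathcal F_{\mathcal{MOp}}$ of the generating (trivial) cofibrations of $\bE$, placed in a single colour. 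Weak equivalences and fibrations are then created by $U$, which is exactly the entrywise description in the statement.

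The key step is to verify the hypotheses of Berger--Moerdijk \cite[Theorem~2.1]{bm_resolutions} for each of $\sSet$, $\Grpd$ and $\widehat{\Grpd}$. Each is cartesian closed, so the terminal unit is cofibrant; in $\sSet$ and $\Grpd$ every object is cofibrant. For the fibrant replacement functor I will take $\mathrm{Ex}^\infty$ on $\sSet$, which preserves finite products, and the identity on $\Grpd$, where everything is fibrant. On $\widehat{\Grpd}$ I will likewise use the identity, provided every object is fibrant in Horel's structure; I will check this against \cite[Theorem~4.12]{Horel_profinite_groupoids}. For the cocommutative coalgebra interval, I will use $\Delta[1]\to J$ (the nerve of the free-living isomorphism) in $\sSet$, and the finite contractible groupoid $\mathscr I$ in $\Grpd$ and $\widehat{\Grpd}$. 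The diagonal makes each of these a cocommutative coalgebra, as in any cartesian category.

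Once these hypotheses are checked, \cite[Theorem~2.1]{bm_resolutions} yields the transferred, cofibrantly generated model structure. Its path objects are built from $\mathcal Q^{\mathscr I}$, which matches the homotopy notion used in Section~\ref{sec: homotopy endomorphisms}.

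I expect the main obstacle to be $\widehat{\Grpd}$. Horel's structure is naturally \emph{fibrantly} generated, and smallness of domains, needed for the small object argument, is delicate for pro-objects. Cofibrant generation should still follow from \cite[Theorem~4.12]{Horel_profinite_groupoids}, which provides it. If smallness fails, I would instead run Kan's transfer lemma directly. In that approach, the path object $\mathcal Q^{\mathscr I}$ together with the fibrancy of all objects supplies the acyclicity condition (Quillen's path-object argument), so that relative $\mathcal F_{\mathcal{MOp}}(J)$-cell complexes are weak equivalences.
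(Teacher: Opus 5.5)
Your route is the paper's: identify $\Mod(\bE)\cong\Alg_{\mathcal{MOp}}(\bE)$ and apply \cite[Theorem~2.1]{bm_resolutions}. For $\sSet$ and $\Grpd$ your check of the hypotheses is correct: $\mathrm{Ex}^\infty$, respectively the identity, serves as symmetric monoidal fibrant replacement, and the interval carries the diagonal as cocommutative comultiplication. This is more explicit than the paper, which only asserts that the hypotheses hold in all three cases.

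The gap is the case $\bE=\widehat{\Grpd}$. Two of the hypotheses you assert fail there, and your fallback repairs neither.

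\emph{Cartesian closedness.} $\widehat{\Grpd}$ is not cartesian closed. The discrete-groupoid functor from profinite sets is fully faithful and preserves finite products and all colimits (it is left adjoint to taking objects). For an infinite profinite set $X$, the canonical map $\coprod_{\mathbb N}X\to X\times\coprod_{\mathbb N}*$ is not an isomorphism. Dually, $\mathrm{Clop}(X)\otimes 2^{\mathbb N}\to\prod_{\mathbb N}\mathrm{Clop}(X)$ misses the sequences with infinitely many distinct values. Berger--Moerdijk's setting is a closed symmetric monoidal model category. Their analysis of free algebras, and of pushouts along free maps, uses that $\times$ commutes with colimits in each variable. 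This is also the standing assumption on $\bE$ in Section~2, which $\widehat{\Grpd}$ violates.

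\emph{Cofibrant generation.} The paper itself, in Section~2 and citing the same \cite[Theorem~4.12]{Horel_profinite_groupoids}, calls Horel's structure cocombinatorial, i.e.\ fibrantly generated. The obvious domains are not small. For instance, the colimit in $\widehat{\Grpd}$ of the discrete groupoids $\{1\}\subset\{1,2\}\subset\cdots$ is the discrete groupoid on the Stone--\v{C}ech compactification of $\mathbb N$. So $\Hom(*,-)$ does not commute with this cell complex. The analogous $\kappa$-chains fail for every regular $\kappa$, via uniform ultrafilters on $\kappa$.

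Kan's transfer lemma does not avoid this. It needs $\mathcal F_{\mathcal{MOp}}(I)$ and $\mathcal F_{\mathcal{MOp}}(J)$ to permit the small object argument in $\Mod(\widehat{\Grpd})$. Quillen's path-object argument with $\calQ^{\mathscr I}$ only discharges the \emph{other} hypothesis, namely that relative $\mathcal F_{\mathcal{MOp}}(J)$-cell complexes are weak equivalences. So the profinite case needs a genuinely different construction, for example building the two factorizations in $\Mod(\widehat{\Grpd})$ directly, or using a pro-category model structure.

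The paper's one-line appeal to Berger--Moerdijk has exactly the same gap at this point. So this is not a divergence from the paper, but your proposal does not close it either.
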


In this model structure, fibrant objects are precisely those modular operads that are entrywise fibrant in $\bE$. In particular, since every groupoid is fibrant in the canonical model structure, every modular operad $\mathcal{P}\in\Mod(\Grpd)$ is fibrant.

\begin{prop}\label{prop: cofibrant modular operad}
Let $\mathcal{P}\in\Mod(\Grpd)$ be a modular operad. If the underlying modular operad in sets $\ob(\mathcal{P})\in\Mod(\Set)$ is free, then $\mathcal{P}$ is cofibrant in $\Mod(\Grpd)$.
\end{prop}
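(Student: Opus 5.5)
The plan is to reduce cofibrancy to a lifting property against acyclic fibrations and then use freeness on objects to build lifts one generator at a time. The transferred model structure of Theorem~\ref{model structure on Mod(E)} has entrywise fibrations and weak equivalences, so a trivial fibration $p:\mathcal{Q}\to\mathcal{R}$ in $\Mod(\Grpd)$ is entrywise a trivial fibration of groupoids. That is, each $p(g,n+1)$ is an equivalence which is an isofibration; equivalently, it is surjective on objects and fully faithful. It therefore suffices to show that every map $f:\mathcal{P}\to\mathcal{R}$ lifts along every such $p$, since cofibrancy means exactly that $\varnothing\to\mathcal{P}$ (the initial modular operad) has the left lifting property against trivial fibrations.

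I would build the lift $\tilde f:\mathcal{P}\to\mathcal{Q}$ first on objects and then on morphisms. For objects, write $\ob(\mathcal{P})=\mathcal{F}X$ for a genus-graded sequence of sets $X$. For each generator $x\in X(g,n+1)$, choose a preimage $\tilde f(x)\in\ob\mathcal{Q}(g,n+1)$ of $f(x)$, which exists since $p$ is surjective on objects. Freeness of $\ob(\mathcal{P})$ in $\Mod(\Set)$ extends this choice uniquely to a map of modular operads in sets $\ob\tilde f:\ob\mathcal{P}\to\ob\mathcal{Q}$ with $\ob(p)\circ\ob\tilde f=\ob f$. Here $\ob:\Mod(\Grpd)\to\Mod(\Set)$ preserves products and hence modular operad structure. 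Some care is needed over equivariance: a free modular operad is generated by a genus-graded sequence carrying $\Sigma^+$-actions. To make the choice equivariant, I would choose preimages on one representative of each $\Sigma^+$-orbit. I must then make sure stabilisers act freely on generators. If that fails, I would instead note that "free" should be read as free on a sequence of sets with free actions, or I would pick stabiliser-invariant lifts (in the case of $\bS$, the stabiliser $A_2^+$ is handled as in relation~\eqref{sym_rel_morph}). I expect this equivariance issue to be the main subtle point.

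For morphisms, given $\phi:G\to G'$ in $\mathcal{P}(g,n+1)$, full faithfulness of $p(g,n+1)$ provides a unique $\tilde f(\phi):\tilde f(G)\to\tilde f(G')$ with $p(\tilde f(\phi))=f(\phi)$. Uniqueness then gives everything else. Functoriality holds because both $\tilde f(\psi\phi)$ and $\tilde f(\psi)\tilde f(\phi)$ lie over $f(\psi\phi)$ with the same endpoints. Compatibility with compositions $\circ_i^j$, contractions $\xi_i^j$, units and $\Sigma_n^+$-actions holds because the two candidate morphisms in $\mathcal{Q}$ have the same source and target (by the object-level map being a modular operad map) and the same image under the fully faithful $p$, using that $p$ and $f$ commute with the structure. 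Hence $\tilde f$ is a map of modular operads with $p\tilde f=f$, and $\mathcal{P}$ is cofibrant.
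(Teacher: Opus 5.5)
Your lifting argument is the one the paper defers to \cite[Proposition~6.8]{Horel_profinite_groupoids}: lift generating objects using surjectivity of $p$ on objects, extend by freeness, and lift morphisms uniquely using full faithfulness. It is complete when $\ob(\mathcal P)$ is free on a genus-graded sequence on which each $\Sigma_n^+$ acts freely, because you can then choose lifts on orbit representatives. The genuine gap is exactly the point you flagged and then tried to wave through. If a generator $x$ has a nontrivial stabiliser $H$, you need an $H$-fixed object of $\calQ$ lying over the $H$-fixed object $f(x)$. A trivial fibration (entrywise fully faithful and surjective on objects) provides no such object, so ``pick stabiliser-invariant lifts'' is not an available move. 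Relation \eqref{sym_rel_morph} does not help either: it is a necessary condition on the target of a map out of $\bS$, not a way of producing fixed preimages in $\calQ$.

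This gap cannot be repaired, and it is precisely the case the paper needs: $\ob(\bS)=\Upsilon$ is free on $\Sigma_2^+/A_2^+$, and $(012)^*P=P$. Let $\widetilde{\Upsilon}$ be the free modular operad in sets on the free $\Sigma_2^+$-set $\Sigma_2^+$ placed in entry $(0,3)$. Let $q\colon\widetilde{\Upsilon}\to\Upsilon$ be induced by $\Sigma_2^+\to\Sigma_2^+/A_2^+$; it is surjective in every entry because vertex decorations lift. Let $\calQ(g,n+1)$ be the groupoid with object set $\widetilde{\Upsilon}(g,n+1)$ and $\Hom_{\calQ}(x,y):=\Hom_{\bS}(q(x),q(y))$, with all structure on morphisms inherited from $\bS$; this is well defined because $q$ is a map of modular operads. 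Then $\calQ\to\bS$ is entrywise fully faithful and surjective on objects, hence a trivial fibration. A lift of $\id_{\bS}$ along it would send $P$ to a $(012)$-fixed element of $\widetilde{\Upsilon}(0,3)$. But the only genus-zero trivalent graphs with three legs are corollas, so $\widetilde{\Upsilon}(0,3)\cong\Sigma_2^+$ is a free orbit with no fixed points. Hence $\bS$ is not cofibrant. The proposition holds only under your first reading of ``free'' (free actions on the generators), and that reading excludes $\bS$. So the paper's later appeal to this proposition for $\bS$ is not justified by it.
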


\begin{proof}
This follows by the same argument as for operads in groupoids; see \cite[Proposition~6.8]{Horel_profinite_groupoids}.
\end{proof}

\medskip 

A map of coloured operads $f:\mathcal{P}\longrightarrow \mathcal{Q}$ satisfying the \emph{monoidal extension property} (see~\cite[Definition~1.1]{dch_coextension}) induces an adjunction between categories of algebras:
\begin{equation}\label{adj: algebras over different operads}
\begin{tikzcd}
\Alg_{\mathcal{P}}(\bE) \arrow[r, shift left=1, "f_!"] &
\Alg_{\mathcal{Q}}(\bE) \arrow[l, shift left=1, "f^*"]
\end{tikzcd}
\end{equation}
See, for example,~\cite{bm_resolutions} or \cite[Remark~2.26]{dch_coextension}.

A coloured operad is said to be \emph{$\Sigma$-cofibrant} if it is cofibrant as a coloured sequence, that is, cofibrant in the underlying model category of $\mathfrak{C}$-coloured symmetric sequences in $\bE$ (cf.~\cite[Definition~4.7]{hry_shrink}). When $\mathcal{P}$ is $\Sigma$-cofibrant, any sufficiently well-behaved symmetric monoidal Quillen adjunction
\[
\begin{tikzcd}
\bE \arrow[r, shift left=1] & \mathscr{D} \arrow[l, shift left=1]
\end{tikzcd}
\]
lifts to a Quillen adjunction between categories of algebras:
\begin{equation}\label{adj: algebra change of base}
\begin{tikzcd}
\Alg_{\mathcal{P}}(\bE) \arrow[r, shift left=1] &
\Alg_{\mathcal{P}}(\mathscr{D}) \arrow[l, shift left=1]
\end{tikzcd}
\end{equation}
Moreover, if the base adjunction is a Quillen equivalence, then so is the induced adjunction~\eqref{adj: algebra change of base}; see \cite[Theorem~5.8]{hry_shrink}.

These results make $\Sigma$-cofibrant operads especially well behaved from a homotopical perspective. In particular, the operad governing modular operads has this property.

\begin{lemma}\label{lemma: MO is Sigma cofibrant}
The coloured operad $\mathcal{MOp}$ governing modular operads is $\Sigma$-cofibrant.
\end{lemma}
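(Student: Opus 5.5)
To prove that $\mathcal{MOp}$ is $\Sigma$-cofibrant, the plan is to show that each symmetric group acts freely on the relevant colour profiles. It then suffices to check cofibrancy as a coloured symmetric sequence.

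In the base categories of interest ($\sSet$, $\Grpd$, $\widehat{\Grpd}$), $\mathcal{MOp}$ is the image of a coloured operad in $\Set$ under the strong monoidal functor $\Set\to\bE$ sending a set to the coproduct of copies of the unit. A coloured symmetric sequence in sets, regarded in $\bE$, is cofibrant in the projective model structure on $\Sigma$-objects precisely when, for each profile, the action of the stabiliser of the input colours is free on the underlying set of operations. So I reduce to a combinatorial freeness statement. For a profile $\bigl((g,r);(g_1,r_1),\ldots,(g_n,r_n)\bigr)$, a permutation $\sigma\in\Sigma_n$ preserving the list of input colours acts on strict isomorphism classes of labelled genus-graded graphs by changing the vertex ordering $\lambda$. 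I must show that if $\sigma^*[G]=[G]$, then $\sigma=\id$.

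Suppose $\varphi\colon G\to G$ is a strict isomorphism from $(G,\lambda\circ\sigma,\ell,\epsilon)$ to $(G,\lambda,\ell,\epsilon)$. Strictness means $\varphi$ preserves the vertex ordering, so $\varphi_V\circ\lambda\circ\sigma=\lambda$. It also preserves the local labellings $\ell_v$ and the boundary labelling $\ell_G$. Since $\varphi_V$ is determined by $\sigma$ and the labellings, I show $\varphi$ is the identity, which forces $\sigma=\id$.

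\begin{itemize}
\item The boundary labelling $\ell_G$ pins down $\varphi$ on every leg.
\item Because $G$ is connected with nonempty boundary, some vertex $v$ is incident to a leg.
\item At that vertex, $\varphi$ fixes the leg, and the local labelling $\ell_v$ is preserved. Hence $\varphi$ fixes $v$ and every half-edge at $v$.
\item Propagating along internal edges via $\dagger$ and connectedness, $\varphi$ fixes every vertex and half-edge.
\end{itemize}

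Then $\lambda\circ\sigma=\lambda$, so $\sigma=\id$.

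The main obstacle is the first step: justifying that freeness of the set-level actions implies $\Sigma$-cofibrancy in each of the three base categories. This is clean in $\sSet$ and $\Grpd$, where cofibrations are monomorphisms and injective-on-objects maps, and free $\Sigma$-sets give cofibrant objects. For $\widehat{\Grpd}$ I will appeal to the fact that constant finite free $\Sigma$-objects are cofibrant in Horel's model structure. I will also cite the analogous argument in \cite{hry_shrink} or \cite[Appendix~A]{dch_coextension}.

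One further point needs care. If strict isomorphism only required $\lambda$ to be preserved up to the fixed labelling conventions, the rigidity argument would still apply as long as the labellings $\ell_v$ are part of the data. Definition~\ref{def: labelled graph} ensures this.
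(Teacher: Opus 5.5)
Your proposal is correct and takes the same route as the paper. You reduce $\Sigma$-cofibrancy to freeness of the vertex-relabelling action, then use the boundary and local labellings together with connectedness to show that no non-trivial permutation fixes a strict isomorphism class; your leg-to-vertex propagation argument and your stabiliser formulation spell out in more detail the rigidity and freeness steps that the paper asserts in one line.
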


\begin{proof}
It suffices to show that the symmetric group $\Sigma_n$ acts freely on each set
\[
\mathcal{MOp}((g,p);(g_1,k_1),\ldots,(g_n,k_n)).
\]
This action is by permutation of the vertex labelling. Since the operations of $\mathcal{MOp}$ are represented by strict isomorphism classes of connected, genus-graded graphs with labelled vertices and non-empty boundary, a non-trivial permutation of the vertex labels cannot fix such a class. Thus the $\Sigma_n$-action is free. Hence $\mathcal{MOp}$ is $\Sigma$-cofibrant. For a similar argument, see~\cite[Proposition~4.13]{hry_shrink}.
\end{proof}

\begin{remark}
The restriction to operations with non-empty boundary is important for the $\Sigma$-cofibrancy argument. The boundary labelling, together with the strict isomorphism convention for labelled graphs, prevents a non-trivial permutation of the vertex labels from fixing an operation of $\mathcal{MOp}$. If one allows arity-zero operations, equivalently graphs with empty boundary, this rigidity can fail. Such graphs may have non-trivial automorphisms which compensate for a permutation of the vertex labels, and the corresponding symmetric group action need not be free. Thus the coloured operad governing modular operads with arity-zero operations is generally not $\Sigma$-cofibrant. See \cite[Example~6.3]{hry_shrink} or \cite[Figure~2]{deshmukh2022homotopical} for concrete examples.
\end{remark}

\begin{cor}\label{lemma: B is homotopically fully faithful}
The adjunction
\[
\begin{tikzcd}
\Mod(\sSet) \arrow[r, shift left=1, "\Pi_{1}"] & \Mod(\Grpd) \arrow[l, shift left=1, "\cs"]
\end{tikzcd}
\]
is a Quillen adjunction, and the right adjoint $\cs$ is homotopically fully faithful. In particular, for all $\mathcal{P},\mathcal{Q}\in\Mod(\Grpd)$, the induced map
\[
\mathbb{R}\Map_{\Mod(\Grpd)}(\mathcal{P},\mathcal{Q})
\longrightarrow
\mathbb{R}\Map_{\Mod(\sSet)}(\cs\mathcal{P},\cs\mathcal{Q})
\]
is a weak equivalence of spaces.
\end{cor}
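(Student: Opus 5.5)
The plan is to derive this corollary from the Quillen adjunction $\Pi_1\dashv\cs$ of Section~\ref{sec: classifying space functor} by transport along change of base for algebras over the coloured operad $\mathcal{MOp}$. By Proposition~\ref{prop: modular operads are algebras over operads}, $\Mod(\bE)\cong\Alg_{\mathcal{MOp}}(\bE)$ for both $\bE=\sSet$ and $\bE=\Grpd$. By Lemma~\ref{lemma: MO is Sigma cofibrant}, $\mathcal{MOp}$ is $\Sigma$-cofibrant. The base adjunction $\Pi_1\dashv\cs$ is a simplicial Quillen adjunction between Cartesian monoidal model categories. Its right adjoint $\cs$ preserves finite products strictly, and $\Pi_1$ preserves finite products as well, since $\Pi_1(X\times Y)\cong\Pi_1X\times\Pi_1Y$; hence the adjunction is strong symmetric monoidal. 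Applying the change-of-base result \eqref{adj: algebra change of base} (via \cite[Theorem~5.8]{hry_shrink}) then gives a lifted Quillen adjunction $\Pi_1\dashv\cs$ on modular operads. The right adjoint is computed entrywise, and it is right Quillen because fibrations and weak equivalences in $\Mod(\bE)$ are entrywise (Theorem~\ref{model structure on Mod(E)}), and $\cs$ preserves both entrywise.

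For homotopical full faithfulness, the key step is to show that the derived counit $\mathbb L\Pi_1(\cs\mathcal Q)\to\mathcal Q$ is a weak equivalence for every $\mathcal Q\in\Mod(\Grpd)$. Granting this, the derived adjunction gives
\[
\mathbb R\Map_{\Mod(\sSet)}(\cs\mathcal P,\cs\mathcal Q)\simeq\mathbb R\Map_{\Mod(\Grpd)}(\mathbb L\Pi_1\cs\mathcal P,\mathcal Q)\simeq\mathbb R\Map_{\Mod(\Grpd)}(\mathcal P,\mathcal Q),
\]
and one checks that this composite equivalence is the map induced by $\cs$.

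To prove that the derived counit is an equivalence, let $c:\widetilde{\cs\mathcal Q}\to\cs\mathcal Q$ be a cofibrant replacement in $\Mod(\sSet)$. Since $\cs$ detects and reflects weak equivalences entrywise, it suffices to show that the derived unit $\widetilde{\cs\mathcal Q}\to\cs\Pi_1\widetilde{\cs\mathcal Q}$ is an entrywise equivalence.

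I expect the main obstacle to be that $\Pi_1$ on algebras is not computed entrywise: the left adjoint on algebras only agrees with the base functor on free algebras. I would handle this with a cellular induction. Because $\mathcal{MOp}$ is $\Sigma$-cofibrant, cofibrant algebras are retracts of cell complexes built from free algebras $\mathcal{MOp}\circ K$ on cofibrant coloured collections $K$. On a free algebra, the left adjoint commutes with the free construction, because $\Pi_1$ is strong monoidal and preserves colimits. The free algebra is also a coproduct of products over graphs with free $\Sigma$-actions, so it is an entrywise homotopy colimit of products. Pushouts along generating cofibrations are handled by the standard filtration argument for $\Sigma$-cofibrant operads, as in \cite{bm_resolutions}.

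This reduces the problem to the following base-level fact. The unit $X\to\cs\Pi_1X$ is not an equivalence for general $X$; it is an equivalence only for $1$-types. However, the relevant objects $\widetilde{\cs\mathcal Q}$ are entrywise $1$-types, since each is equivalent to $\cs\mathcal Q$. I would therefore work in the Bousfield localisation of $\sSet$ at $1$-types, which is still Cartesian and satisfies the hypotheses of \cite[Theorem~5.8]{hry_shrink}. There, $\Pi_1\dashv\cs$ is a Quillen equivalence, so the lifted adjunction is a Quillen equivalence by \cite[Theorem~5.8]{hry_shrink}. Finally, mapping spaces into the local, entrywise $1$-truncated objects $\cs\mathcal Q$ agree in the localised and unlocalised model structures, which yields the claimed weak equivalence of mapping spaces.
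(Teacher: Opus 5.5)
Your argument reaches the right conclusion, and the lifting step (via $\Sigma$-cofibrancy of $\mathcal{MOp}$ and \cite[Theorem~5.8]{hry_shrink}) is the same as the paper's. After that you take a longer route, because of a misconception. The paper simply observes that the counit $\Pi_1\cs\calP\to\calP$ is entrywise an equivalence and concludes that the derived counit is one.

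What makes this work is exactly the fact you doubted: the lifted left adjoint \emph{is} computed entrywise, not only on free algebras. Since $\mathcal{MOp}$ is $\Set$-valued and $\Pi_1$ preserves coproducts and finite products (as you note yourself), entrywise $\Pi_1$ sends $\mathcal{MOp}$-algebras in $\sSet$ to $\mathcal{MOp}$-algebras in $\Grpd$. It is left adjoint to entrywise $\cs$: a family $\Pi_1X(c)\to Y(c)$ respects the structure maps out of $\Pi_1(X(c_1)\times\cdots\times X(c_k))\cong\Pi_1X(c_1)\times\cdots\times\Pi_1X(c_k)$ if and only if the adjoint family does. Entrywise $\Pi_1$ preserves all weak equivalences, so the derived counit $\Pi_1\widetilde{\cs\calQ}\to\Pi_1\cs\calQ\cong\calQ$ is an entrywise equivalence.

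Equivalently, your own reduction to the unit $\widetilde{\cs\calQ}\to\cs\Pi_1\widetilde{\cs\calQ}$, combined with your observation that $\widetilde{\cs\calQ}$ is entrywise a $1$-type, already finishes the proof. The cellular induction is therefore unnecessary. As sketched it would not close anyway, since the intermediate cells are not $1$-types, so the base-level statement cannot be applied cell by cell.

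Your localisation argument is still a valid independent route:
\begin{enumerate}
\item $L_{\leq 1}\sSet$ is Cartesian and combinatorial, with the same generating cofibrations as $\sSet$. It has a product-preserving fibrant replacement $\mathrm{cosk}_2\mathrm{Ex}^\infty$, and $\Delta^1$ still serves as an interval, so transfer applies.
\item The transferred structure on $\Mod(L_{\leq1}\sSet)$ has the same cofibrations as $\Mod(\sSet)$ and more weak equivalences. It is therefore a left Bousfield localisation of $\Mod(\sSet)$, and $\cs\calQ$ is fibrant in it.
\item Lifting the Quillen equivalence $L_{\leq1}\sSet\rightleftarrows\Grpd$, and comparing derived mapping spaces into the local object $\cs\calQ$, gives the claim.
\end{enumerate}
This buys a stronger statement: $\Mod(\Grpd)$ is Quillen equivalent to modular operads in $1$-types. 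It also needs no formula for the left adjoint. The cost is checking the transfer and Quillen-equivalence-lifting hypotheses for $L_{\leq1}\sSet$ and identifying the two transferred structures as a Bousfield localisation. The paper's one-line argument needs none of this.
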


\begin{proof}
By Lemma~\ref{lemma: MO is Sigma cofibrant}, the coloured operad $\mathcal{MOp}$ is $\Sigma$-cofibrant. The adjunction
\[
\begin{tikzcd}
\sSet \arrow[r, shift left=1, "\Pi_{1}"] &
\Grpd \arrow[l, shift left=1, "\cs"]
\end{tikzcd}
\]
is a Cartesian monoidal Quillen adjunction. Hence, by \cite[Theorem~5.8]{hry_shrink}, it lifts to a Quillen adjunction between categories of $\mathcal{MOp}$ algebras, equivalently between $\Mod(\sSet)$ and $\Mod(\Grpd)$.

It remains to check homotopical full faithfulness of the right adjoint. Since weak equivalences are defined entrywise and every groupoid is fibrant, every object of $\Mod(\Grpd)$ is fibrant. For each groupoid $G$, the counit $\Pi_{1}\cs G\longrightarrow G$ is an equivalence of groupoids. Therefore, for every $\mathcal{P}\in\Mod(\Grpd)$, the counit $\Pi_{1}\cs\mathcal{P}\longrightarrow \mathcal{P}$ is an entrywise weak equivalence. Since the derived counit is a weak equivalence, it follows that $\cs$ is homotopically fully faithful. The statement on derived mapping spaces follows.
\end{proof}

\subsection{Truncation of Modular Operads}\label{subsec: truncations}

We now define genus truncations of modular operads by restricting the coloured operad $\mathcal{MOp}$ governing modular operads. Recall that our colours are
pairs $(h,p)$, where $h$ is the genus and $p+1$ is the number of boundary components.

For $g\geq 0$, let $\mathcal{MOp}_{\leq g}$ be the coloured suboperad of $\mathcal{MOp}$ whose colours are the pairs \[(h,p)\qquad 0\leq h\leq g,\quad p\geq 0.
\] Its operations are those operations of $\mathcal{MOp}$ whose output colour has genus at most $g$. Thus
\[
\mathcal{MOp}_{\leq g}\bigl((h,p);(h_1,p_1),\ldots,(h_n,p_n)\bigr)
\]
is the set of strict isomorphism classes of connected genus-graded, labelled graphs $G=(G,\lambda,\ell,\epsilon)$ with $n$ vertices and $p+1$ legs, satisfying:
\begin{itemize}
    \item the vertex labelled by $i$ has valence $|\nb(v_{\lambda(i)})|=p_i+1$ and genus $\epsilon(v_{\lambda(i)})=h_i$;
    \item the total genus of $G$ is $h\leq g$, equivalently
    \[
    h=\beta_1(G)+\sum_{i=1}^n h_i.
    \]
\end{itemize}
Graph substitution preserves the total output genus, and hence $\mathcal{MOp}_{\leq g}$ is indeed a coloured suboperad of $\mathcal{MOp}$.

A \emph{genus-$g$ truncated modular operad} in $\bE$ is an algebra over $\mathcal{MOp}_{\leq g}$. We write
\[
\Mod_{\leq g}(\bE):=\Alg_{\mathcal{MOp}_{\leq g}}(\bE)
\]
for the category of genus-$g$ truncated modular operads. Concretely, an object of $\Mod_{\leq g}(\bE)$ consists of the entries $\mathcal P(h,p+1)$ for $0\leq h\leq g$ and $p\geq 0$, together with all composition and contraction operations whose output has genus at most $g$. Thus compositions with $h_1+h_2\leq g$ are retained, and contractions $\xi_i^j:\mathcal P(h,p+1)\to \mathcal P(h+1,p-1)$ are retained only when $h+1\leq g$.

The inclusion of coloured operads $\iota_g:\mathcal{MOp}_{\leq g}\hookrightarrow \mathcal{MOp}$
induces a restriction functor
\[
\trun{g}^*:=\iota_g^*:\Mod(\bE)\longrightarrow \Mod_{\leq g}(\bE),
\]
which we call the \emph{genus-$g$ truncation functor}. This functor admits both a left adjoint and a right adjoint:
\begin{equation}\label{eq: truncation adjunction}
\begin{tikzcd}
\Mod(\bE) \arrow[r, swap, "\trun{g}^*"] &
\Mod_{\leq g}(\bE)
\arrow[l, bend left=50, "(\trun{g})_*"]
\arrow[l, bend right=50, swap, "(\trun{g})_!"] .
\end{tikzcd}
\end{equation}
The left adjoint $(\trun{g})_!$ generalizes the notion of the \emph{modular envelope}; see~\cite{Ward}, \cite[Appendix~A]{dch_coextension}, and~\cite[§4]{GNPR} for related constructions.

\begin{prop}\label{prop: truncation is Quillen functor}
Let $\bE$ be a cofibrantly generated Cartesian monoidal model category with cofibrant unit, a symmetric monoidal fibrant replacement functor, and a cocommutative coalgebra interval. Then both $\Mod(\bE)$ and $\Mod_{\leq g}(\bE)$ admit cofibrantly generated model category structures in which weak equivalences and fibrations are defined entrywise. More precisely, a map $f:\mathcal P\to \mathcal Q$ in $\Mod(\bE)$ is a weak equivalence, respectively a fibration, if and only if each map
\[
f(h,m+1):\mathcal P(h,m+1)\longrightarrow \mathcal Q(h,m+1)
\]
is a weak equivalence, respectively a fibration, in $\bE$, for all $h\geq 0$ and $m\geq 0$. Similarly, in $\Mod_{\leq g}(\bE)$ this condition is imposed only for $0\leq h\leq g$.

Moreover:
\begin{itemize}
    \item the truncation functor
    $\trun{g}^*:\Mod(\bE)\to\Mod_{\leq g}(\bE)$ preserves weak equivalences and
    fibrations;
    \item both adjoints $(\trun{g})_!$ and $(\trun{g})_*$ are fully faithful;
    \item there are canonical isomorphisms
    \[
    \trun{g}^* \circ (\trun{g})_! \cong \id_{\Mod_{\leq g}(\bE)}
    \quad \text{and} \quad
    \trun{g}^* \circ (\trun{g})_* \cong \id_{\Mod_{\leq g}(\bE)}.
    \]
\end{itemize}
\end{prop}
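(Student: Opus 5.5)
We treat all three claims as formal consequences of the presentation $\Mod_{\leq g}(\bE)=\Alg_{\mathcal{MOp}_{\leq g}}(\bE)$ and $\Mod(\bE)=\Alg_{\mathcal{MOp}}(\bE)$ (Proposition~\ref{prop: modular operads are algebras over operads}). The key input is that $\iota_g:\mathcal{MOp}_{\leq g}\hookrightarrow\mathcal{MOp}$ is a \emph{full} suboperad on a subset of colours, and moreover a sieve-like one: any operation of $\mathcal{MOp}$ whose output colour has genus $\le g$ has all input colours of genus $\le g$, since $h=\beta_1(G)+\sum h_i$. Thus every operation of $\mathcal{MOp}$ with output in $\mathcal{MOp}_{\le g}$ already lies in $\mathcal{MOp}_{\le g}$.

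\textbf{Model structures.} We apply \cite[Theorem~2.1]{bm_resolutions} verbatim to both coloured operads $\mathcal{MOp}$ and $\mathcal{MOp}_{\le g}$. The hypotheses on $\bE$ are exactly those assumed, and the theorem produces transferred, cofibrantly generated structures with entrywise weak equivalences and fibrations. Because $\trun{g}^*$ is restriction of algebras along $\iota_g$, it simply forgets the colours of genus $>g$. It therefore preserves entrywise weak equivalences and fibrations, making it right Quillen for $(\trun{g})_!$.

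\textbf{Adjoints and the two isomorphisms.} Both adjoints exist because algebra categories over coloured operads are locally presentable and $\iota_g^*$ preserves all limits and colimits, which are computed colourwise on the underlying collections, or at least are created by the forgetful functor for sifted colimits; alternatively we write them explicitly. For $(\trun{g})_*$ we set $(\trun{g})_*\mathcal P(h,p+1)=\mathcal P(h,p+1)$ for $h\le g$ and $*$ otherwise. The sieve property ensures that every operation with output of genus $\le g$ only uses genus $\le g$ inputs, so this is a well-defined $\mathcal{MOp}$-algebra, and it is clearly right adjoint with $\trun{g}^*(\trun{g})_*=\id$. For $(\trun{g})_!$ we use the operadic left Kan extension formula: the value at colour $c$ is a coend over $\mathcal{MOp}(c;\underline{c'})$ with $c'$ ranging over colours of genus $\le g$. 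When $c$ has genus $\le g$, fullness together with the sieve property identifies $\mathcal{MOp}(c;-)$ restricted to such inputs with $\mathcal{MOp}_{\le g}(c;-)$, so the coend collapses by Yoneda to $\mathcal P(c)$. This gives $\trun{g}^*(\trun{g})_!\cong\id$, with the unit being this isomorphism.

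\textbf{Full faithfulness.} This follows formally: a left adjoint is fully faithful if and only if the unit is an isomorphism, and a right adjoint is fully faithful if and only if the counit $\trun{g}^*(\trun{g})_*\to\id$ is an isomorphism. The main obstacle is the coend collapse for $(\trun{g})_!$. We must carefully verify that the free-algebra/coequalizer description of the left adjoint in the coloured setting, and not just its underlying collection, restricts correctly to low genus. We would argue at the level of the monad: the free $\mathcal{MOp}$-algebra monad restricted to genus-$\le g$ colours equals the free $\mathcal{MOp}_{\le g}$-monad by the sieve property. The reflexive coequalizer computing $(\trun{g})_!$ then restricts termwise, since restriction preserves colimits colourwise.
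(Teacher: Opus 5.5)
Your proposal is correct and follows the paper's argument: $\trun{g}^*$ is restriction along the full coloured suboperad $\iota_g:\mathcal{MOp}_{\leq g}\hookrightarrow\mathcal{MOp}$, so it preserves entrywise weak equivalences and fibrations. Restricting either extension back to genus $\le g$ recovers the original object, so the unit of $(\trun{g})_!\dashv\trun{g}^*$ and the counit of $\trun{g}^*\dashv(\trun{g})_*$ are isomorphisms, and both adjoints are fully faithful. You also fill a gap the paper leaves implicit: you use that inputs of an operation have genus at most its output genus to build $(\trun{g})_*$ by extending with the terminal object, which is needed because fullness of $\iota_g$ alone does not guarantee a right adjoint. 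You correctly call the relevant map for $(\trun{g})_!$ the unit, where the paper says counit.
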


\begin{proof}
The functor $\trun{g}^*$ is restriction along the inclusion of coloured operads
\[
\iota_g:\mathcal{MOp}_{\leq g}\hookrightarrow \mathcal{MOp}.
\]
Since weak equivalences and fibrations are entrywise, restriction clearly preserves both weak equivalences and fibrations.

The left and right adjoints are the left and right algebraic extensions along $\iota_g$. Because $\mathcal{MOp}_{\leq g}$ is a full coloured suboperad of $\mathcal{MOp}$ on the colours of genus at most $g$, extending a truncated modular operad and then restricting back recovers exactly the original truncated
object. Equivalently, the counits
\[
\trun{g}^*(\trun{g})_! \longrightarrow \id_{\Mod_{\leq g}(\bE)}
\qquad\text{and}\qquad
\trun{g}^*(\trun{g})_* \longrightarrow \id_{\Mod_{\leq g}(\bE)}
\]
are isomorphisms. Hence both adjoints are fully faithful.
\end{proof}

As in the modular case, cyclic operads are algebras over a coloured operad. In fact, the coloured operad governing cyclic operads may be identified with the
genus-zero truncation
\[
\mathcal C \cong \mathcal{MOp}_{\leq 0}.
\]
Its operations are represented by labelled unrooted trees, equivalently by the genus-zero graphs
\[
T\in
\mathcal{MOp}_{\leq 0}\bigl((0,p);(0,k_1),\ldots,(0,k_n)\bigr).
\]
Here, as throughout, the colour $(0,p)$ corresponds to $p+1$ boundary components. The following identification can be found in \cite[Example~A3]{dch_coextension} or \cite[Section~1.6.4]{lukacs2010cyclic}.

\begin{prop}\label{prop: cyc are genus 0 modular operads}
There is a natural isomorphism of categories
\[
\Alg_{\mathcal C}(\bE)\cong \Cyc(\bE).
\]
%Equivalently, genus-zero truncated modular operads are cyclic operads.
\end{prop}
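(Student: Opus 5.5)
The plan is to build mutually inverse functors between $\Alg_{\mathcal C}(\bE)$ and $\Cyc(\bE)$, with $\mathcal C\cong\mathcal{MOp}_{\leq 0}$. The template is Proposition~\ref{prop: modular operads are algebras over operads}, run in genus zero. Every operation of $\mathcal{MOp}_{\leq 0}$ is a labelled tree, since a genus-zero connected graph has $\beta_1=0$ and all vertex genera equal $0$. In particular no contraction appears, because contraction creates a cycle and raises the genus. So an algebra $X$ amounts to the following data:
\begin{itemize}
\item objects $X(0,p)$ for $p\geq 0$;
\item for each tree with vertices of valence $k_i+1$, a structure map $\prod_i X(0,k_i)\to X(0,p)$;
\item compatibility of these maps with tree substitution and with vertex relabelling.
\end{itemize}

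\emph{From algebras to cyclic operads.} Given $X$, set $\calP(n):=X(0,n)$ for $n\geq 1$.
\begin{itemize}
\item The $\Sigma_n^+$-action comes from the unary operations of $\mathcal{MOp}_{\leq 0}$. These are corollas $C_{n+1}$ with boundary labelling $\ell:\{0,\dots,n\}\to\partial(C_{n+1})$. Relative to the vertex labelling, such strict isomorphism classes are indexed exactly by $\Sigma_n^+$, as in the remark opening Section~\ref{sec: modular operad of mapping class groups}.
\item The unit comes from the exceptional edge, or equivalently from $X(0,1)$ together with the nullary operation given by the unit tree.
\item The composition $\circ_i$ (equivalently $\circ_i^j$, by Remark~\ref{rmk:reduce-circij}) comes from the two-vertex tree obtained by grafting leg $i$ of the first corolla to leg $0$ of the second. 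Its outer legs are labelled in the standard block order.
\end{itemize}
The axioms of Definitions~\ref{def:operad} and~\ref{def: cyclic structure} then follow from the associativity and equivariance of the algebra. Each axiom is an identity between two iterated substitutions that give the same labelled tree. For example, the cyclic axiom says that relabelling the output of a two-vertex tree by $z^+$ yields the same tree as rotating the two corollas and regrafting.

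\emph{From cyclic operads to algebras.} Conversely, given a cyclic operad $\calP$, I would define the action of a labelled tree $T$ as follows. Root $T$ at leg $\ell_T(0)$ and use the $\Sigma^+$-actions to orient each vertex so that its outgoing half-edge is labelled $0$. This is possible by the remark after Definition~\ref{def: tree}. Then compose iteratively along the internal edges, working inward from the leaves, and finally apply the permutation that matches the leg labelling.

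The key step is \emph{well-definedness}. The result must not depend on the order in which edges are contracted, nor on the chosen root or the reorientation at each vertex. Independence of the order follows from the two operad associativity axioms, (1) and (2) of Definition~\ref{def:operad}, by a standard induction on the number of edges. Independence of the root follows from the cyclic compatibility axiom (2) of Definition~\ref{def: cyclic structure}, because moving the root across one edge is exactly what that axiom describes. I expect this root-independence argument to be the main obstacle: it involves delicate bookkeeping of permutations under the paper's conventions. To manage it, I would reduce to a single edge and invoke the equivalence of the $\circ_i$ and $\circ_i^j$ formalisms (Remark~\ref{rmk:reduce-circij}, \cite{DCH}).

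Once well-definedness is established, the remaining steps are routine:
\begin{itemize}
\item substitution of trees corresponds to nesting these composites, which gives associativity of the algebra;
\item the two constructions are inverse to one another, since each is determined by corollas and two-vertex trees, which generate $\mathcal{MOp}_{\leq 0}$ under substitution;
\item everything is natural in maps.
\end{itemize}
Alternatively, one could cite \cite[Example~A3]{dch_coextension} and \cite[§1.6.4]{lukacs2010cyclic} for the identification $\mathcal C\cong\mathcal{MOp}_{\leq 0}$ and deduce the isomorphism directly.
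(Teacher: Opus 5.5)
The paper gives no argument of its own here; it only cites \cite[Example~A3]{dch_coextension} and \cite[Section~1.6.4]{lukacs2010cyclic}. Your route is genuinely different: a direct ``tree monad'' proof. You identify the operations of $\mathcal{MOp}_{\leq 0}$ with labelled trees. Corollas, the exceptional edge and two-vertex trees then give the $\Sigma_n^+$-actions, the unit and the compositions. Conversely, a tree acts by rooting at $\ell_T(0)$ and composing inward. This is sound in outline, and it gives a self-contained proof in the paper's own conventions, which the citation leaves unchecked. One organisational remark: the root $\ell_T(0)$ is canonical, so root-independence is not needed to define the action of a single tree. It is needed when you verify compatibility with substitution $T\{H\}$, because the half-edge at the substituted vertex pointing toward the root of $T$ need not be the leg of $H$ labelled $0$.

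There is, however, one step that fails as written: your claim that the two constructions are mutually inverse ignores the colour $(0,0)$. With the paper's definition, $\mathcal{MOp}_{\leq 0}$ has colours $(0,p)$ for all $p\geq 0$. This includes $(0,0)$, a single boundary component (i.e.\ arity zero), so trees may have univalent vertices. You set $\calP(n):=X(0,n)$ only for $n\geq 1$, which discards $X(0,0)$. In the other direction your recipe has nothing to decorate a univalent vertex with, since $\Cyc(\bE)$ consists of reduced cyclic operads.

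This is not harmless. A tree with exactly one leg satisfies $\sum_v|\nb(v)|=2|V_T|-1$, so it has a univalent vertex. Hence every operation with output colour $(0,0)$ has an input of colour $(0,0)$. It follows that setting $X(0,0)=\varnothing$ and $X(0,p)=*$ for $p\geq 1$ defines an $\mathcal{MOp}_{\leq 0}$-algebra. It is not isomorphic to the terminal algebra, since their $(0,0)$-components differ, yet both restrict to the terminal cyclic operad.

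You therefore need one of two fixes:
\begin{enumerate}
\item Take $\mathcal C$ to be the full suboperad of $\mathcal{MOp}_{\leq 0}$ on the colours $(0,p)$ with $p\geq 1$, i.e.\ trees all of whose vertices have valence at least $2$. Your argument then goes through verbatim.
\item Enlarge $\Cyc(\bE)$ by an arity-zero component $\calP(0)$, with compositions $\calP(n)\times\calP(0)\to\calP(n-1)$ for $n\geq 1$ (and none producing empty boundary). Then extend your recipe to leaves decorated by $\calP(0)$.
\end{enumerate}
This mismatch is already present in the paper's conventions. So your fallback of citing \cite{dch_coextension,lukacs2010cyclic} also requires checking which version of cyclic operads those sources use.
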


As a consequence of Proposition~\ref{prop: cyc are genus 0 modular operads}, Theorem~2.1 of~\cite{bm_resolutions} applies to cyclic operads as well. Thus $\Cyc(\bE)$ admits a cofibrantly generated model category structure in which a map of cyclic operads $f:\mathcal P\to\mathcal Q$ is a weak equivalence,
respectively a fibration, if and only if each component $f(n):\mathcal P(n)\longrightarrow\mathcal Q(n)$ is a weak equivalence, respectively a fibration, in $\bE$, for all $n\geq 1$. Moreover, under the identification $\Mod_{\leq 0}(\bE)\cong\Cyc(\bE)$, the genus-zero truncation functor becomes
\[
\trun{0}^*:\Mod(\bE)\longrightarrow \Cyc(\bE).
\]
It preserves weak equivalences and fibrations, and hence is the right adjoint in a Quillen adjunction.

\section{Homotopical properties of the nerve}\label{sec: nerve is homotopically fully faithful}

Recall that $\dMod(\bE)$ denotes the category of modular dendroidal objects, that is, functors
\[
X:\bM^{\mathrm{op}}\longrightarrow \bE .
\]
This category carries several useful model structures. We write $\dMod(\bE)_p$ for the projective model structure, in which weak equivalences and fibrations are defined entrywise \cite[Theorem 11.6.1]{hirsch}.

The graphical category $\bM$ is also a generalized Reedy category \cite[Theorem~2.22]{hry1}. Hence, for any suitable model category $\bE$, $\dMod(\bE)$ admits a Reedy model structure; see \cite[Theorem~1.6]{bm_reedy}. We write $\dMod(\bE)_R$ for this Reedy model structure.

The identity functor on the underlying category $\dMod(\bE)$ induces a Quillen equivalence
\begin{equation}\label{identity qe}
\begin{tikzcd}
\dMod(\bE)_p \arrow[r, shift left=1] &
\dMod(\bE)_R \arrow[l, shift left=1].
\end{tikzcd}
\end{equation}
This follows from \cite[Lemma 3.9]{hry1}.

Modular $\infty$-operads are obtained as the fibrant objects in a left Bousfield localisation of the Reedy model structure. We use left Bousfield localisation in the standard sense: if $\mathcal{S}$ is a set of maps in a model category $\mathcal{M}$, then the localised model structure $\mathcal{L}_{\mathcal{S}}\mathcal{M}$ has the same cofibrations as $\mathcal{M}$, and its fibrant objects are precisely the fibrant objects $Z$ of $\mathcal{M}$ such that, for every map $A\to B$ in $\mathcal{S}$, the induced map of derived mapping spaces
\[
\mathbb{R}\Map(B,Z)\longrightarrow \mathbb{R}\Map(A,Z)
\]
is a weak equivalence. Its weak equivalences are the corresponding $\mathcal{S}$-local equivalences. We refer to \cite[Chapters~3--4]{hirsch} for the general theory.

In Definition~\ref{def: infty modular}, a modular dendroidal object $X\in\dMod(\bE)$ is called a modular $\infty$-operad if it is reduced and
satisfies the Segal condition. Here reduced means that $X_{\updownarrow}=*$.

Any modular dendroidal object can be reduced via the pushout
\[
\begin{tikzcd}
X_{\updownarrow}\times \bM[\updownarrow]\arrow[d]\arrow[r] &
\bM[\updownarrow]\arrow[d]\\
X\arrow[r] & X_* .
\end{tikzcd}
\]
This defines a functor $(-)_*:\dMod(\bE)\longrightarrow \dMod(\bE)_*$ which is left adjoint to the inclusion of reduced modular dendroidal objects into all
modular dendroidal objects.

Both the projective and Reedy model structures restrict to $\dMod(\bE)_*$. We write $\dMod(\bE)_{*,p}$ and $\dMod(\bE)_{*,R}$ for the reduced projective and reduced Reedy model structures, respectively.

For each graph $G\in\bM$, let $\Sc[G]_*\longrightarrow \bM[G]_*$ denote the reduced Segal core inclusion. Let $S$ be the set of these maps:
\[
S=\{\Sc[G]_*\longrightarrow \bM[G]_*\mid G\in\bM\}.
\] We denote the left Bousfield localisation of $\dMod(\bE)_{*,R}$ with respect to $S$ by $\mathcal{L}_S\dMod(\bE)_{*,R}$. Its fibrant objects are the Reedy
fibrant reduced modular dendroidal objects $X$ for which, for every graph $G$, the induced map
\[
\mathbb R\Map(\bM[G]_*,X)\longrightarrow \mathbb R\Map(\Sc[G]_*,X)
\]
is a weak equivalence. This is exactly the Segal condition.

We can now make precise what we mean by the $\infty$-category of modular $\infty$-operads.

\begin{definition}\label{def: infinity cat of infinity modular operads}
The $\infty$-category of modular $\infty$-operads, denoted $\Mod_{\infty}(\bE)$, is the $\infty$-category obtained from the full relative subcategory of $\dMod(\bE)_{*,p}$ spanned by the modular $\infty$-operads, with weak equivalences given by the entrywise weak equivalences.
\end{definition}

The main result of this appendix is that the modular dendroidal nerve is homotopically fully faithful. In other words, strict modular operads embed into modular $\infty$-operads without changing their derived mapping spaces.

\begin{thm}\label{thm: nerve is homotopically fully faithful}
Let $\bE=\sSet$ or $\Grpd$. For any two modular operads $\calP,\calQ\in\Mod(\bE)$, the nerve induces a weak equivalence of derived mapping spaces
\[
\mathbb{R}\Map_{\Mod(\bE)}(\calP,\calQ)
\longrightarrow
\mathbb{R}\Map_{\Mod_{\infty}(\bE)}(\nerve\calP,\nerve\calQ).
\]
Equivalently, the modular dendroidal nerve
\[
\nerve:\Mod(\bE)\longrightarrow \Mod_{\infty}(\bE)
\]
is homotopically fully faithful.
\end{thm}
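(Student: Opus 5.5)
Our plan is to realise $\nerve$ as the right adjoint of a Quillen equivalence between $\Mod(\bE)$ and the localised model structure $\mathcal{L}_S\dMod(\bE)_{*,R}$, and then deduce homotopical full faithfulness from the derived unit being an equivalence on nerves. First we note that $\nerve$ has left adjoint $\tau$ with $\tau\nerve\cong\id$ (\cite[Theorem~3.6]{hry_modular_nerve}). We check that $(\tau,\nerve)$ is a Quillen pair for the reduced projective structure $\dMod(\bE)_{*,p}$. The nerve preserves entrywise fibrations and weak equivalences, since $(\nerve\calP)_G=\prod_{v}\calP(\epsilon(v),|\nb(v)|)$ and finite products preserve fibrations and weak equivalences between fibrant objects in $\sSet$ or $\Grpd$. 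Because the identity is a Quillen equivalence $\dMod(\bE)_p\rightleftarrows\dMod(\bE)_R$ by \eqref{identity qe}, and localisation only adds weak equivalences, the pair descends to $\mathcal{L}_S\dMod(\bE)_{*,R}$ provided $\tau$ sends each $\Sc[G]_*\to\bM[G]_*$ to a weak equivalence. In fact $\tau$ sends it to an isomorphism: both sides have $\tau$ equal to the free modular operad on the corollas of $G$ glued along edges.

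Next we identify mapping spaces. The nerve of any $\calQ$ is reduced and satisfies the Segal condition strictly, so it is a modular $\infty$-operad; after a Reedy fibrant replacement $R$ (an entrywise equivalence) it becomes fibrant in the localised structure. For cofibrant $\calP$ (replace if needed, using that $\Mod(\bE)$ is cofibrantly generated by Theorem~\ref{model structure on Mod(E)}), we have
\[
\mathbb R\Map_{\Mod_\infty}(\nerve\calP,\nerve\calQ)\simeq\mathbb R\Map_{\Mod(\bE)}(\mathbb L\tau\,\nerve\calP,\calQ).
\]
It therefore suffices to show that the derived counit $\mathbb L\tau(\nerve\calP)\to\calP$ is a weak equivalence for every $\calP$, i.e.\ that $\mathbb L\tau\,\nerve\simeq\id$.

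This step is the main obstacle. Strictly, $\tau\nerve\cong\id$, but $\nerve\calP$ need not be cofibrant in $\dMod(\bE)_{*,p}$, so we must show that $\tau$ preserves the weak equivalence $Q\nerve\calP\to\nerve\calP$ from a cofibrant replacement. We would try to reduce this to $\Sigma$-cofibrancy of $\mathcal{MOp}$ (Lemma~\ref{lemma: MO is Sigma cofibrant}). Express $\tau$ on Segal objects as a coend over $\bM$, and compare the bar-type resolution $\int^{G}\bM[G]\times(\text{simplicial resolution})$ with the free--forgetful bar resolution $B(\mathcal F,\mathcal F,\calP)$ of $\calP$ as an $\mathcal{MOp}$-algebra. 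Freeness of the $\Sigma$-actions, which uses the nonempty-boundary restriction, makes the relevant quotients homotopy quotients. Hence $\tau$ of a Segal object is computed by the homotopy coend, and the homotopy coend of a Segal object recovers the strict operad up to entrywise equivalence. For $\bE=\Grpd$ one can alternatively reduce to $\sSet$ via $\cs$, using Corollary~\ref{lemma: B is homotopically fully faithful} and the fact that $\cs$ commutes with $\nerve$ because it preserves products. Once the derived counit is an equivalence, the two mapping spaces agree, proving the theorem.
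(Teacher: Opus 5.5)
\textbf{Where you agree with the paper.} Your reduction is the same as the paper's. $(\tau,\nerve)$ is a Quillen pair, and it descends to the Segal localisation because $\nerve$ sends fibrant operads to local objects. (Localise the projective structure $\dMod(\bE)_{*,p}$, where $\tau$ is known to be left Quillen, not the Reedy one.) Everything then comes down to the derived counit $\mathbb{L}\tau(\nerve\calP)\to\calP$. You are right that this is not formal: $\nerve\calP$ is not projectively cofibrant, so $\tau\nerve\cong\id$ says nothing about $\tau(Q\nerve\calP)$. The paper's proof of the theorem passes over this by saying the derived counit ``is represented by the ordinary counit''. The substantive input it supplies for this step is Lemma~\ref{derived unit}, which rests on Lemma~\ref{lemma: nerve of free operad}; the counit statement then follows from the triangle identity and the fact that $\nerve$ detects entrywise equivalences on corollas.

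\textbf{The gap.} Your third paragraph does not supply a replacement for that input.
\begin{itemize}
\item Its final claim, that the homotopy coend of a Segal object recovers the strict operad, is for $X=\nerve\calP$ just a restatement of $\mathbb{L}\tau\,\nerve\calP\simeq\calP$.
\item $\Sigma$-cofibrancy of $\mathcal{MOp}$ only makes the free functor $\mathcal F$, and homotopy colimits over $\Delta^{\op}$ of modular operads, homotopically well behaved. It does not by itself compute $\mathbb{L}\tau$ on any Segal object.
\item The presheaf-side resolution cannot be matched levelwise with $B(\mathcal F,\mathcal F,\calP)$. The face map $d_0=\mu$ of the monadic bar construction is not induced by a map of generating collections. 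So it does not lift to a map between free presheaves $\coprod_c \bM[C_c]\times U\mathcal F^{k}\calP(c)$ (with $C_c$ the corolla of colour $c$), since maps between these can only send generators to generators.
\end{itemize}

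\textbf{The missing fact.} What is missing is that free Segal objects are nerves of free modular operads. For a decorated representable $Z_G=\bM[G]\times X_G$ one has $\tau Z_G\cong\mathcal F(X_G)$, and $Z_G\to\nerve\mathcal F(X_G)$ is a local equivalence, even though the two presheaves are very different strictly. You also need that $\nerve$ commutes with geometric realisations: its values are finite products of entries, and realisations of simplicial modular operads are computed entrywise, which is where $\Sigma$-cofibrancy genuinely enters. With both facts your bar-resolution plan goes through. The derived counit is an equivalence on each free operad $\mathcal F^{k+1}\calP$, hence by naturality on $|B_\bullet(\mathcal F,\mathcal F,\calP)|\simeq\calP$. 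That would be a legitimate counit-side variant of the paper's unit-side argument, but as written your proposal omits exactly the step that carries the content.
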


Before proving Theorem~\ref{thm: nerve is homotopically fully faithful}, we record a comparison between decorated representables and nerves of free modular
operads. Let $G=(G,\lambda,\ell,\epsilon)$ be a labelled graph with $k$ vertices, written as $v_{\lambda(1)},\ldots,v_{\lambda(k)}$ according to the vertex labelling. For each $1\leq i\leq k$, write $\epsilon(v_{\lambda(i)})=g_i$ and $|\nb(v_{\lambda(i)})|=n_i+1$.

Define a genus-graded sequence in $\Set$, denoted $X_G$, by:
\[
X_G(h,n+1)
=
\coprod_{\substack{1\leq i\leq k\\ g_i=h,\; n_i=n}}
*,
\]
and denote the generator corresponding to the $i$th labelled vertex by $x_i^{(g_i,n_i+1)}$. Thus $X_G$ has one generator for each labelled vertex of $G$, placed in the entry determined by the genus and valence of that vertex.

We also regard $X_G$ as a discrete decoration of the representable modular dendroidal set $\bM[G]$: a map into $G$ carries the vertex data of $G$, and each vertex $v_{\lambda(i)}$ is decorated by the corresponding generator $x_i^{(g_i,n_i+1)}$. We write
\[
Z_G:=\bM[G]\times X_G
\]
for this decorated representable modular dendroidal object. Concretely, $Z_G$ is the modular dendroidal object whose value on a graph $H$ consists of a map $H\to G$ in $\bM$, together with the induced decoration of the vertices of $H$ by the generators of $X_G$.

Let $\mathcal F(X_G)$ be the free modular operad generated by $X_G$. The graph $G$ determines a distinguished operation in $\mathcal F(X_G)$, obtained by composing the generators $x_i^{(g_i,n_i+1)}$ according to the internal edges of $G$.

\begin{lemma}\label{lemma: nerve of free operad}
Let $\bE=\Grpd$ or $\sSet$, and let $G\in\bM$. With $X_G$ and $Z_G$ as above, the decorated representable $Z_G$ and the nerve of the free modular operad $\nerve(\mathcal F(X_G))$ are weakly equivalent in the localised model structure $\mathcal L_S\dMod(\bE)_{*,R}$.
\end{lemma}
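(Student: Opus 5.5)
We compare both objects with the nerve of the free modular operad via a canonical map. The natural candidate is a map $Z_G\to \nerve(\mathcal F(X_G))$. By the Yoneda lemma, a map $\bM[G]\to \nerve(\mathcal F(X_G))$ is an element of $\nerve(\mathcal F(X_G))_G=\prod_{v\in V_G}\mathcal F(X_G)(\epsilon(v),|\nb(v)|)$. We take the tuple whose $i$th entry is the generator $x_i^{(g_i,n_i+1)}$. Since $X_G$ is discrete, this tuple extends to the decorated representable $Z_G$, giving a map $\phi_G:Z_G\to\nerve(\mathcal F(X_G))$, possibly after reducing both sides via $(-)_*$. We then show that $\phi_G$ is an $S$-local weak equivalence in $\mathcal L_S\dMod(\bE)_{*,R}$. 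Because the objects are discrete, this reduces to a statement about modular dendroidal sets, which we then base change along $\Set\to\bE$; the argument is identical for $\bE=\Grpd$ and $\bE=\sSet$.

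The first step factors $\phi_G$ through the Segal core. The inclusion $\Sc[G]_*\to\bM[G]_*$ lies in $S$, so it is a local weak equivalence, and decorating both sides by $X_G$ preserves this. The decorated Segal core is the coproduct of the decorated corollas $\bM[C_{v}]$ glued along reduced edges $\bM[\updownarrow]_*=*$. So it suffices to compare $\coprod$-glued decorated corollas with $\nerve(\mathcal F(X_G))$.

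The second step describes $\nerve(\mathcal F(X_G))$ as a colimit of representables. An element of $\nerve(\mathcal F(X_G))_H$ is a tuple of $X_G$-decorated graphs, one for each vertex of $H$. Substituting these graphs into $H$ gives a decorated graph $K=H\{K_v\}$, together with a graphical map $H\to K$. This exhibits $\nerve(\mathcal F(X_G))$ as a colimit of decorated representables $\bM[K]$, indexed by $X_G$-decorated graphs $K$. Each $\bM[K]$ is locally equivalent to its Segal core, which is the glued decorated corollas on the vertices of $K$. Consequently $\nerve(\mathcal F(X_G))$ is locally equivalent to the free Segal object generated by the corollas $x_i$. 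We expect the fibrant replacement of the glued corollas of $G$ in $\mathcal L_S$ to have this same description. The required input is that $\mathcal L_S$-fibrant replacement of a reduced Segal-core pushout computes the free modular $\infty$-operad, whose values are given by decorated graph substitutions. This mirrors the dendroidal case (Cisinski–Moerdijk, the free operad on a tree) and the modular case in \cite{hry1}.

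The main obstacle is the second step. We must check that this colimit of representables is a homotopy colimit in the reduced Reedy structure. Concretely, the indexing category of decorated graphs mapping to $H$ needs no nontrivial automorphisms, so that the colimit is free and cofibrant. This is where the labelling and $\Sigma$-freeness (Lemma~\ref{lemma: MO is Sigma cofibrant}) enter: each vertex carries a distinct generator $x_i$ and the boundary is labelled, so decorated graphs are rigid. We would first attempt an induction on the number of vertices of $K$, using the graph substitution factorization \cite[Theorem~2.7]{hry1}. At each stage we would attach one decorated corolla by a pushout along $\bM[\updownarrow]_*$ or a self-gluing (contraction), and show that every such pushout is a homotopy pushout and preserves the Segal-local comparison. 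Contractions, which create loops in the graph, are the delicate case. There we would verify directly that the self-gluing pushout of $\bM[K]$ along two legs is locally equivalent to $\bM[\xi K]$, using the inner-face and Segal-core description.
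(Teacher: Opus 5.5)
Your first step agrees with the paper's: the decorated Segal core map $\Sc_G(X_G)\to Z_G$ is a local equivalence because $\Sc[G]_*\to\bM[G]_*$ lies in $S$. The gap is in the second step, which carries all the content of the lemma. There you must show that $\Sc_G(X_G)\to\nerve\mathcal F(X_G)$ is an $S$-local equivalence, i.e.\ that for every modular $\infty$-operad $W$, restriction induces $\mathbb R\Map(\nerve\mathcal F(X_G),W)\simeq\prod_i W_{\corolla_{v_i}}$. Your argument for this leaves two things unproved.

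First, the claim that the presentation $\nerve\mathcal F(X_G)\cong\colim_K\bM[K]_*$ is a homotopy colimit is only announced. The inductive pushout argument, and in particular the contraction case, is something you ``would attempt''. Second, and more seriously, after passing to $\operatorname*{hocolim}_K\Sc[K]_*$ you close the argument with a ``required input'': that $\mathcal L_S$-fibrant replacement of a reduced Segal core computes the free modular $\infty$-operad, with values given by decorated graph substitutions. That free $\infty$-operad is $\nerve\mathcal F(X_G)$, and by your first step the Segal core and $Z_G$ are interchangeable. So this input is exactly the statement of the lemma, and at the decisive point the proposal is circular.

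What is missing is a comparison of $\operatorname*{hocolim}_K\Sc[K]_*$ with $\Sc_G(X_G)$ that does not pass through the free $\infty$-operad. In the reduced setting this looks within reach. Each $\Sc[K]_*$ is a wedge of decorated corollas, so the homotopy colimit should split, generator by generator, into the homotopy colimit of the constant diagram $\bM[\corolla_{v_i}]_*$ over the category of pairs $(K,w)$, where $w$ is a vertex of $K$ decorated by $x_i$. That category has the decorated corolla as an initial object; this uses that the generators carry free $\Sigma^+$-actions, so the leg bijection is forced. It is therefore contractible, and you recover $\bigvee_i\bM[\corolla_{v_i}]_*=\Sc_G(X_G)$. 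Together with an actual proof of the homotopy-colimit claim, this would turn your outline into a proof.

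For comparison, the paper does not decompose $\nerve\mathcal F(X_G)$ at all. After the Segal-core step it argues as follows. Because $\nerve\mathcal F(X_G)$ is strictly Segal, maps from the decorated Segal core into it are determined by the images of the generators. By freeness of $\mathcal F(X_G)$ this is the same data as a map out of $\mathcal F(X_G)$, so the map is a local replacement. In effect, $\tau\Sc_G(X_G)\cong\mathcal F(X_G)$ and the map is the unit of $\tau\dashv\nerve$. Be aware that this reasoning only probes maps into nerves of strict modular operads, not all local objects. Your instinct that a genuinely homotopical argument is needed is therefore sound; your proposal just does not yet supply one.
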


\begin{proof}
The argument is the modular analogue of \cite[Proposition~4.4]{bh_group_actions_segal}.

Let $\Sc_G(X_G)$ denote the Segal core of $G$ decorated by the vertex generators $x_i^{(g_i,n_i+1)}$. Thus $\Sc_G(X_G)$ is obtained from $\Sc[G]_*$ by labelling the corolla corresponding to the vertex $v_{\lambda(i)}$ by the generator $x_i^{(g_i,n_i+1)}$. There is a canonical map $\Sc_G(X_G)\longrightarrow Z_G.$  Since $\Sc[G]_*\to \bM[G]_*$ is one of the Segal core inclusions, this map is a weak equivalence in the localised model structure $\mathcal L_S\dMod(\bE)_{*,R}$.

We now compare $\Sc_G(X_G)$ with $\nerve(\mathcal F(X_G))$. The generators $x_i^{(g_i,n_i+1)}$ define maps from the vertex corollas of $G$ into
$\nerve(\mathcal F(X_G))$, and these assemble to a map
\[
\Sc_G(X_G)\longrightarrow \nerve(\mathcal F(X_G)).
\] Since $\nerve(\mathcal F(X_G))$ is the nerve of a strict modular operad, it satisfies the Segal condition strictly. Therefore maps from a Segal object into $\nerve(\mathcal F(X_G))$ are determined by their restrictions to the vertex corollas. By the universal property of the free modular operad
$\mathcal F(X_G)$, this is exactly the same data as a choice of images of the generators $x_i^{(g_i,n_i+1)}$. It follows that the map $\Sc_G(X_G)\longrightarrow \nerve(\mathcal F(X_G))$ exhibits $\nerve(\mathcal F(X_G))$ as a local replacement of $\Sc_G(X_G)$.

Combining this with the local weak equivalence $\Sc_G(X_G)\to Z_G$, we obtain a weak equivalence
\[
Z_G\longrightarrow \nerve(\mathcal F(X_G))
\]
in $\mathcal L_S\dMod(\bE)_{*,R}$.
\end{proof}

The modular dendroidal nerve $\nerve:\Mod(\bE)\longrightarrow \dMod(\bE)_*$  admits a left adjoint, denoted
\[
\tau:\dMod(\bE)_*\longrightarrow \Mod(\bE).
\]
The left adjoint is determined by its values on reduced representables: $\tau(\bM[G]_*)$ is the free modular operad generated by the graph $G$. More explicitly, if a vertex $v\in V_G$ has genus $\epsilon(v)$ and valence $|\nb(v)|$, then the corresponding generator lies in the entry $(\epsilon(v),|\nb(v)|)$.

\begin{prop}\label{quillen pair projective}
Let $\bE=\Grpd$ or $\sSet$. The adjoint pair
\[
\begin{tikzcd}
\dMod(\bE)_{*,p} \arrow[r, shift left=1, "\tau"] &
\Mod(\bE) \arrow[l, shift left=1, "\nerve"]
\end{tikzcd}
\]
is a Quillen pair.
\end{prop}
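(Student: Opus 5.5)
The plan is to check that the right adjoint $\nerve$ preserves fibrations and trivial fibrations between the projective model structure on $\dMod(\bE)_{*,p}$ and the transferred model structure on $\Mod(\bE)$ from Theorem~\ref{model structure on Mod(E)}. Since both classes are defined entrywise on each side, this reduces to a computation on the values of the nerve.

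First, I will recall the two notions of fibration. A map $f:\calP\to\calQ$ in $\Mod(\bE)$ is a fibration (respectively trivial fibration) exactly when each component $f(g,n+1)$ is a fibration (respectively trivial fibration) in $\bE$. In $\dMod(\bE)_{*,p}$, the fibrations and weak equivalences are also entrywise, now over the graphs $G\in\bM$; the reduced projective structure is the one restricted from $\dMod(\bE)_p$. Second, by Definition~\ref{def: modular nerve}, the map $(\nerve f)_G$ is the product
\[
\prod_{v\in V_G} f\bigl(\epsilon(v),|\nb(v)|\bigr):\prod_{v\in V_G}\calP\bigl(\epsilon(v),|\nb(v)|\bigr)\longrightarrow \prod_{v\in V_G}\calQ\bigl(\epsilon(v),|\nb(v)|\bigr).
\]
For the edge $\updownarrow$ there are no vertices, so this is the identity of the point, and $\nerve f$ is indeed a map of reduced objects. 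Since fibrations and trivial fibrations in any model category are closed under finite products (they are stable under pullback and composition), each $(\nerve f)_G$ is a fibration, respectively a trivial fibration, in $\bE$. For $\bE=\Grpd$ these are products of isofibrations or of surjective-on-objects equivalences; for $\bE=\sSet$ they are products of Kan fibrations or of trivial Kan fibrations. Hence $\nerve$ preserves fibrations and trivial fibrations, and is therefore a right Quillen functor.

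It then remains only to confirm that $(\tau,\nerve)$ is an adjunction between the reduced category $\dMod(\bE)_*$ and $\Mod(\bE)$. This follows from \cite[Theorem~3.6]{hry_modular_nerve}, together with the observation that the nerve lands in reduced objects, so the reduction functor $(-)_*$ is compatible with the adjunction.

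I expect the main obstacle to be checking that the reduced projective model structure actually exists with entrywise fibrations and weak equivalences. One needs that restricting along the reduction adjunction still gives a transferred structure, since the pushout defining $X_*$ could in principle interfere with the generating cofibrations. To handle this, I would describe $\dMod(\bE)_{*,p}$ as the slice-type category of modular dendroidal objects under $\bM[\updownarrow]$ that are constant at the edge, and transfer the structure along the left adjoint $(-)_*$. For this transfer to work, I would verify that $(-)_*$ applied to the generating projective cofibrations $\bM[G]\times A\to \bM[G]\times B$ yields maps whose pushouts are weak equivalences entrywise when the original maps are trivial cofibrations. Once that structure is in place, the fibration check above completes the argument.
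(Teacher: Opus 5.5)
Your proposal is correct and follows the same route as the paper's proof: you show that $\nerve$ preserves fibrations and trivial fibrations, using that $(\nerve f)_G$ is the finite product $\prod_{v\in V_G} f(\epsilon(v),|\nb(v)|)$ and that finite products preserve both classes in $\Grpd$ and $\sSet$. Your extra remarks go beyond the paper, which simply takes $\dMod(\bE)_{*,p}$ and the reduced adjunction as given (the remark that $\nerve\calP$ is reduced because $\updownarrow$ has no vertices is harmless), so the transfer sketch for the reduced projective structure is not needed for the statement as posed.
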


\begin{proof}
It is enough to show that the right adjoint $\nerve$ preserves fibrations and
trivial fibrations. For a modular operad $\calP$, the nerve is given on corollas
by
\[
(\nerve\calP)_{\corolla_{(g,n+1)}}=\calP(g,n+1).
\]
Thus, if $f:\calP\to\calQ$ is a fibration, respectively a trivial fibration,
in $\Mod(\bE)$, then each map
\[
f(g,n+1):\calP(g,n+1)\longrightarrow \calQ(g,n+1)
\]
is a fibration, respectively a trivial fibration, in $\bE$.

Now let $G\in\bM$. Since the nerve of a modular operad satisfies the Segal condition strictly, we have
\[
(\nerve\calP)_G = \prod_{v\in V_G} (\nerve\calP)_{\corolla_{(\epsilon(v),|\nb(v)|)}}.
\]
Therefore $(\nerve f)_G$ is the finite product of the maps
\[
f\bigl(\epsilon(v),|\nb(v)|\bigr):
\calP\bigl(\epsilon(v),|\nb(v)|\bigr)
\longrightarrow
\calQ\bigl(\epsilon(v),|\nb(v)|\bigr),
\] where $v$ ranges over the vertices of $G$.  Finite products preserve fibrations and trivial fibrations in both $\sSet$ and $\Grpd$. Hence $(\nerve f)_G$ is a fibration, respectively a trivial fibration, for every graph $G$. 

Since fibrations and weak equivalences in the projective model structure on $\dMod(\bE)_{*,p}$ are defined entrywise, $\nerve f$ is a fibration, respectively a trivial fibration. It follows that $\nerve$ is a right Quillen functor, and the adjunction is a Quillen pair.
\end{proof}

\medskip

We now prove the key input for homotopical full faithfulness of the nerve. The idea of the next lemma is that cofibrant modular dendroidal objects are built, up to homotopy colimit, from reduced representables. On reduced representables, the comparison with the nerve is controlled by Lemma~\ref{lemma: nerve of free operad}: the localisation identifies a decorated representable with the nerve of the free modular operad generated by its vertex decorations. The general case then follows because the relevant constructions preserve the homotopy colimits used to build cofibrant diagrams.

\begin{lemma}\label{derived unit}
Let $\bE=\sSet$ or $\Grpd$. If $X\in\dMod(\bE)_p$ is cofibrant, then the derived unit
\[
X_* \longrightarrow \nerve\bigl(\tau(X_*)_f\bigr)
\]
is a local equivalence in $\mathcal{L}_{S}\dMod(\bE)_{*,p}$.
\end{lemma}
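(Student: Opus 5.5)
The plan is a cellular induction on cofibrant objects, reducing the claim to reduced representables where Lemma~\ref{lemma: nerve of free operad} does the work. Let $\mathcal{C}$ be the class of cofibrant $X\in\dMod(\bE)_p$ for which the derived unit $X_*\to\nerve(\tau(X_*)_f)$ is a local equivalence. The steps are: (i) show $\mathcal{C}$ contains the generating cells; (ii) show $\mathcal{C}$ is closed under pushouts along generating cofibrations, transfinite composites and retracts; (iii) conclude, since every projective cofibrant object is a retract of a cell complex built from the generators.

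For step (i), the projective generating cofibrations have the form $\partial K\times\bM[G]\to K\times\bM[G]$, where $K\to\partial K$ runs over the generating cofibrations of $\bE$ (boundary inclusions of simplices for $\sSet$, and the usual generators for $\Grpd$). So the basic case is $X=K\times\bM[G]$. Here $\tau$ sends $(K\times\bM[G])_*$ to the free modular operad on $X_G$ with each generator tensored by $K$. Its nerve is, up to the decoration by $K$, the object $\nerve(\mathcal F(X_G))$. By Lemma~\ref{lemma: nerve of free operad}, $Z_G\to\nerve(\mathcal F(X_G))$ is a local equivalence. Taking the product with $K$ is compatible with this, since $\bE$ is cartesian and the Segal localisation is cartesian enough for $K$ constant. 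This identifies the derived unit with that local equivalence after a fibrant replacement, which changes nothing because $\bE$-free operads on cofibrant generators are cofibrant.

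For step (ii), I use that $(-)_*$ and $\tau$ are left adjoints, hence preserve colimits, and that pushouts along cofibrations and transfinite composites of cofibrations are homotopy colimits in both $\mathcal L_S\dMod(\bE)_{*,p}$ and $\Mod(\bE)$. The latter holds because $\tau$ is left Quillen by Proposition~\ref{quillen pair projective}, and because $\Mod(\bE)$ is left proper enough, given that $\mathcal{MOp}$ is $\Sigma$-cofibrant (Lemma~\ref{lemma: MO is Sigma cofibrant}). The difficulty is that $\nerve$ is a right adjoint and need not preserve these homotopy colimits. What I need instead is that $\nerve\circ(-)_f$ carries homotopy pushouts of cofibrant modular operads to local homotopy pushouts.

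This homotopy-pushout step is the main obstacle. I would attack it by a Segal-type argument. The nerve is determined by its corolla values, since its Segal maps are strict products. Local equivalences between Segal objects are detected on corollas. So it suffices to know that the operation of localising a homotopy pushout, i.e.\ forming the free Segal completion, computes on corollas the same thing as the homotopy pushout of modular operads. This amounts to checking the unit on a pushout of free operads. I would describe both sides explicitly via graphs decorated by the cells, and reduce to the representable case via graph substitution, in the spirit of \cite[Proposition~4.4]{bh_group_actions_segal}.

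Step (iii) is then formal, since retracts of local equivalences are local equivalences.
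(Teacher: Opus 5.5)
Your step (ii) carries the entire content of the lemma, and the proposal does not supply it.

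To close $\mathcal C$ under a cell attachment $X'=X\cup_{\partial K\times\bM[G]}(K\times\bM[G])$, you need $\nerve\bigl(\tau(X'_*)_f\bigr)$ to be, up to local equivalence, the homotopy pushout of the nerves attached to the three corners. In other words, you need $\nerve\circ(-)_f$ to carry homotopy pushouts in $\Mod(\bE)$ to homotopy pushouts in $\mathcal L_S\dMod(\bE)_{*,p}$. For a right adjoint this is not automatic. It does hold a posteriori, because the lemma together with $\tau\nerve\cong\id$ makes $\tau\dashv\nerve$ a Quillen equivalence on the localisation. That is exactly why it cannot serve as an input.

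Your reformulation, that Segal-localising the pushout computes on corollas the pushout of modular operads, restates the same assertion. The suggested method of describing both sides by graphs decorated by cells also does not get started, for three reasons:
\begin{itemize}
\item $\tau(X'_*)\cong\tau(X_*)\cup_{\mathcal F(\partial K\times X_G)}\mathcal F(K\times X_G)$ is not free.
\item Its entries are only accessible through the usual filtration of a pushout of algebras, in which old operations of $\tau(X_*)$ are composed with new cells.
\item Pushouts are not sifted colimits, so they do not commute with the vertexwise products $\prod_{v\in V_H}$ that define the nerve.
\end{itemize}

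Your base case has a related problem. Here $\tau\bigl((K\times\bM[G])_*\bigr)=\mathcal F(K\times X_G)$, and in it a composite of $r$ generators is parametrised by $K^{r}$. So its nerve is not $K$ times $\nerve(\mathcal F(X_G))$. For $K=\Delta^n$ contractibility rescues this. But the non-contractible domains ($\partial\Delta^n$, or $\ast\sqcup\ast\to\mathscr I$ for groupoids) push you back onto step (ii). Also, the generating cofibrations are $\partial K\to K$, not $K\to\partial K$.

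The missing idea, which is how the paper proceeds, is to build $X_*$ from homotopy colimits that the nerve \emph{does} preserve. The paper writes $X_*\simeq\operatorname*{hocolim}_{\Delta^{op}}\coprod_i Z_{G_i}$ using a simplicial resolution by coproducts of decorated reduced representables. The argument then runs as follows:
\begin{itemize}
\item The left Quillen functor $\tau$ (Proposition~\ref{quillen pair projective}) preserves this homotopy colimit and sends each level to a free modular operad.
\item Geometric realisations are sifted homotopy colimits. In $\Mod(\sSet)$ they are computed on underlying genus-graded sequences, since the free-algebra monad commutes with sifted colimits because finite products in $\sSet$ do.
\item Realisations commute with finite products, hence with $\nerve$.
\item Local equivalences are stable under homotopy colimits, so the claim reduces levelwise to the free case of Lemma~\ref{lemma: nerve of free operad}, in its evident form for coproducts of decorated representables.
\end{itemize}
No statement about pushouts is ever needed.

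For $\bE=\sSet$ you could even keep your cell complexes. A projective cell complex is, in each simplicial degree, a coproduct of representables. The functors $(-)_*$, $\tau$ and $\nerve$ are all computed degreewise. A simplicial presheaf is the homotopy colimit over $\Delta^{op}$ of its degrees. So applying the free case degreewise and realising proves the lemma without passing through homotopy pushouts of modular operads.
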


\begin{proof}
We give the argument for $\bE=\sSet$. The proof for $\Grpd$ is analogous.

Since $X$ is cofibrant in the projective model structure, we can use standard simplicial resolution of a cofibrant diagrams to write $X_*$, up to weak equivalence, as a homotopy colimit of coproducts of decorated reduced representables:
\[
X_* \simeq \operatorname*{hocolim}_{\Delta^{op}}\coprod_i Z_{G_i}.
\]
Here each $Z_{G_i}$ is of the form considered in Lemma~\ref{lemma: nerve of free operad}, with vertex-decoration sequence $X_{G_i}$.

Since $\tau$ is a left Quillen functor by Proposition~\ref{quillen pair projective}, its left derived functor preserves homotopy colimits. Thus,
\[
\tau(X_*) \simeq \tau\left( \operatorname*{hocolim}_{\Delta^{op}}\coprod_i Z_{G_i} \right) \simeq \operatorname*{hocolim}_{\Delta^{op}}\coprod_i \tau(Z_{G_i}).
\]
By construction of the left adjoint $\tau$, each $\tau(Z_{G_i})$ is the free modular operad generated by the corresponding vertex decorations: $\tau(Z_{G_i})\cong \mathcal F(X_{G_i}).$ Putting this together, we have \[\tau(X_*) \simeq \operatorname*{hocolim}_{\Delta^{op}}\coprod_i \mathcal F(X_{G_i}).\]

We now compare the nerve of this homotopy colimit with the homotopy colimit of the nerves. After taking a fibrant replacement in $\Mod(\sSet)$ and applying
the nerve, we get, up to local equivalence,
\[
\nerve(\tau(X_*)_f) \simeq \nerve\left( \operatorname*{hocolim}_{\Delta^{op}}\coprod_i \mathcal F(X_{G_i})_{f} \right).
\] The nerve of a strict modular operad satisfies the Segal condition strictly and, thus its value on a graph is a finite product of its values on the vertex corollas. Since homotopy colimits over $\Delta^{op}$ commute with finite homotopy products in $\sSet$, it follows that
\[
\nerve\left(\operatorname*{hocolim}_{\Delta^{op}}\coprod_i \mathcal F(X_{G_i})_{f} \right) \simeq \operatorname*{hocolim}_{\Delta^{op}}\coprod_i \nerve\bigl(\mathcal F(X_{G_i})_{f}\bigr).
\]

On the other hand, by Lemma~\ref{lemma: nerve of free operad}, each decorated representable $Z_{G_i}$ is weakly equivalent, in the localised model structure, to $\nerve\bigl(\mathcal F(X_{G_i})\bigr).$ Fibrant replacement in $\Mod(\sSet)$ does not change the local equivalence class of its nerve, so we may equally use
$\nerve(\mathcal F(X_{G_i})_f)$. Therefore
\[
\operatorname*{hocolim}_{\Delta^{op}}\coprod_i Z_{G_i} \longrightarrow \operatorname*{hocolim}_{\Delta^{op}}\coprod_i \nerve\bigl(\mathcal F(X_{G_i})_{f}\bigr)
\]
is a local equivalence. Combining the displayed equivalences above gives a local equivalence
\[
X_* \longrightarrow \nerve(\tau(X_*)_f)
\]
in $\mathcal{L}_{S}\dMod(\sSet)_{*,p}$, as required.
\end{proof}

\begin{proof}[Proof of Theorem~\ref{thm: nerve is homotopically fully faithful}]
By Proposition~\ref{quillen pair projective}, the adjunction
\[
\begin{tikzcd}
\dMod(\bE)_{*,p} \arrow[r, shift left=1, "\tau"] &
\Mod(\bE) \arrow[l, shift left=1, "\nerve"]
\end{tikzcd}
\]
is a Quillen adjunction. Since the nerve of a strict modular operad satisfies the Segal condition strictly, the right adjoint $\nerve$ sends fibrant modular operads to fibrant objects in the localised model structure $\mathcal{L}_{S}\dMod(\bE)_{*,p}$. Hence the same adjunction descends to a Quillen adjunction
\[
\begin{tikzcd}
\mathcal{L}_{S}\dMod(\bE)_{*,p} \arrow[r, shift left=1, "\tau"] &
\Mod(\bE) \arrow[l, shift left=1, "\nerve"] .
\end{tikzcd}
\]

It remains to check that the right adjoint is homotopically fully faithful. For a strict modular operad $\calP$, the counit $\tau\nerve(\calP)\longrightarrow \calP$ is an isomorphism. Indeed, the nerve records the values of $\calP$ on corollas, and the left adjoint $\tau$ reconstructs the strict modular operad generated by these operations modulo the compositions already encoded by the nerve.

Now let $\calP$ be fibrant in $\Mod(\bE)$. Then the derived counit $\mathbb{L}\tau\,\mathbb{R}\nerve(\calP)\longrightarrow \calP$ is represented by the ordinary counit above, and is therefore a weak equivalence. It follows that the right derived functor of $\nerve$ is fully faithful. Equivalently, for all $\calP,\calQ\in\Mod(\bE)$, the induced map on derived mapping spaces
\[
\mathbb{R}\Map_{\Mod(\bE)}(\calP,\calQ)
\longrightarrow
\mathbb{R}\Map_{\Mod_{\infty}(\bE)}(\nerve\calP,\nerve\calQ)
\]
is a weak equivalence.
\end{proof}

\section{The complex of markings over pants decompositions}\label{app: complex of markings}
In this appendix, we prove that the complex of markings $\calM(\surf)$ introduced in Section~\ref{sec: complex of markings} is connected and simply connected. This is the complex used in the proof of Theorem~\ref{thm: presentation for maps out of bS}.

The $2$-dimensional CW complex $\calM(\surf)$ is closely related to the complex $\Mmax(\surf)$ of \cite{bk_marked_surfaces}. The main difference is that $\calM(\surf)$ only retains markings over pants decompositions: disc and cylinder components are not allowed. Following the strategy of \cite{bk_marked_surfaces,complex_of_pants_decomp}, our proof proceeds by comparing $\calM(\surf)$ with a simply connected complex via a forgetful map and applying two general comparison results.

The first lemma allows us to deduce simple connectivity of the target from that of the source.

\begin{lemma}\label{lemma: simply-connect target from domain}
    Let $A$ be a simply connected CW complex, and let $f:A\to B$ be a continuous map such that every loop in $B$ lifts to a loop in $A$. Then $B$ is simply connected. \qed
\end{lemma}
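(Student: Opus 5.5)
Since $\pi_1(B,b_0)$ is the set of homotopy classes of based loops, it suffices to show that every loop $\gamma$ in $B$ is null-homotopic. We may also assume $B$ is path connected; in the intended application the target is a CW complex, and connectivity is checked separately. If one wants the statement to cover connectivity as well, the lifting hypothesis should be read as applying to paths, so that any two points of $B$ are joined by the image of a path in the connected space $A$.

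The plan is then a single lifting-and-contracting argument.
\begin{enumerate}
\item Given a loop $\gamma\colon S^1\to B$, use the hypothesis to choose a loop $\tilde\gamma\colon S^1\to A$ with $f\circ\tilde\gamma=\gamma$. Here ``lifts'' should be read up to homotopy, meaning $f\circ\tilde\gamma\simeq\gamma$; that is the form in which it is actually verified for complexes of markings, where an edge path in the target is replaced by a homotopic image of an edge path in the source.
\item Since $A$ is simply connected, there is a null-homotopy $H\colon D^2\to A$ extending $\tilde\gamma$.
\item The composite $f\circ H$ is then a null-homotopy of $f\circ\tilde\gamma$, and concatenating it with the homotopy $f\circ\tilde\gamma\simeq\gamma$ from the first step shows that $\gamma$ is null-homotopic.
\end{enumerate}

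Basepoints need a little care. Free null-homotopy of every loop already implies $\pi_1(B,b_0)=1$, because a based loop that is freely null-homotopic is based null-homotopic. So we may work with free loops throughout, and the path-connectedness of $A$ is not even required for this part.

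The argument has no real obstacle. The only delicate point is fixing what ``lifts'' means, and we will state explicitly that a lift up to homotopy suffices, since that is the version needed in the later application to $\calM(\surf)$.
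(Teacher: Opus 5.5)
Your argument for $\pi_1(B)=1$ is correct. The paper gives no proof of this lemma and calls it standard; your argument (lift the loop, contract it in $A$, push the contraction forward along $f$) is the standard one. Both of your refinements are right. Free null-homotopy suffices. Lifts need only hold up to homotopy, and this is in fact how the lemma is used for $q\colon\mathcal M^{\max}(P)\to\mathcal M(P)$, where the lifted loop maps to a reparametrisation of the original with the $Z$-edges collapsed.

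The one thing to fix is path-connectedness. You assume it, and you say it would need a path-lifting hypothesis. It does not: it follows from the hypothesis as stated. The paper relies on this, since it invokes the lemma to conclude that $\mathcal M(P)$ is \emph{connected} and simply connected.

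Take an arbitrary $b\in B$ and apply the hypothesis to the constant loop $c_b$. This gives a loop $\tilde\gamma$ in $A$ with $f\circ\tilde\gamma\simeq c_b$. With strict lifts, $f\circ\tilde\gamma=c_b$, so $f$ is simply surjective. With lifts up to homotopy, the track of the basepoint under the free homotopy is a path from $f(\tilde\gamma(1))\in f(A)$ to $b$.

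Since $A$ is path-connected, which is the part of ``simply connected'' this step needs, $f(A)$ is path-connected. Hence every point of $B$ is joined to a path-connected subset, and $B$ is path-connected. With this added, your proof is complete.
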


The following proposition gives a criterion for deducing simple connectivity of the source from that of the target.

\begin{prop}\label{prop: simply-connect domain from target}
    Let $A$ and $B$ be $2$-dimensional CW complexes, and let $\pi:A^{[1]}\to B^{[1]}$ be a surjective map of their $1$-skeleta. Suppose that
    \begin{enumerate}
        \item $B$ is connected and simply connected.
        \item For every vertex $b\in B$, the fiber $\pi^{-1}(b)$ is connected, and every loop contained in $\pi^{-1}(b)$ is contractible in $A$.
        \item Let $b_1\xra{e} b_2$ be an edge in $B$, and let $a_1'\xra{e'}a_2'$ and $a_1''\xra{e''}a_2''$ be two lifts to $A$. Then there exist paths $a_1'\xra{e_1}a_1''$ in $\pi^{-1}(b_1)$ and $a_2'\xra{e_2}a_2''$ in $\pi^{-1}(b_2)$ such that the square
            \[
            \begin{tikzcd}
            	{a_1'} & {a_2'} \\
            	{a_1''} & {a_2''}
            	\arrow["{e'}", from=1-1, to=1-2]
            	\arrow["{e_1}"', from=1-1, to=2-1]
            	\arrow["{e_2}", from=1-2, to=2-2]
            	\arrow["{e''}", from=2-1, to=2-2]
            \end{tikzcd}
            \]
        is contractible in $A$.
        \item For every $2$-cell $X$ in $B$, its boundary $\partial X$ can be lifted to a contractible loop in $A$.
    \end{enumerate}
    Then $A$ is connected and simply connected. \qed
\end{prop}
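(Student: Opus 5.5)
The statement is the standard "fibration criterion" for simple connectivity (as in Hatcher--Thurston, Bakalov--Kirillov, Hatcher--Lochak--Schneps). I would prove it by lifting paths and homotopies from $B$ to $A$. Connectivity comes first. Given vertices $a,a'$ of $A$, connect $\pi(a)$ to $\pi(a')$ by an edge path in $B$ using (1). Lift each edge: since $\pi$ is surjective on $1$-skeleta, every edge $e$ of $B$ has some lift $e'$. Consecutive lifts need not share endpoints, but successive endpoints lie in the same fiber, and those fibers are connected by (2). Inserting paths inside the fibers therefore gives a path from $a$ to $a'$.

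For simple connectivity, take a loop $\gamma$ in $A^{[1]}$. I would first normalise it: it is an edge path, and each edge either lies in a fiber or maps onto an edge of $B$. Write $\gamma$ as alternating fiber segments and lifted edges, so that $\pi(\gamma)$ (after deleting constant pieces) is an edge loop $\bar\gamma$ in $B$. By (1), $\bar\gamma$ is null-homotopic via a finite sequence of elementary moves: inserting or deleting a backtrack $ee^{-1}$, and inserting or deleting the boundary $\partial X$ of a $2$-cell. I would induct on the length of this sequence. The aim is to show that each elementary move on $\bar\gamma$ can be realised by changing $\gamma$ up to homotopy in $A$, to a loop lying over the new path. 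Once that is done, $\gamma$ is homotopic to a loop whose image in $B$ is constant. Such a loop lies in a single fiber, and is contractible by (2).

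The key tool is independence of the choice of lift. Take two lifts $\gamma_1,\gamma_2$ of the same edge path $e_1\cdots e_k$, joined at the start and end by fiber paths. Applying (3) edge by edge gives a ladder of contractible squares. The fiber paths at intermediate vertices may differ from the ones (3) produces, but any two fiber paths between the same points differ by a loop in a fiber, which is contractible by (2). So the two lifts are homotopic rel these fiber connections. With this, a backtrack $ee^{-1}$ can be lifted as $e'e'^{-1}$, which is trivially null-homotopic. By (4), a $2$-cell boundary $\partial X$ has some contractible lift, and any other lift of it is homotopic to that one by the ladder argument. Hence inserting or deleting these pieces changes $\gamma$ only by a contractible loop, after conjugating by a fiber path to the insertion point.

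I expect the main obstacle to be the bookkeeping. One has to set up carefully the fiber paths at the basepoints of inserted pieces so that the ladder argument applies, and to handle lifted edges whose endpoints land anywhere in the fibers. The cleanest way is to prove first the lemma that any two lifts of the same edge path in $B$ with fiber-connected endpoints are homotopic. That lemma uses only (2) and (3), and everything else then reduces to it.
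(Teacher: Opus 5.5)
Your proposal is correct and follows the argument the paper sketches: project a loop of $A$ to an edge loop in $B$, then realise a combinatorial null-homotopy (backtracks and $2$-cell boundaries) move by move in $A$, using (2) and (3) to compare different lifts and (4) to handle the $2$-cells. Your ladder lemma, for two lifts of the same edge path with arbitrary fiber connections at the ends, supplies the detail the paper omits, and it also covers deleting a $2$-cell boundary whose portion of $\gamma$ is not a closed loop.
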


The proofs of the preceding two results are standard. Proposition~\ref{prop: simply-connect domain from target} is established by a cellular lifting argument: one projects a loop in $A$ to $B$ and lifts a nullhomotopy back to $A$ cell by cell. Conditions~\emph{(2)} and~\emph{(3)} are used to compare different choices of lifts, and condition~\emph{(4)} is applied to lift the $2$-cells of $B$. We omit the details.

We now recall the pants complex of Hatcher--Lochak--Schneps \cite{hls} in a form suited to our setting. Let $S=(S,\delta)$ be a labelled surface of type $(g,n+1)$, with $n\ge 0$. Recall that a \emph{pants decomposition} of $S$ is a cut system $C=\{c_1,\ldots,c_k\}$ of pairwise disjoint simple loops on $S$ such that
    \[\overline{S\setminus \bigcup_{r=1}^k c_r}\;=\;\coprod_j S_j\]
is a disjoint union of subsurfaces $S_j$, each of which is of type $(0,3)$. The \emph{pants complex} of $S$, denoted $\pantscpx(S)$, is the $2$-dimensional CW complex whose vertices are isotopy classes of pants decompositions on $S$, whose edges are generated by $A$-moves and $S$-moves between pants decompositions, and whose $2$-cells are attached along loops of type $(3A)$, $(5A)$, $(3S)$, $(6AS)$, and $(C)$, in the sense of \cite[Section~2]{hls}.

By \cite[Theorem~2]{hls}, the complex $\pantscpx(S)$ is connected and simply connected.

Recall from Definition~\ref{def: complex of markings} that the complex of marked pants decompositions on $S$, denoted $\markS$, has vertices given by marked pants decompositions $(C,[m])$ on $S$. The $1$-cells are generated by $T$-, $A$-, $B$-, and $S$-moves. The $2$-cells are slight modifications of those appearing in \cite{bk_marked_surfaces}. Concretely, we include the following families of $2$-cells:
\begin{enumerate}
    \item \textbf{Commutativity of disjoint moves:} if $E_1$ and $E_2$ are moves supported in disjoint subsurfaces of $S$, then
                \[
                E_1E_2 = E_2E_1
                \]
                \cite[Section~4.7]{bk_marked_surfaces}.
          
    \item \textbf{Boundary twist relations:} if $(C,[m])\overset{E}{\rightsquigarrow}(C',[m'])$ is a move supported on a subsurface $S'$ and $a\in C$ is a curve on the boundary of $S'$, then if $E=B_{a,b}$
                \[E \; T_a= T_b\; E\]
            otherwise, we have
                \[E\; T_a=T_a \; E.\]
    
    \item \textbf{Pentagon relation:} whenever there exists a subsurface $S'\subset S$ of type $(0,5)$ together with a diffeomorphism from $S'$ to the standard surface of Figure~\ref{fig:pentagon} carrying the relevant curves and markings to those represented in the figure, we attach a $2$-cell along the boundary loop of the pentagon diagram.
        \begin{figure}[ht]
            \centering
            \includegraphics[width=0.5\linewidth]{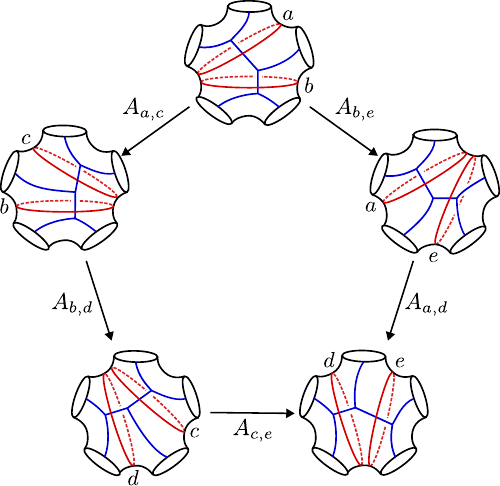}
            \caption{Pentagon relation}
            \label{fig:pentagon}
        \end{figure}
    % \cite[Figure 30, Appendix~C]{bk_marked_surfaces}.

    \item \textbf{Hexagon relations:} whenever there exists a subsurface $S'\subset S$ of type $(0,4)$ together with a diffeomorphism from $S'$ to the standard surface $S_{0,4}$ of Figure~\ref{fig:hexagons} carrying the relevant curves and markings to those represented in the figure, we attach $2$-cells along the two hexagonal loops of Figure~\ref{fig:hexagons}.

    % \begin{figure}[ht]
    %     \includegraphics[width=0.55\linewidth]{Figures/hexagonI.pdf}\hfill\hfill \\
    %     \hfill\includegraphics[width=0.55\linewidth]{Figures/hexagonII.pdf}
    %     \caption{Hexagon II}
    %     \label{fig:hexagonI}
    % \end{figure}
    \begin{figure}[ht]
        % Top-left
        \makebox[\textwidth][l]{%
            \includegraphics[width=0.735\linewidth]{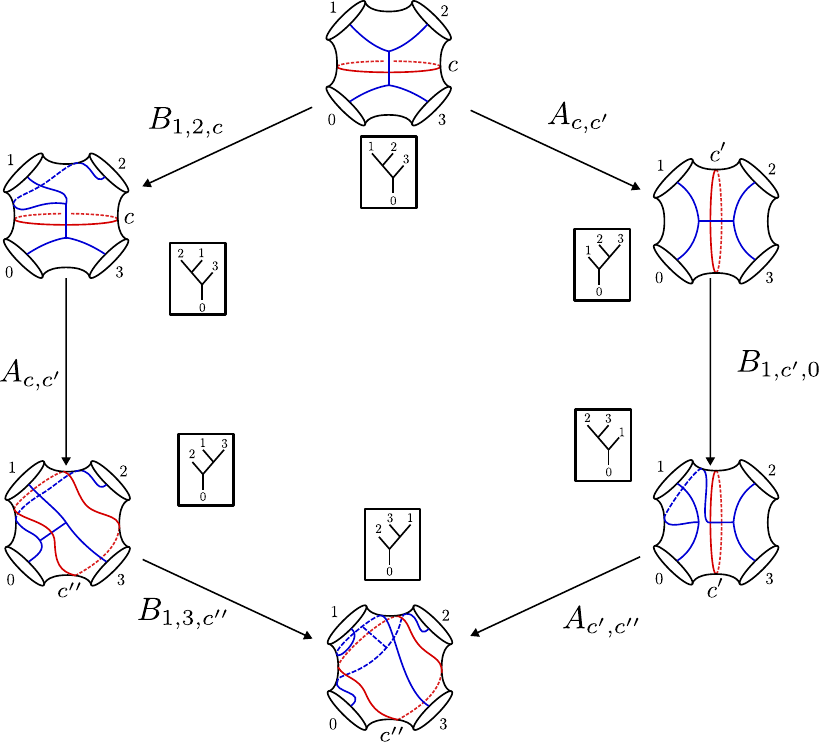}}
        % \vspace{-2.5cm}
        % Bottom-right
        \makebox[\textwidth][r]{%
            \includegraphics[width=0.735\linewidth]{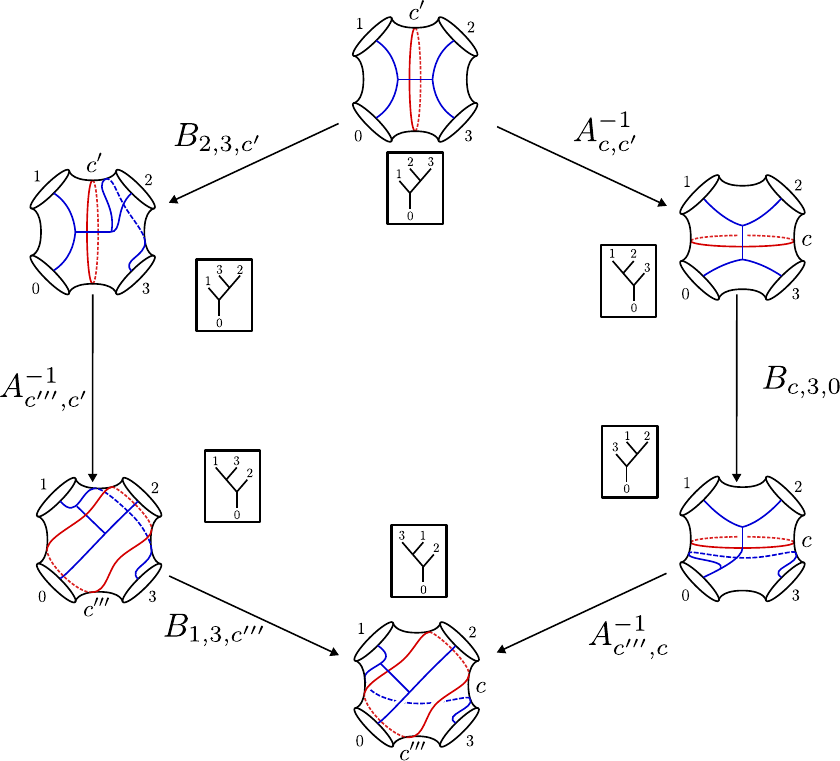}}
        \caption{Hexagon relations: Hexagon I (top left) and Hexagon II (bottom right)}
        \label{fig:hexagons}
    \end{figure}
    
    \item \textbf{Self-duality of associativity:}
        \[A_{c,c'}A_{c',c}=\id.\]

    \item \textbf{Twist--braiding relation:}
        \[B_{1,2}B_{2,1}=T_0T_1^{-1}T_2^{-1}.\]

    \item \textbf{Gluing relations for Dehn twists:} 
        \begin{itemize}
            \item if two pairs of pants $S_1$ and $S_2$ are glued along a curve $a$, then
                \[\id_{S_1}\cup_a T_a = T_a\cup_a \id_{S_2}\]
            where the $T$-move is supported on $S_2$ on the left-hand side and on $S_1$ on the right-hand side.
            \item if a pair of pants $S'$ has two boundary components $a$ and $b$ that get identified in $S$, then 
                \[T_a = T_b.\]
        \end{itemize}

    \item \textbf{Genus-one relations for $(g,n)=(1,1)$:} as in Figure~\ref{fig:G2-cells-G1} in Definition~\ref{def: complex of markings}.

    \item \textbf{Genus-one relation for $(g,n)=(1,2)$:} as in Figure~\ref{fig:G2-cell-NS-complex-of-markings} in Definition~\ref{def: complex of markings}.
\end{enumerate}

\medskip

The complex of markings over the pair of pants $P$, will be of particular relevance for us.

\begin{lemma}\label{lemma: marked complex for pants is simply connected}
Let $P$ be a pair of pants. Then $\mathcal{M}(P)$ is connected and simply connected.
\end{lemma}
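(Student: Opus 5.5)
We would prove the lemma by a direct analysis of $\mathcal M(P)$ rather than through the comparison machinery. For a pair of pants $P$ the pants decomposition is unique: the empty cut system. Hence $A$- and $S$-moves never occur, and the only $1$-cells are $T$-moves (one for each boundary curve $\partial_0,\partial_1,\partial_2$) and $B$-moves. The vertices are therefore the markings $[m]$ of $P$ by a trivalent corolla with legs ending at the marked points $x_0,x_1,x_2$.

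Step one is to identify the vertex set with a torsor. Any two markings inducing the same cyclic order differ by a unique mapping class, by the construction preceding Lemma~\ref{lemma: marking-independence} and by that lemma. Markings inducing the opposite cyclic order differ from these by a $B$-move, which transposes the legs $1,2$. So the set of vertices is in natural bijection with $\Gamma(P)\times \Sigma_2^+/A_2^+$, that is, with two copies of $\Gamma(P)\cong \mathbb Z^3$; here $\Gamma(P)$ is generated by the three boundary twists.

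Connectedness then follows. Relative to a fixed standard marking, a $T$-move at $\partial_i$ acts by the twist $D_{\partial_i}$, and a $B$-move switches the cyclic order. The boundary twists generate $\Gamma(P)$, so every vertex is joined to $m_P$ by an edge path.

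For simple connectivity, we map $\mathcal M(P)$ to the Cayley-type complex of the group $\widetilde\Gamma=\langle T_0,T_1,T_2,B\rangle$ generated by the moves. Every edge loop, read as a word in these generators, evaluates to the identity mapping class acting on markings (together with the trivial permutation of labels). We must show that every such word is a consequence of the attached $2$-cells. The relevant cells are:
\begin{itemize}
\item commutativity of disjoint moves, which gives $T_iT_j=T_jT_i$;
\item the boundary twist relations $B\,T_a=T_b\,B$ and $B\,T_0=T_0\,B$;
\item the twist--braiding cell $B_{1,2}B_{2,1}=T_0T_1^{-1}T_2^{-1}$.
\end{itemize}
Using the boundary twist relations, any word can be rewritten in the form $w(T)\,B^{\epsilon}B^{k}$ with $B$'s collected at the end. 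By the twist--braiding cell, $B^2$ is equivalent to a word in the $T$'s, so any loop reduces to a loop consisting only of $T$-moves. Such a loop evaluates to $T_0^{a}T_1^{b}T_2^{c}$, which acts trivially on markings only when $a=b=c=0$, because $\Gamma(P)\cong\mathbb Z^3$ is free abelian on the boundary twists. Since the $T$'s commute via $2$-cells, the reduced word is null-homotopic.

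The main obstacle is bookkeeping: each $B$-edge depends on which two labelled boundaries are exchanged, so $B_{1,2}$, $B_{2,1}$ and the cyclic translates under $A_2^+$ must all be accounted for. We would handle this by fixing, at each vertex, the identification with $m_P$ or $\beta\circ m_P$. We would then check that every $B$-edge is a cyclic relabelling of the standard one, and that its relabelled twist--braiding and boundary-twist cells are among those attached. Once that is in place, the normal-form argument above closes the proof.
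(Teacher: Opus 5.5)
Your description of the vertex set (two $\Gamma(P)$-torsors, one for each cyclic order) and your connectedness argument are fine. The simple-connectivity step, however, has a genuine gap that is not just bookkeeping.

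At every vertex there are three $B$-edges, one for each choice of the fixed curve $c$ in $B_{a,b,c}$. At $m_P$ these are $B_{1,2,0}$, $B_{2,0,1}$ and $B_{0,1,2}$, and they end at three different vertices of the other torsor. After you collect the $B$'s, you are left with strings of consecutive $B$-moves that in general fix different boundary curves. Examples are $B_{1,2,0}$ followed by $B_{0,2,1}$, or $B_{1,2,0}$ followed by a reversed $B_{2,0,1}$. The twist--braiding cell, and each of its cyclic relabellings, only rewrites two consecutive $B$-moves exchanging the same pair of curves. So the step ``$B^2$ is a word in the $T$'s'' does not apply to these strings.

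This cannot be repaired using only the cells you list. Evaluate edge paths in the group $\widetilde\Gamma$ of mapping classes of $P$ that are allowed to permute the parametrised boundary components. Send $T_x$ to $D_x$, and send $B_{a,b,c}$ to $\phi^{-1}\beta\phi$, where $\phi$ is the identification in the definition of the move. The boundaries of the commutation, boundary-twist and twist--braiding cells, in every relabelling, all evaluate to $1$, so evaluation is a homotopy invariant of loops.

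The vertices, however, do not form a free $\widetilde\Gamma$-orbit: each has stabiliser $\mathbb{Z}/3$, generated by the rotation carrying the embedded corolla to itself. Consider the loop that performs $B_{1,2,0}$ at $m_P$, then a $T$-path in the opposite torsor, then $B_{2,0,1}$ backwards to $m_P$. It evaluates to an element whose permutation of the boundary labels is the $3$-cycle $(2\,0)(1\,2)$. Hence your assertion that every edge loop evaluates to the identity is false, and this loop is not null-homotopic in the complex spanned by your cells.

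What is missing is a family of $2$-cells relating $B$-moves with different fixed curves up to boundary twists. An example is the relation $(01)^*B_{1,2}=T_0^{-1}T_1\,B_{2,1}$ used in the proof of Proposition~\ref{prop:PaRB-cyclic-is-S0}. With such cells, every $B$-edge can be rewritten in standard form, and your normal form then does close up.

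The paper avoids this direct analysis. It maps the Bakalov--Kirillov complex $\mathcal{M}^{\max}(P)$ onto $\mathcal{M}(P)$; there $B$-moves are taken relative to a distinguished half-edge, and the $Z$-moves changing it absorb exactly this $\mathbb{Z}/3$ ambiguity. It then lifts loops and pushes down the null-homotopies supplied by the simple connectivity of $\mathcal{M}^{\max}(P)$.
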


\begin{proof}
For a pair of pants $P$, a vertex of $\mathcal M(P)$ is simply a marked pants decomposition of the form $(\emptyset,[m]),$ since the only pants decomposition of $P$ is the empty cut system.

Let $\mathcal M^{\max}(P)$ denote the complex of maximal markings on $P$ in the sense of \cite[Section~5]{bk_marked_surfaces}. A vertex of $\mathcal M^{\max}(P)$ is given by an isotopy class of a cut system
\[
C=\{c_1,\ldots,c_k\}
\]
together with a marking, such that cutting $P$ along $C$ decomposes it into pairs of pants, possibly together with cylinder and disc components. In addition, one records a distinguished half-edge at each vertex of the marking graph.

We compare $\mathcal M^{\max}(P)$ with $\mathcal M(P)$ by means of a map
\[
q:\mathcal M^{\max}(P)\longrightarrow \mathcal M(P).
\]
On vertices, the map $q$ forgets the cut system $C$ and the distinguished half-edges, and contracts those edges of the marking graph that are not incident to to a boundary component, thereby producing a vertex $(\emptyset,[m])\in \mathcal{M}(P)$.

On edges, the map $q$ behaves as follows. Moves of type $T$ and $B$ are either contracted to points or taken to the corresponding edges of $\mathcal M(P)$. By contrast, moves of type $F$ forget cuts bounding cylindrical components and therefore become trivial after passing to the empty cut system. Likewise, $A$-moves are supported on a subsurface of type $(0,4)$, and also are contracted to points via $q$. Finally, a $Z$-move changes only the choice of distinguished half-edge, which is not retained in $\mathcal M(P)$. Thus $A$-, $F$-, and $Z$-edges are all sent to vertices.

This defines a cellular map on the $1$-skeleton of $\mathcal M^{\max}(P)$. Moreover, each defining $2$-cell of $\mathcal M^{\max}(P)$ maps either to the corresponding $2$-cell of $\mathcal M(P)$ or to a degenerate loop. Hence $q$ extends to a cellular map
\[
q:\mathcal M^{\max}(P)\to \mathcal M(P).
\]

We next show that every loop in $\mathcal M(P)$ lifts to a loop in $\mathcal M^{\max}(P)$. Given a cellular loop in $\mathcal{M}(P)$, each vertex in the loop is a marking of the form $(\emptyset,[m])$. Any such vertex can be lifted to one in $\mathcal{M}^{\max}(P)$ by choosing a distinguished half-edge for the marking $[m]$. For instance, we may choose the half-edge incident to the boundary component labelled $0$. In this way, all vertices in the loop in $\mathcal{M}(P)$ lift to vertices in $\mathcal{M}^{\max}(P)$.

Since every edge of $\mathcal M(P)$ is a $T$- or $B$-move, each $e_i$ lifts to an edge path in $\mathcal M^{\max}(P)$ consisting of the corresponding $T$- or $B$-move, together with $Z$-moves, if necessary, to adjust the distinguished half-edge before or after performing that move. Concatenating these lifted edge paths yields a loop in $\mathcal M^{\max}(P)$ projecting to the original loop in $\mathcal M(P)$.

By \cite[Theorem~4.9]{bk_marked_surfaces}, the complex $\mathcal M^{\max}(P)$ is connected and simply connected. Lemma~\ref{lemma: simply-connect target from domain} therefore implies that $\mathcal M(P)$ is hence connected and simply connected as well.
\end{proof}

To study the complex of markings $\markS$ for more general surfaces, we compare $\markS$ with $\pantscpx(S)$ via the natural forgetful map $F:\markS\longrightarrow \pantscpx(S)$ defined on vertices by
\[
F(C,[m])=C,
\]
that is, by sending a marked pants decomposition to its underlying pants decomposition.

We use this to prove the main result of this section:

\begin{prop*}[Proposition~\ref{prop:our-complex-of-markings-is-simply-connected}]
    Let $S=(S, \delta, \{x_{i}\}_{i=0}^{n})$ be a parametrised, labelled surface of type $(g,n+1)$, $n\geq 0$. Then $\markS$, the complex of markings over pants decompositions on $S$, is connected and simply connected.
\end{prop*}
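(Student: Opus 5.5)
We will apply Proposition~\ref{prop: simply-connect domain from target} to the forgetful map $F:\markS\to\pantscpx(S)$, with $A=\markS$ and $B=\pantscpx(S)$. On $1$-skeleta, $F$ sends $T$- and $B$-moves to vertices (constant paths) and sends $A$- and $S$-moves to the corresponding $A$- and $S$-moves of $\pantscpx(S)$. Strictly speaking, $F$ is a map to a subdivision of the $1$-skeleton collapsing the $T$/$B$ edges; the cellular lifting argument behind the proposition is unaffected by this. Surjectivity holds because every pants decomposition admits a marking: choose a trivalent graph dual to $C$ with legs ending at the points $x_i$. Every $A$- or $S$-edge of $\pantscpx(S)$ also lifts, since after adjusting the marking locally on the supporting $(0,4)$ or $(1,1)$ subsurface it becomes standard, and the standard $A$- or $S$-move then applies. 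Condition~(1) is \cite[Theorem~2]{hls}.

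For condition~(2), the fibre $F^{-1}(C)$ is the subcomplex spanned by the $T$- and $B$-moves on markings over the fixed decomposition $C$. Any two markings over $C$ differ on each pair of pants $S_j$ by a marking change of $S_j$, together with a choice of transverse arc across each cut curve, up to twisting. We therefore identify $F^{-1}(C)$, modulo the commutativity, boundary twist and gluing $2$-cells, with a product-like gluing of the complexes $\calM(S_j)$ along the twist actions at the cut curves. Lemma~\ref{lemma: marked complex for pants is simply connected} gives that each $\calM(S_j)$ is connected and simply connected. Connectedness of the fibre then follows. Moreover, a loop in the fibre decomposes, using the disjoint-commutativity cells, into loops supported on individual pants. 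Each such loop is null-homotopic by the lemma, where the gluing relations for Dehn twists identify $T_a$ on the two sides of a cut curve $a$.

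For condition~(3), two lifts of an $A$-move (respectively an $S$-move) along $C\to C'$ differ by markings that agree on the supporting subsurface up to $T$/$B$-moves on its boundary and on the complementary pants. Moves outside the support commute with the lift by the disjoint-commutativity cells, and the boundary twist relations handle twists on boundary curves. It remains to compare two lifts differing by $T$/$B$-moves inside the support. For the $(0,4)$ case we will use the hexagon cells together with self-duality and twist--braiding to transport a $B$- or $T$-move across $\alpha$, in the form used for Proposition~\ref{prop:PaRB-cyclic-is-S0}. For the $(1,1)$ case, the internal marking changes are generated by $T_a$ and the $B$-move exchanging the two copies of $a$. Here (G1.1) and (G1.2) express precisely how $\sigma$ conjugates these, which supplies the required squares.

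For condition~(4), we lift each $2$-cell type of \cite{hls}. The $(3A)$ cell lifts to a loop bounded by hexagon cells together with twist--braiding and self-duality. The $(5A)$ cell lifts to the pentagon. The $(3S)$ cell lifts to a loop that (G1.1) and (G1.2) make contractible, since (G1.2) is the marked version of $(ST^{-1})^3$. The $(6AS)$ cell lifts to the (G2) cell, whose definition was chosen for exactly this purpose. The $(C)$ cells lift to commutativity cells. We expect the main obstacle to be the bookkeeping in (3) and (4). After lifting, the boundary loop closes only up to a correction in the fibre, namely extra $T$- and $B$-moves, and one must check that these corrections are exactly the twist exponents built into (G1.2) and (G2), as opposed to the slightly different cells of \cite{bk_marked_surfaces}. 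We would first verify this on the standard surfaces $S_{\contraction}$ and $\xi_2^3(P\circ_1P)$ using the curves of Figures~\ref{fig: g1-curves} and~\ref{fig:curves for g2}. The general case then follows by transporting along diffeomorphisms, using Lemma~\ref{lemma: conjugation by mapping classes}.
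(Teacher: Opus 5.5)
Your architecture is the same as the paper's. You use the same forgetful map $F:\markS\to\pantscpx(S)$, the same verification of (1), and the same table of lifts in (4). Your fibre argument in (2) is also essentially the paper's. The paper makes your ``product-like gluing'' precise as the quotient of $\prod_j\calM(S_j)$ by a free $\Z^C$-action, whose fundamental group $\Z^C$ is generated exactly by the boundaries of the gluing cells. Note that the single-pants pieces of a fibre loop are paths whose endpoints differ by this action, not loops.

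The gap is in (3). You reduce to two lifts whose source markings differ by $T$/$B$-moves inside the support, and propose to move these across $\alpha$ (resp.\ $\sigma$) one generator at a time. This cannot produce the squares that (3) asks for, because a single internal move changes which edge of $\pantscpx(S)$ is being lifted.
\begin{itemize}
\item Twisting the marking along the cut curve $a$ of the $(0,4)$ piece replaces the new curve $a'$ by $D_a^{\pm1}(a')\neq a'$.
\item In type $(1,1)$, the twist along the cut curve likewise moves the new curve. So the $T$-square you attribute to (G1.2) never occurs.
\item A $B$-move on only one of the two pairs of pants of the $(0,4)$ piece changes the pair of legs enclosed by the new curve, e.g.\ from $\{2,3\}$ to $\{1,3\}$.
\end{itemize}
So the intermediate markings along your path in $F^{-1}(C)$ are in general not sources of lifts of $e$. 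The required square therefore cannot be assembled generator by generator.

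What is missing is the rigidity step the paper uses. Let $\phi_1,\phi_2$ be the identifications of the support with the standard $(0,4)$ surface that witness the two lifts. Both carry the pair $(a,a')$ to the same pair of standard curves, so $\phi_1\phi_2^{-1}$ stabilises both standard curves. First use Lemma~\ref{lemma:aux-for-A-move} to make $\phi_1,\phi_2$ agree on which boundary components go to $\{1,2\}$. An intersection-number argument then rules out twists along $a$ and a braiding on only one of the pants. What remains is boundary twists, possibly followed by the simultaneous pair of braidings on both pants.

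The first case is handled by the boundary twist cells. The second needs one explicit square (Figure~\ref{fig:condition-3}). For the $S$-move, the same constraint leaves only boundary twists and the swap of the two copies of the cut curve. The paper gets that square from (G1.1) together with commutativity of disjoint moves, and (G1.2) is not needed there. With this classification in place, the rest of your plan goes through as in the paper.
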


\begin{proof}
Let
\[
F:\markS\longrightarrow \pantscpx(S)
\]
be the forgetful map sending a marked pants decomposition $(C,[m])$ to its underlying pants decomposition $C$. By \cite[Theorem~2]{hls}, the pants complex $\pantscpx(S)$ is connected and simply connected. We now verify the hypotheses of Proposition~\ref{prop: simply-connect domain from target} for the map $F$. By construction, $F$ is surjective on vertices and edges.

\emph{Checking condition~$(2)$:} Let $C$ be a vertex of $\pantscpx(S)$, that is, a pants decomposition of $S$. The fibre $F^{-1}(C)$ is the subcomplex of $\markS$ whose vertices are markings with underlying pants decomposition $C$, and whose edges and cells are precisely those built from moves that leave $C$ unchanged. In particular, its $1$-cells are the $T$- and $B$-moves, and its $2$-cells are the relations among these moves that do not alter the underlying pants decomposition, namely commutativity of disjoint moves, the boundary twist relations, the twist--braiding relation, and the Gluing relations for Dehn twists.

We next show that $F^{-1}(C)$ is connected and simply connected. Let $S_1,\dots,S_k$ be the pair-of-pants components obtained as the closures of the connected components of $S\setminus C$. For each curve $c\in C$, choose a basepoint on $c$. These basepoints induce marked points on the corresponding boundary components of the $S_i$, so that each $S_i$ becomes a parametrised, labelled surface of type $(0,3)$ (Definition~\ref{def: labelled surface}). In particular, a marking on each $S_i$ may be viewed as an embedded corolla meeting each boundary component at the chosen marked point.

Using these basepoints, we may glue markings on the $S_i$ along the cut curves: if two boundary components of $S_i$ and $S_j$ arise from the same curve $c\in C$, then we identify them so that the marked points determined by the chosen basepoint of $c$ agree. In this way we obtain a map
\[
\mathcal M(S_1)\times\cdots\times \mathcal M(S_k)\longrightarrow F^{-1}(C),
\]
sending a tuple of markings on the pair-of-pants components to the induced marking on $S$ with underlying pants decomposition $C$. Since each $\mathcal M(S_i)$ is connected and simply connected by Lemma~\ref{lemma: marked complex for pants is simply connected}, the product
    \begin{align}\label{eq:gluing-map-for-markings}
        \mathcal M(S_1)\times\cdots\times \mathcal M(S_k)
    \end{align}
is also connected and simply connected.

The gluing map is surjective on vertices: any marking on $S$ with underlying pants decomposition $C$ can be isotoped so that it intersects each curve $c\in C$ at its chosen marked point. Such a marking then restricts, after cutting along $C$, to markings on the pair-of-pants components $S_1,\dots,S_k$, and gluing these restrictions recovers the original marking. It is also surjective on $1$-cells, since every edge of $F^{-1}(C)$ is a $T$- or $B$-move, hence is supported on a single pair-of-pants component, and therefore is obtained by applying the corresponding edge in one factor $\mathcal M(S_i)$ while leaving the other factors fixed.

The gluing map is not injective on vertices, because isotopies of the glued surface $S$ are not required to preserve the chosen basepoints on the cut curves. In fact, the map \eqref{eq:gluing-map-for-markings} is the quotient of $$\mathcal{M}(S_1)\times \dots\times \mathcal{M}(S_k)$$ by an action of $\Z[C]$, the free abelian group generated by the curves of the cut system $C$. For each $c\in C$, the corresponding generator acts on a tuple
\[
(m_1,\dots,m_k)\in \mathcal M(S_1)\times\cdots\times \mathcal M(S_k)
\]
as follows. If $c$ bounds two distinct components $S_i$ and $S_j$, then
\[
c\cdot(m_1,\dots,m_k)
=
(m_1,\dots,D_c m_i,\dots,D_c^{-1}m_j,\dots,m_k).
\]
If $c$ corresponds to two boundary components $c_1,c_2$ of a single component $S_i$, then
\[
c\cdot(m_1,\dots,m_k)
=
(m_1,\dots,D_{c_1}D_{c_2}^{-1}m_i,\dots,m_k).
\]

It is simple to check that this is a well-defined action on the vertices of $\mathcal M(S_1)\times\cdots\times \mathcal M(S_k)$ which extends to a free and properly discontinuous action on the entire space. Then the quotient has fundamental group $\Z^C$ generated by loops of the type 
\[
(\id_{S_i}\cup_c T_c)\,(T_c^{-1}\cup_c \id_{S_j}),
\]
and similarly in the nonseparating case. Since these loops are precisely the boundaries of the 2-cells (Gluing relations for Dehn twists) in $F^{-1}(C)$, we conclude that $F^{-1}(C)$ is connected and simply connected.

It remains to verify conditions~$(3)$ and~$(4)$ of Proposition~\ref{prop: simply-connect domain from target}.

\emph{Checking condition~$(3)$:} this concerns comparing different lifts of edges in $\pantscpx(S)$. Recall that the edges of $\pantscpx(S)$ are given by $A$- and $S$-moves. We begin with the former.

Let $C\xra{e}C'$ be an $A$-move in $\pantscpx(S)$ exchanging the curve $a\in C$ with $a'\in C'$, and let $e_1$ and $e_2$ be two lifts of $e$. Then $e_1$ and $e_2$ must be edge-paths of the form $\gamma'\circ A_{a,a'}^{\pm 1} \circ \gamma$, where $\gamma$ and $\gamma'$ are edge-paths in $F^{-1}(C)$ and $F^{-1}(C')$, respectively. Since Condition~$(2)$ has already been verified, we may assume that 
    \[e_1:(C,m_1)\xra{}(C',m_1') \quad\text{and}\quad e_2:(C,m_2)\xra{}(C',m_2')\]
are single edges corresponding to $A$-moves. By the $2$-cell (Self-duality of associativity), we may assume without loss of generality that these are $A$-moves, rather than their inverses.  

For a fixed marking over the cut system $C$, there is a unique $A$-move exchanging $a\in C$. We now analyse how the markings $m_1$ and $m_2$ may differ.

By the definition of the $A$-move (Definition~\ref{def: complex of markings}), there exist diffeomorphisms $\phi_1,\phi_2:S'\to S_{\forkone}$ such that $\phi_1(a)=\phi_2(a)=c$, and both $\phi_1\circ m_1$ and $\phi_2\circ m_2$ define the standard marking on $S_{\forkone}$. Moreover, we have $\alpha\phi_1(a')=\alpha\phi_2(a')=c'$, and both $\alpha\circ\phi_1\circ m_1'$ and $\alpha\circ\phi_2\circ m_2'$ define the standard marking on $S_{\forktwo}$. By Lemma~\ref{lemma:aux-for-A-move}, we may further assume that $\phi_1$ and $\phi_2$ send the same boundary components of $S'$ to the components $\{1,2\}$ of $S_{\forkone}$.

It follows that $\phi_1\circ \phi_2^{-1}$ is a diffeomorphism of $S_{\forkone}$ preserving each component of the standard pants decomposition. In particular, it is isotopic to a diffeomorphism fixing the curve $c$ pointwise. Thus, we may regard $\phi_1\circ \phi_2^{-1}$ as a pair of diffeomorphisms of the two pairs of pants in $S'$, that fix the boundary components of the pants corresponding to $c$. From the description of the mapping class group of a pair of pants, it follows that $\phi_1\circ \phi_2^{-1}$ is a composition of Dehn twists along $c$ and the other boundary components, together with possible braiding morphisms that fix $c$.

On the other hand,
    \[ \alpha\phi_1(\alpha\phi_2)^{-1}=\alpha(\phi_1\phi_2^{-1})\alpha^{-1}\]
is a diffeomorphism of $S_{\forktwo}$ fixing $c'$. This implies that $\phi_1\circ \phi_2^{-1}$ also fixes $\alpha^{-1}(c')$.  In particular, by considering intersection numbers, we see that $\phi_1\circ \phi_2^{-1}$ cannot involve Dehn twists about $c$, since such twists would not preserve $\alpha^{-1}(c')$.

It follows that $\phi_1\circ \phi_2^{-1}$ is either a composition of Dehn twists along boundary components, or such a composition followed by the pair of braidings (one on each pair-of-pants component) represented on the right-hand side of Figure~\ref{fig:standard-markings-A-move}. Hence we may assume that $m_1$ and $m_2$ differ from one of the two markings in Figure~\ref{fig:standard-markings-A-move} only by Dehn twists along the boundary components.

% By analysing how braidings and Dehn twists affect the intersection number of $\alpha^{-1}(c')$ with its image, we conclude that applying $\phi_1\circ \phi_2^{-1}$ to the standard marking on $S_{\forkone}$ produces one of the two markings shown in Figure~\ref{fig:standard-markings-A-move}, possibly composed with Dehn twists along the boundary components. Hence we may assume that $m_1$ and $m_2$ differ from one of the two markings in Figure~\ref{fig:standard-markings-A-move} only by Dehn twists along the boundary components.

    \begin{figure}[ht]
        \centering
        \includegraphics[width=0.5\linewidth]{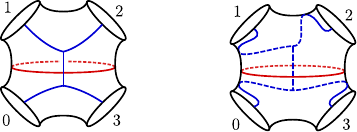}
        \caption{Two standard markings on $S_{0,4}$}
        \label{fig:standard-markings-A-move}
    \end{figure}

If $m_1$ and $m_2$ are obtained from the same marking in Figure~\ref{fig:standard-markings-A-move} by Dehn twists along the boundary components, then the required commutativity of the square follows from the $2$-cell (Boundary twist relations).

It remains to consider the case where $m_1$ and $m_2$ arise from different markings in Figure~\ref{fig:standard-markings-A-move}. Without loss of generality, assume that $m_1$ and $m_2$ correspond to the left and right markings, respectively. Then Figure~\ref{fig:condition-3} gives the required commutative square.

    \begin{figure}
        \centering
        \includegraphics[width=0.65\linewidth]{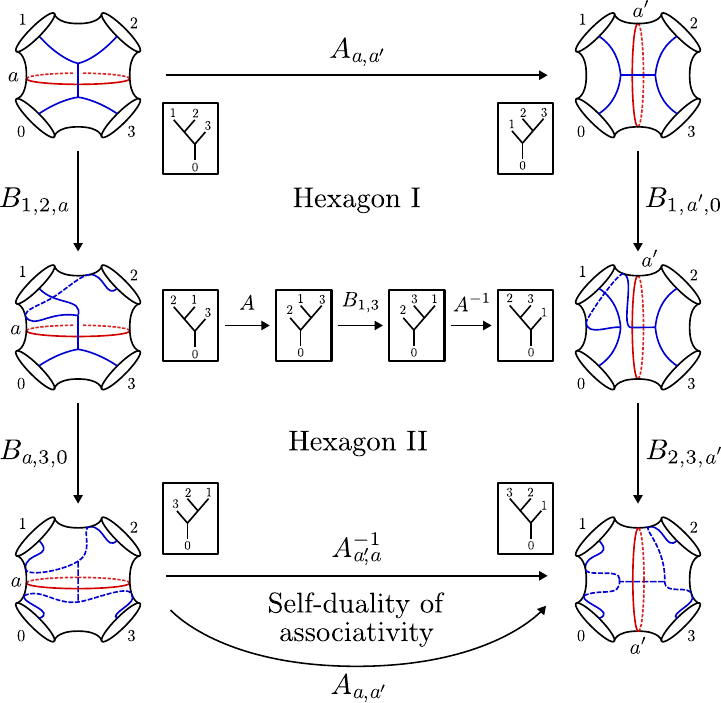}
        \caption{Commutative diagram checking condition (3)}
        \label{fig:condition-3}
    \end{figure}

A very similar argument works for the lifts of the S-move, and can be found in \cite[Section~7.7]{bk_marked_surfaces}. Here, the contractible square is obtained using the (Commutativity of disjoint moves) and the genus-one relation for $(g,n)=(1,1)$ given by \eqref{eq:g1.1}.

\emph{Checking condition~$(4)$:} now we must show that the boundary of each standard $2$-cell of $\pantscpx(S)$ lifts to a contractible loop in $\markS$. In fact, even more, we have that each such boundary can be lifted to the boundary of a standard $2$-cell in $\markS$.

The correspondence is as follows:
\begin{itemize}
    \item The boundary of the $2$-cell $(3A)$ can be lifted to the boundary of the $2$-cell corresponding to the hexagon relations; see Figure~\ref{fig:hexagons};
    \item The boundary of the $2$-cell $(5A)$ can be lifted to the boundary of the $2$-cell corresponding to the pentagon relation; see Figure~\ref{fig:pentagon};
    \item The boundary of the $2$-cell $(3S)$ can be lifted to the boundary of the $2$-cell corresponding to the Genus-one relation \eqref{eq:g1.2}; see Figure~\ref{fig:G2-cell-G1-2};
    \item The boundary of the $2$-cell $(6AS)$ can be lifted to the boundary of the $2$-cell corresponding to the Genus-one relation for $(g,n)=(1,2)$; see Figure~\ref{fig:G2-cell-NS-complex-of-markings};
    \item The boundary of the $2$-cell $(C)$ can be lifted to the boundary of the $2$-cell corresponding to the relation (Commutativity of disjoint moves).
\end{itemize}

Thus the boundary of each $2$-cell of $\pantscpx(S)$ lifts to a loop bounding a $2$-cell of $\markS$. Hence condition~$(4)$ holds. Proposition~\ref{prop: simply-connect domain from target} now implies that $\markS$ is connected and simply connected.
\end{proof}

\begin{lemma}\label{lemma:aux-for-A-move}
    Let $\phi:S'\to S_{\forkone}$ be a diffeomorphism witnessing an $A$-move, as in Definition~\ref{def: complex of markings}, and let $\{c,c'\}$ denote the boundary components of $S'$ that are mapped to the boundary components $\{1,2\}$ of $S_{\forkone}$. Then there exists a diffeomorphism $\phi':S'\to S_{\forkone}$ witnessing the same $A$-move such that the boundary components $\{c,c'\}$ are mapped to $\{0,3\}$.
\end{lemma}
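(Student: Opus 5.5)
The plan is to obtain $\phi'$ from $\phi$ by postcomposing with a symmetry of the standard four-holed sphere. Concretely, I will set
\[
\phi'=\rho\circ\phi ,
\]
where $\rho:S_{\forkone}\to S_{\forkone}$ is a diffeomorphism that preserves the standard cut curve $c$ (the curve of Figure~\ref{fig:A-move}) and exchanges the two pairs of pants of $S_{\forkone}$. Then $\rho$ sends the boundary components labelled $\{1,2\}$ to those labelled $\{0,3\}$. The key requirement is that $\rho$ be compatible with $\alpha$, meaning that $\alpha\rho\alpha^{-1}$ is again a symmetry of $S_{\forktwo}$ with the analogous properties.

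\emph{Construction of $\rho$.} Realise $S_{\forkone}$ as a round sphere with four holes placed symmetrically. Take the curve $c$ to be an equator separating holes $\{1,2\}$ from holes $\{0,3\}$, and take the curve $c'=\alpha^{-1}(\text{standard curve of }S_{\forktwo})$ to be the orthogonal great circle, which meets $c$ in two points. Let $\rho$ be rotation by $\pi$ about the axis through the two points of $c\cap c'$. By choice of the axis, $\rho$ preserves both $c$ and $c'$ setwise and interchanges the two hemispheres cut out by $c$. I will arrange the holes and the marked points $x_i$ symmetrically, so that $\rho$ carries each marked boundary point to a marked boundary point. The standard marking of Figure~\ref{fig:standard-markings} (the graph $P\circ_1 P$ embedded with one edge crossing $c$ once) can be isotoped to be $\rho$-invariant as an embedded graph, with its internal edge passing through a point of $c$ fixed by $\rho$.

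\emph{Verification.} First I check that $\rho\circ\phi$ takes $(\{a\},m|_{S'})$ to the standard marking of $S_{\forkone}$. This holds up to the relabelling of boundary components induced by $\rho$, and that relabelling is exactly the cyclic translate under which $P\circ_1P$ is invariant as a ribbon graph; compare the identity $(01)^*(P\circ_1P)=(231)^*(P\circ_2P)$ used in the proof of Proposition~\ref{prop:PaRB-cyclic-is-S0}. Second, since $\rho$ preserves $c'$, the conjugate $\alpha\rho\alpha^{-1}$ preserves the standard curve of $S_{\forktwo}$. By the same symmetry argument it preserves the standard marking of $S_{\forktwo}$, so $\alpha\circ\rho\circ\phi$ takes $(\{b\},m'|_{S'})$ to the standard marking. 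Both mapping classes are well defined by Lemma~\ref{lemma: marking-independence}. As a consistency check, the self-duality relation $A_{c,c'}A_{c',c}=\id$ shows that swapping which pair of pants carries labels $\{1,2\}$ does not change the $1$-cell.

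\emph{Main obstacle.} The delicate point is that $\rho$ must respect the markings themselves, not merely the curves: the embedded trees, the marked points, and the boundary labels must all match the standard pictures exactly. Otherwise $\rho\circ\phi$ would differ from a valid witness by boundary twists or a braiding. If the $\rho$-invariant picture does not quite match, I would first try correcting $\rho$ by boundary Dehn twists. These commute with $\alpha$ by the boundary twist relations, so they do not change which $A$-move is being witnessed.
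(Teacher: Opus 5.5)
Your proposal is correct and is essentially the paper's argument. The paper also sets $\phi'=z\circ\phi$, where $z$ is a symmetry of $S_{\forkone}$ fixing the standard marking and acting on boundary labels by $(13)(20)$, which is exactly the permutation induced by your rotation $\rho$. Your explicit symmetric model, where both the standard tree and the pullback under $\alpha$ of the standard tree of $S_{\forktwo}$ are $\rho$-invariant, fills in the compatibility with $\alpha$ that the paper only asserts ``by construction''; the self-duality remark and the boundary-twist fallback are not needed.
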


\begin{proof}
    Let $(C,m)\xra{A_{a,b}}(C',m')$ be an $A$-move on $\mathcal{M}(S)$ and let $m|_{S'}$ and $m'_{S'}$ denote representatives of the markings $m$ and $m'$ restricted to $S'$ that, together with $\phi$, witnesses the $A$-move. 

    Let $z:S_{\forkone}\to S_{\forkone}$ denote a difffeomorphism that fixes the standard marking and on the boundary components gives the permutation $(13)(20)$ (this can be constructed using the standard marking and the diagrammatic representation of mapping classes).
    
    Define the diffeomorphism
        \[\phi'\coloneqq z_S\circ \phi.\]
    By construction,  $\phi'\circ m|_{S'}$ is still the standard marking on $S_{\forkone}$, and $\alpha\circ\phi'\circ m'_{S'}$ is still the standard marking on $S_{\forktwo}$. Hence $\phi'$ also witnesses this $A$-move and, by construction, takes the boundary components $\{c,c'\}$ to the boundaries $\{0,3\}$ on $S_{\forkone}$.
    % 
    % Let $z:S_{\forkone}\to S_{\forkone}$ be the standard diffeomorphism which rotates the surface and swaps the boundary components $\{1,2\}$ with $\{0,3\}$ preserving the standard marking (this can be constructed explicitly using the diagrammatic representation of mapping classes). Define the new diffeomorphism by
    %     \[\phi'=z\circ \phi.\]
    % Since $z$ preserves the standard marking, we still have that
    %     \[\phi'(a)=c \quad \text{and} \quad \phi'\circ m= m_{\forkone}.\]
    % All that remains is to check that
    %     \[\alpha\circ\phi'(b)=c' \quad \text{and} \quad \alpha\circ\phi'\circ m'= m_{\forktwo}.\]
    % Substituting, this becomes
    %     \[\alpha\circ z\circ \phi(b)=c' \quad \text{and}\quad \alpha\circ z\circ\phi\circ m'= m_{\forktwo}.\]
\end{proof}

\bibliographystyle{plain}
\bibliography{bibliography.bib}
\end{document}